\documentclass[11pt]{article}

% Imports ---------------------------------------------------------------------

% Required packages.
\usepackage{amsfonts}               % Write mathematics.
\usepackage{amscd}                  % Write mathematics.
\usepackage{amsmath}                % Write mathematics.
\usepackage{amsthm}                 % Write mathematics.
\usepackage{amssymb}                % Write mathematics.
\usepackage{graphicx}               % Include figures from external files.
\usepackage{lmodern}                % Handle font sizes.
\usepackage{makeidx}                % For making an index.
\usepackage{hyperref}               % Embed hyperlinks in the PDF.
\usepackage{mathrsfs}
\usepackage[nameinlink]{cleveref}   % Make references with \cref{}.
% NOTE: hyperref must be imported BEFORE cleveref.

% Optional packages.
% \usepackage{draftwatermark}       % "DRAFT" watermark in background.

% Packages used for the explanation in the template.
\usepackage{layout}                 % Draw the page layout with \layout*.
\usepackage{url}                    % Typesetting URLs.
\usepackage{verbatim}               % Quote block of text verbatim.

% Additional packages ---------------------------------------------------------
\usepackage{mathtools}
\usepackage{bm}
\usepackage{bbm}
\usepackage[ruled,vlined]{algorithm2e}
\newlength\mylen

\Crefname{algocf}{Algorithm}{Algorithms}
\usepackage{enumitem}
\usepackage{listings}
\usepackage{booktabs}
\usepackage{subcaption}
\usepackage{multirow}
\usepackage{cases} % For cases, used in BCP paper, might not be needed
\usepackage[margin=1in]{geometry}

\usepackage{bibentry} % for displaying full bib entries
% Hyperlink setup -------------------------------------------------------------

\hypersetup{hypertexnames=false,colorlinks=true,allcolors=black}

% Page setup ------------------------------------------------------------------

%% Line spacing (uncomment one).
% \singlespacing \singlespacequote
% \oneandonehalfspacing \oneandonehalfspacequote        % DEFAULT
% \doublespacing \doublespacequote

% Math environments -----------------------------------------------------------

% \theoremstyle{plain}        % If changed, also change line 294 of utdiss.sty.
\newtheorem{theorem}{Theorem}                  % Number by chapter.
\newtheorem{corollary}[theorem]{Corollary}
\newtheorem{lemma}[theorem]{Lemma}
\newtheorem{proposition}[theorem]{Proposition}
\newtheorem{example}[theorem]{Example}
\newtheorem{assumption}[theorem]{Assumption}

\theoremstyle{definition}
\newtheorem{definition}[theorem]{Definition}

\theoremstyle{remark}
\newtheorem{remark}[theorem]{Remark}

\crefformat{equation}{#2(#1)#3}
% Other customizations --------------------------------------------------------

% Symbols

\newcommand{\bR}{\mathbb{R}}
\newcommand{\bE}{\mathbb{E}}

\newcommand{\bN}{\mathbb{N}}

\newcommand{\cA}{\mathcal{A}}
\newcommand{\cB}{\mathcal{B}}
\newcommand{\cC}{\mathcal{C}}

\newcommand{\cE}{\mathcal{E}}
\newcommand{\cF}{\mathcal{F}}

\newcommand{\cH}{\mathcal{H}}

\newcommand{\cL}{\mathcal{L}}

\newcommand{\cN}{\mathcal{N}}
\newcommand{\cO}{\mathcal{O}}
\newcommand{\cP}{\mathcal{P}}

\newcommand{\cU}{\mathcal{U}}

\newcommand{\cX}{\mathcal{X}}
\newcommand{\cY}{\mathcal{Y}}

% Integration: sometimes not used 

% Probability

% Linear algebra
\newcommand{\linner}{\left\langle}
\newcommand{\rinner}{\right\rangle}

\newcommand{\tr}{\mathrm{Tr}}

% For taylor approximations 

% For Gaussian mixtures paper 

% For MRDINO paper 

% Functional analysis
\newcommand{\HS}{\mathrm{HS}}

% For dibasis ---- this is pretty extensive 
\newcommand{\dimx}{d_{\mathcal{X}}}
\newcommand{\dimy}{d_{\mathcal{Y}}}

\newcommand{\Cx}{C_{\mathcal{X}}}
\newcommand{\Cy}{C_{\mathcal{Y}}}
\newcommand{\Hx}{H_{\mathcal{X}}}
\newcommand{\Hy}{H_{\mathcal{Y}}}

\newcommand{\rx}{r}
\newcommand{\ry}{r}
\newcommand{\Vrx}{V_{r}}
\newcommand{\Ury}{U_{r}}

\newcommand{\Pry}{P_{r}}
\newcommand{\Qrx}{Q_{r}}

\newcommand{\KLE}{\mathrm{PCA}}
\newcommand{\POD}{\mathrm{PCA}}
\newcommand{\JTJ}{\mathrm{DIS}}
\newcommand{\JJT}{\mathrm{DIS}}

\newcommand{\XC}{\mathcal{E}}

\newcommand{\Pinv}[1]{{(#1)}^{\!\dagger}}
\newcommand{\pinv}[1]{{#1}^{\!\dagger}}

\newcommand{\lambdahat}{\widehat{\lambda}}

\newcommand{\muhat}{\widehat{\mu}}
\newcommand{\uhat}{\widehat{u}}
\newcommand{\vhat}{\widehat{v}}
\newcommand{\Uryhat}{\widehat{U}_{r}}
\newcommand{\Vrxhat}{\widehat{V}_{r}}
\newcommand{\Pryhat}{\widehat{P}_{r}}
\newcommand{\Qrxhat}{\widehat{Q}_{r}}

\newcommand{\Cyhat}{\widehat{C}_{\mathcal{Y}}}
\newcommand{\Cybar}{\bar{C}_{\mathcal{Y}}}
\newcommand{\Hxhat}{\widehat{H}_{\mathcal{X}}}
\newcommand{\Hyhat}{\widehat{H}_{\mathcal{Y}}}

\newcommand{\half}{1/2}

\newcommand{\encoder}{\mathfrak{E}}
\newcommand{\decoder}{\mathfrak{D}}

\newcommand{\derivXC}{D_{\XC}}
\newcommand{\deriv}{D}

\newcommand{\fun}{F}
\newcommand{\gfun}{G}
\newcommand{\fbar}{\bar{F}}
\newcommand{\ftilde}{\widetilde{F}}
\newcommand{\fhat}{\widehat{F}}

\newcommand{\gelu}{\mathrm{GeLU}}
\newcommand{\silu}{\mathrm{SiLU}}

\newcommand{\din}{d_\mathrm{in}}
\newcommand{\dout}{d_\mathrm{out}}
% \newcommand{\din}{d_X}
% \newcommand{\dout}{d_Y}

% For BCP 

% \newcommand{\bs}[1]{\boldsymbol{#1}}

\newcommand{\beq}{\begin{equation}}
\newcommand{\eeq}{\end{equation}}
\newcommand{\bcases}{\begin{subnumcases}}
\newcommand{\ecases}{\end{subnumcases}}

\newcommand{\supp}{\mathrm{supp}\,}

\newcommand{\id}{\mathrm{Id}}
\newcommand{\nelem}{N_{\mathrm{el}}}
\usepackage{parskip}
\setlength\parindent{0pt}
\usepackage{bookmark}
\bibliographystyle{siamplain}

\usepackage{authblk}

% % \author{Dingcheng Luo, Thomas O'Leary-Roseberry, Peng Chen, Omar Ghattas}

\author[1,*]{Dingcheng Luo}
% }
\author[1]{Thomas O'Leary-Roseberry}

\author[2]{Peng Chen}

\author[1,3]{Omar Ghattas}

\affil[1]{Oden Institute for Computational Engineering and Sciences, The University of Texas at Austin, Austin,~TX~78712,~USA}
\affil[2]{School of Computational Science and Engineering, Georgia Institute of Technology, Atlanta,~GA~30332,~USA}
\affil[3]{Walker Department of Mechanical Engineering, The University of Texas at Austin, Austin,~TX~78712,~USA}

% \affil[*]{Oden Institute for Computational Engineering and Sciences, The University of Texas at Austin, Austin,~TX~78712,~USA}
\title{Dimension reduction for derivative-informed operator learning: \\ An analysis of approximation errors}

\begin{document} % ============================================================
\maketitle

\vspace{1em}
\begin{abstract}
    We study the derivative-informed learning of nonlinear operators between infinite-dimensional separable Hilbert spaces by neural networks. 
Such operators can arise from the solution of partial differential equations (PDEs), and are used in many simulation-based outer-loop tasks in science and engineering, such as PDE-constrained optimization, Bayesian inverse problems, and optimal experimental design. 
In these settings, the neural network approximations can be used as surrogate models to accelerate the solution of the outer-loop tasks. 
However, since outer-loop tasks in infinite dimensions often require knowledge of the underlying geometry, the approximation accuracy of the operator's derivatives can also significantly impact the performance of the surrogate model. 
Motivated by this, we analyze the approximation errors of neural operators in Sobolev norms over infinite-dimensional Gaussian input measures. 
We focus on the reduced basis neural operator (RBNO), which uses linear encoders and decoders defined on dominant input/output subspaces spanned by reduced sets of orthonormal bases. 
To this end, we study two methods for generating the bases; principal component analysis (PCA) and derivative-informed subspaces (DIS), which use dominant eigenvectors of the covariance of the data or the derivatives as the reduced bases, respectively.
We then derive bounds for errors arising from both the dimension reduction and the latent neural network approximation, including the sampling errors associated with the empirical estimation of the PCA/DIS. 
Our analysis is validated on numerical experiments with elliptic PDEs, where our results show that bases informed by the map (i.e., DIS or output PCA) yield accurate reconstructions and generalization errors for both the operator and its derivatives, while input PCA may underperform unless ranks and training sample sizes are sufficiently large.

\end{abstract}

\textit{Keywords}: Operator learning, derivative-informed neural operators, Sobolev training, dimension reduction, universal approximation

\vspace{3em}

\let\thefootnote\relax\footnotetext{*Corresponding author: dc.luo@utexas.edu}

\pagebreak
% \input{sections/jmlr_title}

% \longtocentry                          % Uncomment if using >10 sections.
% \pdfbookmark[0]{\contentsname}{toc}
\tableofcontents
% CHAPTERS --------------------------------------------------------------------
\section{Introduction}\label{sec:dibasis_introduction}
% Neural network approximations of operators between function spaces 
% Models of many, 

In this work, we consider the task of learning neural network representations of nonlinear operators between infinite-dimensional spaces in terms of both their outputs and their derivatives.
Such operators arise, for example, as the solution operators of partial differential equations (PDEs),
which model many physical systems and play important roles in scientific discovery and engineering design.
For complex PDEs, their solution operators can only be accessed using numerical methods with significant computational costs, which often renders their use in simulation-intensive outer loop tasks---such as simulation-based design, inference, and uncertainty quantification---extremely computationally expensive, if not intractable.
To this end, neural network approximations of the expensive-to-solve PDEs can be effective surrogate models 
that have the potential to significantly accelerate the aforementioned outer-loop tasks.
% by neural network representations is a promising approach for constructing efficient surrogate models, 
% have the potential accelerate simulation intensive tasks, such as simulation-based design, inference, and uncertainty quantification. 

Over the past years, a vast selection of neural network architectures for operator learning has been proposed. Well-known examples include 
those using reduced bases representations (PCANet, POD-DL-ROM) \cite{HesthavenUbbiali18, BhattacharyaHosseiniKovachkiEtAl21, FrescaManzoni22},
DeepONets \cite{LuJinPangEtAl21,LuMengCaiEtAl22,JinMengLu22}, 
Fourier neural operators \cite{LiKovachkiAzizzadenesheliEtAl21,BonevKurthHundtEtAl23}
along with many more recent developments (see \cite{KovachkiLiLiuEtAl23,KovachkiLanthalerStuart24} for more comprehensive discussions). 
Typically, these neural network representations are trained in a conventional, supervised manner, 
where the PDE's input parameters and solutions are used as input-output pairs 
to define a loss function based on the misfit between the neural network predictions and the true outputs.

% Specifically, in operator learning, 
% the goal is to construct approximations of expensive-to-evaluate PDE solution operators 
% Learned neural network based 
% In this work, we are interested in the task of constructing neural network based representations of operators between infinite dimensional function spaces, such as those arising from PDE solution operators. 
% To this end, 

In addition, it has been demonstrated that, when available, 
derivatives of the underlying input-output maps (or approximations thereof) can be utilized to improve the accuracy of the surrogates.
\cite{LiuBatill00,CzarneckiOsinderoJaderbergEtAl17,BouhlelHeMartins20,Tsay21,YuLuMengEtAl22,OLearyRoseberryChenVillaEtAl24}.
In particular, derivatives of the input-output maps can be used as training data (in addition to the input-output pairs used in conventional supervised learning),
and be incorporated into the training loss with a Sobolev-type norm,
thereby encouraging the resulting function approximation to be accurate not only in its predicted output values, but also its derivatives with respect to the input. 
This has been considered for constructing finite-dimensional neural network approximations in works such as \cite{LiuBatill00,CzarneckiOsinderoJaderbergEtAl17,BouhlelHeMartins20,Tsay21,YuLuMengEtAl22}
where the approach has been termed \textit{gradient-enhanced neural networks} or \textit{Sobolev training}. 

In \cite{OLearyRoseberryChenVillaEtAl24}, Sobolev training is considered for the approximation of formally infinite-dimensional mappings that arise from the solution operators of (PDEs), where it is given the name \textit{derivative-informed neural operators} (DINOs). 
% Sobolev training, which we will also refer to as derivative-informed operator learning when used to construct operator surrogates, 
Additional works have shown that Sobolev training of neural operators (which we will also refer to as derivative-informed operator learning)
can lead to notable benefits when used to construct surrogates for
outer-loop tasks involving PDEs \cite{BouhlelHeMartins20, LuoOLearyRoseberryChenEtAl23, GoChen23, CaoOLearyRoseberryGhattas24, GoChen24, CaoChenBrennanEtAl24}.
For example, derivative-informed surrogates have been 
applied to PDE-constrained optimization, both deterministic and under uncertainty \cite{BouhlelHeMartins20, LuoOLearyRoseberryChenEtAl23}, 
acceleration of geometric Markov Chain Monte Carlo and measure transport algorithms for Bayesian inverse problems \cite{CaoOLearyRoseberryGhattas24, CaoChenBrennanEtAl24},
and Bayesian optimal experimental design \cite{GoChen23, GoChen24}. 
Due to the important roles played by the derivatives in these outer-loop tasks, 
the improved derivative accuracy in the surrogates provided by Sobolev training can be crucial to obtaining the observed performance gains.

Since the input and output spaces in operator learning can be arbitrarily high dimensional upon discretization, the discretized derivative (i.e., the Jacobian matrix) can be extremely expensive to compute. To address this, \cite{OLearyRoseberryChenVillaEtAl24}, along with subsequent works \cite{LuoOLearyRoseberryChenEtAl23,GoChen23,CaoOLearyRoseberryGhattas24,QiuBridgesChen24, GoChen24, CaoChenBrennanEtAl24}, have leveraged the reduced basis neural operator (RBNO) architecture for the approximation to enable scalable and efficient Sobolev training.
That is, the operator surrogate uses linear encoding and decoding of the input and output spaces given in terms of reduced bases. 
These are constructed before training the neural network, which defines only the mapping between the reduced (latent) representations of the inputs and/or outputs. 
This choice of architecture allows the data generation and training costs associated with the derivatives to scale only with the intrinsic dimensionality of the mapping 
(i.e., rank of the reduced bases), as only the action of the derivative within the reduced subspaces are needed in training.

In this approach, the selection of the reduced bases, or dimension reduction technique, plays an important role.
A common approach is the principal component analysis (PCA) on function space,
which uses the dominant eigenvectors of the covariance operators of the input and outputs as reduced bases \cite{HesthavenUbbiali18,BhattacharyaHosseiniKovachkiEtAl21,FrescaManzoni22}.
The active subspace (AS) method \cite{ConstantineDowWang14,ZahmConstantinePrieurEtAl20} has also been considered for input dimension reduction in \cite{OLearyRoseberryVillaChenEtAl22,OLearyRoseberryDuChaudhuriEtAl22}.
This approach essentially uses the dominant eigenvectors of the covariance of the derivative operator's adjoint to construct 
a subspace of the input space to which the outputs are most sensitive.

In this work, our goal is to analyze the approximation error of reduced basis architectures in the context of derivative-informed operator learning. 
Specifically, we focus on operators between (real-valued) infinite-dimensional separable Hilbert spaces $\cX$ and $\cY$, where the input space is equipped with a Gaussian measure $\gamma$. 
We are interested in studying the not only the approximation error of the operator in the usual $L^2_{\gamma}$ Bochner sense, but also the approximation error of the derivative. That is, we consider the approximations of operators the $H^1_{\gamma}$ Sobolev norm over infinite-dimensional Gaussian measures as defined in \cite{Bogachev98,Bogachev10}, which is closely related to the classical Sobolev spaces for finite-dimensional inputs.

Additionally, since the choice of dimension reduction plays an important role in reduced basis architectures, we investigate how different choices of reduced bases impact the overall approximation error. 
In addition to the PCA, we also consider a generalization of the AS method to both input and output dimension reduction, where the output dimension reduction uses the covariance of the derivative operator instead of its adjoint.
We refer to this approach as the input/output derivative-informed subspaces (DIS).
Our goal is then to obtain combined approximation error bounds in terms of both the dimension reduction and the neural network approximation, where we are additionally interested in the statistical properties of the dimension reduction errors due the use of sample-based covariance estimators in computing the PCA and DIS.

\subsection{Related works}

Many of the existing works for analyzing the approximation errors of operator surrogates focus on the approximation of the operator's output values (e.g.,\cite{LuJinPangEtAl21,KovachkiLanthalerMishra21,BhattacharyaHosseiniKovachkiEtAl21, LanthalerMishraKarniadakis22,Lanthaler23}), 
typically providing error bounds in terms an $L^2_{\gamma}$ Bochner norm or an $L^\infty$ norm over a compact set.
In this context, the works of \cite{BhattacharyaHosseiniKovachkiEtAl21,Lanthaler23} are most closely related to ours. These works establish approximation errors bounds for PCANet---a reduced basis architecture that uses PCA for dimension reduction---in which the error decomposes into dimension reduction errors and neural network approximation errors.
Additionally, we mention that the approximation error of linear encoder-decoder architectures for operator learning has also been studied in \cite{HerrmannSchwabZech24,AdcockDexterMoraga24,AdcockGriebelMaier24,ReinhardtWangZech24,AdcockDexterMoraga25}, which additionally establish approximation rates with respect to the number of neural network parameters and/or training sample size under assumptions such as holomorphy and Lipschitz continuity, utilizing specific parametrizations of the neural network based on polynomial chaos expansions.

On the other hand, we are not aware of approximation results for operator surrogates involving the derivative accuracy. 
The approximation accuracy of neural networks in finite-dimensional settings is considered in works as early as that of \cite{Hornik91}, 
which proves universal approximation results of functions in $C^k$ (the space of $k$ times continuously differentiable functions)
by single-layer neural networks using smooth, sigmoid-like activation functions,
with errors measured in either weighted Sobolev norms or $C^k$ norms over compact sets. 
In our work, we make use of this result of \cite{Hornik91} to develop our universal approximation results, providing extensions to deep neural network architectures and commonly used smooth and non-saturating activation functions such as softplus, sigmoid linear unit (SiLU), and Gaussian error linear unit (GeLU).
We note that more modern results for approximation in classical Sobolev norms are available, e.g.,\cite{GuehringKutyniokPetersen19,OpschoorSchwabZech21,FengZeng22}, 
often with precise estimates on depth and width of the neural network,
though these are typically formulated for ReLU activation functions and over bounded domains.

Our analysis of dimension reduction via the DIS also draws on the works of \cite{CuiMartinMarzoukEtAl14,ZahmConstantinePrieurEtAl20,LamZahmMarzoukEtAl20,CuiTong22,BaptistaMarzoukZahm22,ChenArnaudBaptistaEtAl24}. 
For Gaussian input measures, our input DIS coincides with the vector-valued active subspace (AS) \cite{ZahmConstantinePrieurEtAl20}, which has an established bound on the $L^2_{\gamma}$ error when used for input dimension reduction.
The results of \cite{ZahmConstantinePrieurEtAl20} have subsequently been applied to finite-dimensional neural network approximations in \cite{OLearyRoseberryVillaChenEtAl22, OLearyRoseberryDuChaudhuriEtAl22}.
Moreover, the DIS also coincides with the likelihood-informed subspace (LIS) in the context of input dimension reduction for Bayesian inverse problems \cite{CuiMartinMarzoukEtAl14, ZahmCuiLawEtAl22, CuiTong22} as well as subsequent extensions to joint input and output dimension reduction \cite{BaptistaMarzoukZahm22,ChenArnaudBaptistaEtAl24}.
Again, the aforementioned works typically considered finite-dimensional input/output spaces.
Additionally, the input DIS (LIS) is considered for an infinite-dimensional input space but a finite-dimensional output space in \cite{CaoOLearyRoseberryGhattas24,CaoChenBrennanEtAl24}.

% While the input DIS is well studied, fewer results are available for the output DIS outside the context of Bayesian inverse problems. 
% In particular, we will show that the output DIS has favorable reconstruction properties for both the function output and its derivatives, and is useful for providing bounded $H^1_{\gamma}$ approximation errors. 

Sampling errors associated with PCA in Hilbert spaces are well studied (e.g., \cite{BlanchardBousquetZwald06, MasRuymgaart14,ReissWahl20}). 
Analogous results are also used in the analyses of PCANet in \cite{BhattacharyaHosseiniKovachkiEtAl21,Lanthaler23}.
Additionally, \cite{LamZahmMarzoukEtAl20,CuiTong22} consider the sampling errors associated with constructing the AS/LIS matrices, though in the finite-dimensional setting.

% Comment on approximation error results 
% \begin{itemize}
%     \item Neural operator approximation errors 
%     \item PCANet papers --- looks at it for KLE and in $L^2$ 
%     \item 
%     \item Gradient based neural network papers, Hornik, GENN --- finite dimensional case 
%     \item Truncation errors for bases and ridge functions Olivier Zahm
%     \item Dimension reduction techniques are studied in finite dimension by Zahm 
%     \item Applied to NN by Tom 
%     \item Infinite dimensional input space uses AS for Bayesian 
% \end{itemize}

\subsection{Contributions}

% In this work, we develop analyze approximation errors associated 
% Our contributions is to provide a unified approximation error analysis of RBNO 
% for the approximation of derivatives
% The analysis of this paper aims to provide a unified approximation error bounds for 
% RBNOs by considering the dimension reduction error, neural network approximation error, as well as statistical sampling errors associated with the dimension reduction strategies. 
% In doing so, we make the following contributions
% \begin{enumerate}
%     \item 
%     \item In doing so, build on PCANet paper and develop results for the approximation in $H^1_{\gamma}$ for PCANet 
%     \item We also extend dimension reduction bounds of Zahm, Baptista, etc. to infinite dimensions, and ridge function bounds 
%     \item Also make extensions to useful lemmas to the infinite dimensional setting such as teh subspace poincare inequality
In this work, we analyze the errors associated with approximating both an operator and its derivatives by an RBNO architecture. 
Our main contribution is a universal approximation result in the $H^1_{\gamma}$ 
that decomposes the error into neural network errors, dimension reduction errors, and sampling errors.
These errors contributions are then numerically studied in the context of learning the solution operator of elliptic PDEs with Gaussian random field inputs. 

In developing our main result, we also make the following contributions:
\begin{itemize}
    \item We formulate the input/output DIS (AS/LIS) in the Hilbert space setting 
    and develop analogous results for the reconstruction errors and ridge function errors in the infinite-dimensional setting based on the theory of Sobolev classes over infinite-dimensional Gaussian measures.
    \item We follow the approaches of \cite{ReissWahl20, BhattacharyaHosseiniKovachkiEtAl21, Lanthaler23} to provide bounds for the statistical sampling errors associated with the DIS operators in the Hilbert space setting.
\end{itemize}
This allows us to unify several earlier results 
that have been developed for various choices of input and output bases in either the finite- or infinite-dimensional settings 
\cite{CuiMartinMarzoukEtAl14, ZahmConstantinePrieurEtAl20,BhattacharyaHosseiniKovachkiEtAl21, CuiTong22, BaptistaMarzoukZahm22, OLearyRoseberryVillaChenEtAl22,OLearyRoseberryDuChaudhuriEtAl22, Lanthaler23, CaoOLearyRoseberryGhattas24,ChenArnaudBaptistaEtAl24,CaoChenBrennanEtAl24}.
A comparison of the problem settings in several existing works is presented in \Cref{tab:summary_of_settings}.

\begin{table}
    \centering
    \caption{A summary of the function space settings (finite or infinite dimensions) and dimension reduction strategies (PCA or DIS) considered in a selection of relevant prior work. 
    Our work considers both PCA and DIS in the infinite-dimensional setting and includes results for the approximation error in $H^1_{\gamma}$.
    Note that our use of the term DIS additionally includes the AS and LIS strategies.
    } \label{tab:summary_of_settings}
    % \begin{tabular}{|c | c | c | c | c |}
    \begin{tabular}{c c c c c}
    \toprule
       & Input dim. & Output dim. & Input reduction & Output reduction  \\
    \midrule
    \cite{BhattacharyaHosseiniKovachkiEtAl21, Lanthaler23} & Infinite & Infinite & PCA & PCA \\ 
    \cite{CuiMartinMarzoukEtAl14,ZahmCuiLawEtAl22,CuiTong22} & Finite & Finite & DIS & - \\ 
    \cite{ZahmConstantinePrieurEtAl20,LamZahmMarzoukEtAl20} & Finite & Finite & DIS, PCA & - \\ 
    \cite{OLearyRoseberryChenVillaEtAl24,OLearyRoseberryDuChaudhuriEtAl22} & Finite & Finite & DIS, PCA & PCA \\
    \cite{BaptistaMarzoukZahm22, ChenArnaudBaptistaEtAl24} & Finite & Finite & DIS, PCA & DIS, PCA \\ 
    \cite{CaoOLearyRoseberryGhattas24, CaoChenBrennanEtAl24} & Infinite & Finite & DIS, PCA & - \\ 
    Ours & Infinite & Infinite & DIS, PCA & DIS, PCA \\
    \bottomrule
    \end{tabular}
\end{table}

The remainder of this paper is organized as follows. 
In \Cref{sec:dibasis_setup}, we introduce the mathematical setup of derivative-informed operator learning with reduced basis architectures over an infinite-dimensional Gaussian input measure,
including the definition of Sobolev classes over Gaussian measures and the formulations for PCA and DIS.
In \Cref{sec:dibasis_approximation_error}, we present our approximation error results, providing bounds for the different sources of error---dimension reduction, neural network approximation, and sampling.
For clarity, many supporting lemmas and proofs for these results are provided in the Appendices.
We then present a suite of numerical experiments in \Cref{sec:numerical_experiments}, 
where we validate our theoretical bounds while numerically comparing the effectiveness of PCA and DIS as dimension reduction strategies in derivative learning with RBNOs.
Finally, we conclude in \Cref{sec:dibasis_conclusions} with a summary of our findings and discuss potential future directions.
\section{Learning an operator and its derivatives}\label{sec:dibasis_setup}
\subsection{Mathematical setup and notation}
Let $\cX$ and $\cY$ be real separable Hilbert spaces with inner products $\linner \cdot, \cdot \rinner_{\cX}$ and $\linner \cdot, \cdot \rinner_{\cY}$. 
We will consider the case where $\cX$ is equipped with a Gaussian measure $\gamma = \cN(0, \Cx)$ defined over the Borel sigma algebra $\cB(\cX)$, 
assumed to be centered without loss of generality. 
In this setting, the covariance operator $\Cx = \bE_{x \sim \gamma}[x \otimes x]$ is a trace-class operator on $\cX$, which we assume is non-degenerate.
Additionally, let $\XC = \mathrm{Range}(\Cx^{\half})$ be the Cameron--Martin space associated with $\cX$, 
which itself is a separable Hilbert space with the inner product $\linner w_1, w_2 \rinner_{\XC} = \linner \Cx^{-\half} w_1, \Cx^{-\half} w_2 \rinner_{\cX}$.

For a pair of separable Hilbert spaces $\cH_1$ and $\cH_2$, we will use 
% $\cL(\cH_1,\cH_2)$ 
% to denote the space of continuous linear operators from $\cH_1$ to $\cH_2$. 
$\cL_{k}(\cH_1, \cH_2)$ to define the space of continuous $k$-linear operators, with $\cL(\cH_1, \cH_2) = \cL_1(\cH_1, \cH_2)$.
For $k > 1$, we will write $A(h_1, \dots, h_k) \in \cH_2$ 
for the action of $A \in \cL_{k}(\cH_1, \cH_2)$ on input directions $h_1, \dots, h_k \in \cH_1$.
Moreover, we write $A(h_1, \dots, h_{k-1}, \cdot) \in \cL(\cH_1, \cH_2)$ as the linear operator defined by leaving one of the input directions open (in this case, the $k$th input direction).
The operator norm for $A \in \cL_{k}(\cH_1, \cH_2)$ is given by 
\[ \|A\|_{\cL_{k}(\cH_1, \cH_2)} = \sup_{h_1, \dots h_k \neq 0} \frac{\|A(h_1, \dots, h_k)\|_{\cH_2}}{\|h_1\|_{\cH_1} \cdots \|h_k\|_{\cH_1}}. \]

Similarly, we will consider the space of $k$-linear Hilbert--Schmidt operators $\HS_{k}(\cH_1, \cH_2)$, 
where the inner product generating the Hilbert--Schmidt norm $\|\cdot \|_{\HS_k(\cH_1, \cH_2)}$ is given by 
\[ \linner A, B \rinner_{\HS_{k}(\cH_1, \cH_2)}
  = \sum_{i_1, \dots, i_k = 1}^{\infty} \linner A(e_{i_1}, \dots, e_{i_k}), B(e_{i_1}, \dots, e_{i_k}) \rinner_{\cH_2},
\]
with any orthonormal basis $(e_i)_{i=1}^{\infty}$ of the input space $\cH_1$.
Specifically, we write $\HS(\cH_1, \cH_2) = \HS_{1}(\cH_1, \cH_2)$ 
with inner product 
\[ 
\linner A, B \rinner_{\HS(\cH_1, \cH_2)} 
  = \sum_{i=1}^{\infty} \linner A e_i, Be_i \rinner_{\cH_2} 
  = \sum_{i=1}^{\infty} \linner e_i, A^*B e_i \rinner_{\cH_2} = \tr(A^* B) 
\]
for $A, B \in \HS(\cH_1, \cH_2)$, where $\tr(\cdot)$ denotes the trace and $A^*$ denotes the adjoint of $A$. 
Note that the Hilbert--Schmidt norm of $A$ is simply $\sqrt{\tr(A^*A)}$.

For any (real) separable Hilbert space $\cH$, let $L^p(\gamma, \cH)$ denote the space of functions from $\cX$ to $\cH$ that is integrable in the Bochner sense with finite $p$th moments, $p \geq 1$, 
such that $\|\fun\|_{L^p(\gamma, \cH)}^p = \bE_{\gamma}[\|\fun\|_{\cH}^p] < \infty$ for $\fun \in L^p(\gamma, \cH)$.
We can then introduce the Sobolev class of functions $W^{m,p}(\gamma, \cH)$, which are functions with Sobolev derivatives up to order $m$ that belong to $L^p$. 
This has a number of equivalent definitions.
Here, we describe the definition using the completion of smooth cylindrical functions, and refer to \cite{Bogachev98, Nualart06} for further details.

Let $\cF \cC^{\infty}$ denote the collection of functions $\fun : \cX \rightarrow \cY$ 
that have the representation 
\[ \fun(x) = \sum_{i=1}^{m} 
\phi_i( \linner v_1, x \rinner_{\cX}, 
\dots, \linner v_n, x \rinner_{\cX} ) u_i ,\]
for some $m, n \in \bN$, where $v_1, \dots, v_n \in \cX$, 
$u_1, \dots, u_m \in \cY$, and $\phi_1, \dots \phi_m \in C^\infty_b(\bR^n, \bR)$, where $C^{\infty}_{b}(\bR^n, \bR)$ refers to the space of smooth and bounded functions from $\bR^n$ to $\bR$.
% y_i, 
% \qquad \phi \in C^{\infty}_{b}(\bR^{n}, \bR^{n}), 
% l_i \in \cX^*, \;
% y_i \in \cY,  \;
% \text{ for } i = 1, \dots, n\]
% % where $C_b^{\infty}(\bR^{n}, \bR^{n})$ 
% refers to the space of smooth and bounded functions from $\bR^{n}$ to $\bR^{n}$ and 
% $\cX^*$ is the topological dual of $\cX$.
The class $\cF \cC^{\infty}$ is referred to as the class of smooth cylindrical functions.
For $\fun \in \cF \cC^{\infty}$, the restriction of the derivatives $\deriv^k \fun$ to directions in the Cameron--Martin space, denoted by
$\derivXC^k \fun(x) = \deriv^k \fun(x) |_{\XC}$,
is Hilbert--Schmidt (i.e., $\derivXC^k \fun(x) \in \HS_{k}(\XC, \cY)$), and has bounded moments. 
We define the norm $\| \cdot \|_{W^{m,p}(\gamma, \cH)}$ as 
\begin{equation}
  \| \fun \|_{W^{m,p}(\gamma, \cH)} = \left( \sum_{k=0}^{m} \int \| \derivXC^{k} \fun(x) \|_{\HS_k(\XC, \cY)}^p \; \gamma(dx) \right)^{\!\! \frac{1}{p}}
  = \left( \sum_{k=0}^{m} \| \derivXC^{k} \fun \|_{L^p(\gamma, \cX; \HS_k(\XC, \cY))}^p \right)^{\!\!\frac{1}{p}}.
\end{equation}
The space $W^{m,p}(\gamma, \cH)$ is then defined as the completion of $\cF \cC^{\infty}$ under $\| \cdot \|_{W^{m,p}}$.
% Here we note two differences with the presentation of~\cite{Bogachev98}. The first is that we write 
For functions $\fun \in W^{m,p}(\gamma, \cH)$, we define their Sobolev derivatives $\derivXC \fun$ 
as the unique limit of sequences $(\derivXC \fun_i)_{i=1}^{\infty}$ for $\fun_i \in \cF \cC^{\infty}$, $i \in \bN$ that converges to $\fun$ in $W^{m,p}(\gamma, \cH)$.
Notably, the Sobolev derivatives 
% , $\derivXC^k \fun \in L^2(\gamma, \HS_k(\XC, \cH))$,
behave as the stochastic G\^ateaux derivatives (see \cite[Chapter 5]{Bogachev98}).
That is, for $\fun \in W^{1, p}(\gamma, \cH)$, the residual
% $\derivXC \fun : \cX \rightarrow \HS(\XC, \cH)$ satisfies
\begin{equation}\label{eq:stochastic_gateaux_derivative}
  \frac{\fun(x + th) - \fun(x)}{t} - \derivXC \fun(x) h 
\end{equation}
converges to zero in measure $\gamma$ as $t \rightarrow 0$ for any $h \in \XC$.
Analogous results hold for higher-order derivatives. 

% Here, we point out two slight differences with the presentation of~\cite{Bogachev98}. 
% First, \cite{Bogachev98} uses the $\ell^1$ norm to combine of the norms of individual derivatives instead of the $\ell^p$ norm that we consider. 
% This is largely inconsequential as the two norms are equivalent. 
% Second, ~\cite{Bogachev98} writes the indices in reverse order, i.e., $W^{p,m}$ instead of $W^{m,p}$ to denote the space of $m$ times differentiable functions that are $p$-integrable. 
% We adopt the notation $W^{m,p}$ to be consistent with our familiar notation for classical Sobolev spaces. 

In this work, we focus on the case $p = 2$, 
for which we introduce the notation $H^m(\gamma, \cH) := W^{m, 2}(\gamma, \cH)$,
along with the shorthand $L^2_{\gamma} := L^2(\gamma, \cY)$, $H^m_{\gamma} := H^m(\gamma, \cY)$, and $W^{m,p}_{\gamma} := W^{m,p}(\gamma, \cY)$ specifically when the output space is $\cY$. 
Moreover, we will often consider mappings $\fun$ that are additionally twice continuously Fr\'echet differentiable, $\fun \in C^2(\cX, \cY)$,
where the first and second derivatives have bounded second moments, 
i.e., $\deriv \fun \in L^2(\gamma, \cL(\cX, \cY))$ and $\deriv ^2 \fun \in L^2(\gamma, \cL_{2}(\cX, \cY))$.
Note that Fr\'echet differentiability is a stronger notion of differentiability than the Sobolev differentiability that defines $W^{m,p}(\gamma, \cY)$. 
Since the inclusion map $\iota : \XC \rightarrow \cX$ is in $\HS(\XC, \cX)$, the restriction of an operator $A \in \cL(\cX, \cY)$, $A|_{\XC} = A \circ \iota$, is also in $\HS(\XC, \cY)$. 
In particular, this implies $\deriv \fun|_{\XC} = \derivXC \fun$ and its adjoint with respect to $(\XC, \cY)$ is given by $\deriv_{\XC} \fun(x)^*  = \Cx \deriv \fun(x)^*$, where $\deriv \fun(x)^*$ is the adjoint of $\deriv \fun(x)$ defined on $\cL(\cX, \cY)$.
Thus, these assumptions on Fr\'echet differentiability would imply that $\fun \in H^2(\gamma, \cY)$. % and $\derivXC \fun \in H^1(\gamma, \HS(\XC, \cY))$.

For such $\fun$, we aim to analyze the approximation accuracy of operator surrogates $\widetilde{\fun} \approx \fun$, 
quantified in terms of both the function value and its derivatives. 
We will measure the function value in the $L^2_{\gamma} = L^2(\gamma, \cY)$ norm, 
and the derivative with the seminorm $| \cdot |_{H^1_{\gamma}}$ defined as,
\begin{equation}
  | \fun |_{H^1_{\gamma}}^2 
  := \| \deriv_{\XC} \fun \|_{L^2(\gamma, \HS(\XC, \cY))}^2
  = \bE_{\gamma}[ \| \deriv_{\XC} \fun \|_{\HS(\XC, \cY)}^2].
\end{equation}
% We then define the $H^1_{\gamma}$ norm as 
That is, we consider overall approximation error estimates in the $H^1_{\gamma}$ norm,
\begin{equation}
  \| \fun \|_{H^1_{\gamma}} = \left( \| \fun \|_{L^2_{\gamma}}^2 + | \fun |_{H^1_{\gamma}}^2 \right)^{1/2}.
\end{equation}

\subsection{Operator surrogates using deep neural networks}
% Due to the computational expense associated with evaluating $\fun (x)$ (e.g., solving a PDE for given input $x$), 
% we are interested in finding an approximation $\ftilde : \cX \rightarrow {\cY}$ of the solution operator $\fun$ that is efficient to evaluate. 
% In particular, we consider $\ftilde = \ftilde_{\theta}$ based on feedforward neural networks.
In this work, we consider operator surrogates based on feedforward neural networks.
For an activation function $\psi : \bR \rightarrow \bR$, 
we say $\varphi_{\psi, \theta} : \bR^{\din} \rightarrow \bR^{\dout}$ 
is a (constant width, feedforward) neural network with input size $\din$, output size $\dout$, activation function $\psi$, 
depth $d_L$, and width $d_W$, if it takes the form 
\begin{equation}\label{eq:neural_network}
  \varphi_{\psi, \theta}(x) = \mathbf{W}_{d_L} \psi(\mathbf{W}_{d_L-1} \dots W_{2} \psi(\mathbf{W}_1 x + \mathbf{b}_1)) + \mathbf{b}_{d_L},
\end{equation}
where $\theta = (\mathbf{W}_i, \mathbf{b}_i)_{i=1}^{d_L}$ are the parameters of the neural networks, 
consisting of weights $\mathbf{W}_1 \in \bR^{\din \times {d_W}}$, 
$\mathbf{W}_i \in \bR^{d_W \times d_W}$ for $i = 2, \dots, d_L - 1$, 
$\mathbf{W}_{d_L} \in \bR^{d_W \times \dout}$, 
and biases $\mathbf{b}_i \in \bR^{d_W}$, for $ i = 1, \dots, d_L -1$ 
and $\mathbf{b}_{d_L} \in \bR^{\dout}$.
Our focus will be on activation functions that are smooth and ReLU-like, 
such as the softplus, Gaussian-error linear unit (GeLU), and the Sigmoid-weighted linear unit (SiLU). 
% In particular, we will consider the following class of activation functions.
Specifically, we will consider the following definition.
\begin{definition} We say $\psi : \bR \rightarrow \bR$ belongs to the class of smooth, 
  ReLU-like activation functions, which we denote by $\cA^{\infty}_b$, 
  if $\psi \in C^{\infty}(\bR)$ 
  % (i.e., is infinitely continuously differentiable) 
  and satisfies the following:
  \begin{enumerate}
    \item (Boundedness for negative inputs) There exists a constant $a_{-} \geq 0$ such that $|\psi(t)| \leq a_{-}$ for $t \leq 0$.
    \item (Non-saturation for positive inputs) The function $\psi$ is strictly increasing for $t > 0$ and non-saturating as $t \rightarrow \infty$ (i.e., $\psi(t)$ tends to $\infty$).
    \item (Boundedness of derivatives) For every $k \in \bN$, there exists a constant $a_{k} > 0 $ such that the $k$th derivative $|\psi^{(k)}(t)| \leq a_k$ for all $t \in \bR$.
    % \item There exists a constant $a_{-} \geq 0$ such that $|\psi(x)| \leq a_{-}$ for $x \leq 0$,
    % \item There exists a constant $a_{+} \geq 0$ such that $\psi(x) \geq a_{+}$ for $x \geq 0$,
  \end{enumerate}
  In particular, examples like softplus, $\gelu$ and $\silu$ belong to $\cA_b^\infty$ (see examples in \Cref{sec:dibasis_hornik}).

  With activation functions $\psi \in \cA^{\infty}_b$, we can show that deep neural networks with $d_L \geq 2$ layers are universal approximators in Sobolev norms. 
  Specifically, we provide an extension to the result of \cite{Hornik91} for functions on $\bR^{d}$ over bounded measures. 
  Given a bounded measure $\nu$ over $\bR^{\din}$, 
  we follow \cite{Hornik91} to define the class of functions
  % this original result of \cite{Hornik91} is formulated for the class of functions
  \[ C^{m,p}_{\nu}(\bR^{\din}, \bR^{\dout}) := \{f \in C^{m}(\bR^{\din}, \bR^{\dout}) : \|f\|_{W^{m,p}(\nu, \bR^{\dout})} < \infty \},
  \]
  where the $W^{m,p}(\nu, \bR^{\dout})$ norm is defined as
  \begin{equation}\label{eq:finite_sobolev_norm}
    \| f \|_{W^{m,p}(\nu, \bR^{\dout})} = \left( \int_{\bR^{d}} \|f(x)\|_2^p d\nu(x) + 
    \sum_{1 \leq |\alpha|_1 \leq m} \int_{\bR^{d}} \|\partial^{\alpha} f(x)\|_2^p d\nu(x) \right)^{\frac{1}{p}}.
  \end{equation}
  Here, $\alpha = (\alpha_1, \alpha_2, \ldots, \alpha_{\din})$, $\alpha_i \in \bN \cup \{0\}$ is a multi-index,
  $\partial^{\alpha} = \partial_{x_1}^{\alpha_1} \dots \partial_{x_{\din}}^{\alpha_{\din}}$ denotes the partial derivatives with order $\alpha_i$ in coordinate $x_i$, 
  $|\alpha| := \sum_{i=1}^{\din} \alpha_i$ denotes the total order of differentiation,
  and $\|\cdot\|_2$ refers to the standard Euclidean norm on $\bR^{\dout}$.
  Note that this definition of the $W^{m,p}(\nu, \bR^{\dout})$ norm is equivalent to our earlier definition of the $W^{m,p}(\gamma, \cY)$ norm when $\nu = \gamma$ is a Gaussian over the finite-dimensional space $\bR^{\din}$ and $\cY = \bR^{\dout}$ is also finite-dimensional.
  The original result of \cite{Hornik91} then states that single-layer neural networks with smooth and bounded activation functions $\psi \in C^{m}_b(\bR)$ 
  are universal approximators of the class of functions $C^{m,p}_{\nu}(\bR^{d}, \bR)$ in the $W^{m,p}(\nu, \bR)$ norm (we restate this in \Cref{theorem:universal_approx_hornik}).
  % The following result extends this to deep neural networks with $d_L \geq 2$ layers and smooth, ReLU-like activation functions $\psi \in \cA^{\infty}_b$.V
  
  In practice, deep neural networks with non-saturating activation functions are preferred. We have the following extension for deep neural networks with activation functions $\psi \in \cA^{\infty}_b$.
  The proof of this result is presented in \Cref{sec:proof_nn_deep_extension}.
  \begin{theorem}[Deep universal approximation with smooth, ReLU like activation functions]\label{theorem:universal_approx_hornik_extended_deep}
    Suppose $g \in C_{\nu}^{m, p}(\bR^{\din}, \bR^{\dout})$ for some $m \geq 0$
    and $\nu$ is a finite measure on $\bR^{\din}$.
    % and $\gamma_{\din}$ is the standard Gaussian measure on $\bR^{\din}$.
    % Let $\psi$ be an activation function in the class $\cA^{\infty}_b$. 
    Let $p \in [1, \infty)$, $\psi \in \cA^{\infty}_{b}$, and $d_L \geq 2$. 
    Then, for any $\epsilon > 0$, 
    there exists a $d_L$-layered neural network $\varphi_{\psi, \theta}$ 
    with activation function $\psi$ such that 
    \begin{equation}
      \| g - \varphi_{\psi, \theta} \|_{W^{m,p}(\nu, \bR^{\dout})} \leq \epsilon.
    \end{equation}
    % where
    % \begin{equation}
    %   \varphi_{\psi, \theta}(x) = \mathbf{W}_{d_L} \psi(\mathbf{W}_{d_L-1} \dots W_{2} \psi(\mathbf{W}_1 x + \mathbf{b}_1)) + \mathbf{b}_{d_L},
    % \end{equation}
    % with weights $\mathbf{W}_1 \in \bR^{\din \times {d_W}}$, 
    % $\mathbf{W}_i \in \bR^{d_W \times d_W}$ for $i = 2, \dots, d_L - 1$, 
    % $\mathbf{W}_{d_L} \in \bR^{d_W \times \dout}$, 
    % and biases $\mathbf{b}_i \in \bR^{d_W}$, for $ i = 1, \dots, d_L -1$ 
    % and $\mathbf{b}_{d_L} \in \bR^{\dout}$.
  \end{theorem}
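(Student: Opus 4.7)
The plan bootstraps from Hornik's shallow result in three moves: I replace $\psi$ by a bounded smooth surrogate, apply Hornik to obtain a two-layer approximation, inflate it to depth $d_L$ by inserting approximate-identity layers, and finally unfold the surrogate back into $\psi$ exactly. Vector-valued outputs are handled throughout by stacking componentwise scalar approximations into block-structured weight matrices (padded to reach constant width). The bounded surrogate is $\tilde\psi(t) := \psi(t) - \psi(t-1)$: condition 3 of $\cA_b^\infty$ gives $|\tilde\psi^{(k)}(t)| \leq 2 a_k$ for every $k \geq 1$, and the mean value theorem yields $|\tilde\psi(t)| = |\psi'(\xi)| \leq a_1$ for all $t$, so $\tilde\psi \in C_b^\infty(\bR)$ and is non-polynomial since $\psi$ is. Applying \Cref{theorem:universal_approx_hornik} componentwise with activation $\tilde\psi$ and stacking yields a two-layer $\tilde\psi$-network $\tilde\varphi_2$ with $\|\tilde\varphi_2 - g\|_{W^{m,p}(\nu, \bR^{\dout})} \leq \epsilon/2$.

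Next, I iterate an approximate-identity insertion to extend depth. Given a $k$-layer $\tilde\psi$-network $\varphi_k(x) = U_k h_{k-1}(x) + d_k$ with $h_{k-1}$ a $\tilde\psi$-activation (hence uniformly bounded by $\|\tilde\psi\|_\infty$), define the $(k+1)$-layer network
\[
\varphi_{k+1}(x) := \frac{1}{\eta\,\tilde\psi'(0)}\,\bigl[\tilde\psi(\eta\,\varphi_k(x)) - \tilde\psi(0)\,\mathbf{1}\bigr],
\]
whose new hidden pre-activation is $(\eta U_k) h_{k-1}(x) + \eta d_k = \eta\,\varphi_k(x)$. Taylor expansion of $\tilde\psi$ around zero then gives
\[
\varphi_{k+1} - \varphi_k = \frac{\eta}{2\,\tilde\psi'(0)}\,\tilde\psi''(\xi)\odot \varphi_k^2,
\]
and an induction shows that $\|\varphi_k\|_{L^\infty}$ together with every $x$-partial $\partial^\alpha \varphi_k$ of order $|\alpha| \leq m$ remain uniformly bounded independent of the earlier $\eta$'s, because $\varphi_k$ stays close to the bounded $\tilde\varphi_2$ and every factor in the chain-rule expansion of $\partial^\alpha \varphi_k$ is either a fixed weight matrix or a uniformly bounded derivative of $\tilde\psi$. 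Consequently $\|\varphi_{k+1} - \varphi_k\|_{W^{m,p}(\nu,\bR^{\dout})} = O(\eta\,\nu(\bR^{\din})^{1/p})$. Choosing $\eta$ sufficiently small at each of the $d_L - 2$ insertions produces a $d_L$-layer $\tilde\psi$-network $\varphi_{d_L}^{\tilde\psi}$ with $\|\varphi_{d_L}^{\tilde\psi} - \tilde\varphi_2\|_{W^{m,p}} \leq \epsilon/2$.

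Finally, I unfold $\tilde\psi$ into $\psi$ exactly: at every hidden layer, the pre-activation vector $z^{(k)}$ is duplicated into $[z^{(k)};\,z^{(k)} - \mathbf{1}]$, $\psi$ is applied elementwise, and the combining operator $[I,\,-I]$ is absorbed into the weight matrix of the subsequent layer. This reproduces $\psi(z^{(k)}) - \psi(z^{(k)} - \mathbf{1}) = \tilde\psi(z^{(k)})$ with no approximation error, producing a $d_L$-layer $\psi$-network $\varphi_{\psi, \theta}$ that computes the same function as $\varphi_{d_L}^{\tilde\psi}$. The triangle inequality then delivers $\|\varphi_{\psi, \theta} - g\|_{W^{m,p}(\nu, \bR^{\dout})} \leq \epsilon$. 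The main obstacle is the depth-extension step: the approximate-identity insertion must control every $x$-partial of the Taylor remainder in the full Sobolev norm, not just the sup norm. This is exactly where the boundedness of $\tilde\psi$ becomes indispensable—it makes every hidden value and every $x$-partial of $\varphi_k$ uniformly bounded, so the remainder $\tfrac{\eta}{2\tilde\psi'(0)}\tilde\psi''(\xi) \odot \varphi_k^2$ and all of its $x$-derivatives are $O(\eta)$ uniformly in $x$, and integration against the finite measure $\nu$ converts this into the desired $W^{m,p}$ bound.
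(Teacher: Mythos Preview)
Your approach is correct and takes a genuinely different route from the paper. Both proofs bootstrap from Hornik's shallow theorem via a bounded smooth surrogate, but the surrogates differ: you take the first difference $\tilde\psi(t) = \psi(t) - \psi(t-1)$, whereas the paper uses the self-composition $\tilde\psi(t) = \psi(-\psi(t) + b)$. Your surrogate is exactly representable as a single width-doubled $\psi$-layer, so the base Hornik approximation already lives at depth $d_L = 2$ after unfolding; the paper's surrogate is itself a two-activation construct, so its base case sits one layer deeper. For the depth extension both insert approximate-identity layers, but you do so entirely within the bounded-$\tilde\psi$ world, where every intermediate $\varphi_k$ and all its $x$-partials are uniformly bounded, making the Taylor remainder $O(\eta)$ uniformly in $x$ and the $W^{m,p}$ bound immediate from finiteness of $\nu$. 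The paper instead appends its $\psi$-identity layer $\id_h$ to an \emph{unbounded} $\psi$-network, which forces a decomposition into $\{|\varphi| \leq B\}$ and $\{|\varphi| > B\}$ together with a Fa\`a di Bruno argument to control the tail. Your route thus sidesteps the tail analysis entirely, at the price of the final (exact, lossless) unfolding. One small technicality: you divide by $\tilde\psi'(0)$, but $\psi \in \cA_b^\infty$ does not force $\psi'(0) \neq \psi'(-1)$; simply center the identity layer at any $t_0$ with $\tilde\psi'(t_0) \neq 0$, which exists since $\tilde\psi$ is smooth and non-constant.
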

\end{definition}

\subsection{Reduced basis neural operators}
Due to the high-dimensionality of the spaces $\cX$ and ${\cY}$, a naive feedforward neural network may consist of a large number of weights and biases, and can be prone to overfitting. 
Instead, more appropriate architectures have been considered for the approximation of PDE solution maps, such as the Fourier neural operator 
\cite{LiKovachkiAzizzadenesheliEtAl20}, 
DeepONet \cite{LuJinPangEtAl21}, 
and reduced basis architectures such as PCANet \cite{BhattacharyaHosseiniKovachkiEtAl21} and DIPNet \cite{OLearyRoseberryVillaChenEtAl22}. 
We will specifically focus on reduced basis architectures, 
which is a type of encoder-decoder architecture that uses a neural network to approximate the mapping $F$ within suitably-chosen $r_{\cX}$- and $r_{\cY}$-dimensional subspaces of $\cX$ and $\cY$, respectively.
This allows the neural network to be constructed as a mapping between the low-dimensional latent spaces, 
reducing the required dimensionality of its parameters (and thereby the risk of overfitting) as well as the training cost.
% Moreover, the reduced basis architecture also allows the training to be performed only in the low-dimensional latent spaces, thereby improving the training cost.

To this end, let us first introduce linear encoder and decoder operators 
$\encoder_{\cX} \in \cL(\cX, \bR^{\rx})$ and $\decoder_{\cX} \in \cL(\bR^{\rx}, \cX)$ on the input space 
and $\encoder_{\cY} \in \cL(\cY, \bR^{\ry})$ and $\decoder_{\cY} \in \cL(\bR^{\ry}, \cY)$ on the output space.
Note that for clarity of presentation, we will assume $r_{\cX} = r_{\cY} = r$ in our analysis. Cases with $r_{\cX} \neq r_{\cY}$ follow as corollaries.
We will then consider neural operator surrogates of the form
\[ \ftilde_{\theta}(x) = \decoder_{\cY} \circ \varphi_{\psi,\theta} \circ \encoder_{\cX} + b \] 
where $b \in \cY$ is a bias term, and 
$\varphi_{\psi,\theta} : \bR^{\rx} \rightarrow \bR^{\ry}$ is a feedforward neural network with activation function $\psi$ that maps between the latent spaces.

% In particular, a decoding operator for the output subspace can be defined in terms of $\ry$ orthonormal basis vectors in $\cY$, $\Ury = (u_i)_{i=1}^{\ry}$.
% We will overload the notation and also use $\Ury$ to denote the decoding operator $\Ury \in \cL(\bR^{\ry}, \cY)$ 
% such that for $\xi = (\xi_1, \dots, \xi_{\ry}) \in \bR^{\ry}$, we have
% \begin{equation}\label{eq:output_decoder_basis}
% $\Ury \xi = \sum_{i=1}^{\ry} u_i \xi_i$.
% \end{equation}
% We will also use $\Ury^*$ to denote its adjoint, given by 
% \begin{equation}\label{eq:output_decoder_adjoint}
% $
% \Ury^* y = (\linner u_1, y \rinner_{\cY}, \dots, \linner u_{\ry}, y, \rinner_{\cY}) \in \bR^{\ry}
% $
% % \end{equation}
% for any $y \in \cY$. 
% For the input space, the encoding operators will consider can be defined in terms of $\rx$ orthonormal basis vectors of the Cameron--Martin space $\XC$,
% $\Vrx = (v_i)_{i=1}^{\rx}$, such that 
% such that for $\xi = (\xi_1, \dots, \xi_{\ry}) \in \bR^{\ry}$,
% % \begin{equation}\label{eq:input_decoder_basis}
% $
%   \Vrx \xi = \sum_{i=1}^{\rx} v_i \xi_i.
% $

In the reduced basis architecture, 
the encoder and decoder operators are defined using orthonormal bases.
For the output space, we consider $\ry$ orthonormal basis vectors in $\cY$, $\Ury = (u_i)_{i=1}^{\ry}$. 
% We will also overload the notation and use $\Ury$ to denote the decoding operator defined by the basis vectors,
The corresponding decoder for this reduced basis is given by 
$\decoder_{\cY}(\xi) = \Ury \xi = \sum_{i=1}^{\ry} \xi_i u_i$ for $\xi = (\xi_1, \dots, \xi_{\ry}) \in \bR^{\ry}$,
where we have overloaded the notation $\Ury$ to additionally denote the decoding operator $\Ury = \decoder_{\cY} \in \cL(\bR^{d}, \cY)$.
Similarly, for the input space, we consider $\rx$ orthonormal basis vectors in the Cameron--Martin space $\XC \subset \cX$, $\Vrx = (v_i)_{i=1}^{\rx}$, and analogously write the decoder as 
$
  \decoder_{\cX}(\xi) = \Vrx \xi = \sum_{i=1}^{\rx} \xi_i v_i.
$
The corresponding encoders $\encoder_{\cX} = \pinv{\Vrx}$ and $\encoder_{\cY} = \pinv{\Ury}$ can then be defined as left inverses of the decoders.
Note that we will also refer to the size of the reduced basis $r$ as its rank.
Evidently, the approximation properties of $\ftilde_{\theta}$ depend on the choices of the bases $\Ury$ and $\Vrx$.

\begin{remark}[Comment on notation]
  In the remainder of the paper, unless otherwise specified, we will use $\lambda$, $u$, $U$, $P$ to denote eigenvalues, eigenvectors, bases/decoders, and projections on the output space $\cY$, and $\mu$, $v$, $V$, and $Q$ for that on the input space $\cX$. 
\end{remark}

\subsection{Sobolev training of reduced basis neural operators}
\label{sec:sobolev_training}
To construct operator surrogates that are accurate in the $H^1_{\gamma}$ norm, 
one can pose the supervised learning problem directly in the $H^1_{\gamma}$ norm.
That is, for a neural operator architecture parametrized by $\theta$, one solves the following minimization problem,
\begin{equation}\label{eq:full_sobolev_training}
  \min_{\theta} \| \fun  - \ftilde_{\theta} \|_{H^1_{\gamma}}^2 = \bE_{\gamma}[\| \fun - \ftilde_{\theta}\|_{\cY}^2] + \bE_{\gamma}[\| \derivXC \fun - \derivXC \ftilde_{\theta} \|_{\HS(\XC, \cY)}^2].
\end{equation}
The incorporation of derivatives in the training loss is termed Sobolev training,
and its application to operator learning leads to the so-called derivative-informed neural operators.
When using a reduced basis architecture of the form $\ftilde_{\theta} = \Ury \circ \varphi_{\psi, \theta} \circ \pinv{\Vrx} + b$, where $\Ury$ and $\Vrx$ are fixed orthonormal bases of $\cY$ and $\XC$, the $H^1_{\gamma}$ training problem can be equivalently written in the latent/reduced spaces as 
\begin{equation}\label{eq:reduced_sobolev_training}
  \min_{\theta} \bE_{x \sim \gamma} [\| \pinv{\Ury} (\fun(x) - b) - \varphi_{\psi,\theta}(\pinv{\Vrx}(x)) \|_{2}^2]
  + \bE_{x \sim \gamma}[\| \pinv{\Ury} \derivXC \fun(x) \Vrx - \varphi_{\psi,\theta}(\pinv{\Vrx}x)\|_{F}^2], 
\end{equation}
where $\|\cdot\|_2$ refers to the $\ell^2$ norm on $\bR^{\ry}$ and $\|\cdot\|_F$ refers to the matrix Frobenius norm on $\bR^{\ry \times \rx}$ (see the error decomposition in \Cref{prop:overall_error})
% This significantly reduces the computational cost of the training problem when $\ry$ is small.

In practice, the training problem is solved using a finite number of training samples,
which are tuples of the form $(x_k, \fun(x_k), \pinv{\Ury} \derivXC \fun(x_k) \Vrx)$, $k = 1, \dots, N$, where $x_k$ are sampled from the input measure $\gamma$.
Notably, the derivative-informed training problem requires evaluations of both the operator's output values and its derivatives at the sampled points. 
Since the reduced basis architecture only maps between the subspaces spanned by $\Ury$ and $\Vrx$, 
the training problem involves only projection of the derivative $\pinv{\Ury} \derivXC \fun(x) \Vrx$.
For PDE-based mappings, this can be efficiently computed using forward/adjoint sensitivity methods (see for example \cite{OLearyRoseberryChenVillaEtAl24}).

Moreover, the training loss is often normalized to balance the contributions of the output and derivative terms. 
This leads to the following practical form of the empirical risk minimization problem, 
\begin{equation}
  \min_{\theta} \frac{1}{N} \left( \sum_{k=1}^{N} \frac{1}{a_{0,k}} \|\pinv{\Ury} (\fun(x_k) - b) - \varphi_{\psi,\theta}(\pinv{\Vrx}x_k)\|_{2}^2
  + \frac{1}{a_{1,k}}\|\pinv{\Ury} \derivXC \fun(x_k) \Vrx - \deriv \varphi_{\psi,\theta}(\pinv{\Vrx}x_k)\|_{F}^2 \right),
\end{equation}
where $(a_{0,k})_{k=1}^{N}$ and $(a_{1,k})_{k=1}^{N}$ are positive weights scaling the output and derivative terms, respectively.
Common choices of normalization include uniformly normalizing by the second moments of the training data,
$a_{0,k} = \sum_{k=1}^{N} \|\pinv{\Ury} (\fun(x_k)-b)\|_2^2$ and $a_{1,k} = \sum_{k=1}^{N} \|\pinv{\Ury} \derivXC \fun(x_k) \Vrx\|_F^2$,
or normalizing each data point by its norm, 
$a_{0,k} = \|\pinv{\Ury} (\fun(x_k) - b)\|_{2}^2 + \tau$ 
and $a_{1,k} = \|\pinv{\Ury} \derivXC \fun(x_k) \Vrx\|_{F}^2 + \tau$, 
where, if required, $\tau$ can be chosen to be a small constant to avoid division by zero.

\subsection{Choices of dimension reduction}
% \subsubsection{Karhunen-Lo\`eve expansion and Proper orthogonal decomposition}
\subsubsection{Principal component analysis}
We now consider choices of reduced bases $\Vrx$ and $\Ury$ for the input and output spaces, respectively. 
One approach is to use subspaces spanned by the PCA bases. 
For the input space, the PCA coincides with the Karhunen Lo\`eve expansion (KLE) of the Gaussian measure $\gamma$. 
% The reduced basis $\Vrx^{\KLE} = (v_i^{\KLE})_{i = 1}^{\rx}$ is given by the eigenvectors of the covariance operator
Since we have assume $\gamma$ is centered, we simply consider the eigenvectors of the covariance operator, 
\begin{equation}\label{eq:kle_basis_general}
  \Cx v_i^{\KLE} = \mu_i^{\KLE} v_i^{\KLE},
\end{equation}
where the eigenvalues are written in decreasing order $\mu_1^{\KLE} \geq \cdots \geq \mu_r^{\KLE} \geq \dots > 0$. 
Since $\Cx$ is self-adjoint and of trace class, its full collection of eigenvectors, $(v_i)_{i=1}^{\infty}$, yields an orthonormal basis in $\cX$. 
Alternatively, they can be normalized in the Cameron--Martin norm such that they are also an orthonormal basis in $\XC$. 
For consistency, we will take this latter approach. 

To reduce to $r$ dimensions, we take the first $\rx$ eigenvectors as a reduced basis, 
$\Vrx^{\KLE}: = (v_i^{\KLE})_{i=1}^{\rx}$.
As previously described, we overload $\Vrx^{\KLE}$ to denote the decoder $\Vrx^{\KLE} \in \cL(\bR^{\rx}, \cX)$ 
along with the corresponding encoder (its left inverse) 
$\Pinv{\Vrx^{\KLE}} \in \cL(\cX, \bR^{\rx})$
such that for $\xi = (\xi_1, \dots, \xi_{\rx}) \in \bR^{\rx}$ and $x \in \cX$, we have 
\begin{equation}
  \Vrx^{\KLE} \xi = \sum_{i=1}^{\rx} \xi_i v_i^{\KLE} ,
  \qquad
  \Pinv{\Vrx^{\KLE}} x = \left( \linner \Cx^{-1} v_1^{\KLE}, x \rinner_{\cX}, 
  \dots, \linner \Cx^{-1} v_{\rx}^{\KLE}, x \rinner_{\cX} \right).
\end{equation}
In particular, when restricted to elements in the Cameron--Martin space $h \in \XC$, we have
\[
  \Pinv{\Vrx^{\KLE}} h = \left( \linner  v_1^{\KLE}, h \rinner_{\XC}, 
  \dots, \linner v_{\rx}^{\KLE}, h \rinner_{\XC} \right).
\]
Thus, 
\begin{equation}
  \Qrx^{\KLE} := \Vrx^{\KLE} \Pinv{\Vrx^{\KLE}}  =\sum_{i=1}^{\rx} v_i^{\KLE} \otimes \Cx^{-1} v_i^{\KLE} = \sum_{i=1}^{\rx} v_i^\KLE \otimes v_i^{\KLE} / \mu_i^{\KLE}
\end{equation} 
defines an orthogonal projection on $\cX$, 
and its restriction to $\XC$, $\Qrx^{\KLE}|_{\XC} = \sum_{i=1}^{\rx} v_i^{\KLE} \otimes_{\XC} v_i^{\KLE}$, 
is also an orthogonal projection on $\XC$.
Here, we have used the notation $v \otimes v = v \linner v, \cdot \rinner_{\cX}$ and $v \otimes_{\XC} v = v \linner v, \cdot \rinner_{\XC}$. 

In this work, we will assume that the covariance operator $\Cx$ is known and can be accessed. 
This is an appropriate assumption in the setting of operator learning for surrogate modeling, 
where the training distribution $\gamma = \cN(0, \Cx)$ is often known a priori,
and the covariance operator $\Cx$ is used to draw samples $x \sim \gamma$ to generate the training data.

Analogously, one can consider a PCA of the output space ${\cY}$.
In this work, we will focus on the PCA obtained with the centered covariance 
\begin{equation}
  \Cy = \bE_{\gamma}[(\fun - \fbar) \otimes (\fun - \fbar)],
\end{equation}
which leads to favorable properties for surrogate modeling compared to the uncentered covariance. 
We discuss this fact in \Cref{sec:constraint_property}.
The output PCA basis is then obtained by solving the eigenvalue problem 
\begin{equation}\label{eq:pod_basis_general}
  \Cy u_i^{\POD} = \lambda_i^{\POD} u_i^{\POD}, 
\end{equation}
with $\lambda_1^{\POD} \geq \cdots \geq \lambda_{\ry}^{\POD} \geq 0$.
We can then take the dominant eigenvectors, $\Ury^{\POD} := (u_i^{\POD})_{i = 1}^{\ry}$, as the output basis, such that the decoding and encoding operators are 
\begin{equation}
\Ury^{\POD} \xi = \sum_{i=1}^{\ry} \xi_i u_i^{\POD},
\qquad 
\Pinv{\Ury^{\POD}} y = \left( \linner u_1^{\POD}, y \rinner_{\cY}, \dots, \linner u_{\ry}^{\POD}, y \rinner_{\cY} \right).
\end{equation}
Unlike the covariance of the inputs (which is assumed to be known), this covariance is generally not known, 
and in practice, is typically approximated empirically from samples. That is, 
\begin{equation}
  \Cyhat = \frac{1}{N}\sum_{k=1}^{N} (\fun(x_k) - \fhat) \otimes (\fun(x_i) - {\fhat}) = \frac{1}{N}\sum_{k = 1}^{N} \fun(x_k) \otimes \fun(x_k) - \fhat \otimes \fhat,
\end{equation}
where $\fhat = \frac{1}{N} \sum_{k=1}^{N} \fun(x_k)$ is an estimator of the mean using $N$ i.i.d.~samples $(x_k)_{k=1}^{N}$, $x_k \sim \gamma$. 
We note that this covariance estimator is biased, where the bias decreases with sample size.
We will write the corresponding eigenvalue problem using the covariance estimator as 
\begin{equation}\label{eq:pod_basis_estimator}
  \Cyhat \uhat^{\POD}_i = \lambdahat^{\POD}_i \uhat^{\POD}_i, 
\end{equation}
again with $\lambdahat_1^{\POD} \geq \cdots \geq \lambdahat_{\ry}^{\POD} \geq 0$.
Thus, we have $\Uryhat^{\POD} := (\uhat_i^{\POD})_{i=1}^{\ry}$ and 
\begin{equation}
  \Uryhat^{\POD} \xi = \sum_{i=1}^{\ry} \xi_i \uhat_i^{\POD},
  \qquad 
  \Pinv{\Uryhat^{\POD}} y = \left( \linner \uhat_1^{\POD}, y \rinner_{\cY}, \dots, \linner \uhat_{\ry}^{\POD}, y \rinner_{\cY} \right),
\end{equation}
% $\Pinv{\Uryhat^{\POD}}$ 
as the decoder and encoder corresponding to the empirical output PCA basis, defined in the same manner as before. 
The projection operators
\begin{equation}
  \Pry^{\POD} := \Ury^{\POD} \Pinv{\Ury^{\POD}} 
  \quad \text{and}  \quad
  \Pryhat^{\POD} := \Uryhat^{\POD} \Pinv{\Uryhat^{\POD}}
\end{equation} 
are both orthogonal projections on $\cY$. 

The input and output PCA bases minimize the mean squared reconstruction errors of the inputs $x$ and outputs $\fun(x)$ measured in 
$\| \cdot \|_{\cX}$ and $\| \cdot \|_{\cY}$, respectively. We will state this result in \Cref{sec:dibasis_approximation_error}.
However, neither the input nor the output PCA bases explicitly capture the sensitivity of the map $\fun$.
In particular, the input PCA basis is independent of $\fun$ and hence does not account for the dependence of the outputs with respect to the inputs. 
On the other hand, the output PCA basis implicitly captures the dependence of the output with respect to the input, as it is constructed using the dominant features of $\fun(x)$. 
As we will see, this has implications for the approximation errors of the derivatives.

\subsubsection{Derivative-informed subspaces}
We also consider an alternative set of reduced bases that are constructed directly using the derivatives of the map $\fun$. 
To this end, we define operators $\Hx : \XC \rightarrow \XC$ and $\Hy : \cY \rightarrow \cY$, where
\begin{align}
  \Hx &:= \bE_{x \sim \nu}[ \derivXC \fun(x)^* \derivXC \fun(x)] = \Cx \bE_{x \sim \nu}[ \deriv \fun(x)^* \deriv \fun(x)], \label{eq:input_dis} \\
% \end{equation}
% and 
% \begin{equation}
  \Hy &:= \bE_{x \sim \nu}[ \derivXC \fun(x) \derivXC \fun(x)^*] = \bE_{x \sim \nu}[ \deriv \fun(x) \Cx \deriv \fun(x)^*], \label{eq:output_dis}
\end{align}
where the latter equalities in \eqref{eq:input_dis} and \eqref{eq:output_dis} hold
if we assume $\fun \in C^1(\cX, \cY)$ with bounded second moments on the derivatives, i.e., $\deriv \fun \in L^2(\gamma, \cL(\cX,\cY))$, such that 
$\deriv \fun^* \deriv \fun \in L^1(\gamma, \cL(\cX, \cX))$.
More generally, when $\fun \in H^1_{\gamma}$, the sensitivity operators $\Hx : \XC \rightarrow \XC$ and $\Hy : \cY \rightarrow \cY$ are both self-adjoint, positive, trace-class operators.
Thus, their eigenvectors form orthonormal bases of $\XC$ and $\cY$. 
The reduced bases are then given by the dominant eigenvectors of $\Hx$ and $\Hy$. 
In this work, we will refer to $\Hx$ as the input sensitivity operator and $\Hy$ as the output sensitivity operator,
and refer to the subspaces generated by this approach as the input/output derivative-informed subspaces.

For the input, we consider the eigenvalue problem
\begin{equation}\label{eq:input_sensitivity_basis}
  \Hx v_i^{\JTJ} = \mu_{i}^{\JTJ} v_i^{\JTJ}, 
\end{equation}
with $\mu_{1}^{\JTJ} \geq \mu_2^{\JTJ} \geq \cdots \geq 0$, 
and take the first $\rx$ eigenvectors $\Vrx^{\JTJ} := (v_i^{\JTJ})_{i=1}^{\rx}$ as the input basis/decoder, i.e.,
We recognize that $v_i^{\JTJ}$ are orthonormal in $\XC$, and so for any $x \in \cX$, 
we write the decoder and encoder as
% \begin{equation*}
\begin{equation}
\Vrx^{\JTJ} \xi = \sum_{i=1}^{r} \xi_i v_i^{\JTJ} , \qquad
\Pinv{\Vrx^{\JTJ}} x := \left(
  \linner \Cx^{-1} v_1^{\JTJ}, x \rinner_{\cX}, \dots,
  \linner \Cx^{-1} v_{\rx}^{\JTJ}, x \rinner_{\cX}
  \right),
\end{equation}
where the encoder is well-defined in $\cL(\cX, \bR^{r})$ whenever $\Cx^{-1} v_i^{\JTJ}$ is well-defined in $\cX$ (i.e., $\Cx^{-1} v_i^{\JTJ}$ are elements of $\cX$). 
We also note that the projection 
\begin{equation}
  \Qrx^{\JTJ} := \Vrx^{\JTJ} \Pinv {\Vrx^{\JTJ}}
\end{equation}
is not orthogonal in $\cX$. However, when restricted to $\XC$, we do have 
\[\Pinv{\Vrx^{\JTJ}}h = \left( \linner v_1^{\JTJ}, h\rinner_{\XC}, \dots, \linner v_{\rx}^{\JTJ}, h \rinner_{\XC} \right),\] such that 
$ \Qrx^{\JTJ}|_{\XC} = \sum_{i=1}^{\rx} v_i^{\JTJ} \otimes_{\XC} v_i^{\JTJ}$
is orthogonal in $\XC$.

\begin{remark}
    For convenience, in the remainder of this work, 
    we will assume that the input DIS bases (and their empirical counterparts) do indeed satisfy $\Cx^{-1} v_i^{\JTJ} \in \cX$. 
    This allows us to define the encoder $\Pinv{\Vrx^{\JTJ}}$ and projections $\Qrx^{\JTJ}$ as continuous linear operators over the entire input space $\cX$. 
    As mentioned above, this is true if $\fun \in C^1(\cX, \cY)$ and $\deriv \fun \in L^2(\gamma, \cL(\cX,\cY))$.
    % $\bE_{\gamma}[\|\deriv \fun\|_{\cL(\cX,\cY)}^2] < \infty$. 
    % It may still possible to proceed without this assumption, 
    % since the continuous linear functional $\linner v_i^{\JTJ}, \cdot \rinner_{\XC} \in \XC^*$, which is only defined on the Cameron--Martin space, can be extended
    % to an element of $L^2(\gamma, \bR)$ that is its \textit{proper linear version}, i.e., linear functional on the entire $\cX$, but not necessarily continuous. 
    % However, differentiating through this encoder by the chain-rule is more nuanced since $\Vrx^{\JTJ}$ would not be continuous on the entire $\cX$, 
    % and one may require modifications to the results relating to differentiation and conditional expectation (e.g., \Cref{lemma:interchange_differentiation_conditional_expectation}, \Cref{theorem:subspace_poincare}).
\end{remark}
% where we use $\otimes_{\XC}$ to indicate the use of the $\XC$ inner product in the definition.

The output derivative-informed subspace is found by the eigenvalue problem
\begin{equation}\label{eq:output_sensitivity_basis}
  \Hy u_i^{\JJT} = \lambda_i^{\JJT} u_i^{\JJT}, % \qquad i \in \bN
\end{equation}
with $\lambda_{1}^{\JJT} \geq \lambda_2^{\JJT} \geq \cdots \geq 0$, 
and take the first $\ry$ eigenvectors $\Ury^{\JJT} := (u_i^{\JJT})_{i=1}^{\ry}$ as the output basis
such that the decoder and encoder are given by
\begin{equation}
  \Ury^{\JJT} \xi = \sum_{i=1}^{\ry} \xi_i u_i^{\JJT} , 
  \qquad 
  \Pinv{\Ury^{\JJT}} y = \left( \linner u_1^{\JJT}, y \rinner_{\cY}, \dots, \linner u_{\ry}^{\JJT}, y \rinner_{\cY} \right).
  \end{equation}
This again yields an orthogonal projection on $\cY$, 
\begin{equation}
  \Pry^{\JJT} := \Ury^{\JJT} \Pinv{\Ury^{\JJT}}.
\end{equation}
As we will show, by nature of their construction, the subspaces spanned by $\Vrx^{\JTJ}$ and $\Ury^{\JJT}$ have optimal reconstruction properties for the derivative operator.

Similar to the output PCA, the operators $\Hx$ and $\Hy$ need to be approximated using i.i.d.~samples $(x_k)_{k=1}^{N}$, where $x_k \sim \gamma$. 
This leads to the estimators
\begin{align}
  \label{eq:input_sensitivity_estimator}
  \Hxhat := \frac{1}{N} \sum_{k=1}^{N} \derivXC \fun(x_k)^* \derivXC \fun(x_k) , \\
% \end{equation}
% and 
% \begin{equation}\label{eq:output_sensitivity_estimator}
  \label{eq:output_sensitivity_estimator}
  \Hyhat := \frac{1}{N} \sum_{k=1}^{N} \derivXC \fun(x_k) \derivXC \fun(x_k)^*.
\end{align}
Their corresponding eigenvalue problems are then given by 
\begin{align}
  \label{eq:input_sensitivity_basis_estimator}
  \Hxhat \vhat_i^{\JTJ} &= \muhat_{i}^{\JTJ} \vhat_i^{\JTJ},  \\
% \end{equation}
% for the input and 
% \begin{equation}\label{eq:output_sensitivity_basis_estimator}
  \label{eq:output_sensitivity_basis_estimator}
  \Hyhat \uhat_i^{\JJT} &= \lambdahat_i^{\JJT} \uhat_i^{\JJT}, 
\end{align}
with again with $\muhat_{1}^{\JTJ} \geq \muhat_{2}^{\JTJ} \geq \cdots \geq 0$
and $\lambdahat_{1}^{\JJT} \geq \lambdahat_{2}^{\JJT} \geq \cdots \geq 0$.
The resulting input and output bases are given by 
$\Vrxhat^{\JTJ} := (\vhat_{i}^{\JTJ})_{i=1}^{\rx}$ 
and 
$\Uryhat^{\JJT} := (\uhat_{i}^{\JJT})_{i=1}^{\ry}$, 
such that corresponding decoders and encoders are given by 
\begin{align}
  \Vrxhat^{\JTJ} \xi &:= \sum_{i=1}^{\rx} \xi_i \vhat_i^{\JTJ} , 
  && \Pinv{\Vrxhat^{\JTJ}} x := \left(
    \linner \Cx^{-1} \vhat_1^{\JTJ}, x \rinner_{\cX}, \dots
    \linner \Cx^{-1} \vhat_{\rx}^{\JTJ}, x \rinner_{\cX}, 
    \right),
  \\
  \Uryhat^{\JJT} \xi &:= \sum_{i=1}^{\ry} \xi_i \uhat_i^{\JJT} ,
  &&
  \Pinv{\Uryhat^{\JJT}} y := \left(
    \linner \uhat_1^{\JJT}, y \rinner_{\cY}, \dots
    \linner \uhat_{\ry}^{\JJT}, y \rinner_{\cY} 
    \right),
\end{align}
which yield $\XC$ and $\cY$ orthogonal projections 
\begin{equation}
  \Qrxhat^{\JTJ} := \Vrxhat^{\JTJ} \Pinv{\Vrxhat^{\JTJ}}, \qquad 
  \Pryhat^{\JJT} := \Uryhat^{\JJT} \Pinv{\Uryhat^{\JJT}}.
\end{equation}
% \begin{align}
%   \Pinv{\Vrxhat^{\JTJ}} x &:= \left(
%     \linner \Cx^{-1} \vhat_1^{\JTJ}, x \rinner_{\cX}, \dots
%     \linner \Cx^{-1} \vhat_{\rx}^{\JTJ}, x \rinner_{\cX}, 
%     \right),
%     && \Qrxhat^{\JTJ} := \Vrxhat^{\JTJ} \Pinv{\Vrxhat^{\JTJ}},
%   \\
%   \Pinv{\Uryhat^{\JJT}} y &:= \left(
%     \linner \uhat_1^{\JJT}, y \rinner_{\cY}, \dots
%     \linner \uhat_{\ry}^{\JJT}, y \rinner_{\cY} 
%     \right),
%     && \Pryhat^{\JJT} := \Uryhat^{\JJT} \Pinv{\Uryhat^{\JJT}}.
% \end{align}

The operator $\Hx$ coincides with that of the AS method for finite-dimensional vector-valued functions studied in \cite{ZahmConstantinePrieurEtAl20}.
In the context of Bayesian inverse problems with Gaussian prior and noise,
$\Hx$ and $\Hy$ are analogous to the diagnostic matrices used in the dimension reduction strategies of
\cite{CuiMartinMarzoukEtAl14,ZahmCuiLawEtAl22,BaptistaMarzoukZahm22,ChenArnaudBaptistaEtAl24}.
In this work, we will extend the previous analysis by formulating these subspaces for Gaussian measures on infinite-dimensional separable Hilbert spaces.

\subsubsection{Satisfaction of non-parametric linear constraints}\label{sec:constraint_property}
Before moving on to analyzing the approximation errors, 
we point out an additional property of interest for the output bases.
In mappings that arise from PDE solution operators, 
it is common for the output $\fun(x)$, i.e., the solution of the PDE, to satisfy linear constraints, 
$B \fun(x)  = h$, for some continuous linear operator $B \in \cL(\cY, \cH)$ mapping to another separable Hilbert space $\cH$, independent of the input $x \in \cX$.
For example, these can arise from Dirichlet boundary conditions or divergence-free conditions on the solution. 
When using the centered PCA and DIS bases for the output space,
we can ensure that the surrogates of the form $\ftilde_{\theta} = \Ury \circ \varphi_{\psi,\theta} \circ \pinv{\Vrx} + \fbar$ also satisfy the constraint, i.e., $B \ftilde_{\theta}(x) = h$.
\begin{proposition}
  Suppose $\fun \in H^1_{\gamma}$. 
  Let $\cH$ be another separable Hilbert space and 
  $B \in \cL(\cY, \cH)$ be a continuous linear operator such that 
  \begin{equation}
    B \fun(x)  = h \qquad \forall x \in \cX,
  \end{equation}
  for some fixed $h \in \cH$.
  Then, the affine space 
  $\fbar + \mathrm{span}(u_1, \dots, u_r)$ corresponding to the output PCA and DIS bases with nonzero eigenvalues
  $\lambda_1 \geq \cdots \geq \lambda_r > 0$ satisfies 
  \[B y = h \qquad \forall y \in \fbar + \mathrm{span}((u_i)_{i=1}^{r}).\]
  The same holds when $\fbar$ is replaced by the estimator $\fhat$ and 
  when the PCA or DIS bases are replaced by their estimators $(\uhat_i)_{i=1}^{r}$ with nonzero eigenvalues $\lambdahat_1 \geq \dots \geq \lambdahat_r > 0$. 
\end{proposition}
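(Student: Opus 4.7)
The strategy is to show that the output basis vectors $u_i$ (PCA or DIS) with nonzero eigenvalues all lie in the kernel of $B$, and that $B\fbar = h$. Together these give $By = B\fbar + \sum_i c_i Bu_i = h + 0 = h$ for any $y = \fbar + \sum_i c_i u_i$.

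\textbf{Step 1 (Mean satisfies the constraint).} Since $B \in \cL(\cY, \cH)$ is continuous linear and $\fun \in L^2_\gamma \subset L^1(\gamma, \cY)$, Bochner integration commutes with $B$, giving $B \fbar = B \bE_\gamma[\fun] = \bE_\gamma[B \fun] = \bE_\gamma[h] = h$. The identical argument for the empirical mean gives $B \fhat = \tfrac{1}{N} \sum_{k=1}^{N} B \fun(x_k) = h$.

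\textbf{Step 2 (PCA basis lies in $\nullspace(B)$).} From $\Cy u_i^{\POD} = \lambda_i^{\POD} u_i^{\POD}$ with $\lambda_i^{\POD} > 0$, write
\begin{equation*}
  u_i^{\POD} = \frac{1}{\lambda_i^{\POD}} \Cy u_i^{\POD} = \frac{1}{\lambda_i^{\POD}} \bE_\gamma\!\left[ \langle \fun - \fbar, u_i^{\POD} \rangle_{\cY} \, (\fun - \fbar) \right].
\end{equation*}
Applying $B$ and pulling it inside the Bochner integral yields $B u_i^{\POD} = (\lambda_i^{\POD})^{-1} \bE_\gamma[ \langle \fun - \fbar, u_i^{\POD}\rangle_\cY \, B(\fun - \fbar)] = 0$, since $B(\fun(x) - \fbar) = h - h = 0$ for every $x$. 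The empirical version is identical with the sample average replacing $\bE_\gamma$.

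\textbf{Step 3 (DIS basis lies in $\nullspace(B)$).} The map $B \circ \fun : \cX \to \cH$ is constant, so as an element of $H^1(\gamma, \cH)$ its Sobolev derivative vanishes, i.e.\ $\derivXC(B \circ \fun) = 0$ $\gamma$-a.e. By linearity of $B$ and the chain rule for Sobolev derivatives (which follows from the chain rule on $\cF \cC^\infty$ and passing to the limit in the completion), $\derivXC(B \circ \fun)(x) = B \, \derivXC \fun(x)$, so $B \, \derivXC \fun(x) = 0$ in $\HS(\XC, \cH)$ $\gamma$-a.e. Equivalently, $\derivXC \fun(x)^* B^* = 0$. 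Consequently
\begin{equation*}
  B \Hy = \bE_\gamma[ B \, \derivXC \fun(x) \, \derivXC \fun(x)^*] = 0,
\end{equation*}
and from $\Hy u_i^{\JJT} = \lambda_i^{\JJT} u_i^{\JJT}$ with $\lambda_i^{\JJT} > 0$ we obtain $B u_i^{\JJT} = (\lambda_i^{\JJT})^{-1} B \Hy u_i^{\JJT} = 0$. The empirical estimator $\Hyhat$ is a finite sample average of the same rank-one operators $\derivXC \fun(x_k) \derivXC \fun(x_k)^*$, so $B \Hyhat = 0$ follows directly from $B \, \derivXC \fun(x_k) = 0$, giving $B \uhat_i^{\JJT} = 0$ for every nonzero $\lambdahat_i^{\JJT}$.

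\textbf{Main obstacle.} The only nontrivial point is the chain rule step invoked in Step~3, since $\derivXC \fun$ is only defined in the Sobolev/stochastic-G\^ateaux sense. This is handled by verifying the identity $\derivXC(B \circ \fun) = B \, \derivXC \fun$ on smooth cylindrical functions, where it follows trivially from the classical chain rule, and then extending by density using continuity of $B$ together with the isometric embedding $A \mapsto B A$ from $\HS(\XC, \cY)$ to $\HS(\XC, \cH)$ (bounded by $\|B\|$). The remaining steps are direct manipulations of Bochner integrals and eigenvalue equations.
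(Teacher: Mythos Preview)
Your proof is correct and follows essentially the same approach as the paper: show $B\fbar = h$ by linearity of Bochner integration, then show each eigenvector with nonzero eigenvalue lies in $\nullspace(B)$ by writing $u_i = \lambda_i^{-1} \Cy u_i$ (respectively $\lambda_i^{-1}\Hy u_i$) and using $B(\fun - \fbar) = 0$ (respectively $B\,\derivXC \fun = 0$). Your explicit justification of the chain rule $\derivXC(B\circ\fun) = B\,\derivXC\fun$ via density of $\cF\cC^\infty$ is in fact more careful than the paper, which simply asserts this identity.
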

\begin{proof}
  Since $B \fun(x) = h$ for all $x$, we have $B \fbar = B \bE_{\gamma}[\fun] = \bE_{\gamma}[B \fun] = h$.
  Thus, it remains to verify that $B u_i = 0$ for the reduced bases $u_i = u_i^{\JJT}$ or $u_i = u_i^{\POD}$.
  For the centered PCA, we have $u_i^{\POD} = \Cy u_i^{\POD}/\lambda_i^{\POD}$ when $\lambda_i^{\POD} > 0$. 
  This implies
  \[ B u_i^{\POD}
    = \frac{1}{\lambda_i^{\POD}}B \bE_{\gamma}[(\fun - \fbar) \otimes (\fun - \fbar)] u_i^{\POD}
    = \frac{1}{\lambda_i^{\POD}}\bE_{\gamma}[B(\fun - \fbar) \otimes (\fun - \fbar)] u_i^{\POD}
    = 0.
  \]
  For the DIS, we begin by noting that $B\fun(x) = h$ implies 
  $ \derivXC \left(B \fun(x) \right) = B \derivXC \fun(x) = 0. $
  We then have, 
  \[
    B u_i^{\JJT} = \frac{1}{\lambda_i^{\JJT}} B \Hy u_i^{\JJT} = \frac{1}{\lambda_i^{\JJT}} \bE_{\gamma}[ B \derivXC \fun \derivXC \fun^*] u_i^{\JJT} = 0,
  \]
  which again holds for all $i \leq \ry$ with $\lambda_i^{\JJT} > 0$. 
  Results for the empirical mean $\fhat$ and bases $\uhat_i^{\POD}$ and $\uhat_i^{\JJT}$ can be shown by replacing $\bE_{\gamma}$ with the sum over the samples, where the result holds almost surely with respect to the sampling.
  % The proof for $(\uhat_i^{\JJT})_{i=1}^{\ry}$ simply replaces the expectation by the sum over samples.
\end{proof}

% \textcolor{red}{TODO: Add a summary paragrah}
\section{Error analysis}\label{sec:dibasis_approximation_error}
\subsection{Main results}
In this section, we analyze the approximation errors of reduced basis neural operators (RBNOs) in the Sobolev norm $H^1_{\gamma}$, where $\gamma$ is a Gaussian measure.
We will focus on the case where the neural operator is constructed as 
\[ \ftilde_{\theta} = \Ury \circ \varphi_{\psi, \theta} \circ \pinv{\Vrx} + \fhat, \]
with reduced bases $\Ury$ and $\Vrx$ that are orthonormal in $\cY$ and $\XC$, 
a latent space neural network $\varphi$, 
and a sample-based mean estimate $\fhat = \frac{1}{N}\sum_{k=1}^{N} f(x_k)$.
This represents a practical setup, where the neural operator is constructed to fit the (empirical) mean shifted data 
$\fun - \fhat$ obtained from the training data. 
% In particular, the RBNO architecture can be viewed as a ridge function, i.e., a function that is constant in the orthogonal complement of the input reduced basis $\Vrx$.

We begin by presenting our two main results, \Cref{theorem:naive_universal_approximation} and \Cref{theorem:main_theorem}.
Their proofs are presented in the \Cref{sec:proof_naive_universap_approximation} and \Cref{sec:proof_main_theorem} using results developed throughout the rest of this section.
Our first main result is that the RBNO architecture is indeed universal in the Sobolev norm $H^m_{\gamma}$ given orthonormal reduced bases $\Ury$ and $\Vrx$ of $\cY$ and $\XC$, respectively.
\begin{theorem}[Generic universal approximation] \label{theorem:naive_universal_approximation}
  Suppose $\fun \in H^{m}_{\gamma}$ for any $m \geq 0$. 
  Let $(u_i)_{i=1}^{\infty}$ be an orthonormal basis of $\cY$ and $(v_i)_{i=1}^{\infty}$ be an orthonormal basis of $\XC$ such that $(\Cx^{-1}v_i)_{i=1}^{\infty}$ are well-defined in $\cX$.
  Additionally, let $\psi \in \cA_b^{\infty}$ and $d_L \geq 2$.
  Then, for any $\epsilon$, 
  there exists a rank $r \in \bN$ and a $d_L$-layered neural network 
  $\varphi_{\psi, \theta}$ with activation function $\psi$ 
  such that the neural operator approximation 
  $\ftilde_{\theta}
    = \Ury \circ \varphi_{\psi, \theta} \circ \pinv{\Vrx}$ 
  with the reduced bases $\Ury = (u_i)_{i=1}^{r}$ 
  and $\Vrx = (v_i)_{i=1}^{r}$ 
  achieves the error bound 
  \begin{equation}
    \|\fun - \ftilde_{\theta}\|_{H^m_{\gamma}} \leq \epsilon.
  \end{equation}
\end{theorem}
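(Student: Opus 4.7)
The plan is to approximate $F$ in $H^m_\gamma$ by a function of the RBNO form in three steps: (i) density of smooth cylindrical functions in $H^m_\gamma$, (ii) a basis-substitution step so that the cylindrical approximant factors through the given orthonormal bases, and (iii) deep universal approximation of the resulting latent finite-dimensional map via \Cref{theorem:universal_approx_hornik_extended_deep}.

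First, by the very definition of $H^m_\gamma$ as the completion of $\cF\cC^\infty$ under $\|\cdot\|_{W^{m,2}}$, one can fix $F_{\text{cyl}} \in \cF\cC^\infty$ with $\|F - F_{\text{cyl}}\|_{H^m_\gamma} < \epsilon/3$. Such an approximant has the form $F_{\text{cyl}}(x) = \sum_{i=1}^M \phi_i(\langle w_1, x\rangle_\cX, \ldots, \langle w_n, x\rangle_\cX) u_i$ with finitely many $w_j \in \cX$, $u_i \in \cY$, and $\phi_i \in C_b^\infty(\bR^n, \bR)$.

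Second, I rewrite $F_{\text{cyl}}$ so that it depends on $x$ only through the latent coordinates $\xi_k(x) := \langle \Cx^{-1} v_k, x\rangle_\cX$ and takes values in $\mathrm{span}((u_i)_{i=1}^r)$. The key observation is that under the hypothesis $\Cx^{-1} v_i \in \cX$, the family $(\Cx^{-1} v_k)_{k \geq 1}$ spans a dense subspace of $\cX$ (using non-degeneracy of $\Cx$ together with the isometry $\Cx^{1/2}:\cX \to \XC$); hence each $w_j$ can be replaced, up to arbitrarily small $\cX$ error, by a finite linear combination of the $\Cx^{-1} v_k$, so that $\langle w_j, x\rangle_\cX$ becomes a linear combination of the $\xi_k(x)$. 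Each $u_i$ is analogously replaced by its projection onto $\mathrm{span}((u_k)_{k=1}^r)$ for $r$ large. The induced perturbations of $F_{\text{cyl}}$ in $H^m_\gamma$ are controlled by the $L^2_\gamma$ identity $\|\langle w-\tilde w,\cdot\rangle\|_{L^2_\gamma} = \|\Cx^{1/2}(w-\tilde w)\|_\cX$, the analogous bound $\|(w-\tilde w)|_{\XC}\|_{\XC^*} = \|\Cx^{1/2}(w-\tilde w)\|_\cX$ for the Cameron--Martin derivative, and the uniform bounds on $\phi_i$ and all its derivatives coming from $\phi_i \in C_b^\infty$. Choosing the tolerances and $r$ appropriately yields $G(x) = \Ury\,\tilde g(\pinv{\Vrx} x)$ with $\tilde g \in C_b^\infty(\bR^r, \bR^r)$ and $\|F_{\text{cyl}} - G\|_{H^m_\gamma} < \epsilon/3$.

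Third, since $(v_i)$ is orthonormal in $\XC$, the pushforward $\nu$ of $\gamma$ under $\pinv{\Vrx}$ is the standard Gaussian $\cN(0, I_r)$, a finite measure, and $\tilde g \in C_b^\infty \subset C_\nu^{m,2}(\bR^r, \bR^r)$. By \Cref{theorem:universal_approx_hornik_extended_deep} there is a $d_L$-layer network $\varphi_{\psi, \theta}$ with $\|\tilde g - \varphi_{\psi, \theta}\|_{W^{m,2}(\nu, \bR^r)} < \epsilon/(3C_m)$, where $C_m$ is a combinatorial constant relating multi-index partial derivatives to Hilbert--Schmidt tensor derivatives. A direct chain-rule computation, using that $\Ury$ is an $\cY$-isometry and $\pinv{\Vrx}|_{\XC}$ is a partial isometry $\XC \to \bR^r$, yields $\|G - \ftilde_\theta\|_{H^m_\gamma} \le C_m \|\tilde g - \varphi_{\psi,\theta}\|_{W^{m,2}(\nu, \bR^r)} < \epsilon/3$, and the triangle inequality closes the argument.

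The hard part will be the second step: justifying that cylindrical functions built from the specific families $(\Cx^{-1} v_k)$ and $(u_k)$ are dense in $\cF\cC^\infty$ under the $H^m_\gamma$ norm. This requires both the density of $\mathrm{span}((\Cx^{-1} v_k))$ in $\cX$ and a careful propagation of $\cX$-level perturbations of the $w_j$'s through the chain rule into simultaneous bounds on every order of Cameron--Martin derivative of $F_{\text{cyl}}$, so that all summands of the $H^m_\gamma$ norm are controlled at once.
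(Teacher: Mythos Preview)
Your proposal is correct but follows a genuinely different route from the paper. The paper does not perform a basis-substitution on a cylindrical approximant; instead it uses the conditional expectation $\bE_\gamma[\fun \mid \sigma(\pinv{\Vrx})]$ as the canonical ridge-function projection. Concretely, the paper first bounds $\|\fun-\Pry\fun\|_{H^m_\gamma}$ by a Parseval argument, then invokes the convergence of conditional expectations in $W^{m,p}(\gamma,\cY)$ (Bogachev, Theorem~5.4.5) to control $\|\Pry\fun-\Pry\bE_\gamma[\fun\mid\sigma(\pinv{\Vrx})]\|_{H^m_\gamma}$, and finally applies \Cref{theorem:universal_approx_cond_exp} to approximate the projected conditional expectation by a neural operator. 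The density of $\cF\cC^\infty$ is used only inside the proof of that lemma, to reduce to a smooth latent map before calling \Cref{theorem:universal_approx_hornik_extended_deep}.

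Your approach trades the conditional-expectation machinery for a direct perturbation argument: approximate $\fun$ once by $F_{\mathrm{cyl}}\in\cF\cC^\infty$, then push the (arbitrary) directions $w_j$ and output vectors onto the prescribed bases. This is more elementary in that it avoids the external Bogachev reference, and your observation that both $\|\langle w,\cdot\rangle\|_{L^2_\gamma}$ and $\|\langle w,\cdot\rangle|_{\XC}\|_{\HS(\XC,\bR)}$ equal $\|\Cx^{1/2}w\|_\cX$ (so that one density statement --- that $(\Cx^{-1/2}v_k)_k$ is an orthonormal basis of $\cX$ --- controls every order of derivative simultaneously) is exactly the right lever. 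The cost is the tedious but routine chain-rule bookkeeping you flag as ``the hard part.'' The paper's route is more economical here because the conditional-expectation lemmas are needed anyway for the detailed bounds of \Cref{theorem:main_theorem}, so it amortizes that investment; your route would stand alone better if one only cared about \Cref{theorem:naive_universal_approximation}.
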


\Cref{theorem:naive_universal_approximation} trivially reduces to the universal approximation theorem of neural networks in finite dimensions. 
We note that it does not quantify the effectiveness of the chosen dimension reduction strategies and how they impact the approximation errors. 
To more explicitly determine the contribution of errors, 
we proceed by performing a more careful accounting of the errors arising from the dimension reduction. 
% This leads to our second main result, which is an end-to-end analysis of the approximation error of the RBNO architecture that accounts for the effects of dimension reduction.

Before presenting this result, we introduce some additional assumptions on the function $\fun$
that are used to bound approximation errors for specific choices of dimension reduction. 
% First, we introduce additional assumptions on higher moments of $\fun$ and its derivatives, which are used in the analysis of the sampling errors arising from the empirical PCA/DIS.
First, when using the input PCA, we will require the additional assumption that $\fun$ is twice continuously Fr\'echet differentiable, and derivatives of $\fun$ also have bounded second moments.
\begin{assumption}[Bounded second moments of Fr\'echet derivatives]\label{assumption:derivative_second_moments}
  Suppose $\fun \in C^2(\cX, \cY)$, $\deriv \fun \in L^2(\gamma, \cL(\cX, \cY))$, and 
  $\deriv^2\fun \in L^2(\gamma, \cL_2(\cX, \cY))$.
\end{assumption}

We will also make use of \Cref{assumption:derivative_inverse_inequality} and \Cref{assumption:hessian_inverse_inequality} 
in the analysis of output PCA and input DIS, respectively. 
Intuitively, these state that lower-order derivatives of $\fun$ have control over its higher-order derivatives.
This ensures that subspaces computed from lower-order derivatives have controlled error when approximating higher derivatives of $\fun$.
We note that measurable polynomials of degree $n$ satisfy these assumptions with $K_D = n$ and $K_H = n - 1$. 
This is made precise in \Cref{prop:hermite_derivative_and_hessian} presented in the \Cref{sec:example_polynomial_forms}.

\begin{assumption}[Derivative inverse inequality] \label{assumption:derivative_inverse_inequality}
  Suppose that for $\fun \in H^1_{\gamma}$, there exists a constant $K_D = K_D(\fun) \geq 0$ such that 
  \begin{equation}\label{eq:inverse_pod_constant}
    % \bE_{\gamma}[\|\derivXC f^* u\|_{\XC}^2] 
      \linner u, \Hy u \rinner_{\cY}  
      \leq K_D \linner u, \Cy u \rinner_{\cY} 
      % = \bE_{\gamma}[|\linner u, \fun - \fbar \rinner_{\cY}|^2]
      \quad \forall u \in \cY.
  \end{equation}
\end{assumption}
\begin{assumption}[Hessian inverse inequality]\label{assumption:hessian_inverse_inequality}
  % For some $C_H > 0$, $\fun \in C^2$ satisfies the Hessian inverse inequality
  Suppose for $\fun \in H^2_{\gamma}$, there exists a constant $K_H = K_H(\fun) \geq 0$ such that 
  % Suppose for $\fun \in H^2_{\gamma} \cap C^2(\cX, \cY)$, there exists a constant $K_H = K_H(f) \geq 0$ such that 
  \begin{equation}\label{eq:hessian_inverse_inequality}
    \bE_{x \sim \gamma}[\|\derivXC^2 \fun(x)(\cdot, v) \|_{\HS(\XC, \cY)}^2] 
      \leq K_H \bE_{\gamma}\left[\|\derivXC \fun v\|_{\cY}^2 \right] = K_H \linner v, \Hx v \rinner_{\XC}, \quad \forall v \in \XC.
  \end{equation}
  % for all $v \in \nullspace(\Hx)^{\perp}$. 
  Here, $\derivXC^2 \fun(x)(\cdot, v)$ refers to the linear operator $\XC \ni h \mapsto \derivXC^2 \fun(x)(h, v) \in \cY$, such that 
  \[ \|\derivXC^2 \fun(x)(\cdot, v)\|_{\HS(\XC, \cY)}^2 = \sum_{j=1}^{\infty} \|\derivXC^2 \fun(x)(w_j, v)\|_{\cY}^2, \]
  for any orthonormal basis $(w_j)_{j=1}^{\infty}$ of $\XC$.
  % for all $v_j^{\JTJ}$ coming from the eigenvalue problem \eqref{eq:input_sensitivity_basis}, where $(\tilde{v}_i)_{i=1}^{\dimx}$ is any orthonormal basis on $\XC$.
\end{assumption}

Finally, we also note that to ensure the sampling errors associated with the empirical PCA/DIS are bounded, we additionally require that $\fun$ has bounded fourth moments in its output and its Sobolev derivatives, i.e., $\fun \in W^{1,4}_{\gamma}$.
This is typical in the analysis of sampling errors in PCA (e.g., \cite{ReissWahl20,BhattacharyaHosseiniKovachkiEtAl21}).

We then have our main result, with the proof presented in the \Cref{sec:proof_main_theorem}.
Here, the approximation error can essentially be decomposed into (1) reconstruction errors arising from representing the outputs in reduced bases, (2) ridge function errors arising from truncating the input by the input reduced basis, and (3) neural network approximation errors. 
Moreover, when using empirical estimators to construct the PCA/DIS, the reconstruction and ridge function errors additionally have a statistical error component. 
Similar to the analyses provided by \cite{ZahmConstantinePrieurEtAl20,BhattacharyaHosseiniKovachkiEtAl21,Lanthaler23}
the reconstruction and ridge function errors can be bounded in terms of summations over trailing eigenvalues ($k > r$) of the covariance or sensitivity operators that are used to define the dimension reduction, while statistical errors have a Monte Carlo type bound related to the sample size $N$. Finally, the neural network approximation error can be bounded using the universal approximation result for weighted Sobolev norms (\Cref{theorem:universal_approx_hornik_extended_deep}).

\begin{theorem}[Detailed universal approximation]\label{theorem:main_theorem}
  Suppose $\fun \in H^2_{\gamma} \cap W^{1,4}_{\gamma}$ satisfies 
  Assumptions \ref{assumption:derivative_second_moments}--\ref{assumption:hessian_inverse_inequality}.
  % \Cref{assumption:derivative_second_moments,assumption:derivative_inverse_inequality,assumption:hessian_inverse_inequality}.
  % Let $\fhat$ be the mean estimator from $N$ i.i.d.~samples $(x_k)_{k=1}^{N}$, $x_k \sim \gamma$.
  % Let also $\Uryhat^{\POD}$, $\Uryhat^{\JJT}$, $\Vrxhat = \Vrx^{\KLE}$ and $\Vrxhat^{\JTJ}$
  % be the $r$ dimensional reduced bases from PCA/DIS for the output and input spaces, 
  % respectively, where 
  % $\Uryhat^{\POD}, \Uryhat^{\JJT}$, and $\Vrxhat^{\JTJ}$ are also computed from the samples $(x_k)_{k=1}^{N}$.
  % Then, given any $\epsilon > 0$ and an 
  Additionally, let $\psi \in \cA^{\infty}_b$ and $d_L \geq 2$.
  % activation function $\psi \in \cA^{\infty}_b$ and depth $d_L \geq 2$, for any $\epsilon > 0$, 
  Then, for any $\epsilon > 0$, 
  there exist $d_L$-layered neural networks $\varphi_{\psi, \theta}$ with activation $\psi$, 
  dependent on the realization of the samples,
  such that for $\Ury = \Uryhat^{\POD}$ or $\Uryhat^{\JJT}$ and $\Vrx = \Vrx^{\KLE}$ or $\Vrxhat^{\JTJ}$,
  the neural operator
  \[ \ftilde_{\theta} = \Ury \circ \varphi_{\psi, \theta} \circ \pinv{\Vrx} + \fhat \]
  simultaneously satisfies the bounds 
  \begin{align}
     \bE_{N}[ \|\fun - \ftilde_{\theta}\|_{L_{\gamma}^2}^2 ] &\leq \epsilon_\theta 
      + K_{\cY}^{(1)} \left( \sum_{i=r+1}^{\infty} {\lambda_i} 
      + \mathscr{E}_{\cY}(N, r) \right)
      + K_{\cX}^{(1)} \left( \sum_{i=r+1}^{\infty} \mu_i 
      + \mathscr{E}_{\cX}(N, r)
      \right)
      + \mathscr{E}_{\fun}(N),
      \\ 
    \bE_{N}[ |\fun - \ftilde_{\theta}|_{H_{\gamma}^1}^2 ] &\leq \epsilon_\theta 
      + K_{\cY}^{(2)} \left( \sum_{i=r+1}^{\infty} {\lambda_i} 
      + \mathscr{E}_{\cY}(N, r) 
      \right)
      + K_{\cX}^{(2)} \left( \sum_{i=r+1}^{\infty} \mu_i 
      + \mathscr{E}_{\cX}(N, r) \right),
  \end{align}
  with sampling errors bounds of the form
  % $
  %   \mathscr{E}_{\fun}(N) = \|\fun - \fbar\|_{L^2_{\gamma}}^2/N
  % $
  \begin{align}
    \mathscr{E}_{\fun}(N) &= \frac{\|\fun - \fbar\|_{L^2_{\gamma}}^2}{N}, \\
    \mathscr{E}_{\cX}(N, r) & = \min\left\{\sqrt{\frac{2r M_\cX}{N}}, \frac{M_{\cX}}{(\lambda_{r}-\lambda_{r+1})N} \right\}, \\
    \mathscr{E}_{\cY}(N, r) & = \min\left\{\sqrt{\frac{2r M_\cY}{N}}, \frac{M_{\cY}}{(\mu_{r}-\mu_{r+1})N} \right\},
  \end{align}
  % \begin{equation}
  %   \mathscr{E}_{\fun}(N) = \|\fun - \fbar\|_{L^2_{\gamma}}^2/N, \quad
  %   \mathscr{E}_{\cX}(N, r) = \min\left\{\sqrt{\frac{2r M_\cX^{(1)}}{N}}, \frac{2 M_{\cX}^{(2)}}{N(\lambda_{r+1}-\lambda_r)} \right\}, \quad 
  %   \mathscr{E}_{\cY}(N, r) = \min\left\{\sqrt{\frac{2r M_\cY^{(1)}}{N}}, \frac{2 M_{\cY}^{(2)}}{N(\mu_{r+1}-\mu_r)} \right\},
  % \end{equation}
  and
  \begin{equation}
    \lambda_i = \begin{cases}
      \lambda_i^{\POD} & \text{if } \Ury = \Uryhat^{\POD} \\
      \lambda_i^{\JJT} & \text{if } \Ury = \Uryhat^{\JJT}
    \end{cases}, \quad
    \mu_i = \begin{cases}
      \mu_i^{\KLE} & \text{if } \Vrx = \Vrx^{\KLE} \\
      \mu_i^{\JTJ} & \text{if } \Vrx = \Vrxhat^{\JTJ}
    \end{cases},
  \end{equation}
  where the expectation is taken with respect to the samples $(x_k)_{k=1}^{N} \sim \gamma$ used to compute 
  $\fhat$, $\Uryhat^{\POD}$, $\Uryhat^{\JJT}$, and $\Vrxhat^{\JTJ}$.
  Here, $K_{\cX}^{(i)}$, $K_{\cY}^{(i)}$,$i= 1,2$, $M_{\cX}$, $M_{\cY}$ are non-negative constants depending on the choice of dimension reduction. 
  \end{theorem}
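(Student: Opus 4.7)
The plan is to decompose the total error into an output reconstruction error, an input ridge function error, a latent neural network approximation error, and sampling errors from the empirical estimators, then apply the lemmas developed earlier in the section to control each piece.

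I would begin from the identity
\[
\fun - \ftilde_{\theta} = \bigl((\fun - \fhat) - \Pryhat(\fun - \fhat)\bigr) + \Ury\bigl(\pinv{\Ury}(\fun - \fhat) - \varphi_{\psi,\theta} \circ \pinv{\Vrx}\bigr),
\]
where $\Pryhat = \Ury \pinv{\Ury}$ is the $\cY$-orthogonal projection onto the output reduced basis. Because $\Ury$ is an isometric embedding of $\bR^r$ into $\cY$ and its range is $\cY$-orthogonal to $I - \Pryhat$, the squared $L^2_\gamma$ norm and the $H^1_\gamma$ seminorm both split via Pythagoras into an output reconstruction contribution and a latent-space residual. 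For the latent residual, I would further decompose $g := \pinv{\Ury}(\fun - \fhat)$ as $g - \varphi_{\psi,\theta} \circ \pinv{\Vrx} = (g - g \circ \Qrxhat) + (g \circ \Qrxhat - \varphi_{\psi,\theta} \circ \pinv{\Vrx})$, isolating the ridge truncation error from the finite-dimensional neural network approximation error on the latent map $\xi \mapsto g(\Vrx \xi)$.

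Next, I would bound each piece pathwise in the samples $(x_k)_{k=1}^N$ and take expectations at the end. For the output reconstruction term I would add and subtract $\fbar$ and $\Pry$: the $\fbar$-part contributes the mean estimation error $\mathscr{E}_\fun(N)$, the $\Pry$-part yields the deterministic trailing sum $\sum_{i > r} \lambda_i$ by the Eckart--Young-type optimality of PCA/DIS, and the $\Pryhat - \Pry$ correction produces $\mathscr{E}_\cY(N,r)$ via Hilbert--Schmidt concentration of the empirical covariance together with a sin-theta eigengap argument. For the derivative analogue, when $\Ury = \Uryhat^\POD$ I would use \Cref{assumption:derivative_inverse_inequality} to upgrade the value-level control by $\Cy$ to a derivative-level control by $\Hy$, whereas for $\Ury = \Uryhat^\JJT$ the derivative bound follows directly from the definition of $\Hy$. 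The ridge function error $\bE_\gamma[\|g - g \circ \Qrxhat\|^2]$ and its derivative analogue would be bounded by $\sum_{i > r} \mu_i + \mathscr{E}_\cX(N,r)$: for $\Vrx = \Vrx^\KLE$ I would use a Gaussian Poincar\'e-type inequality together with \Cref{assumption:derivative_second_moments} to upgrade an $L^2_\gamma$ truncation bound to an $H^1_\gamma$ truncation bound via the second Fr\'echet derivative, while for $\Vrx = \Vrxhat^\JTJ$ the deterministic piece is the usual DIS bound and \Cref{assumption:hessian_inverse_inequality} is used to lift derivative-level sensitivity control to value-level control.

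The latent neural network contribution would be handled by applying \Cref{theorem:universal_approx_hornik_extended_deep} to the finite-dimensional target $\xi \mapsto g(\Vrx \xi)$ on $\bR^r$ equipped with the Gaussian pushforward of $\gamma$ through $\pinv{\Vrx}$; I must verify that this target lies in $C^{1,2}_\nu(\bR^r, \bR^r)$, which follows from $\fun \in H^2_\gamma$ combined with the linearity of the encoder. Conditionally on the realization of the samples, the theorem then produces a $d_L$-layer network with activation $\psi \in \cA_b^\infty$ achieving any prescribed latent tolerance, which we label $\epsilon_\theta$.

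The main obstacle I anticipate is handling the random dependence of both the reduced bases and the neural network parameters on the training samples: the decomposition and the deterministic bounds must be derived pathwise, and the expectation $\bE_N[\cdot]$ must be inserted only after absorbing the $\theta$-dependence into $\epsilon_\theta$ and invoking the concentration inequalities for $\Pryhat - \Pry$ and $\Qrxhat - \Qrx$. A secondary subtlety is the two-rate form of $\mathscr{E}_\cX$ and $\mathscr{E}_\cY$: the $O(1/\sqrt{N})$ rate follows from a trace-class bound that requires only control of the fourth moments $\fun \in W^{1,4}_\gamma$, whereas the sharper $O(1/N)$ rate is a Davis--Kahan-style bound that depends on the spectral gap $\lambda_r - \lambda_{r+1}$; taking the minimum yields the stated $\mathscr{E}$ terms. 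Finally, the derivative ridge bound for input PCA is what forces the second-moment control of $\deriv^2 \fun$ from \Cref{assumption:derivative_second_moments} to appear specifically in the $\Vrx = \Vrx^\KLE$ case.
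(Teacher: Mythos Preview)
Your overall decomposition strategy matches the paper's, but there is a genuine gap in your choice of intermediate ridge function. You split the latent residual through $g \circ \Qrxhat$, i.e., the map $x \mapsto \pinv{\Ury}(\fun(\Qrxhat x) - \fhat)$. This is \emph{not} the ridge function for which the DIS bound holds. The paper instead routes through the conditional expectation $\ftilde_r = \bE_\gamma[\fun \mid \sigma(\pinv{\Vrx})]$, which has the representation $\bE_{z\sim\gamma}[\fun(\Qrxhat x + (I-\Qrxhat)z)]$ and is the $L^2_\gamma$-optimal ridge function. The subspace Poincar\'e inequality (\Cref{theorem:subspace_poincare}) gives $\|\fun - \ftilde_r\|_{L^2_\gamma}^2 \leq \bE_\gamma[\|\derivXC\fun(I-\Qrxhat)\|_{\HS(\XC,\cY)}^2]$, and it is precisely this right-hand side that the input DIS eigenvalues control. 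By contrast, bounding $\bE_\gamma[\|\fun(x) - \fun(\Qrxhat x)\|_{\cY}^2]$ via a mean-value argument would require control of $\bE_\gamma[\|(I-\Qrxhat)x\|_\cX^2]$, which input DIS does \emph{not} provide --- that quantity is what input PCA minimizes, not DIS. Your approach would go through for $\Vrx = \Vrx^{\KLE}$ under \Cref{assumption:derivative_second_moments}, but fails for $\Vrx = \Vrxhat^{\JTJ}$. The same issue recurs at the derivative level: the paper applies subspace Poincar\'e to $\derivXC\fun$ itself (\Cref{prop:ridge_derivative_bound_dis}), and \Cref{assumption:hessian_inverse_inequality} is used there to bound the resulting second-order term by first-order DIS eigenvalues, not to ``lift derivative-level to value-level control'' as you describe.

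A secondary gap concerns the latent neural network step. You assert that $\xi \mapsto g(\Vrx\xi)$ lies in $C^{1,2}_\nu$ because $\fun \in H^2_\gamma$, but Sobolev regularity over a Gaussian measure does not yield classical $C^1$ regularity, which is what \Cref{theorem:universal_approx_hornik_extended_deep} requires. The paper addresses this inside \Cref{theorem:universal_approx_cond_exp} by first approximating $\fun$ in $H^m_\gamma$ by a smooth cylindrical function $\fun_\eta \in \cF\cC^{\infty}$; its conditional expectation is then in $C^\infty_b$ by \Cref{lemma:interchange_differentiation_expectation_smooth}, so the finite-dimensional target is genuinely smooth and Hornik's theorem applies. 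Without the conditional-expectation intermediary you lose access to this smoothing step as well.
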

  \begin{remark}\label{remark:on_the_assumptions}
  Not all of the assumptions are simultaneously needed. 
  We can break down the required assumptions and the corresponding forms of the constants based on the choice of dimension reduction as follows. 
  \begin{enumerate}
    \item Empirical output PCA, $\Uryhat = \Uryhat^{\POD}$: Without additional assumptions, we have $K_{\cY}^{(1)} = 1$ and 
    $M_{\cY} = 2 \bE_{\gamma}[\|(\fun - \fbar) \otimes (\fun - \fbar) - \Cy \|_{\HS(\cY, \cY)}^2] + 2 \|\fun - \fbar\|_{L^4_{\gamma}}^4 + 6\|\fun - \fbar\|_{L^2_{\gamma}}^4.$
    \Cref{assumption:derivative_inverse_inequality} is required for $K_{\cY}^{(2)}$ to be finite, in which case $K_{\cY}^{(2)} = K_D$.
    \item Empirical output DIS, $\Uryhat = \Uryhat^{\JJT}$: This does not require additional assumptions. 
    We have $K_{\cY}^{(1)} = K_{\cY}^{(2)} = 1$, and  $M_{\cY} = \bE_{\gamma}[\|\derivXC \fun \derivXC \fun^* - \Hy \|_{\HS(\cY,\cY)}^2].$
    \item Exact input PCA, $\Vrxhat = \Vrx^{\KLE}$: 
    Since the input PCA is exact, we trivially have 
    $M_{\cX} = 0$.
    \Cref{assumption:derivative_second_moments} is required for $K_{\cX}^{(1)}$ and $K_{\cX}^{(2)}$ to be finite, in which case
    \[
      K_{\cX}^{(1)}  = 2 \bE_{\gamma}[\| \deriv \fun \|_{\cL(\cX,\cY)}^2],
      \quad
      K_{\cX}^{(2)}  = 2 \bE_{\gamma}[\|\deriv^2 \fun\|_{\cL_2(\cX,\cY)}^2]
      \bE_{\gamma}[\|x\|_{\cX}^2] + 
      \bE_{\gamma}[\| \deriv \fun \|_{\cL(\cX,\cY)}^2].
    \]
    \item Empirical input DIS, $\Vrxhat = \Vrxhat^{\JTJ}$: 
    Note that we use \Cref{assumption:derivative_second_moments} as a simple way of ensuring that $(\Cx^{-1}\vhat_i^{\JTJ})_{i=1}^{\infty}$ that are well-defined in $\cX$. 
    This can be relaxed by instead directly assuming $(\Cx^{-1}\vhat_i^{\JTJ})_{i=1}^{\infty}$ are indeed well-defined in $\cX$. 
    Without additional assumptions, we have $K_{\cX}^{(1)} = 2$ and 
    $M_{\cX} = \bE_{\gamma}[ \|\derivXC \fun^* \derivXC \fun - \Hx\|_{\HS(\XC,\XC)}^2]$.
    \Cref{assumption:derivative_inverse_inequality} is then required for $K_{\cX}^{(2)}$ to be finite, in which case $K_{\cX}^{(2)} = 2 K_H + 1$. 
  \end{enumerate}
  \end{remark}
  \begin{remark}
    We also note that factors of two show up in $K_{\cX}^{(1)}, K_{\cX}^{(2)}$ due to the application of the triangle inequality for squared quantities $\|\fun - \ftilde_{\theta}\|^2$, i.e., $\|\fun - \ftilde_{\theta}\|^2 \leq (\|\fun - \gfun\| + \|\gfun - \ftilde_{\theta}\|)^2 \leq 2\|\fun - \gfun\|^2 + 2\|\gfun - \ftilde_{\theta}\|^2$ (see the proof of \Cref{theorem:error_decomposition_with_conditional_expectation}).
    These can be eliminated if we consider directly $\|\fun - \ftilde_{\theta}\|$ instead of $\|\fun - \ftilde_{\theta}\|^2$, in which case applying the triangle inequality does not incur the additional factor.
  \end{remark}
  \begin{remark}
    The sampling error estimates, $\mathscr{E}_{\cX}(N, r)$ and $\mathscr{E}_{\cY}(N, r)$, are the minima of two different bounds,
    as is typical in the analysis of PCA sampling error (e.g., \cite{ReissWahl20}). 
    These are often termed the \textit{global} bound, which has a slow $\cO(N^{-1/2})$ convergence rate 
    and a \textit{local} bound, which has a faster $\cO(N^{-1})$ convergence rate but only holds when the eigenvalues are sufficiently well separated (i.e., $\lambda_{r} - \lambda_{r+1} > 0$ and $\mu_{r} - \mu_{r+1} > 0$).
    The constant in the local bound can become very large as $r$ increases. 
    We note that both forms of the bounds are pessimistic, and more refined bounds can be derived by using more sophisticated techniques akin to those presented in \cite{ReissWahl20} for the analysis of PCA, though these often require additional assumptions such as boundedness or sub-Gaussianity of $\fun$.
  \end{remark}

The rest of this section proceeds as follows. We begin by introducing the conditional expectation, a key tool that we use to analyze the RBNO architecture. 
We proceed to consider the reconstruction, ridge function, and statistical sampling errors in detail.
% components are brought together to provide possible forms of overall $L^2_{\gamma}$ and $H^1_{\gamma}$ approximation error bounds
% such as those given in 
% \Cref{theorem:dipnet_error} and \Cref{theorem:dino_error}.
% Our approach 

\subsection{Conditional expectation and universal approximation}
% As previously mentioned, we are interested in the approximation errors of reduced basis neural operators in the Sobolev norm $H^1_{\gamma}$.
% The reduced basis architectures that we consider are known as ridge functions, 
% Specifically, the reduced basis architectures are given by projections of ridge functions,
The reduced basis architectures for given bases $\Ury$ and $\Vrx$ are projections (with $\Pry$) of ridge functions---$L^2_{\gamma}$ functions
that are measurable in the restricted sigma algebra $\sigma(\pinv{\Vrx})$ (i.e., constant in the orthogonal complement directions of $\Vrx$).
% since the function output does not depend on the null space of $\pinv{\Vrx}$,
% and hence are measurable with respected to the restricted sigma algebra $\sigma(\pinv{\Vrx})$.
% For such functions, 
In $L^2_{\gamma}$, the conditional expectation $\bE_{\gamma}[\fun | \sigma(\pinv{\Vrx})]$ of a function $\fun \in L^2_{\gamma}$
is a projection, and hence $\bE_{\gamma}[\fun | \sigma(\pinv{\Vrx})]$ 
is an optimal representation of $\fun$ in $L^2_{\gamma}$ when restricted to the $\sigma(\pinv{\Vrx})$ measurable functions \cite{Bogachev10}.
When $\gamma$ is Gaussian and $\Vrx$ are orthonormal in the Cameron--Martin space $\XC$, 
the conditional expectation has the representation (Corollary 3.5.2,~\cite{Bogachev98})
\begin{equation}\label{eq:conditional_expectation}
  \bE_{\gamma}[\fun | \sigma(\pinv{\Vrx})](x) = \bE_{z \sim \gamma}[\fun(\Qrx x + (I - \Qrx) z)].
\end{equation}
This is used to develop the theory of active subspaces in the finite-dimensional setting in \cite{ZahmConstantinePrieurEtAl20}, 
but also holds in the infinite-dimensional setting.

For $\fun \in W^{m,p}(\gamma, \cY)$, when $(v_i)_{i=1}^{\infty}$ is an orthonormal basis of $\XC$, the conditional expectations 
$\bE_{\gamma}[\fun | \sigma(\pinv{\Vrx})]$ converges to $\fun$ in $W^{m,p}(\gamma, \cY)$ as $r \rightarrow \infty$ (Theorem 5.4.5,~\cite{Bogachev98}). 
Moreover, we can show that the Sobolev derivative can be interchanged with the conditional expectation here for $\fun \in H^1_{\gamma}$. 
\begin{lemma}[Interchanging Sobolev differentiation and conditional expectation]\label{lemma:interchange_differentiation_conditional_expectation}
  Suppose $\fun \in H^1_{\gamma}$,
  and let $(v_i)_{i=1}^{\infty}$ be an orthonormal basis of $\XC$ for which $(\Cx^{-1}v_i)_{i=1}^{\infty}$ are well-defined in $\cX$.
  Then, for the reduced basis $\Vrx = (v_i)_{i=1}^{r}$ and the corresponding projection $\Qrx = \Vrx \pinv{\Vrx} = \sum_{i=1}^{r} v_i \otimes \Cx^{-1} v_i$,
  the Sobolev derivative of the conditional expectation is given by
  \[\derivXC \bE_{\gamma}[\fun | \sigma(\pinv{\Vrx})] = \bE_{\gamma}[\derivXC \fun \Qrx | \sigma(\pinv{\Vrx})].\]
\end{lemma}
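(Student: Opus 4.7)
The plan is to first verify the identity on the dense subclass $\cF\cC^{\infty} \subset H^1_{\gamma}$ by direct differentiation of the explicit representation \eqref{eq:conditional_expectation}, and then to extend to arbitrary $\fun \in H^1_{\gamma}$ by continuity, exploiting the fact that the conditional expectation is a contraction on $L^2$ and that Sobolev derivatives are closed under $L^2$ limits.

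For the first step, take $\fun \in \cF\cC^{\infty}$ and set $g(x) := \bE_{z \sim \gamma}[\fun(\Qrx x + (I - \Qrx) z)]$. Since $\fun$ depends only on finitely many continuous linear functionals of its argument through smooth bounded outer functions $\phi_i \in C^{\infty}_b(\bR^n, \bR)$, the Fr\'echet derivative $\deriv \fun$ is uniformly bounded in operator norm, and dominated convergence justifies differentiating under the expectation. For any $h \in \XC$, the perturbation $\Qrx h$ again lies in $\XC$ (because $v_i \in \XC$ and $\Qrx|_{\XC}$ is the orthogonal projection onto $\mathrm{span}(\Vrx)$ in $\XC$), so
\[
  \derivXC g(x)\, h = \bE_{z \sim \gamma}[ \derivXC \fun(\Qrx x + (I-\Qrx) z)\, \Qrx h ] = \bE_{\gamma}[\derivXC \fun\, \Qrx \,|\, \sigma(\pinv{\Vrx})](x)\, h,
\]
where the second equality is \eqref{eq:conditional_expectation} applied to the $\HS(\XC, \cY)$-valued map $x \mapsto \derivXC \fun(x)\, \Qrx$.

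For the second step, given $\fun \in H^1_{\gamma}$, choose $(\fun_n) \subset \cF\cC^{\infty}$ converging to $\fun$ in $H^1_{\gamma}$, and set $g_n := \bE_{\gamma}[\fun_n \,|\, \sigma(\pinv{\Vrx})]$ and $g := \bE_{\gamma}[\fun \,|\, \sigma(\pinv{\Vrx})]$. By contractivity of the conditional expectation on $L^2_{\gamma}$, one has $g_n \to g$ in $L^2_{\gamma}$. Step 1 gives $\derivXC g_n = \bE_{\gamma}[\derivXC \fun_n\, \Qrx \,|\, \sigma(\pinv{\Vrx})]$. Since $\Qrx|_{\XC}$ is an orthogonal projection and hence has operator norm $1$,
\[
  \|(\derivXC \fun_n - \derivXC \fun)\, \Qrx\|_{\HS(\XC, \cY)} \leq \|\derivXC \fun_n - \derivXC \fun\|_{\HS(\XC, \cY)},
\]
so $\derivXC \fun_n\, \Qrx \to \derivXC \fun\, \Qrx$ in $L^2(\gamma, \HS(\XC, \cY))$. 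Applying the (Hilbert-space-valued) contractive conditional expectation yields $\derivXC g_n \to \bE_{\gamma}[\derivXC \fun\, \Qrx \,|\, \sigma(\pinv{\Vrx})]$ in the same space, and closedness of the Sobolev derivative under $L^2$ convergence forces $\derivXC g = \bE_{\gamma}[\derivXC \fun\, \Qrx \,|\, \sigma(\pinv{\Vrx})]$, which is the desired identity.

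The principal technical obstacle is the Step 1 interchange of differentiation and expectation in an infinite-dimensional setting: directly invoking the stochastic G\^ateaux characterization \eqref{eq:stochastic_gateaux_derivative} for $\fun \in H^1_{\gamma}$ yields only convergence in measure for the difference quotient, which is insufficient to commute with an expectation. Restricting first to cylindrical $\fun$ sidesteps this, because all nontrivial dependence on $x$ is channeled through finite-dimensional smooth bounded outer functions whose derivatives admit uniform bounds, after which the standard Banach-space density argument transports the identity back to the full Sobolev class $H^1_{\gamma}$.
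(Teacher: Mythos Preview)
Your proposal is correct and follows essentially the same two-step strategy as the paper: first establish the identity on $\cF\cC^{\infty}$ by differentiating under the integral (the paper packages this as a separate Leibniz-type theorem and a lemma for smooth cylindrical functions), then extend to $H^1_{\gamma}$ by density using contractivity of the conditional expectation and closedness of the Sobolev derivative (the paper states this closedness as its own lemma). The only cosmetic difference is that the paper verifies the $L^2$ convergence of the conditional expectations via the explicit integral formula rather than invoking contractivity abstractly.
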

The proof of this lemma is provided in \Cref{sec:interchange_differentiation_integration}.
This result implies that the conditional expectation can be simultaneously used to optimally approximate $\fun$ and its projected Sobolev derivative $\derivXC \fun \Qrx$.
Thus, in developing our approximation results, we can first approximate $\fun \in H^m_{\gamma}$ 
by its conditional expectation with outputs projected onto an orthonormal basis, 
and then approximate the projected conditional expectation by a neural network. 
To this end, the following lemma is useful, with the proof provided in \Cref{sec:proof_universal_approx_cond_exp}.

\begin{lemma}[Universal approximation of the projected conditional expectation]\label{theorem:universal_approx_cond_exp}
  Suppose $\fun \in H^m_{\gamma}$ for some $m \geq 0$. 
  Let $(u_i)_{i=1}^{\infty}$ and $(v_i)_{i=1}^{\infty}$ be orthonormal bases in $\cY$ and $\XC$, respectively, 
  for which $(\Cx^{-1}v_i)_{i=1}^{\infty}$ are well-defined in $\cX$.
  For the reduced bases $\Ury = (u_i)_{i=1}^{r}$ and $\Vrx = (v_i)_{i=1}^{r}$, 
  consider the projected conditional expectation $\Pry \bE_{\gamma}[\fun | \sigma(\pinv{\Vrx})]$ 
  where $\Pry = \Ury \pinv{\Ury}$.
  Then, given $\psi \in \cA^{\infty}_b$ and $d_L \geq 2$, for any $\epsilon > 0$, 
  there exists a $d_L$-layered neural network $\varphi_{\psi, \theta}$ with activation function $\psi$ 
  such that the neural operator $\ftilde_{\theta} = \Ury \circ \varphi_{\psi,\theta} \circ \pinv{\Vrx}$ satisfies 
  \begin{equation}
    \| \Pry \bE_{\gamma}[\fun | \sigma(\pinv{\Vrx})] - \ftilde_{\theta} \|_{H^m_{\gamma}} \leq \epsilon.
  \end{equation}
\end{lemma}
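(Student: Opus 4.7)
The plan is to recognize that the projected conditional expectation is intrinsically a finite-dimensional object and reduce the claim to an application of the deep universal approximation result \Cref{theorem:universal_approx_hornik_extended_deep} for functions on $\bR^r$. Since $\Vrx = (v_i)_{i=1}^{r}$ is orthonormal in $\XC$ and $(\Cx^{-1}v_i)$ is well-defined in $\cX$, the encoder $\pinv{\Vrx} \in \cL(\cX, \bR^r)$ is a continuous linear map whose components $x \mapsto \linner \Cx^{-1}v_i, x\rinner_\cX$ are jointly Gaussian under $\gamma$ with covariance $\linner v_i, v_j\rinner_\XC = \delta_{ij}$. Hence the pushforward measure $\nu := (\pinv{\Vrx})_{\#} \gamma$ is the standard Gaussian $\cN(0, I_r)$ on $\bR^r$.

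Next I would introduce the finite-dimensional representative $h : \bR^r \to \bR^r$ defined by $h(\xi) := \pinv{\Ury}\, \bE_\gamma[\fun \mid \sigma(\pinv{\Vrx})](\Vrx \xi)$, so that $\Pry \bE_\gamma[\fun \mid \sigma(\pinv{\Vrx})](x) = \Ury h(\pinv{\Vrx}x)$ for $\gamma$-a.e.\ $x$. The goal is then to show $h \in W^{m,2}(\nu, \bR^r)$. Using the explicit representation \eqref{eq:conditional_expectation} of the conditional expectation, together with \Cref{lemma:interchange_differentiation_conditional_expectation} and its iteration for higher derivatives (which follows by the same Gaussian integration-by-parts argument, since $\derivXC^k \fun \in L^2(\gamma, \HS_k(\XC,\cY))$ for $\fun \in H^m_\gamma$), I would express each partial derivative $\partial^\alpha h(\xi)$ with $|\alpha|\le m$ as the conditional expectation of $\pinv{\Ury}\, \derivXC^{|\alpha|}\fun\,(v_{i_1},\dots,v_{i_{|\alpha|}})$ evaluated along the fiber. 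By Jensen's inequality and the isometry properties $\|\pinv{\Ury} z\|_2 \le \|z\|_\cY$ and $\|\pinv{\Vrx}|_{\XC}\|_{\HS(\XC, \bR^r)} = \sqrt{r}$, these have finite $L^2(\nu)$ norms controlled by $\|\fun\|_{H^m_\gamma}$, giving $h \in C^{m,2}_\nu(\bR^r, \bR^r) \cap W^{m,2}(\nu, \bR^r)$. A standard mollification argument then guarantees that $h$ can be approximated in $W^{m,2}(\nu, \bR^r)$ by a smooth function, so there is no loss in applying the $C^{m,2}_\nu$ hypothesis of \Cref{theorem:universal_approx_hornik_extended_deep}.

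With $h$ in hand, \Cref{theorem:universal_approx_hornik_extended_deep} applied to the finite measure $\nu$ yields, for any $\delta > 0$, a $d_L$-layered network $\varphi_{\psi,\theta} : \bR^r \to \bR^r$ with $\|h - \varphi_{\psi,\theta}\|_{W^{m,2}(\nu, \bR^r)} \le \delta$. Setting $\ftilde_\theta = \Ury \circ \varphi_{\psi,\theta} \circ \pinv{\Vrx}$, I would lift this bound back to $H^m_\gamma$ by computing, for each $0 \le k \le m$, the Sobolev derivative $\derivXC^k \ftilde_\theta(x) = \Ury \nabla^k \varphi_{\psi,\theta}(\pinv{\Vrx}x)\bigl(\pinv{\Vrx}|_\XC,\dots,\pinv{\Vrx}|_\XC\bigr)$, and likewise for $\Ury h \circ \pinv{\Vrx}$. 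Because $\Ury$ is an isometry from $\bR^r$ into $\cY$ and $\pinv{\Vrx}|_\XC$ sends the orthonormal basis $(v_i)$ to the standard basis of $\bR^r$, the Hilbert--Schmidt norm of the difference equals the Frobenius norm of $\nabla^k(h - \varphi_{\psi,\theta})$, so integrating against $\gamma$ and using the pushforward identity gives $\|\Pry \bE_\gamma[\fun\mid \sigma(\pinv{\Vrx})] - \ftilde_\theta\|_{H^m_\gamma} \le C_{m,r}\, \delta$ for an explicit constant. Choosing $\delta = \epsilon/C_{m,r}$ completes the argument.

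The main obstacle is the bookkeeping in the second paragraph, namely verifying that $h$ inherits $W^{m,2}(\nu)$-regularity from $\fun \in H^m_\gamma$. Because \Cref{lemma:interchange_differentiation_conditional_expectation} is stated only for first derivatives, I must either iterate it carefully for higher $k$ or, more cleanly, approximate $\fun$ by smooth cylindrical functions in $\cF\cC^\infty$, establish the identity on that dense class by direct differentiation under the integral sign in \eqref{eq:conditional_expectation}, and then pass to the limit in $W^{m,2}$ using the continuity of $\pinv{\Ury}$ and of conditional expectation on $L^2(\gamma)$. Once this finite-dimensional lifting is in place, everything else is a routine identification of norms and a direct invocation of \Cref{theorem:universal_approx_hornik_extended_deep}.
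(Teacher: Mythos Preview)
Your proposal is correct and close to the paper's argument; the main difference is where you place the smoothing step. You work directly with $h(\xi) = \pinv{\Ury}\,\bE_\gamma[\fun \mid \sigma(\pinv{\Vrx})](\Vrx\xi)$, argue that it lies in $W^{m,2}(\nu,\bR^r)$, mollify on $\bR^r$ to meet the $C^{m,2}_\nu$ hypothesis of \Cref{theorem:universal_approx_hornik_extended_deep}, and then lift back. The paper instead smooths at the infinite-dimensional level: it first replaces $\fun$ by some $\fun_\eta \in \cF\cC^\infty$ with $\|\fun-\fun_\eta\|_{H^m_\gamma}\le \eta$, observes (via \Cref{lemma:interchange_differentiation_expectation_smooth}) that $\bE_\gamma[\fun_\eta\mid\sigma(\pinv{\Vrx})]\in C^\infty_b(\cX,\cY)$, so that the reduced map $g(s)=\pinv{\Ury}\bE_\gamma[\fun_\eta\mid\sigma(\pinv{\Vrx})](\Vrx s)$ is genuinely in $C^\infty_b(\bR^r,\bR^r)$, and applies \Cref{theorem:universal_approx_hornik_extended_deep} directly to $g$. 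This is precisely the ``cleaner'' alternative you describe in your last paragraph. The paper's route sidesteps the need to iterate \Cref{lemma:interchange_differentiation_conditional_expectation} to order $m$ for a general $\fun\in H^m_\gamma$ and avoids your claim that $h\in C^{m,2}_\nu$ (which is not obvious without the cylindrical approximation). Conversely, your route makes explicit that the only analytic content is finite-dimensional and that the lift is a norm identity; indeed the paper shows $\|\Pry\bE_\gamma[\fun_\eta\mid\sigma(\pinv{\Vrx})]-\Ury\circ\varphi\circ\pinv{\Vrx}\|_{H^m_\gamma}=\|g-\varphi\|_{H^m(\gamma_r,\bR^r)}$ exactly, so your constant $C_{m,r}$ can be taken equal to one.
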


\Cref{theorem:universal_approx_cond_exp} immediately allows us to prove the generic universal approximation result of \Cref{theorem:naive_universal_approximation}.
Moreover, it allows us to focus on the approximation error committed by the optimal ridge function (the projected conditional expectation), knowing that the subsequent neural network approximation error of the projected conditional expectation can be made arbitrarily small.

% Note the fact that the projected conditional expectations can be universally approximated by a neural network in $H^m_{\gamma}$ is 
% itself a useful result that we will use later on, and is proven separately in the Appendix (Theorem \ref{theorem:universal_approx_cond_exp}).

% We begin with the useful result that the conditional expectation composed with a finite-rank projection on the output space is approximated arbitrarily well in $H^m_{\gamma}$ by a reduced basis architecture.

\subsection{Decomposition of the approximation errors}
Recall that we are considering RBNOs of the form
$ \ftilde_{\theta} = \Ury \circ \varphi_{\psi, \theta} \circ \pinv{\Vrx} + \fhat, $
% with $\Ury$ and $\Vrx$ being orthonormal in $\cY$ and $\XC$, and 
% $\fhat = \frac{1}{N}\sum_{k=1}^{N} f(x_k)$ being a sample-based estimate of the mean. 
% This represents a practical setup, where the neural operator is constructed to fit the (empirical) mean shifted data 
% $\fun - \fhat$ obtained from the training data. 
where $\Ury$ and $\Vrx$ define orthogonal projections $\Pry = \Ury \pinv{\Ury}$ and 
$\Qrx = \Vrx \pinv{\Vrx}$
on $\XC$ and $\cY$, respectively.
This form of the RBNO allows us to decompose the approximation errors in the following manner, the proof of which is presented in \Cref{sec:proof_overall_error}.
\begin{proposition}\label{prop:overall_error}
  Suppose $\fun \in H^1_{\gamma}$.
  Let $(u_i)_{i=1}^{\infty}$ and $(v_i)_{i=1}^{\infty}$ be orthonormal bases in $\cY$ and $\XC$, respectively, 
  for which $(\Cx^{-1}v_i)_{i=1}^{\infty}$ are well-defined in $\cX$.
  Additionally, let $\varphi \in C^1(\bR^{\rx}, \bR^{\ry})$.
  Then, with the reduced bases $\Ury = (u_i)_{i=1}^{r}$ and $\Vrx = (v_i)_{i=1}^{r}$, 
  the ridge function $\ftilde = \Ury \circ \varphi \circ \pinv{\Vrx} + \fhat$ 
  % the approximation errors in the $L^2_{\gamma}$ norm and $H^1_{\gamma}$ seminorm are bounded as 
  satisfies
  \begin{equation}\label{eq:overall_error_l2}
    \| \fun - \ftilde \|_{L^2_{\gamma}}^2 \leq 
    % \bE_{\gamma} \left[ \| \pinv{\Ury}(\fun - \fhat) - \varphi \circ \pinv{\Vrx} \|_2^2 
    \bE_{\gamma} [\| \Pry(\fun - \ftilde)\|_{\cY}^2]
      + \bE_{\gamma}[\| (I - \Pry)(\fun - \fbar) \|_{\cY}^2]
      + \| (I - \Pry) (\fbar - \fhat)\|_{\cY}^2
  \end{equation}
  and 
  \begin{align}\label{eq:overall_error_h1}
    | \fun - \ftilde |_{H^1_{\gamma}}^2 &\leq 
      \bE_{\gamma}[
        % \| \pinv{\Ury} \derivXC \fun \Vrx - \deriv \varphi \circ \pinv{\Vrx} \|_{F}^2 
      \| \Pry(\derivXC \fun - \derivXC \ftilde) \Qrx \|_{\HS(\XC, \cY)}^2] \nonumber \\
      & \quad + \bE_{\gamma}[\| \derivXC \fun (I - \Qrx)\|_{\HS(\XC, \cY)}^2]
      + \bE_{\gamma}[\|(I-\Pry)\derivXC f\|_{\HS(\XC, \cY)}^2]. 
  \end{align}
  Moreover, the terms involving $\ftilde$ can alternatively be written as 
  \begin{equation}
    \bE_{\gamma}[\|\Pry(\fun - \ftilde) \|_{\cY}^2] = 
     \bE_{\gamma}[\| \pinv{\Ury} (\fun - \fhat) - \varphi \circ \pinv{\Vrx} \|_{2}^2] 
    %  = \bE_{\gamma}[\| \Pry(\fun - \ftilde) \|_{\cY}^2] = 
  \end{equation}
  and
  \begin{equation}
      \bE_{\gamma}[\| \Pry(\derivXC \fun - \derivXC \ftilde) \Qrx \|_{\HS(\XC, \cY)}^2] = 
      \bE_{\gamma}[\|\pinv{\Ury} \derivXC \fun \Vrx - \deriv \varphi \circ \pinv{\Vrx} \|_{F}^2]
      % = \bE_{\gamma}[\| \Pry(\deriv \fun - \deriv \ftilde)\Qrx \|_{\HS(\XC, \cY)}^2] 
  \end{equation}
  where $\left(\varphi \circ \pinv{\Vrx} \right)(x) = \varphi \left( \pinv{\Vrx} x \right)$, $\left( \deriv \varphi \circ \pinv{\Vrx} \right)(x) = \deriv \varphi \left( \pinv{\Vrx} x \right)$, 
  and $\|\cdot\|_{2}$ and $\|\cdot\|_{F}$ denote the standard Euclidean and Frobenius norms in finite dimensions, respectively.
\end{proposition}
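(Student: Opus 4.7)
The plan is to exploit the structure of the reduced basis surrogate $\ftilde = \Ury \circ \varphi \circ \pinv{\Vrx} + \fhat$ to reduce both decompositions to applications of the Pythagorean identity with respect to the orthogonal projections $\Pry$ (in $\cY$) and $\Qrx|_\XC$ (in $\XC$). I will rely on two elementary identities: first, since $\fhat \in \cY$ is a constant (independent of $x$), $(I-\Pry)\ftilde = (I-\Pry)\fhat$; second, since $\pinv{\Vrx}|_\XC = \pinv{\Vrx}|_\XC \cdot \Qrx|_\XC$ and the range of $\Ury$ lies in $\Pry(\cY)$, the derivative $\derivXC \ftilde = \Ury\, \deriv\varphi(\pinv{\Vrx}x)\, \pinv{\Vrx}|_{\XC}$ satisfies $\Pry \derivXC \ftilde \Qrx = \derivXC \ftilde$ and $\derivXC \ftilde (I - \Qrx) = 0$ and $(I - \Pry) \derivXC \ftilde = 0$. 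These are the structural facts that make the decomposition work.

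For the $L^2_\gamma$ bound, I would split pointwise $\|\fun - \ftilde\|_\cY^2 = \|\Pry(\fun - \ftilde)\|_\cY^2 + \|(I-\Pry)(\fun - \ftilde)\|_\cY^2$ via Pythagoras. The first structural identity gives $(I-\Pry)(\fun - \ftilde) = (I-\Pry)(\fun - \fhat) = (I-\Pry)(\fun - \fbar) + (I-\Pry)(\fbar - \fhat)$. Since $\fbar$ and $\fhat$ are constants in $x$ and $\bE_\gamma[\fun - \fbar] = 0$, the two summands are orthogonal in $L^2(\gamma,\cY)$, so
\[
\bE_\gamma\bigl[\|(I-\Pry)(\fun - \ftilde)\|_\cY^2\bigr]
  = \bE_\gamma\bigl[\|(I-\Pry)(\fun - \fbar)\|_\cY^2\bigr]
  + \|(I-\Pry)(\fbar - \fhat)\|_\cY^2,
\]
from which \eqref{eq:overall_error_l2} follows (as an equality; the stated $\leq$ is more conservative than what we actually get).

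For the $H^1_\gamma$ seminorm, I would use the second structural identity to write
\[
\derivXC(\fun - \ftilde)
= \Pry(\derivXC \fun - \derivXC \ftilde)\Qrx
  + (I-\Pry)\derivXC \fun\, \Qrx
  + \derivXC \fun (I - \Qrx).
\]
The key step is to verify that these three $\HS(\XC,\cY)$-valued summands are mutually orthogonal in the Hilbert--Schmidt inner product. I would do this by evaluating $\langle A, B\rangle_{\HS}$ using an orthonormal basis of $\XC$ obtained by extending $(v_i)_{i=1}^{r}$ with an orthonormal basis of the $\XC$-orthogonal complement of $\mathrm{span}(\Vrx)$: on the first $r$ basis vectors the term with $(I-\Qrx)$ vanishes, on the remaining basis vectors the two terms carrying $\Qrx$ vanish, and the remaining cross terms vanish because $\Pry \perp (I-\Pry)$ in $\cY$. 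Pythagoras in $\HS$ then gives the pointwise identity, and after taking expectations one obtains \eqref{eq:overall_error_h1} once $\|(I-\Pry)\derivXC\fun\,\Qrx\|_{\HS}^2 \le \|(I-\Pry)\derivXC\fun\|_{\HS}^2$ is used to match the stated third term.

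For the equivalent finite-dimensional expressions, I would simply factor the projections through the orthonormal reduced bases. Writing $\Pry(\fun - \ftilde) = \Ury\bigl[\pinv{\Ury}(\fun - \fhat) - \varphi \circ \pinv{\Vrx}\bigr]$ and invoking the isometry $\|\Ury z\|_\cY = \|z\|_2$ gives the $L^2$ identity. Similarly $\Pry(\derivXC \fun - \derivXC \ftilde)\Qrx = \Ury\,[\,\pinv{\Ury}\derivXC \fun\, \Vrx - \deriv\varphi(\pinv{\Vrx}\,\cdot\,)\,]\,\pinv{\Vrx}$, and evaluating the $\HS$ norm in the orthonormal basis $\{v_1,\dots,v_r\}\cup(\text{complement basis})$ collapses the sum to the Frobenius norm of the $r_\cY \times r_\cX$ matrix $\pinv{\Ury}\derivXC \fun\, \Vrx - \deriv\varphi(\pinv{\Vrx}\,\cdot\,)$, as claimed. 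The main subtlety I anticipate is the bookkeeping verifying the mutual $\HS$-orthogonality of the three derivative terms; the rest is essentially linear algebra once the two structural identities for $\ftilde$ and $\derivXC \ftilde$ are in place.
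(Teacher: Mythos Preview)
Your proposal is correct and follows essentially the same route as the paper: a Pythagorean split via $\Pry$ for the $L^2_\gamma$ term (with the cross term vanishing after taking $\bE_\gamma$ since $\bE_\gamma[\fun-\fbar]=0$), a two-stage orthogonal decomposition via $\Qrx$ then $\Pry$ for the $\HS$ derivative term using the basis $(v_i)$, the same final inequality $\|(I-\Pry)\derivXC\fun\,\Qrx\|_{\HS}^2 \le \|(I-\Pry)\derivXC\fun\|_{\HS}^2$, and Parseval/isometry through $\Ury,\Vrx$ for the latent-space identities. Your observation that \eqref{eq:overall_error_l2} is in fact an equality is correct; the paper simply records the weaker $\leq$.
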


Here, the dimension reduction introduces errors of two types; reconstruction errors and ridge function errors.
The reconstruction errors, $\|\Pry(\fun - \fbar)\|_{L^2_{\gamma}}$, $\|(I - \Pry) \derivXC \fun \|_{L^2(\gamma, \HS(\XC, \cY))}$, 
and $\|\derivXC \fun (I - \Qrx)\|_{L^2(\gamma, \HS(\XC, \cY))}$
arise from truncating the representations of $\fun(x) - \fbar \in \cY$ in the subspace spanned by $\Ury$, 
and $\derivXC \fun(x) \in \HS(\XC, \cY)$ 
in the subspaces spanned by $\Ury$ on the output side 
and by $\Vrx$ on the input side. 
These are independent of the choice of $\varphi$.
On the other hand, the ridge function errors $\|\Pry(\fun - \ftilde)\|_{L^2_{\gamma}}$ and 
$\bE_{\gamma}[\|\Pry (\derivXC \fun - \derivXC \ftilde) \Qrx\|_{L^2(\gamma, \HS(\XC, \cY))}^2]$
are due to the projection of the input to the function itself, and can be analyzed using the conditional expectation.
Additionally, sampling errors are present in the mean estimate $\fhat$, and also in constructing the reduced bases. 

Recall that the conditional expectation is the optimal ridge function in $L^2_{\gamma}$ whose derivative is also optimal in $L^2(\gamma, \HS(\XC,\cY))$.
As previously discussed, we may further decompose the error of the ridge function term into the optimal ridge function error committed by the conditional expectation, 
and an additional error contribution due to the approximation of the conditional expectation by a neural network.
This leads to the following error decomposition that forms the basis of the proof for the main result in \Cref{theorem:main_theorem}.

\begin{corollary} \label{theorem:error_decomposition_with_conditional_expectation}
  Suppose $\fun \in H^1_{\gamma}$. 
  Let $(u_i)_{i=1}^{\infty}$ and $(v_i)_{i=1}^{\infty}$ be orthonormal bases in $\cY$ and $\XC$, respectively, 
  for which $(\Cx^{-1}v_i)_{i=1}^{\infty}$ are well-defined in $\cX$.
  Additionally, let $\psi \in \cA^{\infty}_b$ and $d_L \geq 2$.
  Then, for any $\epsilon > 0$, 
  there exists a $d_L$-layered neural network $\varphi_{\psi, \theta}$ with activation function $\psi$
  % dependent on the realization of the samples,
  such that simultaneously, 
  the $L^2_{\gamma}$ approximation error of the neural operator $\ftilde_{\theta} = \Ury \circ \varphi_{\psi, \theta} \circ \pinv{\Vrx} + \fhat$ satisfies
  \begin{align} \label{eq:error_decomposition_with_conditional_expectation_l2}
    \|\fun - \ftilde_{\theta} \|_{L^2_{\gamma}}^2
    \leq \epsilon_{\theta} + 2 \|\fun - \ftilde_r\|_{L^2_{\gamma}}^2
        + \|(I - \Pry)(\fun - \fbar) \|_{L^2_{\gamma}}^2 % \\
         + \|\fbar - \fhat\|_{\cY}^2 ,
  \end{align}
  and the $H^1_{\gamma}$ seminorm approximation error satisfies
  \begin{align} \label{eq:error_decomposition_with_conditional_expectation_h1}
     |\fun - \ftilde_{\theta} |_{H^1_{\gamma}}^2
    \leq & \epsilon_{\theta} 
       + 2 \|\Pry (\derivXC \fun - \derivXC \ftilde_r )\Qrx \|_{L^2(\gamma, \HS(\XC,\cY))}^2
        \nonumber \\
      & + \|\derivXC \fun ( I - \Qrx )\|_{L^2(\gamma,\HS(\XC, \cY))}^2
        + \|(I - \Pry)\derivXC \fun \|_{L^2(\gamma,\HS(\XC, \cY))}^2.
  \end{align}
  % Here, $\Pry = \Ury \pinv{\Ury}$ and $\Qrx = \Vrx \pinv{\Vrx}$ are the projection operators onto the output and input spaces, respectively, while
  where we have used 
  $\ftilde_{r} := \bE_{\gamma}[\fun | \sigma(\pinv{\Vrx})]$ 
  to denote the conditional expectation with respect to the sigma algebra generated by $\pinv{\Vrx}$.

\end{corollary}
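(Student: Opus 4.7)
The plan is to combine the error decomposition of Proposition~\ref{prop:overall_error} with the optimality of the conditional expectation and the universal approximation result of Lemma~\ref{theorem:universal_approx_cond_exp}. Concretely, I would start from \eqref{eq:overall_error_l2} and \eqref{eq:overall_error_h1}, which decompose the $L^2_{\gamma}$ and $H^1_{\gamma}$ seminorm errors into a ridge function term (involving $\ftilde_{\theta}$), plus reconstruction terms that are already in the desired form, plus the constant $\|(I-\Pry)(\fbar-\fhat)\|_{\cY}^2$, which is trivially bounded by $\|\fbar-\fhat\|_{\cY}^2$ since $I - \Pry$ is a contraction. The only terms that still depend on the neural network are the ridge function contributions, and these are exactly where $\ftilde_r$ should be introduced.

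Next, for the $L^2_{\gamma}$ ridge function term, I would insert the conditional expectation via the triangle inequality,
\begin{equation*}
  \|\Pry(\fun - \ftilde_{\theta})\|_{L^2_{\gamma}} \;\leq\; \|\Pry(\fun - \ftilde_r)\|_{L^2_{\gamma}} + \|\Pry(\ftilde_r - \ftilde_{\theta})\|_{L^2_{\gamma}},
\end{equation*}
and then use that $\Pry$ is an orthogonal projection to bound $\|\Pry(\fun - \ftilde_r)\|_{L^2_{\gamma}} \leq \|\fun - \ftilde_r\|_{L^2_{\gamma}}$. Squaring and applying $(a+b)^2 \leq 2a^2 + 2b^2$ yields the $2\|\fun - \ftilde_r\|_{L^2_{\gamma}}^2$ term together with a residual $2\|\Pry(\ftilde_r - \ftilde_{\theta})\|_{L^2_{\gamma}}^2$. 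The $H^1_{\gamma}$ seminorm term is handled identically after invoking Lemma~\ref{lemma:interchange_differentiation_conditional_expectation}, which gives $\derivXC \ftilde_r = \bE_{\gamma}[\derivXC \fun\, \Qrx \mid \sigma(\pinv{\Vrx})]$, so that inserting $\ftilde_r$ matches exactly with the derivative term $\|\Pry(\derivXC \fun - \derivXC \ftilde_r)\Qrx\|_{L^2(\gamma, \HS)}^2$ appearing in the statement.

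It then remains to absorb both residuals $\|\Pry(\ftilde_r - \ftilde_{\theta})\|_{L^2_{\gamma}}^2$ and $\|\Pry(\derivXC \ftilde_r - \derivXC \ftilde_{\theta})\Qrx\|_{L^2(\gamma, \HS)}^2$ into a single $\epsilon_\theta$. For this I would apply Lemma~\ref{theorem:universal_approx_cond_exp} to the shifted function $\fun - \fhat \in H^1_{\gamma}$ with $m=1$, producing a $d_L$-layered network $\varphi_{\psi,\theta}$ with
\begin{equation*}
  \bigl\|\Pry\,\bE_{\gamma}[\fun - \fhat \mid \sigma(\pinv{\Vrx})] - \Ury \circ \varphi_{\psi,\theta} \circ \pinv{\Vrx}\bigr\|_{H^1_{\gamma}} \leq \sqrt{\epsilon_\theta/2}.
\end{equation*}
Since $\Pry\,\bE_{\gamma}[\fun - \fhat \mid \sigma(\pinv{\Vrx})] = \Pry \ftilde_r - \Pry \fhat$ and $\Ury \circ \varphi_{\psi,\theta} \circ \pinv{\Vrx} = \ftilde_{\theta} - \fhat = \Pry(\ftilde_{\theta} - \fhat)$ (using $\Pry \Ury = \Ury$), the left-hand side equals $\|\Pry(\ftilde_r - \ftilde_{\theta})\|_{H^1_{\gamma}}$. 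Reading off the $L^2_{\gamma}$ and derivative components of this bound, and using $\Pry \derivXC(\ftilde_r - \ftilde_{\theta}) = \Pry(\derivXC \ftilde_r - \derivXC \ftilde_{\theta})\Qrx$ (because both $\derivXC \ftilde_r$ and $\derivXC \ftilde_{\theta}$ are supported on $\Qrx$ on the input side), each residual is bounded by $\epsilon_\theta/2$; after the factor of $2$ from the triangle inequality, both residuals collapse into $\epsilon_\theta$, yielding \eqref{eq:error_decomposition_with_conditional_expectation_l2} and \eqref{eq:error_decomposition_with_conditional_expectation_h1}.

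The main obstacle is the bookkeeping in the third step: verifying that the universal approximator produced by Lemma~\ref{theorem:universal_approx_cond_exp} simultaneously controls the $L^2_{\gamma}$ and derivative residuals, and checking that the projection identities $\Pry \Ury = \Ury$, $\pinv{\Vrx}\Qrx = \pinv{\Vrx}$, and the interchange of $\derivXC$ with the conditional expectation align so that the shift by $\fhat$ inside the Lemma produces precisely the form of $\ftilde_{\theta}$ required. Everything else is a direct application of the triangle inequality together with $\|(I-\Pry)\cdot\|_{\cY} \leq \|\cdot\|_{\cY}$.
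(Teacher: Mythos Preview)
Your proposal is correct and follows essentially the same approach as the paper: start from Proposition~\ref{prop:overall_error}, insert the conditional expectation $\ftilde_r$ via the triangle inequality and $(a+b)^2 \le 2a^2+2b^2$, then invoke Lemma~\ref{theorem:universal_approx_cond_exp} on the shifted function $\fun-\fhat$ to absorb both residuals into $\epsilon_\theta$. The only cosmetic difference is that the paper states the bookkeeping more tersely, whereas you spell out the projection identities and the use of Lemma~\ref{lemma:interchange_differentiation_conditional_expectation} explicitly.
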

The proof of this result is provided in \Cref{sec:proof_error_decomposition_with_conditional_expectation}.
We now proceed by studying the error contributions individually, and how they depend on the choice of dimension reduction. 

\subsection{Reconstruction errors}\label{sec:reconstruction_errors}
\subsubsection{PCA subspaces}
% \subsubsection{Output reconstruction error}

We begin by considering the reconstruction errors associated with the PCA bases. 
It is well known that the PCA bases yield optimal linear reduced representations. 
We start with the input case, where no sampling errors are involved.
\begin{proposition}\label{prop:kle_input_reconstruction}
  % The projection $\Qrx^{\KLE} = \Vrx^{\KLE} \Pinv{\Vrx^{\KLE}}$ given by the exact input PCA \eqref{eq:kle_basis_general} 
  The projection given by the exact input PCA
  minimizes the reconstruction error amongst all rank $\rx$ orthogonal projections on $\cX$, $\cP_r(\cX)$, in the sense that,
  \begin{equation}
    \bE_{x \sim \gamma}[\|(I - \Qrx^{\KLE}) x \|_{\cX}^2] 
    = \min_{{Q}_{\rx} \in \cP_{\rx}(\cX)} \bE_{x \sim \gamma}[\|(I - {Q}_{\rx}) x\|_{\cX}^{2} ] 
    = \sum_{i = \rx + 1}^{\infty} \mu_i^{\KLE}.
  \end{equation}
\end{proposition}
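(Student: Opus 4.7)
The plan is to exploit the classical Eckart--Young-style optimality of PCA for self-adjoint positive trace-class operators, adapted to the present convention that the eigenvectors $v_i^{\KLE}$ are normalized in the Cameron--Martin space $\XC$ rather than in $\cX$. The key is to rewrite $\Qrx^{\KLE}$ as an orthogonal projection in $\cX$ built from $\cX$-orthonormal eigenvectors of $\Cx$, after which the proof becomes a standard Ky Fan / Courant--Fischer argument.

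First, I would normalize the basis. Since $\Cx v_i^{\KLE} = \mu_i^{\KLE} v_i^{\KLE}$ and $\|v_i^{\KLE}\|_{\XC} = 1$, a short calculation with the definition of the Cameron--Martin inner product gives $\|v_i^{\KLE}\|_{\cX}^2 = \mu_i^{\KLE}$, so the rescaled vectors $e_i := v_i^{\KLE}/\sqrt{\mu_i^{\KLE}}$ form an $\cX$-orthonormal sequence of eigenvectors of $\Cx$ with the same eigenvalues. Substituting into the definition of $\Qrx^{\KLE}$ yields
\begin{equation*}
  \Qrx^{\KLE} \;=\; \sum_{i=1}^{\rx} v_i^{\KLE} \otimes v_i^{\KLE}/\mu_i^{\KLE} \;=\; \sum_{i=1}^{\rx} e_i \otimes e_i,
\end{equation*}
which is exactly the $\cX$-orthogonal projection onto $\mathrm{span}(e_1,\dots,e_\rx)$; in particular $\Qrx^{\KLE} \in \cP_\rx(\cX)$.

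Next, I would set up the standard reconstruction identity for orthogonal projections. For any $Q \in \cP_\rx(\cX)$, the Pythagorean identity gives $\|(I-Q)x\|_{\cX}^2 = \|x\|_{\cX}^2 - \|Qx\|_{\cX}^2$, and taking expectations,
\begin{equation*}
  \bE_{\gamma}[\|(I-Q)x\|_{\cX}^2] \;=\; \tr(\Cx) - \bE_{\gamma}[\|Qx\|_{\cX}^2].
\end{equation*}
Writing $Q = \sum_{j=1}^\rx w_j \otimes w_j$ for an $\cX$-orthonormal family $(w_j)_{j=1}^\rx$ spanning the range of $Q$, I get $\bE_{\gamma}[\|Qx\|_{\cX}^2] = \sum_{j=1}^\rx \langle w_j, \Cx w_j \rangle_{\cX}$. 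Minimizing the reconstruction error is therefore equivalent to maximizing $\sum_{j=1}^\rx \langle w_j, \Cx w_j\rangle_{\cX}$ over $\cX$-orthonormal $\rx$-tuples.

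The final step is to invoke the Ky Fan maximum principle (equivalently, the trace version of Courant--Fischer) for the self-adjoint positive trace-class operator $\Cx$ on $\cX$: the supremum equals the sum of the top $\rx$ eigenvalues of $\Cx$ as an operator on $\cX$, namely $\sum_{i=1}^{\rx}\mu_i^{\KLE}$, and is attained precisely by any orthonormal family spanning a dominant $\rx$-dimensional eigenspace, in particular by $(e_i)_{i=1}^{\rx}$. Combining this with $\tr(\Cx) = \sum_{i=1}^{\infty}\mu_i^{\KLE}$ yields the claimed minimum $\sum_{i=\rx+1}^{\infty}\mu_i^{\KLE}$ and shows $\Qrx^{\KLE}$ attains it. The only mild subtlety is bookkeeping the two inner products: one must check that the $v_i^{\KLE}$'s rescaled to be $\cX$-orthonormal are still eigenvectors of $\Cx$ with the same eigenvalues (which is immediate), so the Ky Fan principle applies directly and yields the stated identity.
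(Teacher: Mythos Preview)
Your proposal is correct and follows essentially the same route as the paper: express the reconstruction error as $\tr((I-Q)\Cx(I-Q)) = \tr(\Cx) - \sum_j \langle w_j, \Cx w_j\rangle_{\cX}$ and invoke Fan's theorem to identify the minimizer as the projection onto the leading eigenvectors of $\Cx$. You are in fact more careful than the paper about the normalization bookkeeping (the paper's $v_i^{\KLE}$ are $\XC$-orthonormal, not $\cX$-orthonormal), which the paper glosses over in its one-line proof.
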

\begin{proof} We reproduce the proof here for completeness. We note that for any $\Qrx \in \cP_{\rx}(\cX)$, we have
  \begin{equation}
    \bE_{x \sim \gamma}[\|(I - \Qrx) x \|_{\cX}^2] 
    = \tr((I - \Qrx) \Cx (I - \Qrx)),
  \end{equation}
  which, by Fan's theorem (\Cref{lemma:fans_theorem}) is minimized by the projection onto the leading eigenvectors of $\Cx$, i.e., $\Qrx = \Qrx^{\KLE}$.
\end{proof}

As evident in the proof, the result gives the value of the Hilbert--Schmidt norm of the 
exact PCA projection $I - \Qrx^{\KLE}$ when viewed as a mapping from $\XC \rightarrow \cX$,
\begin{equation}\label{eq:kle_hs_error}
  \|I - \Qrx^{\KLE} \|_{\HS(\XC, \cX)}^2 = \tr((I - \Qrx^{\KLE}) \Cx (I - \Qrx^{\KLE})) = \sum_{i = \rx + 1}^{\infty} \mu_i^{\KLE}.
\end{equation}

Although a rank $r$ PCA optimally reconstructs the input $x$, 
intuitively, we can see that it may not provide a good reconstruction $\derivXC \fun (I - \Qrx^{\KLE})$ of $\derivXC \fun$ 
since the PCA on the input does not incorporate any information about the mapping $\fun$. 
As an example, when $\fun$ is a map depending only on the $(r + 1)$th basis, 
a truncated input PCA of rank $r$ will always yield a projected derivative $\derivXC \fun \Qrx$ of zero.
However, when measuring the reconstruction error of $\derivXC \fun (I - \Qrx^{\KLE})$ in the Hilbert--Schmidt norm $\HS(\XC, \cY)$,
smaller weights are placed on the input directions with small variance.
This allows one to place a bound on the reconstruction error of the derivative.
\begin{proposition}\label{prop:kle_derivative_reconstruction}
  Suppose $\fun \in H^1_{\gamma} \cap C^1(\cX, \cY)$ 
  and $\deriv \fun \in L^2(\gamma, \cL(\cX, \cY))$.
  % and $\|\deriv \fun\|_{\cL(\cX,\cY)}$ is integrable with bounded second moments.
  % Let $\Vrx^{\KLE} = (v_i^{\KLE})_{i = 1}^{\rx}$ be the input PCA basis given by \eqref{eq:kle_basis_general}.
  Then the derivative reconstruction error for the exact input PCA 
  % $\Qrx^{\JTJ} = \sum_{i=1}^{\rx} v_i v_i^T \Cx^{-1}$ 
  % $\Qrx^{\KLE} = \Vrx^{\KLE} \Pinv{\Vrx^{\KLE}}$
  % is bounded by
  is bounded by 
  \begin{equation}\label{eq:kle_derivative_bound_first}
    \bE_{\gamma}[\|\derivXC \fun (I - \Qrx^{\KLE})\|^2_{\HS(\XC, \cY)}]
    \leq \bE_{\gamma}[\|\deriv \fun\|_{\cL(\cX, \cY)}^2]\sum_{i= \rx + 1}^{\infty} \mu_{i}^{\KLE}.
  \end{equation}
  Moreover, when $\fun : \cX \rightarrow \cY$ is (globally) Lipschitz continuous, 
  i.e., 
  \begin{equation}\label{eq:kle_derivative_bound_Lipschitz}
    \|\fun(x_1) - \fun(x_2) \|_{\cY} \leq L \|x_1 - x_2\|_{\cX}, \qquad \forall x_1, x_2 \in \cX
  \end{equation}
  with Lipschitz constant $L \geq 0$, we have
  \begin{equation}
    \bE_{\gamma}[\|\derivXC \fun (I - \Qrx^{\KLE})\|^2_{\HS(\XC, \cY)}]
    \leq L^2 \sum_{i = \rx + 1}^{\infty} \mu_{i}^{\KLE}.
  \end{equation}
\end{proposition}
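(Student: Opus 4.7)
My approach is to directly expand the Hilbert--Schmidt norm using the eigenbasis $(v_i^{\KLE})_{i=1}^{\infty}$ of $\Cx$, exploit that this same basis is orthonormal in $\XC$ by our normalization convention, and then bound each resulting term by the operator norm of the Fr\'echet derivative times the $\cX$-norm of the eigenvector. The hypothesis $\fun \in C^1(\cX, \cY)$ with $\deriv \fun \in L^2(\gamma, \cL(\cX, \cY))$ is what lets me pass from the $\XC$-based Hilbert--Schmidt norm back to the operator norm of the full Fr\'echet derivative.

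First I would record the key normalization identity. Since $v_i^{\KLE}$ is normalized so that $\|v_i^{\KLE}\|_{\XC} = 1$ and satisfies $\Cx v_i^{\KLE} = \mu_i^{\KLE} v_i^{\KLE}$, we have
\[
\|v_i^{\KLE}\|_{\cX}^2 = \linner v_i^{\KLE}, v_i^{\KLE}\rinner_{\cX} = \mu_i^{\KLE} \linner \Cx^{-1} v_i^{\KLE}, v_i^{\KLE}\rinner_{\cX} = \mu_i^{\KLE} \|v_i^{\KLE}\|_{\XC}^2 = \mu_i^{\KLE}.
\]
Next, I would compute the Hilbert--Schmidt norm using the $\XC$-orthonormal basis $(v_i^{\KLE})_{i=1}^{\infty}$. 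Since $\Qrx^{\KLE}|_{\XC}$ is the orthogonal projection in $\XC$ onto $\mathrm{span}(v_1^{\KLE},\ldots,v_r^{\KLE})$, we get $(I - \Qrx^{\KLE}) v_i^{\KLE} = 0$ for $i \leq r$ and $v_i^{\KLE}$ for $i > r$, so
\[
\|\derivXC \fun(x)(I - \Qrx^{\KLE})\|_{\HS(\XC,\cY)}^2 = \sum_{i=r+1}^{\infty} \|\derivXC \fun(x) v_i^{\KLE}\|_{\cY}^2.
\]
Then, since $\derivXC \fun(x) = \deriv \fun(x)|_{\XC}$ by the remarks preceding the proposition, each term satisfies
\[
\|\derivXC \fun(x) v_i^{\KLE}\|_{\cY}^2 \leq \|\deriv \fun(x)\|_{\cL(\cX,\cY)}^2 \|v_i^{\KLE}\|_{\cX}^2 = \|\deriv \fun(x)\|_{\cL(\cX,\cY)}^2\, \mu_i^{\KLE}.
\]
Summing and taking expectation with respect to $\gamma$ (using Fubini/monotone convergence to exchange the sum and the expectation, justified since the summands are non-negative) yields \eqref{eq:kle_derivative_bound_first}.

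For the Lipschitz case, I would invoke the standard fact that a globally Lipschitz Fr\'echet differentiable map satisfies $\|\deriv \fun(x)\|_{\cL(\cX,\cY)} \leq L$ pointwise: this follows from the defining limit $\deriv \fun(x) h = \lim_{t \to 0} (\fun(x+th) - \fun(x))/t$ and the Lipschitz bound on the difference quotient. Substituting into \eqref{eq:kle_derivative_bound_first} gives the stated bound $L^2 \sum_{i = r+1}^{\infty} \mu_i^{\KLE}$. No step here presents a real obstacle; the only subtlety is keeping the two norms $\|\cdot\|_{\cX}$ and $\|\cdot\|_{\XC}$ on the eigenvectors straight, which is handled by the normalization identity at the start.
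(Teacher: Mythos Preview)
Your proof is correct and essentially the same as the paper's. The only cosmetic difference is that the paper packages the key step as the operator inequality $\|\derivXC \fun (I - \Qrx^{\KLE})\|_{\HS(\XC,\cY)} \le \|\deriv \fun\|_{\cL(\cX,\cY)} \|I - \Qrx^{\KLE}\|_{\HS(\XC,\cX)}$ together with the identity $\|I - \Qrx^{\KLE}\|_{\HS(\XC,\cX)}^2 = \sum_{i>r} \mu_i^{\KLE}$, whereas you unpack this same computation term-by-term in the eigenbasis; the Lipschitz part is handled identically.
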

\begin{proof}
  The approach taken here is analogous to that for the proof of Proposition 3.1 of \cite{ZahmConstantinePrieurEtAl20}.
  We will use the fact that $I - \Qrx^{\KLE} \in \HS(\XC, \cX)$ to write 
  \[ \|\derivXC \fun (I - \Qrx^{\KLE}) \|_{\HS(\XC, \cY)} \leq \|\deriv \fun\|_{\cL(\cX, \cY)} \|I - \Qrx^{\KLE}\|_{\HS(\XC, \cX)}.\]
  Therefore,
  \[ \bE_{\gamma}[\|\derivXC \fun (I - \Qrx^{\KLE}) \|_{\HS(\XC, \cY)}^2] \leq \bE_{\gamma}[\|\deriv \fun\|_{\cL(\cX, \cY)}^2] \|I - \Qrx^{\KLE}\|_{\HS(\XC, \cX)}^2,\]
  along with \eqref{eq:kle_hs_error} gives the bound \eqref{eq:kle_derivative_bound_first}.
  Moreover, since $\fun$ is continuously differentiable, if $\fun$ is Lipschitz with constant $L$, we have $\|\deriv \fun\|_{\cL(\cX, \cY)} \leq L$,
  from which the bound \eqref{eq:kle_derivative_bound_Lipschitz} follows.
\end{proof}

Since we consider a centered (mean-shifted) PCA for the output, 
the output PCA basis optimally reconstructs the mean-shifted output $\fun - \fbar$. 
On the other hand, when sampling errors are present, 
the empirical PCA basis becomes suboptimal, where the gap in the error bound is termed the excess risk. 
As discussed \cite{ReissWahl20, BhattacharyaHosseiniKovachkiEtAl21, Lanthaler23}, 
the excess risk can be bounded using the Hilbert--Schmidt distance between $\Cy$ and its estimator $\Cyhat$. 
This is summarized in the following proposition.

\begin{proposition}\label{prop:pod_reconstruction_bound}
  Suppose $\fun \in L^2_{\gamma}$. 
  % Let $\Pry^{\POD} = \Ury^{\POD} \Pinv{\Ury^{\POD}}$ be the projection given by the exact output PCA basis \eqref{eq:pod_basis_general} 
  % and let $\Pryhat^{\POD} = \Uryhat^{\POD} \Pinv{\Uryhat^{\POD}}$ be the projection given by the empirical output PCA basis \eqref{eq:pod_basis_estimator}. 
  % Additionally, let $(\lambda_i^{\POD})_{i=1}^{\infty}$ be the eigenvalues of the exact output PCA.
  Then, the exact output PCA basis minimizes the reconstruction error amongst all rank $\ry$ orthogonal projections $\cP_{\ry}(\cY)$ in the sense that,
  \begin{equation}
    \bE_{\gamma}[\|(I - \Pry^{\POD}) (\fun - \fbar) \|_{\cY}^2] 
    = \min_{\Pry \in \cP_{\ry}(\cY)} \bE_{\gamma}[\|(I - \Pry) (\fun - \fbar)\|_{\cY}^{2} ] \nonumber
    = \sum_{i = r + 1}^{\infty} \lambda_i^{\POD},
  \end{equation}
  while the reconstruction error for the empirical output PCA is bounded by
  \begin{equation}
    \bE_{\gamma}[\|(I - \Pryhat^{\POD}) (\fun - \fbar) \|_{\cY}^2] 
    \leq \sum_{i = \ry + 1}^{\infty} \lambda_i^{\POD}  + \min \left\{ \sqrt{2 r} \|\Cy - \Cyhat \|_{\HS(\cY, \cY)}, \frac{2 \| \Cy - \Cyhat\|_{\HS(\cY,\cY)}^2}{\lambda_{r}^{\POD} - \lambda_{r+1}^{\POD}} \right\}.
  \end{equation}
\end{proposition}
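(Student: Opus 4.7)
The proof is a two-part argument. For the exact PCA, I follow the template of Proposition 3.4 (Fan's theorem); for the empirical PCA, I decompose the error into the optimal error plus an excess risk term, and bound the excess risk by two different methods (global and local), taking the minimum.

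My starting point is the key identity
\begin{equation*}
  \bE_{\gamma}[\|(I-\Pry)(\fun-\fbar)\|_{\cY}^2] = \tr\bigl((I-\Pry)\Cy(I-\Pry)\bigr) = \tr(\Cy) - \tr(\Pry \Cy),
\end{equation*}
valid for any orthogonal projection $\Pry$ on $\cY$, where the second equality uses $\Pry^2=\Pry$ and cyclicity of the trace. For the exact case ($\Pry = \Pry^{\POD}$), applying Fan's theorem (the same lemma invoked in Proposition 3.4) to the nonnegative self-adjoint trace-class operator $\Cy$ shows that $\tr(\Pry\Cy)$ is maximized over rank-$r$ orthogonal projections precisely by projection onto the leading eigenvectors, yielding $\tr(\Pry^{\POD}\Cy) = \sum_{i=1}^{r}\lambda_i^{\POD}$ and thus the claimed minimum $\sum_{i=r+1}^{\infty}\lambda_i^{\POD}$.

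For the empirical case, the identity above applied to $\Pryhat^{\POD}$ and subtraction gives the excess risk
\begin{equation*}
  \bE_{\gamma}[\|(I - \Pryhat^{\POD})(\fun - \fbar)\|_{\cY}^2] - \sum_{i=r+1}^{\infty}\lambda_i^{\POD} = \tr\bigl((\Pry^{\POD} - \Pryhat^{\POD})\Cy\bigr).
\end{equation*}
Now I would use Fan's theorem a second time, applied to $\Cyhat$: since $\Pryhat^{\POD}$ maximizes $\tr(\Pry\Cyhat)$ over rank-$r$ projections, $\tr(\Pryhat^{\POD}\Cyhat) \geq \tr(\Pry^{\POD}\Cyhat)$, hence $\tr((\Pry^{\POD} - \Pryhat^{\POD})\Cyhat) \leq 0$. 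Adding this nonpositive quantity to the excess risk and applying Cauchy--Schwarz in the Hilbert--Schmidt inner product yields
\begin{equation*}
  \tr\bigl((\Pry^{\POD} - \Pryhat^{\POD})\Cy\bigr) \leq \tr\bigl((\Pry^{\POD} - \Pryhat^{\POD})(\Cy - \Cyhat)\bigr) \leq \|\Pry^{\POD} - \Pryhat^{\POD}\|_{\HS(\cY,\cY)} \|\Cy - \Cyhat\|_{\HS(\cY,\cY)}.
\end{equation*}

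It then remains to bound $\|\Pry^{\POD} - \Pryhat^{\POD}\|_{\HS(\cY,\cY)}$ in two ways. For the global bound, I would use the elementary fact that for two rank-$r$ orthogonal projections, $\|\Pry^{\POD} - \Pryhat^{\POD}\|_{\HS}^{2} = 2r - 2\tr(\Pry^{\POD}\Pryhat^{\POD}) \leq 2r$, giving the $\sqrt{2r}\,\|\Cy - \Cyhat\|_{\HS}$ contribution. For the local bound, I would invoke a Davis--Kahan $\sin\Theta$-type inequality in Hilbert--Schmidt norm under the spectral gap assumption $\lambda_r^{\POD} - \lambda_{r+1}^{\POD} > 0$, which gives $\|\Pry^{\POD} - \Pryhat^{\POD}\|_{\HS} \leq 2\|\Cy - \Cyhat\|_{\HS}/(\lambda_r^{\POD} - \lambda_{r+1}^{\POD})$; substitution produces the second term $2\|\Cy - \Cyhat\|_{\HS}^{2}/(\lambda_r^{\POD} - \lambda_{r+1}^{\POD})$. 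Taking the minimum of the two bounds concludes the proof.

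The most delicate part is the local bound: the Davis--Kahan argument must be set up for trace-class (rather than finite-rank) self-adjoint operators on $\cY$ and in the Hilbert--Schmidt (not operator) norm, and the correct constant must be tracked through the resolvent/contour argument. This is standard in the operator-theoretic PCA literature (e.g., Reiss--Wahl, and the analogous bounds used in PCANet analyses cited earlier), so I would state the result by direct appeal to that machinery rather than reprove it from scratch.
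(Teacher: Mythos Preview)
Your argument for the exact PCA and for the global empirical bound matches the paper's exactly (the paper packages both empirical bounds into a single supporting lemma, \Cref{lemma:projection_errors}, and invokes it). The only genuine difference is in how the local bound is obtained. You invoke a Davis--Kahan $\sin\Theta$ inequality to control $\|\Pry^{\POD}-\Pryhat^{\POD}\|_{\HS}$ directly by $\|\Cy-\Cyhat\|_{\HS}/(\lambda_r^{\POD}-\lambda_{r+1}^{\POD})$ and then substitute into Cauchy--Schwarz. The paper instead uses the Reiss--Wahl self-bounding trick: writing the excess risk as $\mathscr{E}_r = \sum_{i\le r}(\lambda_i-\beta)\|A_i\Pryhat^{\perp}\|_{\HS}^2 + \sum_{j>r}(\beta-\lambda_j)\|A_j\Pryhat\|_{\HS}^2$ (with $A_i=u_i\otimes u_i$) and taking $\beta=\lambda_{r+1}^{\POD}$ gives the elementary inequality $\|\Pry^{\POD}-\Pryhat^{\POD}\|_{\HS}^2 \le 2\mathscr{E}_r/(\lambda_r^{\POD}-\lambda_{r+1}^{\POD})$; feeding this back into the same Cauchy--Schwarz step you already have yields $\mathscr{E}_r^2 \le \frac{2\mathscr{E}_r}{\lambda_r^{\POD}-\lambda_{r+1}^{\POD}}\|\Cy-\Cyhat\|_{\HS}^2$, i.e.\ the stated bound with constant exactly $2$. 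The advantage of the paper's route is that it is entirely self-contained (no resolvent or contour argument, no external perturbation theorem) and nails the constant; your Davis--Kahan route is also correct in principle, but depending on which variant you cite (classical vs.\ Yu--Wang--Samworth) you may pick up an extra factor of $\sqrt{2}$ in the constant, which is precisely the ``delicate tracking'' you flagged.
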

\begin{proof} 
  % Again, we present the proof for completeness. 
  The reconstruction error for the exact output PCA is analogous to the input case shown in \Cref{prop:kle_input_reconstruction}.
  The excess risk corresponding to the empirical PCA with sample-based estimator $\Cyhat$ is a consequence of \Cref{lemma:projection_errors}.
\end{proof}

The output PCA basis is constructed on the outputs of the map $\fun$, and hence is aware of the sensitivity of the map $\fun$ in the output space. 
That is, directions in which $\fun$ changes significantly 
are expected to be captured by the output PCA basis. 
However, it is still difficult to derive similar bounds for the derivative reconstruction error using output PCA in terms of its eigenvalues.
For example, we can see that output PCA error is such that
\begin{align*}
  & \bE_{\gamma}[\|(I - \Pry^{\POD})\derivXC \fun\|_{\HS(\XC, \cY)}^2 ]  \\
  &\qquad = \tr((I - \Pry^{\POD}) \Hy (I - \Pry^{\POD})) \\
  &\qquad = \tr((I - \Pry^{\POD}) \Cy (I - \Pry^{\POD}))
    + \tr((I - \Pry^{\POD})(\Hy - \Cy)(I - \Pry^{\POD}))  \\
  &\qquad = \sum_{i = \ry + 1}^{\infty} \lambda_i^{\POD}
    + \bE_{\gamma}[\|(I - \Pry^{\POD})\derivXC \fun\|_{\HS(\XC, \cY)}^2]
    - \bE_{\gamma}[\|(I - \Pry^{\POD})(\fun - \fbar)\|_{\HS(\XC, \cY)}^2], 
\end{align*}
where the additional error term 
\[ \bE_{\gamma}[\|(I - \Pry^{\POD})\derivXC \fun\|_{\HS(\XC, \cY)}^2]
    - \bE_{\gamma}[\|(I - \Pry^{\POD})(\fun - \fbar)\|_{\HS(\XC, \cY)}^2] \geq 0\]
due to the Poincar\'e inequality (\Cref{theorem:poincare}).

Instead, we can look towards \Cref{assumption:derivative_inverse_inequality} to provide a bound on the derivative reconstruction error in terms of the eigenvalues of the output PCA basis.
In particular, the constant $K_D$ in \Cref{assumption:derivative_inverse_inequality}
quantifies the extent to which the output PCA basis captures the variation in derivatives. 
Intuitively, for operators that are largely monotonic, the deviations $\fun - \fbar$ do capture the derivatives well, 
while this may not be the case for operators with highly oscillatory behavior.
Note that the Poincar\'e inequality again implies $K_D \geq 1$. 
As previous mentioned, for polynomials, the constant $K_D$ can be bounded by the degree of the polynomial, as shown in \Cref{prop:hermite_derivative_and_hessian} in \Cref{sec:example_polynomial_forms}.
We then have the following result.
% We can then write an alternative bound using $K_D$. 
\begin{proposition}\label{prop:pod_derivative_reconstruction_multiplicative}
  Suppose $\fun \in H^1_{\gamma}$ satisfies \Cref{assumption:derivative_inverse_inequality}. 
  % Let $\Uryhat^{\POD} = (\uhat_i^{\POD})_{i = 1}^{\ry}$ be the empirical output PCA basis given by \eqref{eq:pod_basis_estimator}, and let
  % $(\lambda_i)_{i=1}^{\infty}$ 
  % be the eigenvalues of the exact output PCA defined in \eqref{eq:pod_basis_general}.
  Then, the derivative reconstruction error for the empirical output PCA is bounded by
  \begin{equation}\label{eq:pod_derivative_error_multiplicative}
    \bE_{\gamma}[\| (I - \Pryhat^{\POD}) \derivXC \fun \|_{\HS(\XC, \cY)}^2] 
    \leq K_{D} \left(
    \sum_{i = \ry + 1}^{\infty} \lambda_i^{\POD} + \min \left\{ \sqrt{2 r} \|\Cy - \Cyhat \|_{\HS(\cY, \cY)}, \frac{2 \| \Cy - \Cyhat\|_{\HS(\cY,\cY)}^2}{\lambda_{r}^{\POD} - \lambda_{r + 1}^{\POD}} \right\} \right).
  \end{equation}
\end{proposition}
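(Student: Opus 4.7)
The plan is to reduce the derivative reconstruction error to a reconstruction error in the output $\cY$ by using Assumption \ref{assumption:derivative_inverse_inequality} as an operator inequality, and then invoke Proposition \ref{prop:pod_reconstruction_bound} for the bound on $\bE_{\gamma}[\|(I - \Pryhat^{\POD})(\fun - \fbar)\|_{\cY}^2]$.

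First, I would rewrite the left-hand side as a trace involving $\Hy$. Since $I - \Pryhat^{\POD}$ is an orthogonal projection on $\cY$, for each $x$ we have
\[
  \| (I - \Pryhat^{\POD}) \derivXC \fun(x) \|_{\HS(\XC, \cY)}^2
  = \tr\bigl((I - \Pryhat^{\POD}) \derivXC \fun(x) \derivXC \fun(x)^* (I - \Pryhat^{\POD})\bigr).
\]
Taking expectations and using the definition $\Hy = \bE_{\gamma}[\derivXC \fun \derivXC \fun^*]$ together with linearity of the trace gives
\[
  \bE_{\gamma}[\| (I - \Pryhat^{\POD}) \derivXC \fun \|_{\HS(\XC, \cY)}^2]
  = \tr\bigl((I - \Pryhat^{\POD}) \Hy (I - \Pryhat^{\POD})\bigr).
\]

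Next, Assumption \ref{assumption:derivative_inverse_inequality} states precisely that $\Hy \preceq K_D \, \Cy$ in the sense of self-adjoint operator ordering on $\cY$. Evaluating the trace on any orthonormal basis $(e_j)$ of $\cY$, each term $\linner (I - \Pryhat^{\POD}) e_j, \Hy (I - \Pryhat^{\POD}) e_j \rinner_{\cY}$ is dominated by $K_D \linner (I - \Pryhat^{\POD}) e_j, \Cy (I - \Pryhat^{\POD}) e_j \rinner_{\cY}$, so
\[
  \tr\bigl((I - \Pryhat^{\POD}) \Hy (I - \Pryhat^{\POD})\bigr)
  \leq K_D \, \tr\bigl((I - \Pryhat^{\POD}) \Cy (I - \Pryhat^{\POD})\bigr)
  = K_D \, \bE_{\gamma}\bigl[\|(I - \Pryhat^{\POD})(\fun - \fbar)\|_{\cY}^2\bigr].
\]
The last identity is just the definition of $\Cy$ together with the projection identity $(I - \Pryhat^{\POD})^2 = I - \Pryhat^{\POD}$.

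Finally, applying the empirical PCA reconstruction bound from Proposition \ref{prop:pod_reconstruction_bound} to $\bE_{\gamma}[\|(I - \Pryhat^{\POD})(\fun - \fbar)\|_{\cY}^2]$ produces exactly the factor multiplying $K_D$ in \eqref{eq:pod_derivative_error_multiplicative}, completing the proof. The only subtlety is keeping straight that the operator inequality $\Hy \preceq K_D \Cy$ is applied \emph{after} sandwiching by $I - \Pryhat^{\POD}$, which preserves the inequality because $I - \Pryhat^{\POD}$ is self-adjoint; this is the key step, but it is essentially immediate from the quadratic form characterization of the trace, so no real technical obstacle is expected.
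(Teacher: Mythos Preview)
Your proof is correct and follows essentially the same approach as the paper: both rewrite the derivative reconstruction error as $\tr((I-\Pryhat^{\POD})\Hy(I-\Pryhat^{\POD}))$, apply Assumption~\ref{assumption:derivative_inverse_inequality} to replace $\Hy$ by $K_D\Cy$, and then invoke Proposition~\ref{prop:pod_reconstruction_bound}. The only cosmetic difference is that the paper expands the trace explicitly in the extended empirical PCA basis $(\uhat_i^{\POD})_{i=1}^{\infty}$ rather than using the operator-inequality language you employ.
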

\begin{proof}
  % First, we note that if $\uhat_i^{\POD} \in \nullspace(\Cy)$, then $\fun$ does not change along the output direction $\uhat_i^{\POD}$ 
  % and so $\derivXC \fun(x)^* \uhat_i^{\POD} = 0$ for almost every $x$.
  We recognize that 
  $\|(I - \Pryhat^{\POD}) \derivXC \fun\|_{\HS(\XC, \cY)}^2 = \sum_{i=1}^{\infty} \| \derivXC \fun^* (I - \Pryhat^{\POD}) u_i\|_{\XC}^2 $ 
  for any $(u_i)_{i=1}^{\infty}$ that is an orthonormal basis of $\cY$. 
  By extending the empirical output PCA basis, we obtain a basis $(\uhat_i^{\POD})_{i=1}^{\infty}$ of $\cY$, which we can use to evaluate the Hilbert--Schmidt norm,
  \[ 
    \|(I - \Pryhat^{\POD}) \derivXC \fun\|_{\HS(\XC, \cY)}^2 = \sum_{i=r+1}^{\infty} \| \derivXC \fun^*  \uhat_i\|_{\XC}^2 
    = \sum_{i=\ry + 1}^{\infty} \linner \derivXC \fun \derivXC \fun^* \uhat_i^{\POD}, \uhat_i^{\POD} \rinner_{\cY}.
    % \leq K_D \sum_{i=\ry+1}^{\infty} | \linner \fun - \fbar, \uhat_i^{\POD} \rinner_{\cY}|^2.
  \]
  Taking an expectation and applying \Cref{assumption:derivative_inverse_inequality} yields
  \[
    \sum_{i=\ry + 1}^{\infty} 
    \bE_{\gamma} \left[ \linner \derivXC \fun \derivXC \fun^* \uhat_i^{\POD}, \uhat_i^{\POD} \rinner_{\cY} \right] = \sum_{i=\ry + 1}^{\infty} \linner \uhat_i^{\POD}, \Hy \uhat_i^{\POD} \rinner_{\cY} \leq K_D \sum_{i=\ry + 1}^{\infty} \linner \uhat_i^{\POD}, \Cy \uhat_i^{\POD} \rinner_{\cY}.
  \]
  Finally, the desired bound follows from \Cref{prop:pod_reconstruction_bound}.

\end{proof}

\subsubsection{Derivative-informed subspaces}
We now consider the reconstruction properties for projections defined using derivative-informed subspaces. 
Since they effectively amount to a PCA on the derivative operators, 
we have similar optimal reconstruction properties of the derivative.
The following proposition extends the results of \cite{ZahmConstantinePrieurEtAl20} and \cite{BaptistaMarzoukZahm22} 
to the infinite-dimensional setting. It also includes the effect of sample errors, 
which is considered for the input DIS (active subspace) in the finite-dimensional setting by \cite{LamZahmMarzoukEtAl20, CuiTong22}.
Again, the sampling errors are analyzed in \Cref{sec:sampling_error}. 
\begin{proposition}\label{prop:dis_derivative_reconstruction}
  Suppose $\fun \in H^1_{\gamma}$ 
  % Let $\Vrx^{\JTJ}$ be the exact input sensitivity basis given by \eqref{eq:input_sensitivity_basis} with eigenvalues $(\mu_i^{\JTJ})_{i=1}^{\rx}$ and 
  % $\Ury^{\JJT}$ be the exact output sensitivity basis given by \eqref{eq:output_sensitivity_basis} with
  % eigenvalues $(\lambda_i^{\JJT})_{i=1}^{\ry}$.
  % Let $\Vrxhat^{\JTJ}$ be the empirical input sensitivity basis given by \eqref{eq:input_sensitivity_basis_estimator} 
  % and $\Uryhat^{\JJT}$ be the empirical output sensitivity basis given by \eqref{eq:output_sensitivity_basis_estimator}.
  and assume that for the exact and empirical input DIS bases, $(\Cx^{-1} v_i^{\JTJ})_{i=1}^{\infty}$ and $(\Cx^{-1} \vhat_i^{\JTJ})_{i=1}^{\infty}$ are well-defined in $\cX$.
  % Then, the projections $\Qrx^{\JTJ} = \Vrx^{\JTJ} \Pinv{\Vrx^{\JTJ}}$ and $\Pry^{\JJT} = \Ury^{\JJT} \Pinv{\Ury^{\JJT}}$ 
  Then, the projections onto the exact derivative-informed subspaces
  minimize the derivative reconstruction errors.
  That is,
  \begin{align}
    \bE_{\gamma}[\|\derivXC \fun(I - \Qrx^{\JTJ})\|^2_{\HS(\XC, \cY)}] &= 
    \min_{\Qrx \in \cP_{\rx}(\XC)} \bE_{\gamma}[\|\derivXC \fun(I - \Qrx)\|^2_{\HS(\XC, \cY)}] = \sum_{i = \rx + 1}^{\infty} \mu_i^{\JTJ}, \\
    \bE_{\gamma}[\|(I - \Pry^{\JJT}) \derivXC \fun\|^2_{\HS(\XC, \cY)}] &=
    \min_{\Pry \in \cP_{\ry}(\cY)} \bE_{\gamma}[\|(I - \Pry) \derivXC \fun\|^2_{\HS(\XC, \cY)}] = \sum_{i = \ry + 1}^{\infty} \lambda_i^{\JJT}.
  \end{align}
  Moreover, in the presence of sampling errors, the derivative reconstruction errors for the empirical DIS
  % the reconstruction errors for the projections 
  % $\Qrxhat^{\JTJ} = \Vrxhat^{\JTJ} \Pinv{\Vrxhat^{\JTJ}}$ 
  % and 
  % $\Pryhat^{\JJT} = \Uryhat^{\JJT} \Pinv{\Uryhat^{\JJT}}$
  % onto the empirical derivative-informed subspaces
  are bounded by 
  \begin{align}
    \bE_{\gamma}[\|\derivXC \fun(I - \Qrxhat^{\JTJ})\|^2_{\HS(\XC, \cY)}] &\leq \sum_{i = \rx + 1}^{\infty} \mu_i^{\JTJ} 
      + \min \left\{ \sqrt{2 r} \|\Hx - \Hxhat \|_{\HS(\XC, \XC)}, \frac{2 \| \Hx - \Hxhat\|_{\HS(\XC,\XC)}^2}{\mu_{r}^{\JTJ} - \mu_{r+1}^{\JTJ}} \right\}, \label{eq:jacobian_error_estimator_input} \\
    \bE_{\gamma}[\|(I - \Pryhat^{\JJT}) \derivXC \fun\|^2_{\HS(\XC, \cY)}] &\leq \sum_{i = \ry + 1}^{\infty} \lambda_i^{\JJT} 
    + \min \left\{ \sqrt{2 r} \|\Hy - \Hyhat \|_{\HS(\cY, \cY)}, \frac{2 \| \Hy - \Hyhat\|_{\HS(\cY,\cY)}^2}{\lambda_{r}^{\JJT} - \lambda_{r+1}^{\JJT}} \right\}.   \label{eq:jacobian_error_estimator_output}
  \end{align}
  % \begin{equation}
  %   \bE_{\gamma}[\|\derivXC f(I - \Qrx^{\JTJ})\|^2_{\HS(\XC, \cY)}] = 
  %   \min_{\Qrx \in \cP_{\rx}(\XC)} \bE_{\gamma}[\|\derivXC f(I - \Qrx)\|^2_{\HS(\XC, \cY)}] = \sum_{i >\rx}^{\dimx} \lambda_i^{\JTJ}
  % \end{equation}
  % and 
  % \begin{equation}
  %   \bE_{\gamma}[\|(I - \Pry^{\JJT}) \derivXC f\|^2_{\HS(\XC, \cY)}] =
  %   \min_{\Pry \in \cP_{\ry}(\cY)} \bE_{\gamma}[\|(I - \Pry) \derivXC f\|^2_{\HS(\XC, \cY)}] = \sum_{i > \ry}^{\dimy} \lambda_i^{\JJT}.
  % \end{equation}
\end{proposition}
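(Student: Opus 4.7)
The plan is to reduce both parts of the proposition to standard eigenvalue optimization problems for self-adjoint, positive, trace-class operators on Hilbert spaces, then apply Fan's theorem for the exact bases and the projection error lemma (the same one invoked in the proof of \Cref{prop:pod_reconstruction_bound}) for the empirical bases. The key conceptual point is that the derivative operator $\derivXC \fun(x)$ lives in $\HS(\XC, \cY)$, so the reconstruction errors are naturally expressed as traces of $\Hx$ against projections on $\XC$, and of $\Hy$ against projections on $\cY$.

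First I would establish the trace identities. For any rank-$r$ $\XC$-orthogonal projection $Q_r$, applying the definition of the Hilbert--Schmidt norm, the self-adjointness of $I-Q_r$ on $\XC$, and Fubini gives
\begin{equation*}
  \bE_\gamma\!\left[\|\derivXC \fun (I - Q_r)\|_{\HS(\XC,\cY)}^2\right]
  = \bE_\gamma\!\left[\tr_{\XC}\!\left((I-Q_r)\derivXC\fun^*\derivXC\fun(I-Q_r)\right)\right]
  = \tr_{\XC}\!\left((I-Q_r)\Hx(I-Q_r)\right).
\end{equation*}
Analogously, for any rank-$r$ $\cY$-orthogonal projection $P_r$,
\begin{equation*}
  \bE_\gamma\!\left[\|(I - P_r)\derivXC \fun\|_{\HS(\XC,\cY)}^2\right]
  = \tr_{\cY}\!\left((I-P_r)\Hy(I-P_r)\right).
\end{equation*}
Since $\Hx$ and $\Hy$ are self-adjoint, positive, and trace-class on $\XC$ and $\cY$ respectively, Fan's theorem (\Cref{lemma:fans_theorem}) immediately implies that the minima over rank-$r$ orthogonal projections are attained by projecting onto the dominant eigenvectors of $\Hx$ and $\Hy$, with minimum values $\sum_{i=r+1}^{\infty} \mu_i^{\JTJ}$ and $\sum_{i=r+1}^{\infty} \lambda_i^{\JJT}$, exactly paralleling \Cref{prop:kle_input_reconstruction}.

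For the empirical bases, the same trace identities hold with $\Qrxhat^{\JTJ}$ and $\Pryhat^{\JJT}$ in place of the exact projections (they are still $\XC$- and $\cY$-orthogonal by construction, using the assumption that $\Cx^{-1}\vhat_i^{\JTJ} \in \cX$). The difference is that these empirical projections diagonalize $\Hxhat$ and $\Hyhat$ rather than $\Hx$ and $\Hy$, so they are suboptimal for the true reconstruction problem. Quantifying the excess risk $\tr_{\XC}((I-\Qrxhat^{\JTJ})\Hx(I-\Qrxhat^{\JTJ})) - \sum_{i=r+1}^{\infty} \mu_i^{\JTJ}$ is precisely the setting handled by the projection error lemma (\Cref{lemma:projection_errors}, the same ingredient used in \Cref{prop:pod_reconstruction_bound}) applied to the pair $(\Hx, \Hxhat)$; this yields the two-term minimum in \eqref{eq:jacobian_error_estimator_input}, with the global $\cO(\|\Hx-\Hxhat\|_{\HS})$ bound and the local, eigengap-dependent $\cO(\|\Hx-\Hxhat\|_{\HS}^2)$ bound. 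The identical argument applied to $(\Hy, \Hyhat)$ on $\cY$ gives \eqref{eq:jacobian_error_estimator_output}.

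The main subtlety, rather than obstacle, is handling the distinction between $\cX$ and $\XC$ for the input side: the projection $\Qrx^{\JTJ}$ is \emph{not} orthogonal on $\cX$ (as noted following \eqref{eq:input_sensitivity_basis}), but is orthogonal on $\XC$, and since $\derivXC \fun$ is defined on $\XC$ the relevant trace computation takes place entirely there. This is why Fan's theorem and the projection error lemma apply cleanly despite the non-orthogonality in $\cX$. Beyond this, the proof reuses the machinery already developed for the PCA case essentially verbatim.
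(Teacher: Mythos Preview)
Your proposal is correct and matches the paper's proof essentially verbatim: both reduce the reconstruction errors to $\tr((I-Q_r)\Hx(I-Q_r))$ and $\tr((I-P_r)\Hy(I-P_r))$, invoke Fan's theorem (the paper cites the derived \Cref{lemma:trace_minimization}) for the exact bases, and then apply \Cref{lemma:projection_errors} for the empirical excess risk. Your remark about the $\cX$-versus-$\XC$ orthogonality of $\Qrx^{\JTJ}$ is a helpful clarification that the paper leaves implicit.
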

\begin{proof}
  We start with the input side. 
  For any projection $\Qrx \in \cP_{r}(\XC)$, 
  \[
    \bE_{\gamma}[ \| \derivXC \fun ( I - \Qrx) \|_{\HS(\XC, \cY)}^2] 
    = \bE_{\gamma}[ \tr((I -  \Qrx) \derivXC^* \fun \derivXC \fun (I -  \Qrx))]
    = \tr((I -  \Qrx) \Hx (I -  \Qrx)),
  \]
  where the interchange of trace with expectation is valid,
  since each term in the infinite sum for the trace is non-negative, and $\derivXC^* \fun \derivXC \fun$ is assumed to be integrable.
  By \Cref{lemma:trace_minimization}, the trace is minimized by $\Qrx^{\JTJ}$ with value
  \[
    \bE_{\gamma}[ \| \derivXC \fun ( I - \Qrx^{\JTJ}) \|_{\HS(\XC, \cY)}^2]  = \sum_{i = \rx + 1}^{\infty} \lambda_i^{\JTJ}. 
  \]
  The case for the output projection is analogous, since for $\Pry \in \cP_{\ry}(\cY)$, 
  \[
    \bE_{\gamma}[ \|(I - \Pry)\derivXC \fun \|_{\HS(\XC, \cY)}^2]
    = \bE_{\gamma}[\tr((I - \Pry) \derivXC \fun \derivXC \fun^* (I - \Pry))]
    = \tr((I - \Pry) \Hy (I - \Pry))
  \]
  % The case for the empirical projector $\Qrxhat^{\JTJ}$ follows from Lemma \ref{lemma:projection_errors}, such that
  % \[ 
  %   \bE_{\gamma}[ \| \derivXC \fun ( I - \Qrxhat^{\JTJ}) \|_{\HS(\XC, \cY)}^2]  \leq \sum_{i = \rx + 1}^{\infty} \lambda_i^{\JTJ} + \sqrt{2r} \|\Hx - \Hxhat{\HS(\XC, \XC)}. 
  % \]
  In the presence of sampling errors, \Cref{lemma:projection_errors} yields the desired bounds for both the input and output subspaces.
\end{proof}

We can use the Poincar\'e inequality (\Cref{theorem:poincare}) to obtain a reconstruction error bound of the output itself using the output DIS, analogous to the output PCA.
We note that this bound is, in general, not optimal due to the use of the Poincar\'e inequality.
In fact, the trailing eigenvalue sum for DIS is at most as small as the trailing output PCA eigenvalues.

\begin{proposition}\label{prop:output_dis_function_error}
  Suppose $\fun \in H^1_{\gamma}$
  % Let $\Uryhat^{\JJT} = (\uhat_i^{\JJT})_{i=1}^{\ry}$ be the empirical output sensitivity basis given by \eqref{eq:output_sensitivity_basis}, 
  % with eigenvalues $\lambda_1^{\JJT} \geq \dots \geq \lambda_{\ry}^{\JJT}$. 
  % Then the reconstruction error using $\Pryhat^{\JJT} = \Uryhat^{\JJT} \Pinv{\Uryhat^{\JJT}}$ is bounded by 
  Then, the reconstruction error for the empirical output DIS is bounded by
  \begin{equation}
      \bE_{\gamma}[\|(I - \Pryhat^{\JJT})(\fun - \fbar)\|_{\cY}^2] 
        \leq \sum_{i = \ry + 1}^{\infty} \lambda_i^{\JJT} 
        + \min \left\{ \sqrt{2 r} \|\Hy - \Hyhat \|_{\HS(\cY, \cY)}, \frac{2 \| \Hy - \Hyhat\|_{\HS(\cY,\cY)}^2}{\lambda_{r}^{\JJT} - \lambda_{r+1}^{\JJT}} \right\}.
  \end{equation}
  % where $(\lambda_i^{\JJT})_{i=1}^{\ry}$ are the eigenvalues of the exact eigenvalue problem \eqref{eq:output_sensitivity_basis}.
\end{proposition}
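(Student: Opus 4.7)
The plan is to reduce the output reconstruction error to a derivative reconstruction error via the Gaussian Poincaré inequality (\Cref{theorem:poincare}), and then invoke the already-established bound \eqref{eq:jacobian_error_estimator_output} from \Cref{prop:dis_derivative_reconstruction}. This is the same route the authors hint at in the paragraph preceding \Cref{prop:output_dis_function_error} when they say that the Poincaré inequality lets one extract an output-reconstruction bound from an output DIS constructed purely from derivative information.

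First, I would fix a realization of the samples $(x_k)_{k=1}^{N}$, so that $\Pryhat^{\JJT}$ is a deterministic orthogonal projection on $\cY$. Since $(I - \Pryhat^{\JJT}) \in \cL(\cY, \cY)$ is a fixed bounded linear operator acting on the output, the composition $(I - \Pryhat^{\JJT}) \fun$ is in $H^1_{\gamma}$ and its Sobolev derivative commutes with the projection,
\[
\derivXC \bigl((I - \Pryhat^{\JJT}) \fun\bigr)(x) = (I - \Pryhat^{\JJT}) \derivXC \fun(x),
\]
which follows from approximating $\fun$ by smooth cylindrical functions and using the linearity of $\derivXC$. Likewise, by linearity of Bochner integration, the mean of $(I - \Pryhat^{\JJT}) \fun$ under $\gamma$ is $(I - \Pryhat^{\JJT}) \fbar$.

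Next, I would apply the Gaussian Poincaré inequality to the $\cY$-valued function $(I - \Pryhat^{\JJT}) \fun$, giving
\[
\bE_{\gamma}\bigl[ \| (I - \Pryhat^{\JJT})(\fun - \fbar) \|_{\cY}^2 \bigr]
\;\leq\; \bE_{\gamma}\bigl[ \| (I - \Pryhat^{\JJT}) \derivXC \fun \|_{\HS(\XC, \cY)}^2 \bigr].
\]
Now the right-hand side is exactly the quantity controlled by \eqref{eq:jacobian_error_estimator_output} in \Cref{prop:dis_derivative_reconstruction}, yielding
\[
\bE_{\gamma}\bigl[ \| (I - \Pryhat^{\JJT}) \derivXC \fun \|_{\HS(\XC, \cY)}^2 \bigr]
\;\leq\; \sum_{i = \ry + 1}^{\infty} \lambda_i^{\JJT}
+ \min\!\left\{ \sqrt{2 r}\, \|\Hy - \Hyhat\|_{\HS(\cY, \cY)},\; \frac{2\, \|\Hy - \Hyhat\|_{\HS(\cY,\cY)}^2}{\lambda_{r}^{\JJT} - \lambda_{r+1}^{\JJT}} \right\},
\]
which is exactly the claimed bound. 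Chaining the two inequalities completes the proof, and since the bound holds for every realization of the samples, no additional conditioning argument is needed.

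There is no substantive obstacle here; the only point deserving care is the validity of the Poincaré inequality for $\cY$-valued Sobolev functions (handled by \Cref{theorem:poincare}) and the commutation of a fixed linear output-side projection with the Sobolev derivative $\derivXC$, which is immediate from the definition of $W^{m,p}(\gamma, \cY)$ via smooth cylindrical functions. The remark in the statement that this bound is generally suboptimal is a direct consequence of the inequality direction of Poincaré — equality in $\sum \lambda_i^{\JJT} \geq \sum \lambda_i^{\POD}$ in the trailing tail is not achieved except in degenerate cases.
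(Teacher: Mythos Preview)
Your proposal is correct and follows essentially the same route as the paper: apply the Gaussian Poincar\'e inequality to $(I-\Pryhat^{\JJT})\fun$ to reduce the output reconstruction error to the derivative reconstruction error, then invoke the output-DIS bound \eqref{eq:jacobian_error_estimator_output} from \Cref{prop:dis_derivative_reconstruction}. If anything, you are slightly more careful than the paper in justifying the commutation of the fixed projection with $\derivXC$ and in conditioning on the sample realization.
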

\begin{proof}
  Since for any $\Pry \in \cP_r(\cY)$, the function $(I - \Pry)\fun$ has derivative $(I - \Pry) \derivXC \fun$,
  and mean $(I - \Pry) \fbar$, 
  we can apply the Poincar\'e equality (\Cref{theorem:poincare}) to obtain 
  \[ 
    \bE_{\gamma}[\|(I - \Pry)(\fun - \fbar)\|_{\cY}^2] \leq \bE_{\gamma}[ \|( I - \Pry) \derivXC \fun\|_{\HS(\XC, \cY)}^2].
  \]
  The bound \eqref{eq:jacobian_error_estimator_input} yields the desired result.
  Note that if we instead use the exact DIS, $\Pry = \Pry^{\JJT}$, the bound is simply 
  \[ 
    \bE_{\gamma}[\|(I - \Pry^{\JJT})(\fun - \fbar)\|_{\cY}^2] \leq \sum_{i = \ry + 1}^{\infty} \lambda_i^{\JJT}.
  \]
\end{proof}

\subsection{Ridge function errors}
\subsubsection{Ridge function errors of the output values}
We now consider the ridge function errors that arise due to the projection of the input. 
This is studied by \cite{ZahmConstantinePrieurEtAl20} for the approximation error in $L^2_{\gamma}$ for finite-dimensional input and output spaces,
and subsequently used to derive estimates on DIPNet approximations \cite{OLearyRoseberryVillaChenEtAl22,OLearyRoseberryDuChaudhuriEtAl22}. 
In this section, we provide extensions to the case of infinite-dimensional inputs and outputs, 
and also provide bounds on the derivative error.

As previously discussed, given $\Vrx = (v_i)_{i=1}^{\rx}$, a set of orthonormal vectors in $\XC$ and its encoding operator
$\pinv{\Vrx} = (\linner v_1, \cdot \rinner_{\XC}, \dots, \linner v_{\rx}, \cdot \rinner_{\XC})$,
the conditional expectation $\bE_{\gamma}[\fun | \sigma(\pinv{\Vrx})]$ 
is an optimal representation of $\fun$ when restricted to the sigma algebra $\sigma(\pinv{\Vrx})$.
% For $x \sim \gamma$ Gaussian, this conditional expectation takes the form 
% \begin{equation}
%   \bE_{\nu}[f | \sigma(\pinv{\Vrx})](x) = \bE_{z \sim \nu}[f(\Qrx x + (I - \Qrx) z)].
% \end{equation}
Our strategy is to study how the conditional expectation approximates the function, and then use a RBNO to approximate the projection of the conditional expectation onto the output subspace spanned by $\Ury$, i.e., $\Pry \bE_{\gamma}[\fun | \sigma(\pinv{\Vrx})]$.

In particular, the conditional expectation satisfies a subspace Poincar\'e inequality, which is shown by \cite{ZahmConstantinePrieurEtAl20} in the finite-dimensional context. 
We will verify that an analogous version holds in the context of separable Hilbert spaces. 
The proof follows from the Poincar\'e inequality itself, and is given in \Cref{sec:proof_subspace_poincare}.
% \dc{We actually are able to drop the $C^1$ assumption (i.e., for all $H^1_{\gamma}$) by a density argument}
\begin{theorem}[Subspace Poincar\'e inequality] \label{theorem:subspace_poincare}
  % Suppose $\fun \in H^1_{\gamma} \cap C^1(\cX, \cY)$, and $\|\deriv \fun\|_{\cL(\cX, \cY)}$ has bounded second moments.
  Suppose $\fun \in H^1_{\gamma}$.
  Let $(v_i)_{i=1}^{\infty}$ be an orthonormal basis of the Cameron--Martin space $\XC$ for which $(\Cx^{-1} v_i)_{i=1}^{\infty}$ are well-defined in $\cX$. 
  Then, with the reduced basis $\Vrx = (v_i)_{i=1}^{\rx}$ and its corresponding projection $\Qrx = \Vrx \pinv{\Vrx} = \sum_{i=1}^{\rx} v_i \otimes \Cx^{-1} v_i$, 
  the operator $\fun$ satisfies the subspace Poincar\'e inequality,
  \begin{equation}\label{eq:subspace_poincare}
    \bE_{\gamma}\left[ \|\fun - \bE_{\gamma}[\fun | \sigma(\pinv{\Vrx})] \|_{\cY}^2 \right]
    \leq 
    \bE_{\gamma} \left[ \|\derivXC \fun (I - \Qrx)\|_{\HS(\XC, \cY)}^2 \right].
    % \int \|\fun(x) - \bE_{\gamma}[\fun|\sigma (\pinv{\Vrx})](x)\|_{\cY}^2 \gamma(dx) 
    % \leq \int \|\derivXC \fun(x)\|_{\HS(\XC, \cY)}^2 \gamma(dx).
  \end{equation}
  % where the conditional expectation is 
  % \[ \bE_{\gamma}[\fun|\sigma (\pinv{\Vrx})](x) = \int_{\cX} \fun(\Qrx x + (I - \Qrx)y) \gamma(dy) \].
  % $\bE_{\gamma}[\fun|\sigma (\pinv{\Vrx})](x) = \bE_{y \sim \gamma}[\fun(\Qrx x + (I - \Qrx)y)]$.
\end{theorem}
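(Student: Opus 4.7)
The plan is to reduce the subspace inequality to the classical Gaussian Poincar\'e inequality applied along the complementary directions, using the conditional expectation representation \eqref{eq:conditional_expectation}.

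First I would record the independence structure. Because $(v_i)_{i=1}^\infty$ is orthonormal in $\XC$, the projector $\Qrx = \sum_{i=1}^{r} v_i\otimes\Cx^{-1}v_i$ satisfies $\Cx \Qrx^* = \sum_{i=1}^{r} v_i\otimes v_i$, whose range lies in $\mathrm{Range}(\Qrx)$. A direct covariance calculation then gives $\cov(\linner w_1,\Qrx x\rinner_{\cX},\linner w_2,(I-\Qrx)x\rinner_{\cX}) = \linner (I-\Qrx)\Cx \Qrx^* w_1,w_2\rinner_{\cX} = 0$ for all $w_1,w_2\in\cX$, so the jointly Gaussian vectors $\Qrx x$ and $(I-\Qrx)x$ are independent under $\gamma$.

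Next, for a fixed $y \in \mathrm{Range}(\Qrx)$, define
\[
  G_y(z) := \fun(y + (I-\Qrx) z), \qquad z \in \cX.
\]
By \eqref{eq:conditional_expectation} we have $\bE_\gamma[\fun | \sigma(\pinv{\Vrx})](x) = \bE_{z\sim\gamma}[G_{\Qrx x}(z)]$, and by independence
\[
  \bE_\gamma\bigl[\|\fun - \bE_\gamma[\fun | \sigma(\pinv{\Vrx})]\|_{\cY}^2\bigr]
  = \bE_{y \sim (\Qrx)_\# \gamma}\bE_{z\sim\gamma}\bigl[\|G_y(z) - \bE_{z'\sim\gamma}[G_y(z')]\|_{\cY}^2\bigr].
\]
I would then apply the classical Gaussian Poincar\'e inequality (\Cref{theorem:poincare}) to $G_y$ viewed as an element of $H^1_\gamma$. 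Since $(I-\Qrx)$ leaves $\XC$ invariant and acts as the $\XC$-orthogonal complement of $\Qrx|_{\XC}$, the chain rule in the Sobolev sense (an approximation argument via $\cF\cC^\infty$, using that $\derivXC$ commutes with composition by the continuous linear map $z\mapsto y+(I-\Qrx)z$) gives
\[
  \derivXC G_y(z) = \derivXC \fun(y+(I-\Qrx)z)\,(I-\Qrx)\big|_{\XC},
\]
so that $\|\derivXC G_y(z)\|_{\HS(\XC,\cY)}^2 = \|\derivXC \fun(y+(I-\Qrx)z)(I-\Qrx)\|_{\HS(\XC,\cY)}^2$.

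Finally, I would integrate the Poincar\'e bound over $y$. The map $(y,z)\mapsto y+(I-\Qrx)z$ pushes the product measure $(\Qrx)_\#\gamma\otimes\gamma$ to $\gamma$ (again using independence and that $(I-\Qrx)z$ has the same distribution as $(I-\Qrx)x$), which turns the joint expectation into $\bE_{x\sim\gamma}[\|\derivXC \fun(x)(I-\Qrx)\|_{\HS(\XC,\cY)}^2]$, establishing \eqref{eq:subspace_poincare}. The main subtlety I anticipate is rigorously justifying the chain rule for $\derivXC$ along the restriction $G_y$, which I would handle by a density argument: approximate $\fun$ in $H^1_\gamma$ by smooth cylindrical functions where the identity is clear, and pass to the limit using that the map $\fun\mapsto \derivXC \fun\,(I-\Qrx)$ is continuous from $H^1_\gamma$ into $L^2(\gamma,\HS(\XC,\cY))$.
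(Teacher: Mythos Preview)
Your proposal is correct and follows essentially the same route as the paper: reduce to the classical Gaussian Poincar\'e inequality applied in the complementary directions via the conditional expectation representation \eqref{eq:conditional_expectation}, then integrate out and extend to general $\fun\in H^1_\gamma$ by density of $\cF\cC^\infty$. The only cosmetic difference is that the paper works with two independent copies $(x,y)\sim\gamma\otimes\gamma$ and the identity $\Qrx x+(I-\Qrx)y\sim\gamma$, whereas you phrase the same decomposition through the independence of $\Qrx x$ and $(I-\Qrx)x$ and the pushforward $(\Qrx)_{\#}\gamma$; both lead to the same integral identity.
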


% \begin{theorem}[Subspace Poincar\'e inequality for Gaussian measures]
%   Let $\fun : \cX \rightarrow \cY$ be continuously differentiable and $\nu = \cN(0, \Cx)$. Then,
%   \begin{equation}
%     \| \fun - \bE_{\nu}[\fun | \sigma(\pinv{\Vrx})] \|_{L^2_{\nu}}^2 \leq \bE_{\nu} [ \|\derivXC f (I - \Qrx) \|_{\HS(\XC, \cY)}^2 ].
%   \end{equation}
% \end{theorem}
% This leads to a bound on the $L^2_{\gamma}$ ridge-function error term in \eqref{eq:overall_error_l2} when using the projected conditional expectation. The result is analogous to that of \cite{OLearyRoseberryVillaChenEtAl22}.
% as is done in Tom's DIPnet paper in the finite dimensional setting. 
Substituting the bounds from \Cref{prop:kle_derivative_reconstruction} and \Cref{prop:dis_derivative_reconstruction} on
the derivative reconstruction error, we can explicitly write out the bounds for the PCA and DIS bases. 
This result is analogous to Proposition 3.1 of \cite{ZahmConstantinePrieurEtAl20}. 
\begin{corollary}[Ridge function error in $L^2_{\gamma}$] \label{cor:ridge_function_error}
  % Let $\fun : \cX \rightarrow \cY$ be continuously differentiable and $\nu = \cN(0, \Cx)$. 
  Suppose $\fun \in H^1_{\gamma}$.
  % Then, the optimal ridge function $\bE_{\gamma}[\fun | \sigma(\pinv{\Vrx})]$ 
  % $\ftilde = \Pry \bE_{\nu}[\fun - \fbar | \sigma(\pinv{\Vrx})]$
  Additionally, assume $(\Cx^{-1} v_i^{\JTJ})_{i=1}^{\infty}$ and $(\Cx^{-1} \vhat_i^{\JTJ})_{i=1}^{\infty}$ are well-defined in $\cX$ for the exact and empirical input DIS bases.
  Then,
  \begin{equation} 
    % \bE_{\gamma}[ \| \Pry(\fun - \ftilde) \|_{\cY}^2] \leq \sum_{i > \rx}^{\dimx} \lambda_i^{\JTJ}
    \bE_{\gamma} \left[ \| \fun - \bE_{\gamma}[\fun | \sigma(\Pinv{\Vrx^{\JTJ}})] \|_{\cY}^2 \right] \leq \sum_{i = \rx + 1}^{\infty} \mu_i^{\JTJ}
  \end{equation}
  when using the exact input DIS and
  \begin{equation} 
    \bE_{\gamma} \left[ \| \fun - \bE_{\gamma}[\fun | \sigma(\Pinv{\Vrxhat^{\JTJ}})]\|_{\cY}^2  \right]\leq 
    % \bE_{\gamma}[ \| \Pry(\fun - \ftilde) \|_{\cY}^2] 
    \sum_{i = \rx + 1}^{\infty} \mu_i^{\JTJ} 
    + \min \left\{ \sqrt{2 r} \|\Hx - \Hxhat \|_{\HS(\XC, \XC)}, \frac{2 \| \Hx - \Hxhat\|_{\HS(\XC,\XC)}^2}{\mu_{r}^{\JTJ} - \mu_{r+1}^{\JTJ}} \right\} 
  \end{equation}
  when using the empirical input DIS.

  Alternatively, if $\fun \in H^1_{\gamma} \cap C^1(\cX, \cY)$ and $D \fun \in L^2(\gamma, \cL(\cX, \cY))$, 
  then 
  \begin{equation} 
    % \bE_{\gamma}[ \| \Pry(\fun - \ftilde) \|_{\cY}^2] \leq 
    \bE_{\gamma}\left[ \| \fun - \bE_{\gamma}[\fun | \sigma(\Pinv{\Vrx^{\KLE}})] \|_{\cY}^2 \right] \leq 
    \bE_{\gamma}\left[ \|\deriv \fun \|_{\cL(\cX, \cY)}^2 \right]\sum_{i = \rx + 1}^{\infty} \mu_i^{\KLE} 
  \end{equation}
  when using the exact input PCA.

\end{corollary}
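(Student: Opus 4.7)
The plan is to combine the subspace Poincar\'e inequality with the derivative reconstruction bounds already established in the paper. Concretely, for any orthonormal basis $\Vrx = (v_i)_{i=1}^{\rx}$ in $\XC$ with $(\Cx^{-1} v_i)$ well-defined in $\cX$, Theorem \ref{theorem:subspace_poincare} gives
\[
\bE_{\gamma}\!\left[\|\fun - \bE_{\gamma}[\fun | \sigma(\pinv{\Vrx})]\|_{\cY}^2\right]
\leq \bE_{\gamma}\!\left[\|\derivXC \fun (I - \Qrx)\|_{\HS(\XC,\cY)}^2\right],
\]
where $\Qrx = \Vrx \pinv{\Vrx}$. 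Thus the ridge function error is controlled by the derivative reconstruction error of the projection $I - \Qrx$, and it remains only to substitute the appropriate reconstruction bound depending on which input subspace is chosen.

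For the exact input DIS case, I would apply the subspace Poincar\'e inequality with $\Vrx = \Vrx^{\JTJ}$ and $\Qrx = \Qrx^{\JTJ}$, and then invoke the exact-DIS optimality statement in Proposition \ref{prop:dis_derivative_reconstruction}, which gives $\bE_{\gamma}[\|\derivXC \fun (I - \Qrx^{\JTJ})\|_{\HS(\XC,\cY)}^2] = \sum_{i=\rx+1}^{\infty} \mu_i^{\JTJ}$. For the empirical input DIS, the same two-step argument applies with $\Vrx = \Vrxhat^{\JTJ}$; the only change is that the second step uses the perturbed bound \eqref{eq:jacobian_error_estimator_input} from Proposition \ref{prop:dis_derivative_reconstruction}, which adds the sampling error term $\min\{\sqrt{2r}\|\Hx - \Hxhat\|_{\HS}, 2\|\Hx - \Hxhat\|_{\HS}^2/(\mu_r^{\JTJ} - \mu_{r+1}^{\JTJ})\}$. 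I should verify that the hypothesis $(\Cx^{-1}\vhat_i^{\JTJ}) \in \cX$ needed to apply the subspace Poincar\'e inequality is precisely what is assumed in the statement.

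For the input PCA case, I take $\Vrx = \Vrx^{\KLE}$, $\Qrx = \Qrx^{\KLE}$, and again apply the subspace Poincar\'e inequality. The additional regularity $\fun \in C^1(\cX,\cY)$ with $\deriv\fun \in L^2(\gamma, \cL(\cX,\cY))$ is exactly what lets me invoke Proposition \ref{prop:kle_derivative_reconstruction}, which yields $\bE_{\gamma}[\|\derivXC \fun(I - \Qrx^{\KLE})\|_{\HS(\XC,\cY)}^2] \leq \bE_{\gamma}[\|\deriv\fun\|_{\cL(\cX,\cY)}^2] \sum_{i=\rx+1}^{\infty} \mu_i^{\KLE}$. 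Since $(\Cx^{-1} v_i^{\KLE}) = (v_i^{\KLE}/\mu_i^{\KLE})$ is well-defined in $\cX$ for any $\mu_i^{\KLE} > 0$ with $v_i^{\KLE} \in \cX$, the applicability of the subspace Poincar\'e inequality is immediate.

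There is really no main obstacle here; this corollary is a direct assembly of earlier results, with the subspace Poincar\'e inequality (Theorem \ref{theorem:subspace_poincare}) being the conceptual bridge and the reconstruction-error propositions supplying the explicit quantitative bounds. The only minor care needed is to check that the hypotheses of each invoked result are satisfied under the assumptions listed in the corollary, particularly the well-definedness of $\Cx^{-1}$ applied to the input basis vectors in the DIS cases (which is explicitly assumed), and the differentiability/integrability hypotheses for the PCA case (which are also explicitly listed).
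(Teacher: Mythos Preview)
Your proposal is correct and matches the paper's own approach exactly: the corollary is obtained by applying the subspace Poincar\'e inequality (Theorem~\ref{theorem:subspace_poincare}) to bound the ridge function error by the derivative reconstruction error, and then substituting the bounds from Proposition~\ref{prop:kle_derivative_reconstruction} and Proposition~\ref{prop:dis_derivative_reconstruction}.
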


\subsubsection{Ridge function errors of the derivatives}
We next consider the ridge function error of the derivative term $\bE_{\gamma}[\|\Pry (\derivXC \fun - \derivXC \ftilde) \Qrx\|_{\HS(\XC, \cY)}^2]$. 
Recall that $\fun \in H^2_{\gamma}$, the Sobolev derivative $\derivXC \fun$ is also a mapping in $H^1(\gamma, \HS(\XC, \cY))$.
Thus, the projected derivative $\derivXC \fun(x) \Qrx$ has an optimal ridge function representation 
\[ \bE_{\gamma}[ \derivXC \fun \Qrx | \sigma(\pinv{\Vrx}) ](x) = \bE_{z \sim \gamma}[\derivXC \fun(\Qrx x + (I - \Qrx)z) \Qrx] = \bE_{\gamma}[\derivXC \fun | \sigma(\pinv{\Vrx})](x) \Qrx, \]
which also coincides with the derivative of the conditional expectation, $\derivXC \bE_{\gamma}[\fun | \sigma(\pinv{\Vrx})]$.
% Moreover, we can show that the Sobolev derivative can be interchanged with the conditional expectation here for functions in $H^1_{\gamma}$, such that 
% \[ \derivXC \bE_{\gamma}[\fun | \sigma(\pinv{\Vrx})] = \bE_{\gamma}[\derivXC \fun \Qrx | \sigma(\pinv{\Vrx})], \]
% so the conditional expectation can be simultaneously used to optimally approximate the function $\fun$ and its projected Sobolev derivative $\derivXC \fun \Qrx$.
% This is summarized in \Cref{lemma:interchange_differentiation_conditional_expectation}. 
Using this fact, we can apply the Subspace Poincar\'e inequality on the projected Sobolev derivative,
which bounds the derivative ridge function error by the Hessian acting on the orthogonal complement of the projection $\Qrx$.
\begin{proposition}[Subspace Poincar\'e inequality for the derivative]
  % Suppose $\fun \in H^2_{\gamma} \cap C^2(\cX, \cY)$ 
  % and $\|\deriv \fun\|_{\cL(\cX, \cY)}$ and $\|\deriv^2 \fun\|_{\cL_2(\cX, \cY)}$ have bounded second moments. 
  Suppose $\fun \in H^2_{\gamma}$.
  % and $\|\deriv \fun\|_{\cL(\cX, \cY)}$ and $\|\deriv^2 \fun\|_{\cL_2(\cX, \cY)}$ have bounded second moments. 
  Let $(v_i)_{i=1}^{\infty}$ be an orthonormal basis of the Cameron--Martin space $\XC$ for which $(\Cx^{-1} v_i)_{i=1}^{\infty}$ are well-defined in $\cX$. 
  Then, with the reduced basis $\Vrx = (v_i)_{i=1}^{\rx}$ and its corresponding projection $\Qrx = \Vrx \pinv{\Vrx} = \sum_{i=1}^{\rx} v_i \otimes \Cx^{-1} v_i$, 
  the operator $\fun$ satisfies the inequality 
  \begin{equation}\label{eq:hessian_poincare}
    \bE_{\gamma} \left[\|\derivXC \fun - \derivXC \bE_{\gamma}[\fun | \sigma(\pinv{\Vrx})]\|_{\HS(\XC, \cY)}^2 \right] 
    \leq \sum_{i=1}^{\infty} \bE_{\gamma}[ \| \derivXC^{2} \fun(w_i, \cdot) (I - \Qrx) \|_{\HS(\XC, \cY)}^2],
  \end{equation}
  for any orthonormal basis $(w_i)_{i=1}^{\infty}$ in $\XC$, where $\derivXC^{2}\fun(w_i, \cdot)$ denotes the linear operator $h \mapsto \derivXC^{2} \fun(w_i, h)$
\end{proposition}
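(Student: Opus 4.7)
The plan is to recognize this bound as an instance of the subspace Poincar\'e inequality (Theorem \ref{theorem:subspace_poincare}) applied to the Sobolev derivative $G := \derivXC \fun$, viewed as an $\HS(\XC, \cY)$-valued element of $H^1(\gamma, \HS(\XC, \cY))$. This is the natural object since $\fun \in H^2_\gamma$ implies $G \in H^1(\gamma, \HS(\XC, \cY))$, whose Sobolev derivative is identified with the second Sobolev derivative $\derivXC^2 \fun$ acting bilinearly on $\XC \times \XC$.

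I would first invoke Lemma \ref{lemma:interchange_differentiation_conditional_expectation} to write
\[
\derivXC \bE_\gamma[\fun \mid \sigma(\pinv{\Vrx})] = \bE_\gamma[G \mid \sigma(\pinv{\Vrx})]\,\Qrx =: G_r \Qrx,
\]
noting that the conditional expectation's derivative is supported on the range of $\Qrx$. Exploiting the orthogonal splitting $\XC = \mathrm{range}(\Qrx) \oplus \mathrm{range}(I-\Qrx)$ together with the corresponding HS-orthogonal decomposition $\|A\|_{\HS}^2 = \|A\Qrx\|_{\HS}^2 + \|A(I-\Qrx)\|_{\HS}^2$, the LHS reduces to controlling the centered deviation $\bE_\gamma[\|G - G_r\|_{\HS(\XC,\cY)}^2]$ (composed with the appropriate projection, which contracts the HS norm).

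Next, I would apply a vector-valued subspace Poincar\'e inequality to $G$ in the separable Hilbert space $\HS(\XC, \cY)$, which yields
\[
\bE_\gamma[\|G - G_r\|_{\HS(\XC, \cY)}^2] \leq \bE_\gamma\!\left[\|\derivXC G\,(I-\Qrx)\|^2_{\HS(\XC, \HS(\XC, \cY))}\right].
\]
Expanding the right-hand side in an orthonormal basis $(w_j)_{j=1}^\infty$ of $\XC$ and using the identification $\derivXC G(x)(h) = \derivXC^2 \fun(x)(h, \cdot)$, this equals $\sum_{j=1}^\infty \bE_\gamma[\|\derivXC^2 \fun((I-\Qrx)w_j, \cdot)\|_{\HS(\XC, \cY)}^2]$. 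Invoking the symmetry of the Sobolev second derivative to swap the two input slots and relabeling indices reorganizes this sum into $\sum_{i=1}^\infty \bE_\gamma[\|\derivXC^2 \fun(w_i, \cdot)(I-\Qrx)\|^2_{\HS(\XC, \cY)}]$, matching the desired right-hand side.

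The main technical obstacles are twofold. First, the subspace Poincar\'e inequality as stated is for $\cY$-valued functions; extending it to $\HS(\XC, \cY)$-valued mappings can be obtained by expanding the integrand in an orthonormal basis of $\HS(\XC, \cY)$, applying the scalar result componentwise, and recombining via Parseval and monotone convergence, since everything in sight is non-negative and $\HS(\XC, \cY)$ is itself a separable Hilbert space. Second, the interchange of the two input slots of $\derivXC^2 \fun$ relies on its symmetry, $\derivXC^2 \fun(x)(h_1, h_2) = \derivXC^2 \fun(x)(h_2, h_1)$ for $\gamma$-a.e.\ $x$; this is trivial for smooth cylindrical $\fun$ by Schwarz's theorem and extends to $\fun \in H^2_\gamma$ via the density of $\cF\cC^\infty$ in $W^{2,2}(\gamma, \cY)$ under the Sobolev norm, with symmetry being preserved in the limit.
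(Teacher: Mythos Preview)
Your proposal is correct and follows essentially the same route as the paper: invoke Lemma~\ref{lemma:interchange_differentiation_conditional_expectation}, contract by $\Qrx$ to reduce to the centered deviation of $G=\derivXC\fun$, then apply the subspace Poincar\'e inequality (Theorem~\ref{theorem:subspace_poincare}) to $G\in H^1(\gamma,\HS(\XC,\cY))$ and expand the resulting $\HS_2$ norm in a basis. The only cosmetic difference is that the paper's slot convention for $\derivXC^2\fun$ (with the second argument as the outer differentiation direction) makes your explicit symmetry step unnecessary, since $\|\derivXC^2\fun(\cdot,(I-\Qrx)(\cdot))\|_{\HS_2}^2$ already unfolds directly as $\sum_i\|\derivXC^2\fun(w_i,\cdot)(I-\Qrx)\|_{\HS}^2$.
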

\begin{proof}
  \Cref{lemma:interchange_differentiation_conditional_expectation} implies that
  $ \derivXC \bE_{\gamma}[\fun | \sigma(\pinv{\Vrx})] = \bE_{\gamma}[\derivXC \fun |\sigma(\pinv{\Vrx})]\Qrx.$ 
  Therefore,  
  \begin{align*}
    \bE_{\gamma} \left[ \| \derivXC \fun \Qrx - \derivXC \bE_{\gamma}[\fun | \sigma(\pinv{\Vrx})] \|_{\HS(\XC,\cY)}^2 \right]
      &= \bE_{\gamma}\left[ \| \derivXC \fun \Qrx - \bE_{\gamma}[\derivXC \fun | \sigma(\pinv{\Vrx})] \Qrx \|_{\HS(\XC, \cY)}^2 \right] \\ 
      &\leq \bE_{\gamma}\left[ \| \derivXC \fun - \bE_{\gamma}[\derivXC \fun | \sigma(\pinv{\Vrx})] \|_{\HS(\XC, \cY)}^2 \right]. 
  \end{align*}
  where we have made use of $\|\Qrx\|_{\cL(\XC, \XC)} = 1$. 
  By our assumptions, $\derivXC \fun \in H^1(\gamma, \HS(\XC, \cY))$, so we can apply the subspace Poincar\'e inequality 
  \[ 
    \bE_{\gamma} \left[\|\derivXC \fun - \derivXC \bE_{\gamma}[\fun | \sigma(\pinv{\Vrx})]\|_{\HS(\XC, \cY)}^2 \right] 
    \leq \bE_{\gamma} [\derivXC^2 \fun(\cdot, (I - \Qrx)(\cdot)) \|_{\HS_2(\XC, \cY)}^2], 
  \]
  where $\derivXC^2 \fun(x)(\cdot, (I - \Qrx)(\cdot)) : (h_1, h_2) \mapsto \derivXC^2 \fun(x) (h_1, (I - \Qrx)h_2)$ takes values in $\HS_{2}(\XC, \cY)$.
  Moreover, we can write the $\HS_2(\XC, \cY)$ norm as 
  \[
    \|\derivXC^2 \fun(\cdot, (I - \Qrx)(\cdot)) \|_{\HS_2(\XC, \cY)}^2 = \sum_{i=1}^{\infty} \| \derivXC^2 \fun(x)(w_i, \cdot)(I - \Qrx) \|_{\HS(\XC, \cY)}^2,
  \]
  where $\derivXC^2 \fun(x)(w_i, \cdot)(I - \Qrx) : h \mapsto \derivXC^2(x)(w_i, (I - \Qrx)h)$. 
  Since the sum consists of non-negative terms, interchanging expectation with summation yields \eqref{eq:hessian_poincare}.

  % \begin{align*}
  %   \bE_{\gamma} \left[ \| \derivXC \fun \Qrx - \deriv \bE_{\gamma}[\fun | \sigma(\pinv{\Vrx})] \|_{\HS(\XC,\cY)}^2 \right]
  %     &= \bE_{\gamma}\left[ \| \derivXC \fun \Qrx - \bE_{\gamma}[\derivXC \fun \Qrx | \sigma(\pinv{\Vrx})] \|_{\HS(\XC, \cY)}^2 \right] \\ 
  %     &= \bE_{\gamma}\left[ \sum_{i=1}^{\infty} \| \derivXC \fun \Qrx w_i - \bE_{\gamma}[\derivXC \fun \Qrx w_i | \sigma(\pinv{\Vrx})] \|_{\cY}^2 \right].
  % \end{align*}
  % Since $\derivXC \fun w_i \in W^{2,1}(\gamma, \cX; \cY)$, we can apply the subspace Poincar\'e inequality 
  % \[ \bE_{x \sim\gamma}[ \| \deriv \fun(x) w_i - \bE_{\gamma}[\deriv f w_i | \sigma(\pinv{\Vrx})](x) \|_{\cY}^2 ] \leq \bE_{\gamma}[ \| \deriv^2 \fun(x) (w_i, \cdot) (I - \Qrx) \|_{\HS(\XC, \cY)}^2]. \]
  % Summing over $i = 1, \dots, \dimx$ yields the desired result.
\end{proof}

In the case of $\Qrx$ coming from the input PCA, we can further bound the Hessian action by the trailing input PCA eigenvalues at the cost of a (potentially large) constant, 
analogous to that of the derivative reconstruction error.
\begin{proposition}[Derivative ridge function error bound with input PCA]\label{prop:ridge_derivative_bound_kle}
  Suppose $\fun \in H^2_{\gamma}$ satisfies \Cref{assumption:derivative_second_moments}.
  % Let $(v_i^{\KLE})_{i=1}^{\infty}$ be the input PCA basis with eigenvalues $(\mu_i^{\KLE})_{i=1}^{\infty}$ and 
  Then, for any orthogonal projection $\Pry \in \cP_{\ry}(\cY)$, the exact input PCA satisfies
  % Then, for the basis $\Vrx = (v_i^{\KLE})_{i=1}^{r}$, defined to be orthonormal in $\XC$, and its corresponding projection operator $\Qrx^{\KLE} = \Vrx^{\KLE} \Pinv{\Vrx^{\KLE}}$,
  \begin{align}\label{eq:kle_basis_derivative_ridge_error}
    \nonumber
    & \bE_{\gamma} \left[\|\Pry (\derivXC \fun  - \derivXC \bE_{\gamma}[\fun | \sigma(\Pinv{\Vrx^{\KLE}})])\Qrx^{\KLE} \|_{\HS(\XC, \cY)}^2 \right] \\
    & \qquad \qquad \qquad\leq 
      \bE_{\gamma} \left[\|\deriv^{2}\fun\|_{\cL_2(\cX, \cY)}^2 \right] 
      \bE_{\gamma}[\|x\|_{\cX}^2] 
      \sum_{i = \rx + 1}^{\infty} \mu_i^{\KLE}.
  \end{align}
\end{proposition}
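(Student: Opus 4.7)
The plan is to combine the subspace Poincaré inequality for the derivative, established in the preceding proposition (equation \eqref{eq:hessian_poincare}), with the same operator-versus-Hilbert--Schmidt norm splitting trick that was used in the proof of \Cref{prop:kle_derivative_reconstruction}. First, I would drop the output projection $\Pry$: since $\Pry$ is an orthogonal projection on $\cY$, it is a contraction in the Hilbert--Schmidt norm, so
\[ \bE_{\gamma}\!\left[ \|\Pry(\derivXC \fun - \derivXC \bE_{\gamma}[\fun|\sigma(\Pinv{\Vrx^{\KLE}})])\Qrx^{\KLE}\|_{\HS(\XC,\cY)}^2 \right]
\leq \bE_{\gamma}\!\left[ \|(\derivXC \fun - \derivXC \bE_{\gamma}[\fun|\sigma(\Pinv{\Vrx^{\KLE}})])\|_{\HS(\XC,\cY)}^2 \right], \]
where I have also used $\|\Qrx^{\KLE}\|_{\cL(\XC,\XC)} = 1$ to drop the input projection from the residual. \Cref{assumption:derivative_second_moments} ensures that $\fun \in H^2_\gamma$ so the subspace Poincaré inequality for the derivative applies, yielding
\[ \bE_{\gamma}\!\left[ \|\derivXC \fun - \derivXC \bE_{\gamma}[\fun|\sigma(\Pinv{\Vrx^{\KLE}})]\|_{\HS(\XC,\cY)}^2 \right]
\leq \sum_{i=1}^{\infty} \bE_{\gamma}\!\left[ \|\derivXC^2 \fun(w_i,\cdot)(I-\Qrx^{\KLE})\|_{\HS(\XC,\cY)}^2 \right] \]
for any orthonormal basis $(w_i)_{i=1}^{\infty}$ of $\XC$.

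Next, I would peel the factor $(I - \Qrx^{\KLE})$ out of the Hilbert--Schmidt norm. Writing $\derivXC^2 \fun(x)(w_i,\cdot)(I-\Qrx^{\KLE}) = \deriv^2 \fun(x)(w_i,\cdot) \circ (I-\Qrx^{\KLE})$ where the first factor is in $\cL(\cX,\cY)$ by \Cref{assumption:derivative_second_moments} and the second factor is in $\HS(\XC,\cX)$, the standard operator--HS inequality gives
\[ \|\deriv^2 \fun(x)(w_i,\cdot)(I-\Qrx^{\KLE})\|_{\HS(\XC,\cY)}
\leq \|\deriv^2 \fun(x)(w_i,\cdot)\|_{\cL(\cX,\cY)} \|I-\Qrx^{\KLE}\|_{\HS(\XC,\cX)}. \]
Squaring and applying the $k=2$ operator norm bound $\|\deriv^2 \fun(x)(w_i,\cdot)\|_{\cL(\cX,\cY)} \leq \|\deriv^2 \fun(x)\|_{\cL_2(\cX,\cY)} \|w_i\|_{\cX}$, and recalling from \eqref{eq:kle_hs_error} that $\|I - \Qrx^{\KLE}\|_{\HS(\XC,\cX)}^2 = \sum_{i>\rx} \mu_i^{\KLE}$, I obtain
\[ \bE_{\gamma}\!\left[\|\derivXC^2 \fun(w_i,\cdot)(I-\Qrx^{\KLE})\|_{\HS(\XC,\cY)}^2\right]
\leq \|w_i\|_{\cX}^2 \, \bE_{\gamma}\!\left[\|\deriv^2\fun\|_{\cL_2(\cX,\cY)}^2\right] \sum_{j=\rx+1}^{\infty} \mu_j^{\KLE}. \]

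Finally, I would sum over $i$, interchanging sum and expectation via Tonelli (all terms non-negative), and use the fact that for any orthonormal basis $(w_i)$ of $\XC$ one has $\sum_i \|w_i\|_{\cX}^2 = \|\iota\|_{\HS(\XC,\cX)}^2 = \tr(\Cx) = \bE_{x\sim\gamma}[\|x\|_{\cX}^2]$, where $\iota : \XC \hookrightarrow \cX$ is the canonical inclusion. Combining the three displayed bounds yields the claimed inequality \eqref{eq:kle_basis_derivative_ridge_error}. The only mildly delicate step is the bookkeeping between the three norms $\cL_2(\cX,\cY)$, $\cL(\cX,\cY)$, and $\HS(\XC,\cY)$ when partially evaluating the bilinear form $\deriv^2 \fun(x)$; everything else is a direct application of machinery already assembled in the preceding subsections, and in particular mirrors the PCA derivative reconstruction argument of \Cref{prop:kle_derivative_reconstruction} one derivative order higher.
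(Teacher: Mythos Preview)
Your proposal is correct and follows essentially the same approach as the paper: drop $\Pry$ as a contraction, invoke the Hessian subspace Poincar\'e inequality \eqref{eq:hessian_poincare}, factor out $(I-\Qrx^{\KLE})$ via the operator/HS splitting, bound $\|\deriv^2\fun(x)(w_i,\cdot)\|_{\cL(\cX,\cY)} \leq \|\deriv^2\fun(x)\|_{\cL_2(\cX,\cY)}\|w_i\|_{\cX}$, and sum $\|w_i\|_{\cX}^2$ to $\tr(\Cx) = \bE_{\gamma}[\|x\|_{\cX}^2]$. The only cosmetic difference is that the paper explicitly chooses $w_i = v_i^{\KLE}$ so that $\|w_i\|_{\cX}^2 = \mu_i^{\KLE}$, whereas you work with an arbitrary $\XC$-orthonormal basis and invoke $\sum_i \|w_i\|_{\cX}^2 = \|\iota\|_{\HS(\XC,\cX)}^2$; both yield the same sum.
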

\begin{proof}
Since $\|\Pry\|_{\cL(\cY, \cY)} = 1$, we have
\[ 
  \|\Pry (\derivXC \fun  - \derivXC \bE_{\gamma}[\fun | \sigma(\Pinv{\Vrx^{\KLE}})])\Qrx^{\KLE} \|_{\HS(\XC, \cY)}^2 \leq 
  \|(\derivXC \fun  - \derivXC \bE_{\gamma}[\fun | \sigma(\Pinv{\Vrx^{\KLE}})])\Qrx^{\KLE} \|_{\HS(\XC, \cY)}^2,
\]
such that we can ignore the output projection. 
For any orthonormal basis $(w_i)_{i=1}^{\infty}$ of $\XC$, we have 
\[ \| \derivXC^2 \fun(x)(w_i, \cdot)(I - \Qrx^{\KLE})\|_{\HS(\XC,\cY)}^2 \leq \| \deriv^2 \fun(x)(w_i, \cdot)\|_{\cL(\cX, \cY)}^2 \|I - \Qrx^{\KLE} \|_{\HS(\XC, \cY)}^2, \]
where $ \|I - \Qrx^{\KLE}\|_{\HS(\XC, \cY)}^2 = \sum_{i=\rx +1}^{\infty} \mu_i^{\KLE}$ due to \eqref{eq:kle_hs_error}.
Moreover, we know 
\begin{align*} 
  \|\deriv^2 \fun(x)(w_i, \cdot)\|_{\cL(\cX,\cY)} = \sup_{\|\hat{x}\| \neq 0} \frac{\| \deriv^2\fun(x)(w_i, \hat{x}) \|_{\cY}}{\|\hat{x}\|_{\cX}} \leq \|\deriv^2 \fun(x) \|_{\cL_2(\cX, \cY)} \|w_i \|_{\cX}.
\end{align*}
Let $w_i = v_i^{\KLE}$. Due to our definition of $v_i^{\KLE}$ being orthonormal in $\XC$, we have 
\[  
\|w_i\|_{\cX}^2 = \| v_i^{\KLE} \|_{\cX}^2 = \| \Cx^{-1/2} \Cx^{1/2} v_i^{\KLE} \|_{\cX}^2 = \mu_i^{\KLE} \|v_i^{\KLE}\|_{\XC}^2 = \mu_i^{\KLE}.
\]
Furthermore, since $\sum_{i=1}^{\infty} \mu_i^{\KLE} =  \bE_{x \sim \gamma}[\|x\|^2_{\cX}]$, we can write 
\begin{align*} 
  \sum_{i=1}^{\infty} \bE_{x \sim \gamma}[\|\deriv^2\fun(x)(w_i, \cdot)&(I - \Qrx)\|_{\HS(\XC, \cY)}^2] \leq \\
  & \bE_{x \sim \gamma}[ \|x\|_{\cX}^2] \bE_{x \sim \gamma}[ \| \deriv^2 \fun(x) \|_{\cL_2(\cX, \cY)}^2] \sum_{i = \rx + 1}^{\infty} \mu_i^{\KLE}, 
\end{align*}
which concludes the proof.
\end{proof}

The case for the derivative-informed subspace is less straightforward. 
% Since the basis does not guarantee a small projection error $\| I - \Qrx \|_{\HS(\XC, \cY)}$, we cannot make use of a similar argument.
Unlike the PCA basis, the DIS basis does not guarantee an optimal reconstruction of the input $x$ (i.e., not necessarily small $\| I - \Qrx \|_{\HS(\XC, \cY)}^2$), and we cannot make use of an analogous argument.
However, similar to the case with output PCA, we recognize that the derivative-informed subspace can also result in a bounded Hessian projection error when the derivatives $\derivXC \fun$ 
are able to represent the Hessian $\derivXC ^2 \fun$ on average. 
This is the subject of \Cref{assumption:hessian_inverse_inequality}, which directly translates to a bound on the derivative ridge function error in terms of the trailing eigenvalues of the DIS problem. 
\begin{proposition}[Derivative ridge function error bound with DIS]\label{prop:ridge_derivative_bound_dis}
  Suppose $\fun \in H^2_{\gamma}$ satisfies \Cref{assumption:hessian_inverse_inequality}
  and $(\Cx^{-1}\vhat_i^{\JTJ})_{i=1}^{\infty}$ are well-defined in $\cX$.
  % Suppose $\fun \in H^2_{\gamma} \cap C^2(\cX, \cY)$ satisfies Assumption \ref{assumption:hessian_inverse_inequality}, 
  % and $\|\deriv \fun\|_{\cL(\cX, \cY)}$ and $\|\deriv^2 \fun\|_{\cL_2(\cX, \cY)}$ have bounded second moments. 
  % Let $(\vhat_i^{\JTJ})_{i=1}^{\infty}$ be the empirical input DIS basis, $(\mu_i^{\JTJ})_{i=1}^{\infty}$ be the eigenvalues of the exact input DIS, 
  % and $\Pry \in \cP_{\ry}(\cY)$ be an orthogonal projection on $\cY$. 
  % Additionally, assume that $(\Cx^{-1} \vhat_i^{\JTJ})_{i=1}^{\infty}$ are well-defined in $\cX$.
  % Then, for the basis $\Vrxhat^{\JTJ} = (\vhat_i^{\JTJ})_{i=1}^{r}$ and its projection $\Qrxhat^{\JTJ} = \Vrxhat^{\JTJ} \Pinv{\Vrxhat^{\JTJ}}$, we have 
  Then, for any orthogonal projection $\Pry \in \cP_{\ry}(\cY)$, the empirical input DIS satisfies
  \begin{align}
    & \bE_{\gamma} \left[\|\Pry (\derivXC \fun  - \derivXC \bE_{\gamma}[\fun | \sigma(\Pinv{\Vrxhat^{\JTJ}})])\Qrxhat^{\JTJ} \|_{\HS(\XC, \cY)}^2 \right] \nonumber \\
    & \qquad \qquad \qquad \leq 
      K_H \left( \sum_{i = \rx + 1}^{\infty} \mu_i^{\JTJ} +\min \left\{ \sqrt{2 r} \|\Hx - \Hxhat \|_{\HS(\XC, \XC)}, \frac{2 \| \Hx - \Hxhat\|_{\HS(\XC,\XC)}^2}{\mu_{r}^{\JTJ} - \mu_{r+1}^{\JTJ}} \right\}\right).
      \label{eq:input_senstivity_basis_derivative_ridge_error}
  \end{align}
\end{proposition}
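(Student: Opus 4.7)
The plan is to adapt the strategy of \Cref{prop:ridge_derivative_bound_kle}, replacing its final step (which exploited the bounded input variance of the KLE basis to control the Hessian action) by an application of \Cref{assumption:hessian_inverse_inequality}, which instead returns control of the Hessian in terms of a first-order DIS quantity for which a sharp reconstruction bound is already available.

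First, using $\|\Pry\|_{\cL(\cY,\cY)}\leq 1$, I discard the output projection. Applying \Cref{lemma:interchange_differentiation_conditional_expectation} to rewrite $\derivXC \bE_\gamma[\fun\mid \sigma(\Pinv{\Vrxhat^{\JTJ}})] = \bE_\gamma[\derivXC \fun \mid \sigma(\Pinv{\Vrxhat^{\JTJ}})]\,\Qrxhat^{\JTJ}$ and noting that $\Qrxhat^{\JTJ}|_{\XC}$ is an orthogonal projection on $\XC$ of unit operator norm, the postmultiplication by $\Qrxhat^{\JTJ}$ can also be dropped, giving a bound by $\bE_\gamma[\|\derivXC \fun - \bE_\gamma[\derivXC \fun\mid\sigma(\Pinv{\Vrxhat^{\JTJ}})]\|_{\HS(\XC,\cY)}^2]$. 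Since $\fun \in H^2_\gamma$ implies $\derivXC \fun \in H^1(\gamma, \HS(\XC, \cY))$, the derivative subspace Poincar\'e inequality \eqref{eq:hessian_poincare} bounds this by $\sum_{i=1}^{\infty} \bE_\gamma[\|\derivXC^2 \fun(w_i, \cdot)(I - \Qrxhat^{\JTJ})\|_{\HS(\XC,\cY)}^2]$ for any orthonormal basis $(w_i)_{i=1}^{\infty}$ of $\XC$.

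The key manipulation is then to unfold the Hilbert--Schmidt norm using a second orthonormal basis $(w_j)_{j=1}^{\infty}$ of $\XC$ and swap the order of summation, which is justified by Tonelli's theorem since the summands are non-negative. This rewrites the bound as $\sum_{j=1}^{\infty} \bE_\gamma[\|\derivXC^2 \fun(\cdot, (I - \Qrxhat^{\JTJ})w_j)\|_{\HS(\XC,\cY)}^2]$ --- the Hessian is now ``sliced'' with the free slot in the first argument and the perturbed direction in the second, which is precisely the form to which \Cref{assumption:hessian_inverse_inequality} applies. Conditioning on the sample realization so that $\Qrxhat^{\JTJ}$ is deterministic and applying the assumption with $v_j = (I - \Qrxhat^{\JTJ}) w_j \in \XC$ gives the bound $K_H \sum_j \bE_\gamma[\|\derivXC \fun (I - \Qrxhat^{\JTJ}) w_j\|_{\cY}^2] = K_H\, \bE_\gamma[\|\derivXC \fun (I - \Qrxhat^{\JTJ})\|_{\HS(\XC,\cY)}^2]$.

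Finally, applying \eqref{eq:jacobian_error_estimator_input} of \Cref{prop:dis_derivative_reconstruction} to this last quantity produces the trailing eigenvalue tail $\sum_{i=\rx+1}^\infty \mu_i^{\JTJ}$ of $\Hx$ plus the Hilbert--Schmidt sampling error, each scaled by $K_H$, which is exactly \eqref{eq:input_senstivity_basis_derivative_ridge_error}. I expect no real obstacle beyond the index swap that converts the Poincar\'e output into the form required by \Cref{assumption:hessian_inverse_inequality}; all other steps are direct compositions of previously established results. Notably, because the Hilbert--Schmidt norm naturally ``symmetrizes'' via the two-basis expansion, no explicit use of symmetry of $\derivXC^2 \fun$ is needed --- the whole argument is a bookkeeping exercise once the swap is identified.
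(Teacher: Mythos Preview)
Your proposal is correct and follows essentially the same route as the paper: drop $\Pry$, apply the derivative subspace Poincar\'e inequality \eqref{eq:hessian_poincare}, invoke \Cref{assumption:hessian_inverse_inequality} to convert the Hessian sum into $K_H\,\bE_\gamma[\|\derivXC \fun (I-\Qrxhat^{\JTJ})\|_{\HS(\XC,\cY)}^2]$, and finish with \Cref{prop:dis_derivative_reconstruction}. The only cosmetic difference is that the paper selects the empirical DIS basis $(\vhat_i^{\JTJ})_{i=1}^\infty$ directly in \eqref{eq:hessian_poincare} so that the sum immediately collapses to $\sum_{i>r}\bE_\gamma[\|\derivXC^2\fun(\cdot,\vhat_i^{\JTJ})\|_{\HS}^2]$, whereas you achieve the same collapse via the two-basis Tonelli swap.
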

\begin{proof}
  As in the computations for the input PCA basis, we have 
  \[ 
    \|\Pry (\derivXC \fun  - \derivXC \bE_{\gamma}[\fun | \sigma(\Pinv{\Vrxhat^{\JTJ}})])\Qrxhat^{\JTJ} \|_{\HS(\XC, \cY)}^2 \leq 
    \|(\derivXC \fun  - \derivXC \bE_{\gamma}[\fun | \sigma(\Pinv{\Vrxhat^{\JTJ}})])\Qrxhat^{\JTJ} \|_{\HS(\XC, \cY)}^2.
  \]
  We then use the Hessian subspace Poincar\'e inequality \eqref{eq:hessian_poincare}, which can be alternatively written as 
  \[ 
    \bE_{\gamma} \left[\|\derivXC \fun - \derivXC \bE_{\gamma}[\fun | \sigma(\Pinv{\Vrxhat^{\JTJ}})]\|_{\HS(\XC, \cY)}^2 \right] 
    \leq \sum_{i= \rx + 1}^{\infty} \bE_{\gamma}[ \| \derivXC^{2} \fun(\cdot, \vhat_i^{\JTJ}) \|_{\HS(\XC, \cY)}^2].
  \]
  % If $v \in \nullspace(\Hyhat)$, we have $\derivXC \fun(x) v = 0$ for almost every $x$, 
  % and so $\derivXC^2 \fun(x) v = \derivXC(\derivXC \fun(x) v) = 0$ for almost every $x$.
  % Thus, 
  \Cref{assumption:hessian_inverse_inequality} is sufficient to give 
  \begin{align*}
    \sum_{i= \rx + 1}^{\infty} \bE_{\gamma}[ \| \derivXC^{2} \fun(\cdot, \vhat_i^{\JTJ}) \|_{\HS(\XC, \cY)}^2 ]
    \leq K_H \sum_{i= \rx + 1}^{\infty} \bE_{\gamma}[\| \derivXC \fun \vhat_i^{\JTJ}\|_{\cY}^2] 
    = K_H \bE_{\gamma}[\|\derivXC \fun (I - \Qrxhat^{\JTJ}) \|_{\HS(\XC, \cY)}^2].
  \end{align*}
  \Cref{prop:dis_derivative_reconstruction} then yields the desired bound.

\end{proof}

We note that, like the derivative inverse inequality in \Cref{assumption:derivative_inverse_inequality}, this is again a strong assumption.
We can, like before, provide examples of mappings for which it is satisfied. 
In particular, polynomial mappings of degree $n$ also satisfy the inequality \eqref{eq:hessian_inverse_inequality} with $K_H = n-1$. This is also summarized in \Cref{prop:hermite_derivative_and_hessian} of \Cref{sec:example_polynomial_forms}.

\subsection{Sampling errors}\label{sec:sampling_error}
% \subsubsection{Bounds in expectation}
We now discuss approaches for dealing with sampling errors associated with the estimators of the mean $\fhat$, covariance $\Cyhat$, and sensitivity operators $\Hxhat$ and $\Hyhat$.
Under mild moment assumptions, one can prove a bound on the expected sample errors.
% On the other hand, using further sub-Gaussian assumptions on the outputs, 
% one can obtain a high-probability bound using a vector-valued Bernstein inequality (e.g., \cite{Lanthaler23}). 
To this end, we largely follow the approaches taken in \cite{ReissWahl20,BhattacharyaHosseiniKovachkiEtAl21}, 
where the authors analyzed the sampling errors for the construction of the PCA. 
The following useful result is presented in the proof of \cite[Lemma B.2]{BhattacharyaHosseiniKovachkiEtAl21},
which we summarize as a standalone lemma and present its proof in \Cref{sec:proof_monte_carlo_error_bounds}.

\begin{lemma}[Monte Carlo error for vector-valued random variables] \label{lemma:monte_carlo_error}
  Let $z$ be a random variable taking values in a separable Hilbert Space $\cH$ with distribution $\nu$ and mean $\bar{z} = \bE_{z \sim \nu}[z]$. 
  Suppose that $z$ has a bounded second moment, $\bE_{z \sim \nu}[\|z\|_{\cH}^2] < \infty$. 
  Then, given $N$ i.i.d.~samples $(z_k)_{i=1}^{N}$, $z_k \sim \nu$, 
  the sample estimator $\widehat{z} = \frac{1}{N}\sum_{k=1}^{N} z_k$ has an expected error of 
  \begin{equation}
    % \bE_{(z_k)_{k=1}^{N} \sim \nu^{N}}[\|\hat{z} - \bar{z}\|_{\cH}^2] \leq \frac{\bE_{z \sim \nu}[\|z - \bar{z}\|_{\cH}^2]}{N},
    \bE_N [\|\hat{z} - \bar{z}\|_{\cH}^2] = \frac{\bE_{z \sim \nu}[\|z - \bar{z}\|_{\cH}^2]}{N},
  \end{equation}
  where the expectation $\bE_{N}$ is taken with respect to the random samples $(z_k)_{k=1}^{N}$.
\end{lemma}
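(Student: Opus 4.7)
The plan is to compute $\bE_N[\|\hat{z} - \bar{z}\|_{\cH}^2]$ by a direct bias-variance style expansion of the sample mean in the Hilbert space, exploiting bilinearity of the inner product and independence of the samples to eliminate cross terms.

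First I would write
\begin{equation*}
  \hat{z} - \bar{z} = \frac{1}{N}\sum_{k=1}^{N}(z_k - \bar{z}),
\end{equation*}
and expand the squared norm using the inner product on $\cH$ to obtain
\begin{equation*}
  \|\hat{z} - \bar{z}\|_{\cH}^2 = \frac{1}{N^2}\sum_{i=1}^{N}\sum_{j=1}^{N} \linner z_i - \bar{z},\; z_j - \bar{z}\rinner_{\cH}.
\end{equation*}
Taking the expectation $\bE_N$ term by term, the diagonal contributions ($i=j$) each equal $\bE_{z \sim \nu}[\|z - \bar{z}\|_{\cH}^2]$, which is finite by the bounded second moment assumption, so Fubini/Tonelli justifies interchanging sum and expectation.

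The key step is handling the off-diagonal terms $i \neq j$: I would argue that for independent integrable $\cH$-valued random variables $X, Y$ with the same or different distributions but mean zero (here $X = z_i - \bar{z}$ and $Y = z_j - \bar{z}$), one has $\bE[\linner X, Y \rinner_{\cH}] = \linner \bE[X], \bE[Y]\rinner_{\cH} = 0$. The cleanest way is to fix any orthonormal basis $(e_\ell)_{\ell \geq 1}$ of $\cH$ and write $\linner X, Y\rinner_{\cH} = \sum_{\ell} \linner X, e_\ell\rinner_{\cH} \linner Y, e_\ell\rinner_{\cH}$, apply the scalar-valued independence identity to each summand, and interchange sum and expectation via Cauchy--Schwarz and the bounded second moment (alternatively, one can invoke the fact that Bochner expectation commutes with bounded linear functionals and use a tensor-product argument).

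Combining the $N$ surviving diagonal terms gives
\begin{equation*}
  \bE_N[\|\hat{z} - \bar{z}\|_{\cH}^2] = \frac{1}{N^2}\cdot N \cdot \bE_{z \sim \nu}[\|z - \bar{z}\|_{\cH}^2] = \frac{\bE_{z \sim \nu}[\|z - \bar{z}\|_{\cH}^2]}{N},
\end{equation*}
which is the claimed identity. The only real obstacle is the rigorous justification that independence of $\cH$-valued random variables implies vanishing expected inner product; everything else is bookkeeping. Since this is standard in the theory of Bochner integration, I would state it as a short lemma (or cite it) and keep the main proof to the four-line expansion above.
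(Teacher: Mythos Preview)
Your proposal is correct and follows essentially the same approach as the paper: expand the squared norm via bilinearity of the inner product and use independence of the samples to kill the cross terms. The only cosmetic difference is that you center first (writing $\hat z - \bar z = \frac{1}{N}\sum_k(z_k-\bar z)$), whereas the paper expands $\langle \frac{1}{N}\sum z_k - \bar z,\, \frac{1}{N}\sum z_k - \bar z\rangle_{\cH}$ directly and simplifies to $\frac{1}{N}(\bE[\|z\|_{\cH}^2]-\|\bar z\|_{\cH}^2)$ before recognizing this as the variance; your centered version is slightly cleaner.
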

% \begin{proof}
%   This proof is reproduced from \cite{BhattacharyaHosseiniKovachkiEtAl21}. Given the bound on the second moment, we compute 
%   \begin{align*}
%     \bE_N \left[\|\hat{z} - \bar{z}\|_{\cH}^2 \right]
%       &= \bE_N \left[\linner \frac{1}{N}\sum_{k=1}^{N} z_k - \bar{z}, \frac{1}{N}\sum_{k=1}^{N} z_k - \bar{z} \rinner_{\cH} \right] \\
%       &= \frac{1}{N} \bE_{z \sim \nu} [\|z\|_{\cH}^2] + \frac{N^2 - N}{N^2} \linner \bar{z}, \bar{z} \rinner_{\cH}
%       - 2 \linner \bar{z}, \bar{z} \rinner_{\cH} + \linner \bar{z}, \bar{z} \rinner_{\cH} \\
%       &= \frac{1}{N} \left( \bE_{z \sim \nu}\left[\|z\|_{\cH}^2 - \|\bar{z}\|_{\cH}^2 \right] \right) \\ 
%       &= \frac{1}{N} \left( \bE_{z \sim \nu}\left[\|z - \bar{z}\|_{\cH}^2 \right] \right),
%   \end{align*}
%   where we have made use of the independence of $z_i, z_j$ for $i \neq j$. 
% \end{proof}

This allows us to bound all the sample errors in expectation. In particular, the usual Monte Carlo error of the mean estimator $\fhat$ follows as an immediate corollary. 
\begin{corollary}\label{corollary:mean_estimator_error}
  Suppose $\fun \in L^2_{\gamma}$. 
  Then, given $N$ i.i.d.~samples $(x_k)_{k=1}^{N}$, $x_k \sim \nu$, the sample estimator $\fhat = \frac{1}{N} \sum_{k=1}^{N} f(x_k)$ has an expected error of 
  \begin{equation}
    \bE_N [\|\fhat - \fbar\|_{\cY}^2] = \frac{\|\fun - \fbar\|_{L^2_{\gamma}}^2}{N}
  \end{equation}
  where the expectation $\bE_N$ is taken with respect to the random samples $(x_k)_{k=1}^{N}$.
\end{corollary}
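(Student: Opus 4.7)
The plan is to invoke \Cref{lemma:monte_carlo_error} directly with the Hilbert space $\cH = \cY$ and the vector-valued random variable $z := \fun(x)$, where $x \sim \gamma$. There is essentially no work beyond identifying the objects correctly, since the corollary is advertised as ``an immediate corollary.''

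First, I would define $z_k := \fun(x_k)$ for $k = 1, \dots, N$. Since $(x_k)_{k=1}^N$ are i.i.d.~samples from $\gamma$, the $z_k$ are i.i.d.~samples from the pushforward measure $\fun_{\#}\gamma$ on $\cY$. The assumption $\fun \in L^2_{\gamma} = L^2(\gamma, \cY)$ means $\bE_{\gamma}[\|\fun(x)\|_{\cY}^2] < \infty$, so the second-moment hypothesis of \Cref{lemma:monte_carlo_error} is satisfied. The mean of $z$ is
\begin{equation*}
    \bar{z} = \bE_{x \sim \gamma}[\fun(x)] = \fbar,
\end{equation*}
and the sample estimator is precisely $\widehat{z} = \frac{1}{N}\sum_{k=1}^N z_k = \fhat$.

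Applying \Cref{lemma:monte_carlo_error} then yields
\begin{equation*}
    \bE_N[\|\fhat - \fbar\|_{\cY}^2] = \frac{\bE_{x \sim \gamma}[\|\fun(x) - \fbar\|_{\cY}^2]}{N} = \frac{\|\fun - \fbar\|_{L^2_{\gamma}}^2}{N},
\end{equation*}
where the last equality is just the definition of the $L^2_{\gamma}$ norm applied to the centered function $\fun - \fbar$.

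There is no real obstacle here: the only mild conceptual point to verify is that the joint distribution of $(z_k)_{k=1}^N$ is indeed i.i.d.~on $\cY$, which follows from the i.i.d.~assumption on $(x_k)$ and the measurability of $\fun$ (inherited from $\fun \in L^2_{\gamma}$). Everything else is bookkeeping to match notation between the lemma and the corollary.
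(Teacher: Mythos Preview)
Your proposal is correct and matches the paper's approach exactly: the paper states that the result ``follows as an immediate corollary'' of \Cref{lemma:monte_carlo_error} without further elaboration, and your identification of $z = \fun(x)$, $\cH = \cY$, $\bar{z} = \fbar$, and $\widehat{z} = \fhat$ is precisely the intended application.
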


Additionally, under the assumptions that the random variable $z$ has bounded fourth moments, we can obtain a bound on error of the sample estimator $\widehat{z}$ in terms of its fourth moment. The proof is again provided in \Cref{sec:proof_monte_carlo_error_bounds}.
\begin{lemma}\label{lemma:fourth_moment_error}
  Let $z$ be a random variable taking values in a separable Hilbert Space $\cH$ with distribution $\nu$ and mean $\bar{z} = \bE_{z \sim \nu}[z]$. 
  Suppose that $z$ has a bounded fourth moment, $\bE_{z \sim \nu}[\|z\|_{\cH}^4] < \infty$. 
  Then, given $N$ i.i.d.~samples $(z_k)_{i=1}^{N}$, $z_k \sim \nu$, 
  the sample estimator $\widehat{z} = \frac{1}{N}\sum_{k=1}^{N} z_k$ has an expected error of 
  \begin{equation}
    % \bE_{(z_k)_{k=1}^{N} \sim \nu^{N}}[\|\hat{z} - \bar{z}\|_{\cH}^2] \leq \frac{\bE_{z \sim \nu}[\|z - \bar{z}\|_{\cH}^2]}{N},
    \bE_N [\|\hat{z} - \bar{z}\|_{\cH}^4] \leq \frac{\bE_{z \sim \nu}[\|z - \bar{z}\|_{\cH}^4] + 3 \bE_{z \sim \nu}[\|z - \bar{z}\|^2_{\cH}]^2}{N^2},
  \end{equation}
  where the expectation $\bE_{N}$ is taken with respect to the random samples $(z_k)_{k=1}^{N}$.
\end{lemma}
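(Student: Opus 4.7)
The plan is to reduce to a bound on $\bE_N[\|\sum_{k=1}^{N}(z_k - \bar{z})\|_{\cH}^4]$ by expanding the squared norm and using independence together with zero mean to kill cross terms.

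First, set $Y_k = z_k - \bar{z}$, so that $(Y_k)_{k=1}^N$ are i.i.d. with $\bE Y_k = 0$, $\bE[\|Y_k\|_{\cH}^2] = m_2$, and $\bE[\|Y_k\|_{\cH}^4] = m_4$, where $m_2, m_4$ denote the second and fourth centered moments. Then $\hat{z} - \bar{z} = \frac{1}{N}\sum_k Y_k$, so
\begin{equation*}
  \bE_N[\|\hat{z} - \bar{z}\|_{\cH}^4] = \frac{1}{N^4}\,\bE_N\!\left[\left\|\sum_{k=1}^{N} Y_k\right\|_{\cH}^4\right].
\end{equation*}
I would then expand the inner squared norm as $\|\sum_k Y_k\|_{\cH}^2 = A + B$ with $A := \sum_k \|Y_k\|_{\cH}^2$ and $B := 2\sum_{i<j}\linner Y_i, Y_j\rinner_{\cH}$, and compute $\bE_N[(A+B)^2] = \bE_N[A^2] + 2\bE_N[AB] + \bE_N[B^2]$ term by term.

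For $\bE_N[A^2]$, splitting into diagonal and off-diagonal terms and using independence gives $\bE_N[A^2] = N m_4 + N(N-1) m_2^2$. For $\bE_N[AB]$ and $\bE_N[B^2]$, the key point is that whenever any index in a product $\|Y_k\|_{\cH}^2 \linner Y_i, Y_j\rinner_{\cH}$ or $\linner Y_i, Y_j\rinner_{\cH}\linner Y_k, Y_l\rinner_{\cH}$ appears exactly once, the factor can be isolated by independence and produces $\bE[Y]=0$ via the inner product's bilinearity (expanding in an orthonormal basis of $\cH$ if convenient to make this explicit). Thus $\bE_N[AB] = 0$, and in $\bE_N[B^2]$ only the diagonal terms $\{i,j\} = \{k,l\}$ with $i<j$ survive. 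For these, the scalar Cauchy--Schwarz inequality $\linner Y_i, Y_j\rinner_{\cH}^2 \leq \|Y_i\|_{\cH}^2 \|Y_j\|_{\cH}^2$ plus independence gives $\bE_N[\linner Y_i, Y_j\rinner_{\cH}^2] \leq m_2^2$, hence $\bE_N[B^2] \leq 4\binom{N}{2} m_2^2 = 2N(N-1) m_2^2$.

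Assembling these bounds yields
\begin{equation*}
  \bE_N\!\left[\left\|\sum_{k=1}^{N} Y_k\right\|_{\cH}^4\right] \leq N m_4 + 3 N(N-1) m_2^2 \leq N m_4 + 3N^2 m_2^2,
\end{equation*}
and dividing by $N^4$ and bounding $N^{-3} \leq N^{-2}$ on the $m_4$ term gives the claimed bound $(m_4 + 3m_2^2)/N^2$. The main obstacle is the bookkeeping in $\bE_N[B^2]$: one must carefully enumerate index overlaps among the pairs $(i,j)$ and $(k,l)$ and verify, via an orthonormal-basis expansion of $\linner \cdot, \cdot \rinner_{\cH}$, that independence together with $\bE Y_k = 0$ zeros out every configuration except the exact match $\{i,j\}=\{k,l\}$. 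Everything else is a direct computation.
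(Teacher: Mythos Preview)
Your proposal is correct and follows essentially the same approach as the paper: both expand the fourth moment into a sum over index tuples, use independence and the zero-mean property to eliminate all terms in which some index appears exactly once, and bound the surviving two-pair terms via Cauchy--Schwarz to obtain $N m_4 + 3N(N-1)m_2^2$. The only cosmetic difference is that the paper expands $\|\sum_k Y_k\|_{\cH}^4$ directly as $\sum_{i,j,k,l}\bE[\langle Y_i,Y_j\rangle\langle Y_k,Y_l\rangle]$ and enumerates the $N + 3N(N-1)$ surviving tuples, whereas you first write $\|\sum_k Y_k\|_{\cH}^2 = A+B$ and square; the combinatorics and the final bound are identical.
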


We use the above results to obtain bounds on the sampling errors for the empirical PCA and DIS.
In particular, the empirical output PCA errors are considered in \cite{BhattacharyaHosseiniKovachkiEtAl21} in the uncentered case. 
In the centered (mean-shifted) case, we additionally need to account for the error in the mean estimate, $\fbar - \fhat$.
The result is a similar estimate as in \cite[Lemma B.2]{BhattacharyaHosseiniKovachkiEtAl21}.
\begin{proposition}\label{prop:pod_sample_error_expectation}
  Suppose $\fun \in L^4_{\gamma}$.
  % , i.e., $\fun$ is integrable and has bounded fourth moments, 
  % $\bE_{\gamma}[\|\fun\|_{\cY}^4] < \infty.$
  Then, there exists a constant $M_{\Cy}$ such that 
  the sampling error for the empirical output PCA based on $N$ samples of 
  $\fun(x_k)$, $k = 1, \dots, N$, $x_k \sim \gamma$ i.i.d., satisfies the bound 
  \begin{equation}
    \bE_N [\| \Cy - \Cyhat \|_{\HS(\cY, \cY)}^2] \leq \frac{M_{\Cy}}{N},
  \end{equation}
  where the expectation $\bE_N$ is taken with respect to the random samples $(x_k)_{k=1}^{N}$.
\end{proposition}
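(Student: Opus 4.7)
The plan is to decompose $\Cyhat - \Cy$ into a standard Monte Carlo term (second-moment operator) and a correction term arising from the mean estimator, then apply the two vector-valued Monte Carlo bounds (\Cref{lemma:monte_carlo_error} and \Cref{lemma:fourth_moment_error}) in the Hilbert space $\HS(\cY,\cY)$. Specifically, writing $\Cy = \bE_{\gamma}[\fun \otimes \fun] - \fbar \otimes \fbar$ and $\Cyhat = \tfrac{1}{N}\sum_{k=1}^N \fun(x_k)\otimes \fun(x_k) - \fhat \otimes \fhat$, we decompose
\begin{equation*}
  \Cyhat - \Cy = \underbrace{\Bigl(\tfrac{1}{N}\textstyle\sum_{k=1}^N \fun(x_k)\otimes \fun(x_k) - \bE_{\gamma}[\fun \otimes \fun]\Bigr)}_{=: A_N} - \underbrace{\bigl(\fhat \otimes \fhat - \fbar \otimes \fbar\bigr)}_{=: B_N},
\end{equation*}
so that $\bE_N[\|\Cyhat - \Cy\|_{\HS(\cY,\cY)}^2] \leq 2\bE_N[\|A_N\|_{\HS(\cY,\cY)}^2] + 2\bE_N[\|B_N\|_{\HS(\cY,\cY)}^2]$ by the triangle inequality.

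For the first term, I would apply \Cref{lemma:monte_carlo_error} in the separable Hilbert space $\HS(\cY,\cY)$ to the random variable $z = \fun(x)\otimes\fun(x)$. Since $\|\fun(x)\otimes\fun(x)\|_{\HS(\cY,\cY)} = \|\fun(x)\|_{\cY}^2$, the assumption $\fun \in L^4_{\gamma}$ guarantees the required second moment $\bE_{\gamma}[\|z\|_{\HS}^2] = \bE_{\gamma}[\|\fun\|_{\cY}^4] < \infty$. This gives
\begin{equation*}
  \bE_N[\|A_N\|_{\HS(\cY,\cY)}^2] = \tfrac{1}{N}\bE_{\gamma}[\|\fun\otimes\fun - \bE_{\gamma}[\fun\otimes\fun]\|_{\HS(\cY,\cY)}^2] \leq \tfrac{1}{N}\bE_{\gamma}[\|(\fun - \fbar)\otimes(\fun-\fbar) - \Cy\|_{\HS(\cY,\cY)}^2],
\end{equation*}
where the last step can be written directly in centered form after re-centering around $\fbar$.

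For the second term, I expand the rank-one perturbation identity
\begin{equation*}
  B_N = \fbar \otimes (\fhat - \fbar) + (\fhat - \fbar)\otimes \fbar + (\fhat - \fbar)\otimes(\fhat - \fbar),
\end{equation*}
and use $\|u\otimes v\|_{\HS(\cY,\cY)} = \|u\|_{\cY}\|v\|_{\cY}$ together with the triangle inequality (squared) to obtain $\|B_N\|_{\HS(\cY,\cY)}^2 \leq 8\|\fbar\|_{\cY}^2 \|\fhat - \fbar\|_{\cY}^2 + 2\|\fhat - \fbar\|_{\cY}^4$. Taking expectations, \Cref{corollary:mean_estimator_error} (or directly \Cref{lemma:monte_carlo_error}) gives $\bE_N[\|\fhat - \fbar\|_{\cY}^2] = \|\fun - \fbar\|_{L^2_{\gamma}}^2/N$, while \Cref{lemma:fourth_moment_error} yields $\bE_N[\|\fhat - \fbar\|_{\cY}^4] \leq (\|\fun - \fbar\|_{L^4_{\gamma}}^4 + 3\|\fun - \fbar\|_{L^2_{\gamma}}^4)/N^2$. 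Summing the $O(1/N)$ and $O(1/N^2)$ contributions and collecting them into a single $O(1/N)$ bound (by crudely bounding $1/N^2 \leq 1/N$) produces a constant $M_{\Cy}$ depending only on $\|\fun - \fbar\|_{L^2_{\gamma}}$, $\|\fun - \fbar\|_{L^4_{\gamma}}$, and $\bE_{\gamma}[\|(\fun-\fbar)\otimes(\fun-\fbar) - \Cy\|_{\HS(\cY,\cY)}^2]$, matching the form of $M_{\cY}$ stated in \Cref{remark:on_the_assumptions}.

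Nothing here is really a main obstacle once the right decomposition is in place; the only mildly delicate step is deciding whether to center $A_N$ around $\bE_{\gamma}[\fun \otimes \fun]$ or around $\Cy$, and making sure that the constant $M_{\Cy}$ as assembled agrees with the version stated later. I would present the bound without specifying the exact constant in the statement (as in the proposition as written), noting that the constant inherits the four contributions from the two Monte Carlo lemmas above.
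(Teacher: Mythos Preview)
Your approach is correct and very close to the paper's, but the paper uses a slightly cleaner decomposition. Instead of splitting around the \emph{uncentered} second moment $\bE_{\gamma}[\fun\otimes\fun]$, the paper first centers by $\fbar$ and observes the algebraic identity
\[
\Cyhat = \underbrace{\tfrac{1}{N}\sum_{k=1}^{N}(\fun(x_k)-\fbar)\otimes(\fun(x_k)-\fbar)}_{=:\,\Cybar} \;-\; (\fhat-\fbar)\otimes(\fhat-\fbar),
\]
so that $\|\Cy-\Cyhat\|_{\HS}^2 \le 2\|\Cy-\Cybar\|_{\HS}^2 + 2\|\fhat-\fbar\|_{\cY}^4$. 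The first term is handled by \Cref{lemma:monte_carlo_error} applied directly to $z=(\fun-\fbar)\otimes(\fun-\fbar)$ (mean $\Cy$), and the second by \Cref{lemma:fourth_moment_error}. This avoids your three-term expansion of $B_N$ and the resulting $8\|\fbar\|_{\cY}^2\|\fhat-\fbar\|_{\cY}^2$ contribution, so the final constant depends only on centered quantities and matches exactly the $M_{\cY}$ in \Cref{remark:on_the_assumptions}.

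One small caution: your claimed inequality $\bE_N[\|A_N\|_{\HS}^2] \le \tfrac{1}{N}\bE_{\gamma}[\|(\fun-\fbar)\otimes(\fun-\fbar)-\Cy\|_{\HS}^2]$ is not justified as written; the variance of $\fun\otimes\fun$ and the variance of $(\fun-\fbar)\otimes(\fun-\fbar)$ differ by cross terms involving $\fbar$ that need not have a sign. This does not affect the existence of \emph{some} $M_{\Cy}$ (your uncentered variance is still bounded by $\|\fun\|_{L^4_\gamma}^4$), but it does mean your assembled constant will not coincide with the paper's. If you want the stated constant, adopt the paper's centering from the outset.
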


We can perform a similar calculation for the sample errors of $\Hxhat$ and $\Hyhat$, this time under the assumption that the Sobolev derivatives $\|\derivXC \fun\|_{\HS(\XC, \cY)}$ have bounded fourth moments.
\begin{proposition}\label{prop:dis_sampling_error_mean}
  Suppose $\fun \in W^{1,4}_{\gamma}$.
  % $\bE_{\gamma}[\|\derivXC \fun \|_{\HS(\XC, \cY)}^4] < \infty$. 
  Then, there exists constants $M_{\Hx}$ and $M_{\Hy}$ 
  % such that the excess risk for the empirical derivative-informed subspace corresponding to $N$ samples 
  such that the sampling error for the 
  empirical sensitivity operators 
  \[\Hxhat = \sum_{k=1}^{N} \derivXC \fun(x_k)^* \derivXC \fun(x_k) \quad \text{and} \quad \Hyhat = \sum_{k=1}^{N} \derivXC \fun(x_k) \derivXC \fun(x_k)^*\]
  based on $N$ i.i.d.~samples $(x_k)_{k=1}^{N}$, $x_k \sim \gamma$, satisfy the bounds 
  % the sampling error for the empirical sensitivity operators $\Hxhat$ and $\Hyhat$ based on $N$ derivative samples $\derivXC f(x_k)$, $k=1,\dots, N$, $x_k \sim \gamma$ i.i.d., satisfy the bounds in expectation, i.e.
  \begin{equation}
    \bE_N [\| \Hx - \Hxhat \|_{\HS(\XC, \XC)}^2] \leq \frac{M_{\Hx}}{N}
  \end{equation}
  for the input, and 
  \begin{equation}
    \bE_N [\| \Hy - \Hyhat \|_{\HS(\cY, \cY)}^2] \leq \frac{M_{\Hy}}{N}
  \end{equation}
  for the output. The expectation $\bE_{N}$ is taken with respect to the random samples $(x_k)_{k=1}^{N}$. 
\end{proposition}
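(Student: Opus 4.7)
The plan is to recognize each of $\Hxhat$ and $\Hyhat$ as a Monte Carlo estimator of a vector-valued mean, living in the Hilbert space $\HS(\XC,\XC)$ or $\HS(\cY,\cY)$ respectively, and then apply \Cref{lemma:monte_carlo_error} directly. Concretely, define the random variables
\[
 Z_{\cX}(x) := \derivXC \fun(x)^* \derivXC \fun(x) \in \HS(\XC,\XC), \qquad
 Z_{\cY}(x) := \derivXC \fun(x) \derivXC \fun(x)^* \in \HS(\cY,\cY).
\]
By construction, $\bE_\gamma[Z_{\cX}] = \Hx$ and $\bE_\gamma[Z_{\cY}] = \Hy$, and the empirical operators $\Hxhat$, $\Hyhat$ are precisely the sample means $\tfrac{1}{N}\sum_k Z_{\cX}(x_k)$ and $\tfrac{1}{N}\sum_k Z_{\cY}(x_k)$.

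The only thing to check before invoking \Cref{lemma:monte_carlo_error} is that $Z_{\cX}$ and $Z_{\cY}$ have bounded second moments in their respective Hilbert--Schmidt spaces. For this I would use the standard submultiplicativity estimate that for any $A \in \HS(\cH_1,\cH_2)$ one has $\|A^*A\|_{\HS(\cH_1,\cH_1)} \le \|A^*\|_{\cL(\cH_2,\cH_1)} \|A\|_{\HS(\cH_1,\cH_2)} \le \|A\|_{\HS(\cH_1,\cH_2)}^2$, and similarly $\|AA^*\|_{\HS(\cH_2,\cH_2)} \le \|A\|_{\HS(\cH_1,\cH_2)}^2$. Applied pointwise to $A = \derivXC \fun(x)$, this gives
\[
 \|Z_{\cX}(x)\|_{\HS(\XC,\XC)}^2 \le \|\derivXC \fun(x)\|_{\HS(\XC,\cY)}^4, \qquad
 \|Z_{\cY}(x)\|_{\HS(\cY,\cY)}^2 \le \|\derivXC \fun(x)\|_{\HS(\XC,\cY)}^4,
\]
and taking expectations uses precisely the hypothesis $\fun \in W^{1,4}_\gamma$, which ensures $\bE_\gamma[\|\derivXC \fun\|_{\HS(\XC,\cY)}^4] < \infty$.

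With bounded second moments established, \Cref{lemma:monte_carlo_error} yields
\[
 \bE_N\!\left[\|\Hx - \Hxhat\|_{\HS(\XC,\XC)}^2\right] = \frac{\bE_\gamma[\|Z_{\cX} - \Hx\|_{\HS(\XC,\XC)}^2]}{N},
\]
and analogously for the output. Setting
\[
 M_{\Hx} := \bE_\gamma\!\left[\|\derivXC \fun^*\derivXC \fun - \Hx\|_{\HS(\XC,\XC)}^2\right], \qquad
 M_{\Hy} := \bE_\gamma\!\left[\|\derivXC \fun\,\derivXC \fun^* - \Hy\|_{\HS(\cY,\cY)}^2\right],
\]
both of which are finite by the preceding moment bound (and the triangle inequality, since $\|\Hx\|_{\HS}, \|\Hy\|_{\HS} \le \bE_\gamma[\|\derivXC \fun\|_{\HS}^2]$), gives the desired conclusions.

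There is no real obstacle here; the statement is essentially a direct corollary of \Cref{lemma:monte_carlo_error} once the $L^2$-integrability of the summands in $\HS$-norm is reduced to the $L^4_\gamma$-integrability of $\|\derivXC \fun\|_{\HS(\XC,\cY)}$. The only mildly delicate point worth verifying carefully is the submultiplicative $\HS$-estimate $\|A^*A\|_{\HS} \le \|A\|_{\HS}^2$ between possibly infinite-dimensional spaces, which follows by writing the $\HS$-norm in terms of any orthonormal basis and using Cauchy--Schwarz on the matrix entries $\langle A^*A e_i, e_j\rangle = \langle A e_i, A e_j\rangle$.
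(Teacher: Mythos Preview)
Your proposal is correct and follows essentially the same route as the paper: recognize $\Hxhat,\Hyhat$ as sample means of the $\HS$-valued random variables $\derivXC\fun^*\derivXC\fun$ and $\derivXC\fun\,\derivXC\fun^*$, use the submultiplicative $\HS$-estimate to reduce the second-moment bound to $\bE_\gamma[\|\derivXC\fun\|_{\HS}^4]<\infty$ (which is exactly the $W^{1,4}_\gamma$ hypothesis), and then apply \Cref{lemma:monte_carlo_error} to obtain the stated constants $M_{\Hx}=\bE_\gamma[\|\derivXC\fun^*\derivXC\fun-\Hx\|_{\HS}^2]$ and $M_{\Hy}=\bE_\gamma[\|\derivXC\fun\,\derivXC\fun^*-\Hy\|_{\HS}^2]$. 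The only cosmetic difference is that the paper invokes $\|AB\|_{\HS}\le\|A\|_{\HS}\|B\|_{\HS}$ for two Hilbert--Schmidt factors, whereas you pass through $\|A^*\|_{\cL}\le\|A\|_{\HS}$; both yield the same pointwise bound $\|A^*A\|_{\HS}\le\|A\|_{\HS}^2$.
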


The proofs for both sampling error results, \Cref{prop:pod_sample_error_expectation} and \Cref{prop:dis_sampling_error_mean}, are given in \Cref{sec:proof_pod_dis_sampling_errors}.

\section{Numerical experiments}\label{sec:numerical_experiments}

In this section, we present several numerical experiments to validate the performance of RBNOs in learning an operator and its derivative. 
Here, we focus on operators arising from the solution of PDEs given inputs distributed as Gaussian random fields. 
We compare the generalization errors in terms of the $L^2_{\gamma}$ norm and $H^1_{\gamma}$ (semi)norm across different dimension reduction strategies, ranks, architectural choices, and training sample sizes. 
The generalization errors are also decomposed into their reconstruction error and ridge function error (neural network error) components. 
These numerical results seek to explore both agreement with our theory
and the practical settings that are not covered by the theoretical analysis.
In particular, error sources such as the statistical errors in the finite training set for the neural network and optimization errors associated with the nonconvex training problem are not analyzed, but will contribute to the overall generalization error. 

\subsection{PDE problems}
For the numerical experiments, we consider PDE problems posed on 1D and 2D domains $\Omega$, where discretization errors can be controlled with modest discretization dimensions. This ensures that the discretized derivative operators (i.e., Jacobian matrices) can be computed in full for the purpose of generating test data.
Nevertheless, the set of PDE problems we consider feature nonlinear PDEs and non-trivial input-output dependence.

Here, we will let $x$ denote the input function, $y$ denote the solution,
and instead will use $s \in \bR^{d}, d = 1,2$ to denote the spatial variable 
Additionally, we will use $\nabla$ to denote the gradient and $\Delta$ to denote the Laplacian, both with respect to the spatial variable $s$.
For the inputs, we will use Gaussian random fields $\gamma = \cN(0, \Cx)$ over $\cX = L^2(\Omega)$, 
with $\Cx$ taking the form of the inverse of an elliptic PDE operator, 
$\Cx = (-a_{\Delta} \Delta + a_{I})^{-\alpha}$,
where $a_{\Delta} > 0, a_{I} > 0$, and $\alpha > 0$ control the pointwise variance, correlation structure, and smoothness of the random field.

\paragraph{Semilinear elliptic PDE} Our first 1D example is the semilinear elliptic PDE posed on the unit interval $\Omega = (0, 1)$:
\begin{align}
    - \nabla \cdot (c_1(s) \nabla y(s)) + c_2 y(s)^3 &= x(s), \qquad s \in \Omega, \\
    y(0) &= 0, \\ 
    \frac{\partial y}{\partial s}(1) &= 0, 
\end{align}
where $c_1(s) = 0.0001 + 0.01 \chi_{(0.5, 1)}(s)$ and $c_2 = 0.1$ are the diffusion and reaction coefficients, 
and $x \sim \cN(0, (-2\Delta + 10)^{-1})$.

% \subsubsection{Steady burgers equation in 1D}
\paragraph{Steady Burgers equation}
For a second 1D example, we consider the steady Burgers equation posed on the unit interval $\Omega = (0, 1)$:
\begin{align}
    - \nabla \cdot (c_1 \nabla y(s)) + y(s) \nabla y(s) &= c_2(s) x(s), \qquad s \in \Omega, \\
    y(0) &= 0, \\ 
    y(1) &= 0,
\end{align}
where $c_1 = 0.01$ is the viscosity and the source term is a profile $c_2(s) = (0.025 \sqrt{2 \pi})^{-1} \exp(-(s - 0.4)^2/0.05^2)$ centered at $s = 0.4$
and 
scaled by the random field $x \sim \cN(0, (-10\Delta + 20)^{-1})$.

% \subsubsection{Hyperelasticity with random forcing in 2D}
\paragraph{Linear elasticity}
We also consider a 2D problem arising from linear elasticity with a random Young's modulus. 
To avoid overloading notation, 
we use $c_{\lambda}$ and $c_{\mu}$ to denote the Lam\'e parameters (as opposed to the commonly used $\lambda$ and $\mu$). 
These are given as 
% we define the Lam\'e parameters
\[
    c_\lambda = \frac{c_E c_\nu}{(1 + c_\nu)(1 - 2 c_\nu)},
    \quad
    c_{\mu} = \frac{c_E}{2(1 + c_\nu)}, 
\]
where $c_E$ is the Young's modulus and $c_\nu$ is the Poisson's ratio.
We then introduce the stress tensor $\mathbf{T}$ as a function of the displacement $y = (y_1, y_2)$ such that 
\[
    \mathbf{T} = c_\lambda \nabla \cdot y \mathbf{I} + c_\mu \left( \nabla y + \nabla y^T \right),
\]
where $\mathbf{I}$ is the identity tensor on $\bR^2$.
We pose the PDE on the domain $\Omega = (0,1)^2 \setminus \Omega_R$, 
where $\Omega_R$ is a disk of radius $R = 0.2$ centered at $(0.5, 0.5)$.
% We consider a setup in which a fixed boundary is applied at the left and a traction is applied at the right. 
The governing equations are then given by
\begin{align*}
    -\nabla \cdot \mathbf{T} &= 0 \quad \text{in } \Omega, \\
    y &= 0 \quad \text{on } \Gamma_L, \\
    \mathbf{T} n &= \tau \quad \text{on } \Gamma_R, \\
    \mathbf{T} n &= 0 \quad \text{on } \Gamma_T \cup \Gamma_B \cup \partial \Omega_R,
\end{align*}
where $\Gamma_L$ is the left boundary, $\Gamma_R$ is the right boundary,
$\Gamma_T$ is the top boundary, $\Gamma_B$ is the bottom boundary, and $\partial \Omega_R$ is the boundary of the disk $\Omega_R$.
Additionally, $\tau$ is the traction vector on the right boundary, $n$ is the outward normal vector.
The input variable $x(s)$ is a Gaussian random field defining the Young's modulus
$c_{E}(s) = 1 + 40 \exp(x(s))$ where $x \sim \cN(0, (-0.2 \Delta + 4)^{-2})$.
Moreover, we fix $c_{\nu} = 0.3$, and take 
\[
    \tau(s) = \left(
        \frac{1}{0.1 \sqrt{2\pi}} \exp \left(-\frac{(s_2 - 0.5)^2}{2 \cdot 0.1^2} \right), 0
    \right).
\]
This corresponds to a tensile load with a Gaussian profile applied to the right boundary.

% \paragraph{Hyperelasticity with random forcing} 
% The first uses a random spatially-varying body force as the input variable. 
% That is, we adopt fixed values of 
% $c_{E} = 40$, $c_{\nu} = 0.3$, and $\tau = 0$.
% The input variable then defines the vertical component of the body force, i.e., $b(s) = \left(0, x(s) \right)$
% with $x \sim \cN(0, (-0.1 \Delta + 10)^{-2})$.

% The output $y = (y_1, y_2)$ is the displacement vector field. 
% \paragraph{Hyperelasticity with random Young's modulus} 
% \subsubsection{Hyperelasticity with random coefficient in 2D}
% We also consider a second setup where the input variable to corresponds the spatially varying Young's modulus with a lognormal distribution.
% That is, we consider 
% $c_{E}(s) = 20 \exp(x(s))$ where $x \sim \cN(0, (-0.2 \Delta + 4)^{-2})$.
% Moreover, we fix $c_{\nu} = 0.3$, and take 
% \[
%     \tau(s) = \left(
%         \frac{1}{0.1 \sqrt{2\pi}} \exp \left(-\frac{(s_2 - 0.5)^2}{2 \cdot 0.1^2} \right), 0
%     \right).
% \]
% This corresponds to a tensile load with a Gaussian profile applied to the right boundary.

\subsection{Data generation and basis computation}
We use the finite element method to discretize the PDEs and generate the training and test data.
This is implemented using FEniCS \cite{LoggMardalWellsEtAl12}
for the finite element solvers and hIPPYlib/hiPPYflow \cite{VillaPetraGhattas21,hippyflow}
for training data generation.
In particular, for each input sample $x_k \sim \gamma$ 
the PDE is solved to obtain the output $y_k = \fun(x_k)$.
Moreover, given discretizations of the input and output spaces with dimensions $\dimx$ and $\dimy$, 
the derivative of the solution operator, $DF(x_k)$, 
corresponds to a Jacobian matrix of size $\dimy \times \dimx$.
This can be computed using direct and adjoint sensitivity methods. 
That is, we can obtain a Jacobian-vector product $DF(x_k) v$ or a vector-Jacobian product $DF(x_k)^T v$ for any vector $v \in \bR^{\dimx}$ or $\bR^{\dimy}$ by solving the linearized state PDE or its adjoint, respectively.
We refer to \cite{OLearyRoseberryChenVillaEtAl24} for more details on the computation of the derivatives.

For the 1D examples, we use a uniform mesh with $\nelem = 256$ linear elements.
In this case, the discretized dimensions of the input and output spaces are $\dimx = \dimy = 257$.
For the 2D examples, we use an unstructured mesh with linear triangular elements,
where the discretized dimensions of the input and output spaces are $\dimx = 407$ and $\dimy = 814$.
In either case, we have chosen the discretization dimension to be sufficiently large such that numerical errors in solving the PDE are unimportant, yet sufficiently small so that we can compute the Jacobian matrix, $\deriv \fun (x) \in \bR^{\dimy \times \dimx}$, in full by direct sensitivity methods.
That is, we generate training and testing data tuples of the form $(x_k, y_k, \deriv \fun (x_k))$ for $k = 1, \ldots, N$, where $x_k \sim \gamma$.
For each training run, the training data is used to estimate the covariance operator $\Cyhat$ or the sensitivity operators $\Hxhat$ and $\Hyhat$, which are then used to solve the eigenvalue problem to obtain the basis functions.

We note that in practice, for high discretization dimensions, randomized methods such as those described in \cite{SaibabaLeeKitanidis15} can be used to efficiently solve the PCA and DIS eigenvalue problems.
This requires only the Jacobian-vector products and vector-Jacobian products on a small number of random vectors, rather than the full Jacobian matrix itself.
Similarly, for Sobolev training, one only needs to compute the reduced Jacobian matrix, $\pinv{\Ury}\deriv \fun(x_k) \Vrx$,
corresponding to the action of the derivative operator on the reduced bases $\Ury$ and $\Vrx$ obtained from the PCA/DIS procedures.
We refer to \cite{OLearyRoseberryVillaChenEtAl22,OLearyRoseberryChenVillaEtAl24} for more detailed description on practical procedures for computing DIS and derivative training data.

% Compared to the 1D example, the corresponding Jacobian matrices are much larger, and generating full Jacobians corresponding to each training data point is not feasible.
% Instead, only generate the full Jacobians for a small number of $N_\text{test} = 2000$ test data points. 

% In this case, to compute the input and output derivative-informed subspaces, we follow the procedures used by \cite{OLearyRoseberryVillaChenEtAl22} and adopt a matrix free approach via the randomized eigensolver of \cite{SaibabaLeeKitanidis15}.
% % The eigenvalue problems corresponding to DIS can be efficiently solved using the randomized eigensolver of \cite{SaibabaLeeKitanidis15}.
% To compute the $\rx$ largest eigenvalues and their corresponding eigenvectors, this method of \cite{SaibabaLeeKitanidis15} requires only the Jacobian-vector products and vector-Jacobian products 
% against $2(\rx + k_{\mathrm{OS}})$ random vectors, where $k_{\mathrm{OS}} \approx 10$ is a small, fixed oversampling factor. 
% We note that the error associated with randomization is small when the oversampling factor is large and when the spectrum of the matrix decays rapidly. 
% Thus, in the following numerical experiments,
% we set a large over sampling factor to ensure that the errors in the eigenvectors are small.

\subsection{Neural network architecture and training}
In the numerical experiments, we use the Sobolev training formulation \eqref{eq:reduced_sobolev_training} to train the neural networks using both the output value and derivative data.
The $L^2_{\gamma}$ and $H^1_{\gamma}$ components of the loss function are normalized by scaling each data point by its norm, 
i.e., $a_{0,k} = \|\pinv{\Ury} (\fun(x_k) - b)\|_{2}^2$ 
and $a_{1,k} = \|\pinv{\Ury} \derivXC \fun(x_k) \Vrx\|_{F}^2$, 
as described in \Cref{sec:sobolev_training}.
We have found that this approach works well to prevent the neural network from overfitting to outliers in the derivative data.

% For the neural networks, we consider the fully connected architecture \eqref{eq:neural_network} with a depth $d_L$ constant width $d_W$ for the hidden layers. 
% Additionally, we will also consider a residual network (ResNet) architecture, where skip connections are added to the hidden layers, 
% i.e. $\varphi_{\psi, \theta}(x) = x_{d_L}$ with
% \begin{align}
%     x_1 &= \psi(\mathbf{W}_1 x + \mathbf{b}_1), \\
%     x_{k} &= \psi(\mathbf{W}_k x_{k-1} + \mathbf{b}_k) + x_{k-1}, \quad k = 2, \ldots, d_L - 1, \\
%     x_{d_L} &= \mathbf{W}_{d_L} x_{d_L - 1} + \mathbf{b}_{d_L},
% \end{align}
% where $\mathbf{W}_1 \in \bR^{d_W \times \din}, \mathbf{b}_1 \in \bR^{d_W}$, $\mathbf{W}_k \in \bR^{d_W \times d_W}, \mathbf{b}_k \in \bR^{d_W}$ for $k = 2, \dots, d_{L_1}$, and $\mathbf{W}_{d_L} \in \bR^{\dout \times d_W}$ and $\mathbf{b}_{d_L} \in \bR^{\dout}$.
% Though this is not analyzed, we hypothesize that since skip connections have been shown to mitigate vanishing gradient problems, the ResNet architecture may be more robust in learning the derivative of the operator.
% In both cases, we use the GeLU activation function. 

We train the neural networks using the Adam optimizer with a constant batch size of 25 for 3,000 epochs, using a piecewise constant learning rate schedule that is successively halved five times starting at the 2,250th epoch.
The hyperparameters, including $d_W$, $d_L$, and initial learning rates are selected independently for each PDE problem.
These are summarized in the \Cref{tab:hyperparameters}.

\begin{table}[t]
\centering
\caption{Hyperparameters used for the latent neural networks in the numerical experiments for the three PDE problems considered.
Note the width is expressed as a multiple of the rank $r$.}
\label{tab:hyperparameters}
\begin{tabular}{ c c c c c }
    \toprule
    Problem & Depth $d_L$ & Width $d_W$ & Activation &  Initial learning rate  \\ 
    \midrule
    Semilinear elliptic & 6 & $2r$ & softplus & 0.001  \\
    Steady Burgers & 4 & $2r$ & softplus  & 0.00025  \\
    Linear elasticity & 4 & $2r$ & softplus  & 0.001  \\
    \bottomrule
\end{tabular}
\end{table}

\subsection{Numerical results}
\subsubsection{Comparison of reconstruction errors}
We begin by presenting a comparison of the different choices of dimension reduction strategies in reconstructing the output and the derivatives.
To this end, we compute the PCA/DIS bases using a large sample size of $N = 20,\!000$, 
which we take as a proxy for the exact PCA/DIS. 
The resulting eigenvalues are shown in \Cref{fig:spectrum} for the PCA/DIS bases corresponding to the three PDE problems: semilinear elliptic, steady Burgers, and linear elasticity.
Additionally, we present samples of the inputs and outputs, along with a selection of the first few basis vectors in each case. 
These are given in \Cref{fig:samples_and_bases_1d} for the 1D examples (semilinear elliptic and steady Burgers PDEs) and in \Cref{fig:samples_and_bases_2d} for the 2D linear elasticity problem.

In the spectra of the covariance/sensitivity operators in \Cref{fig:spectrum},
we observe that the eigenvalues decay rapidly at an algebraic rate (i.e., $\lambda_i \sim i^{-\beta}$ for some $\beta > 0$) in all cases. 
On the input side, the PCA eigenvalues simply correspond to the eigenvalues of the covariance operator (second order/fourth order elliptic PDE operators). 
These decay rates are independent of the input-output map.
On the other hand, the DIS eigenvalues tend to decay at a faster rate than the PCA eigenvalues, reflecting the sensitivity information of the underlying map.
% For example, in the semilinear PDE, there is a sharp drop in the eigenvalues after the 128th eigenvalue, arising from the fact that half of the domain has significantly larger diffusion coefficients, and hence much lower sensitivity to the input in that region.
This can be seen in the input basis vectors shown in \Cref{fig:samples_and_bases_1d}. 
Whereas PCA basis vectors are global sinusoidal functions, the DIS basis vectors are localized to the regions of low diffusion coefficient in the semilinear elliptic example and 
the near the Gaussian peak in the steady Burgers equation example.
On the output side, the spectra and corresponding basis vectors are similar between the PCA and DIS in the examples considered.
This reflects the fact that both the output PCA and DIS take into account the underlying mapping. 
Moreover, we also observe that the DIS eigenvalues are larger than the PCA eigenvalues.

\begin{figure}
    \centering
    \includegraphics[width=0.99\textwidth]{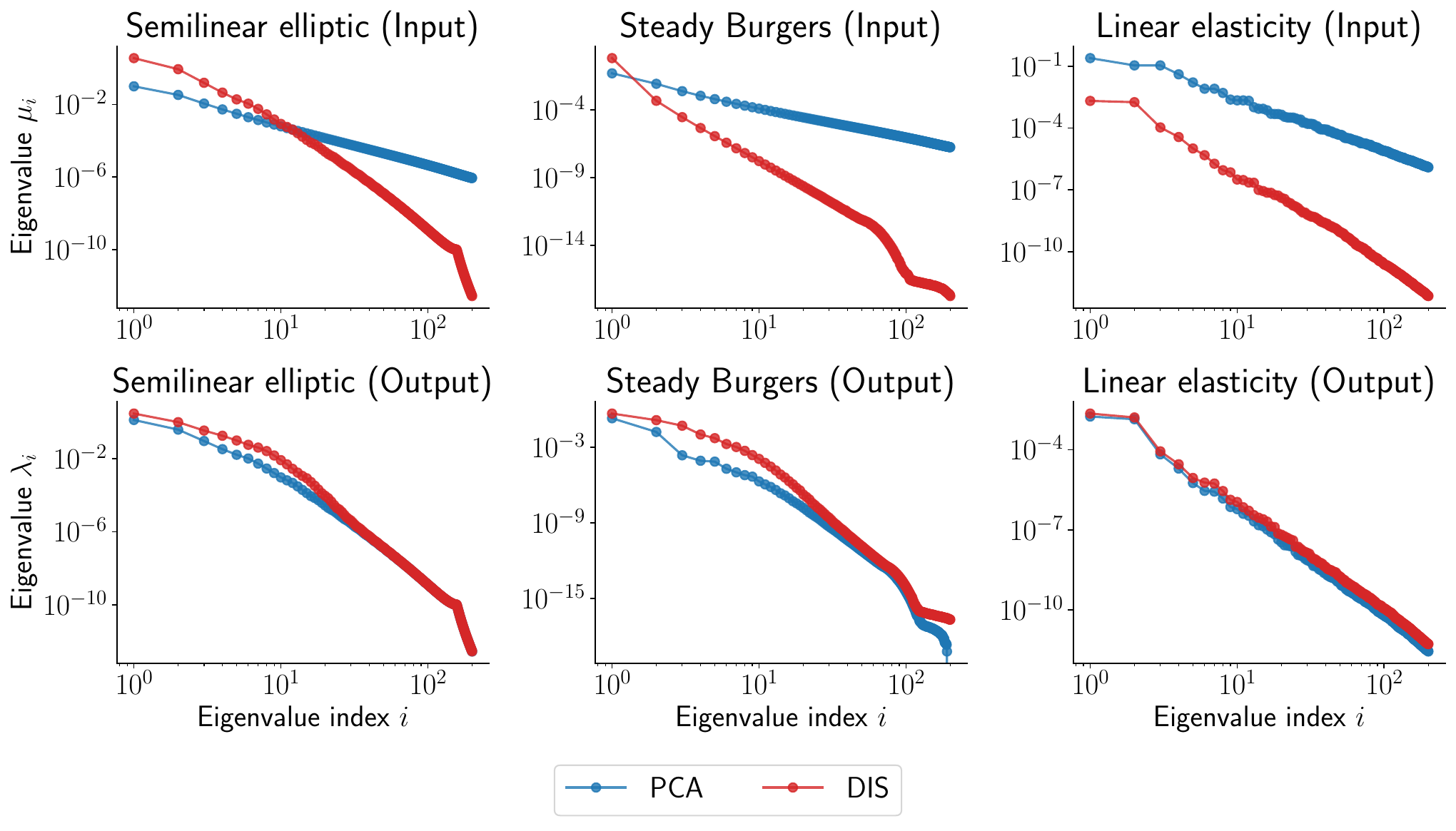}
    \caption{First 200 eigenvalues corresponding to the PCA and DIS bases for the three PDE problems considered. These are computed using a large sample size of $N$ = 20,000, which we take as a proxy for the exact PCA/DIS.}
    \label{fig:spectrum}
\end{figure}

\begin{figure}
    \begin{subfigure}{0.99\textwidth}
        \centering
        \includegraphics[width=0.99\textwidth]{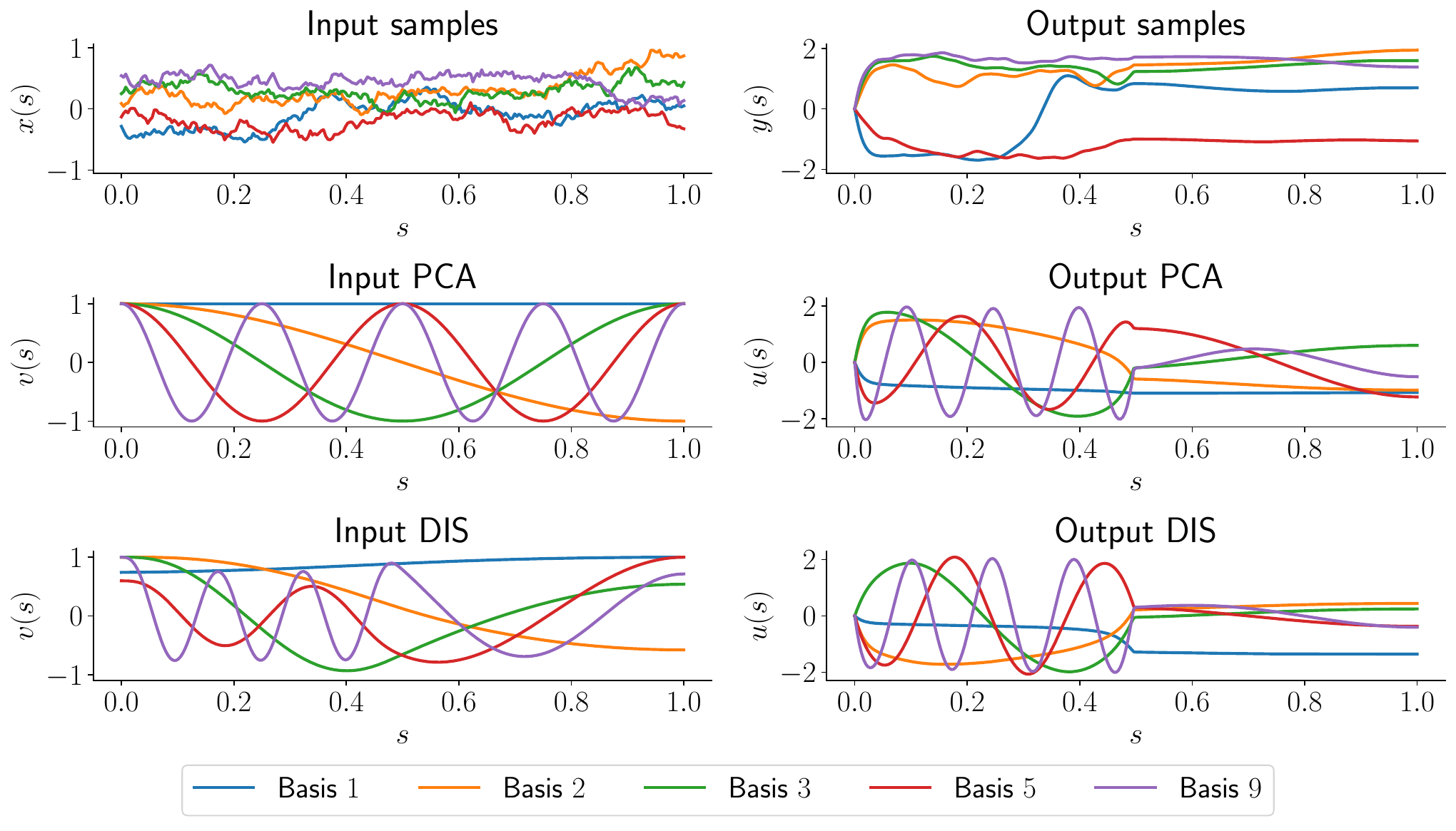}
        \caption{Semilinear elliptic PDE}
    \end{subfigure}
    \begin{subfigure}{0.99\textwidth}
        \centering
        \includegraphics[width=0.99\textwidth]{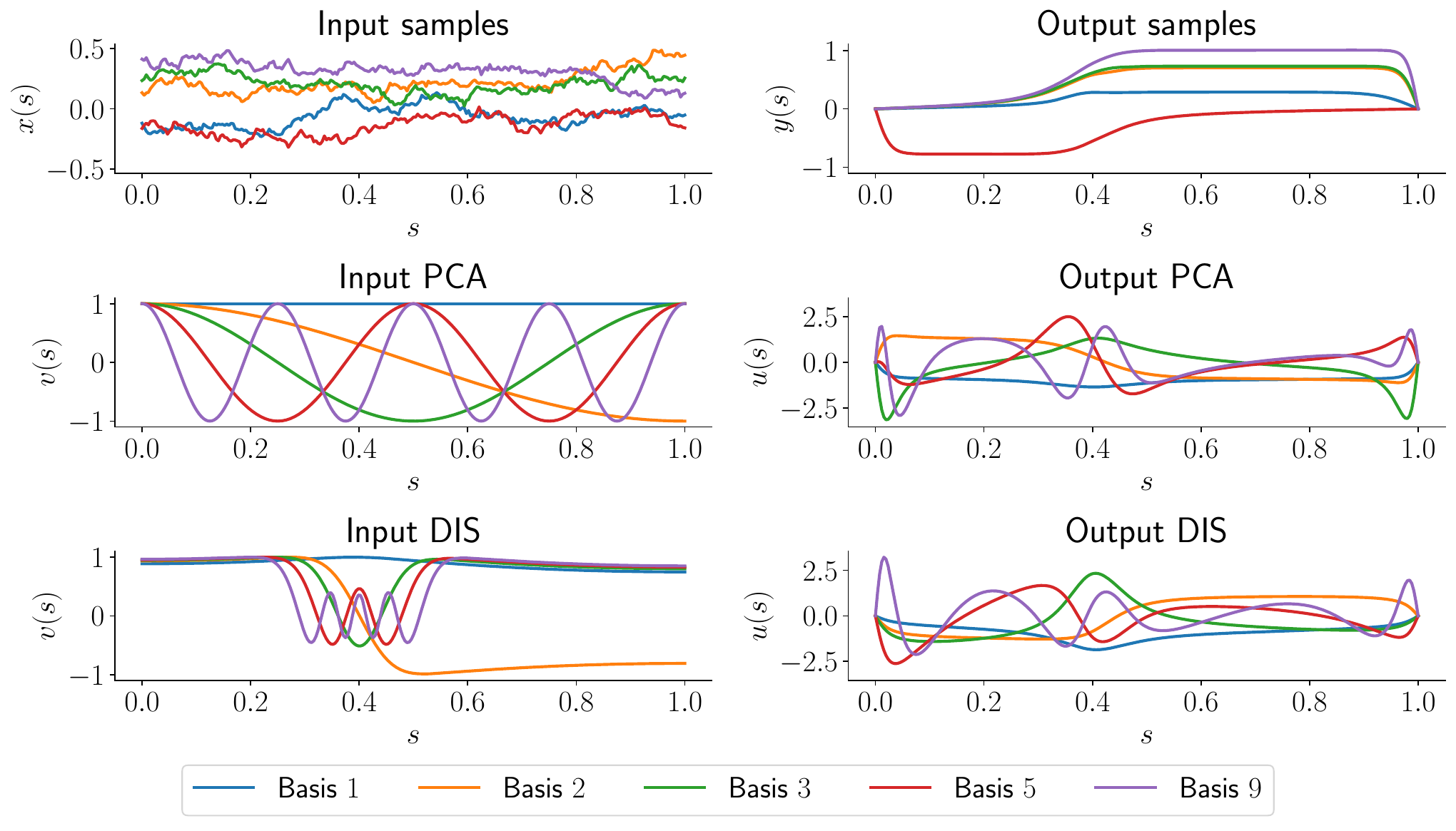}
        \caption{Steady Burgers equation}
    \end{subfigure}
    \caption{Samples of input-output pairs along with the 1st, 2nd, 3rd, 5th, and 9th PCA/DIS basis vectors for the 1D PDE examples. Top: semilinear elliptic PDE. Bottom: steady Burgers equation. Note the localization of the input DIS basis vectors to the region $s \leq 0.5$ in the semilinear elliptic PDE example where the diffusion coefficient is smallest, 
    and to the region near $s = 0.4$ that is the center of the Gaussian profile scaling the source in the Burgers example.}
    \label{fig:samples_and_bases_1d}
\end{figure}

\begin{figure}
    \centering
    \begin{subfigure}{0.99\textwidth}
        \centering
        \includegraphics[width=0.4\textwidth]{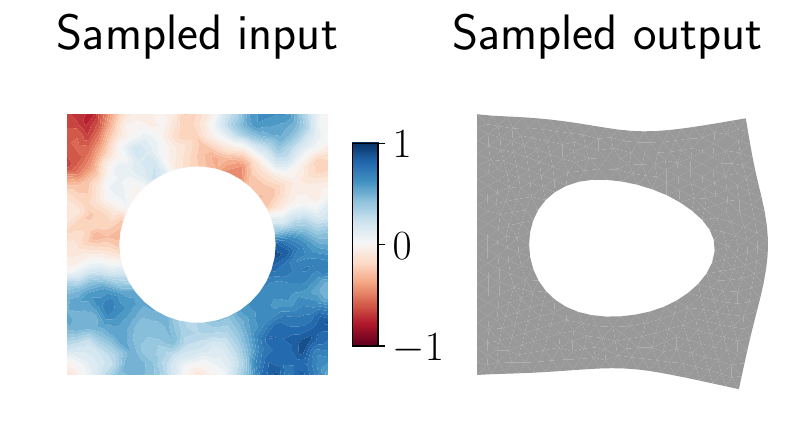}
        \caption{Sample input and output}
    \end{subfigure}

    \vspace{10pt}

    \begin{subfigure}{0.99\textwidth}
        \centering
        \includegraphics[width=0.99\textwidth]{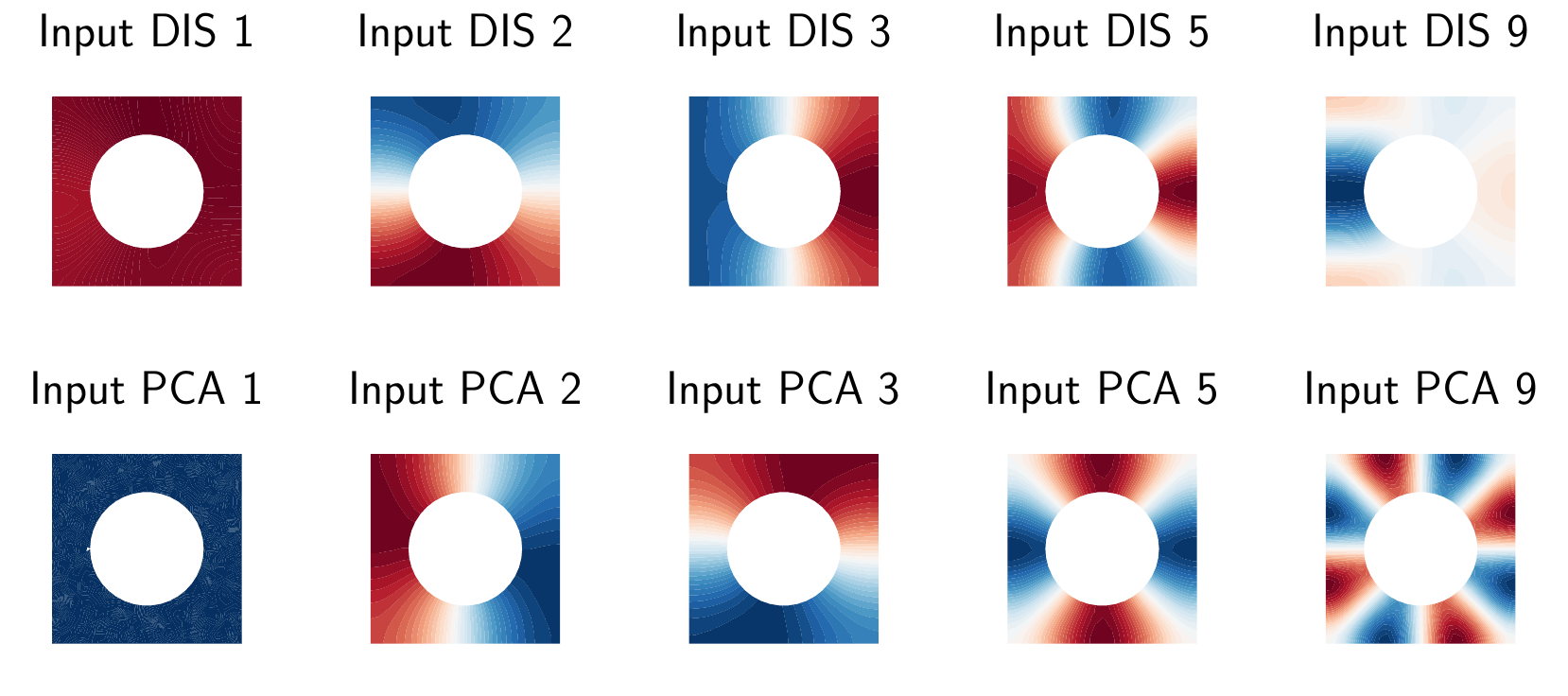}
    \end{subfigure}
    \begin{subfigure}{0.99\textwidth}
        \centering
        \includegraphics[width=0.5\textwidth]{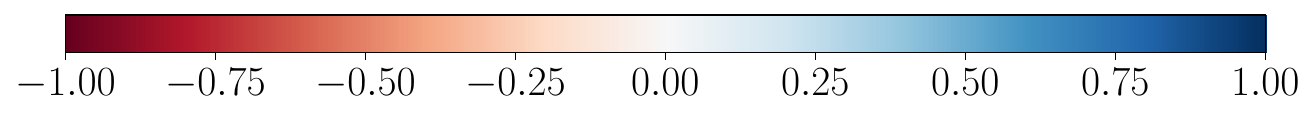}
    \end{subfigure}
    \begin{subfigure}{0.99\textwidth}
        \centering
        \includegraphics[width=0.99\textwidth]{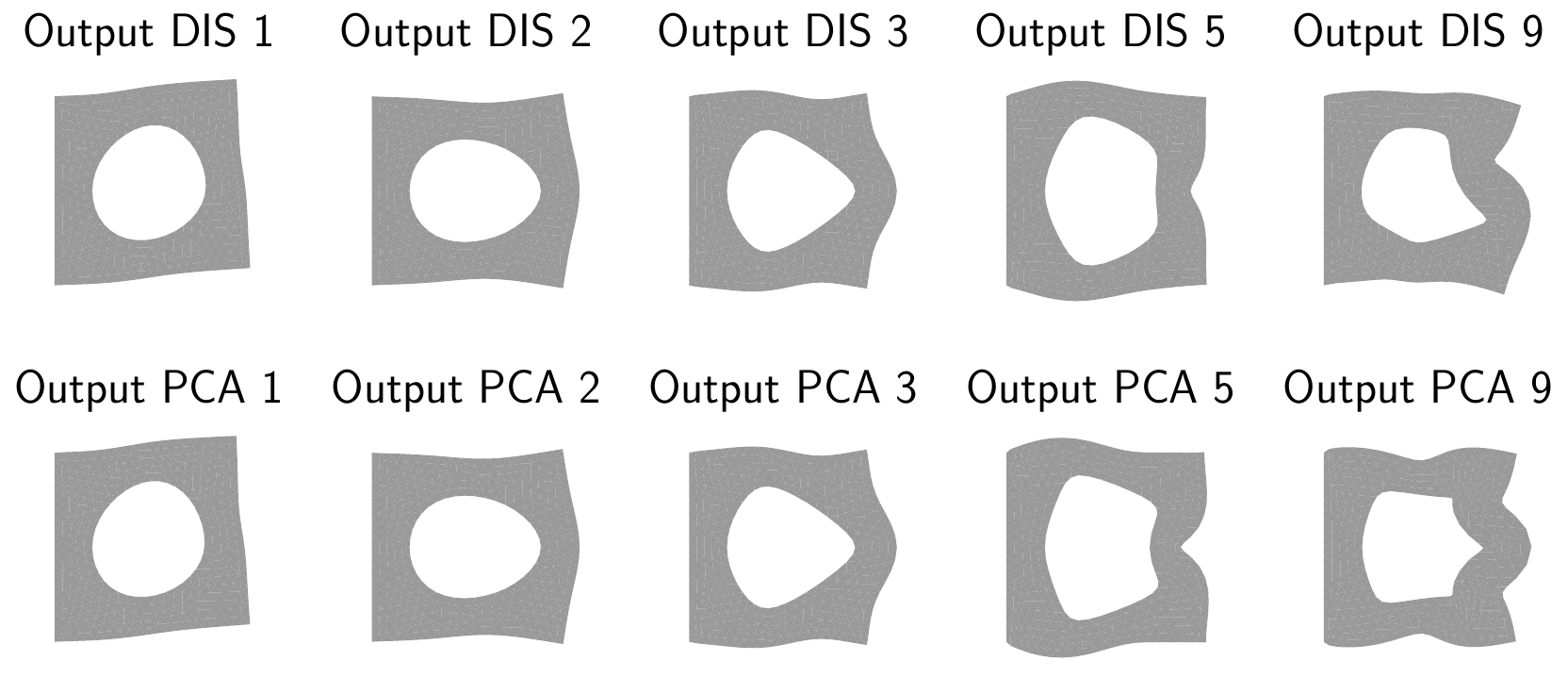}
        \caption{Input and output bases}
    \end{subfigure}
    \caption{Top: a sample of an input-output pair for the linear elasticity problem.  
    Bottom: the 1st, 2nd, 3rd, 5th, and 9th PCA/DIS basis vectors for the input and output of the linear elasticity problem.
    }
    \label{fig:samples_and_bases_2d}
\end{figure}

\begin{figure}
    \begin{subfigure}{0.99\textwidth}
        \centering
        \includegraphics[width=0.99\textwidth]{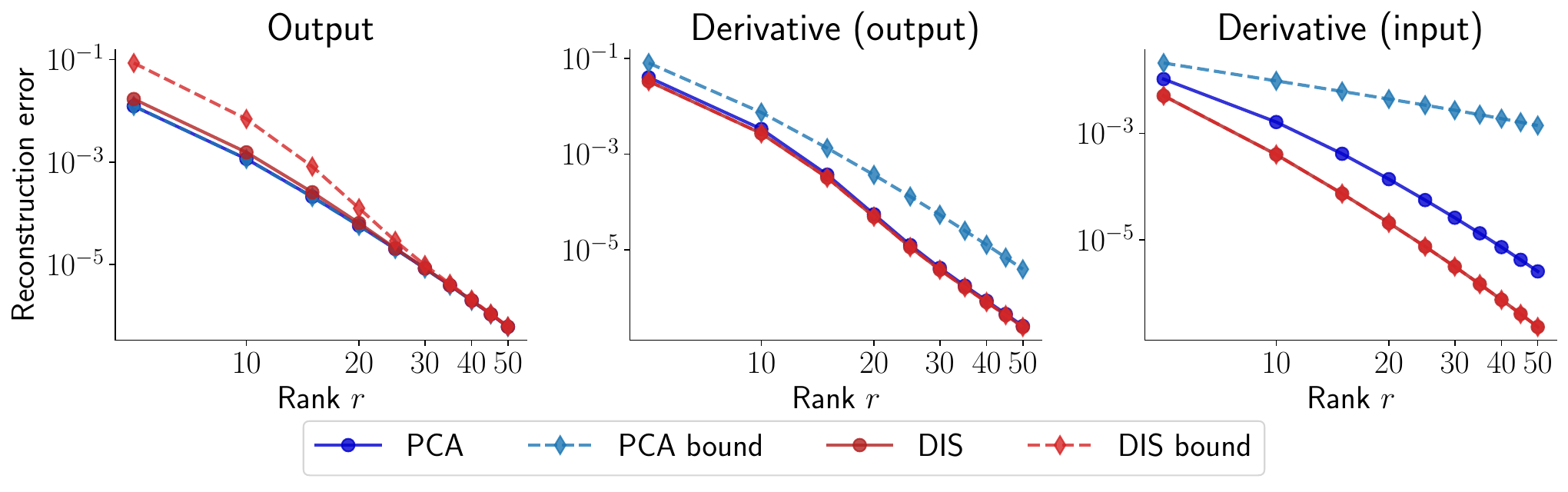}
        \caption{Semilinear elliptic PDE}
    \end{subfigure}
    \begin{subfigure}{0.99\textwidth}
        \centering
        \includegraphics[width=0.99\textwidth]{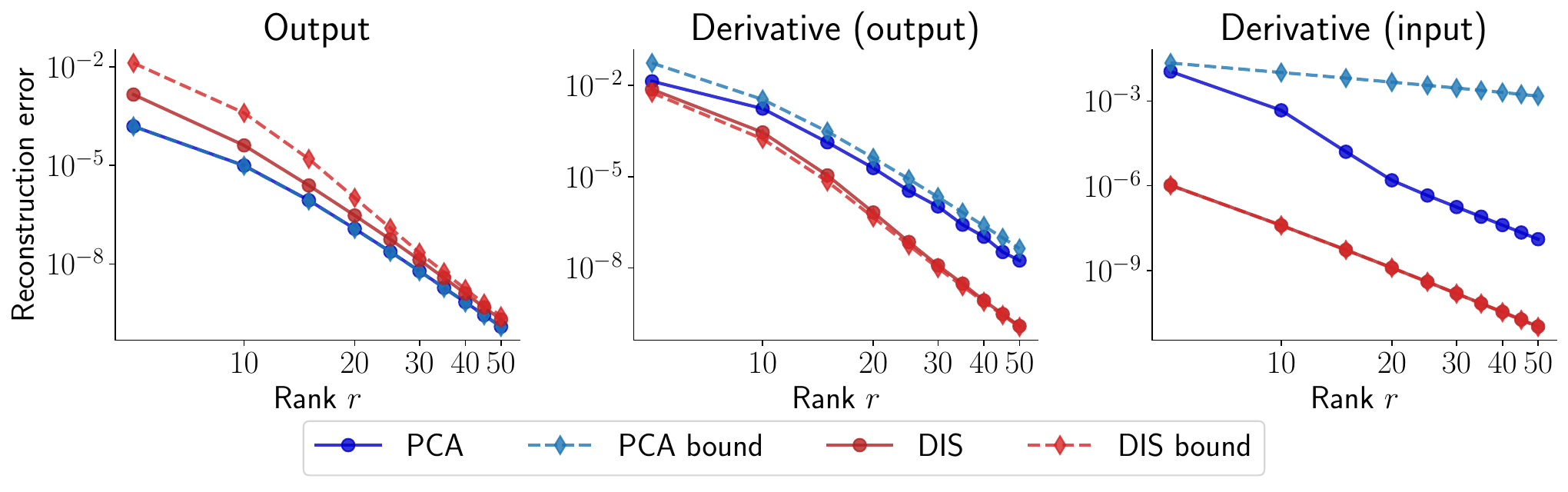}
        \caption{Steady Burgers equation}
    \end{subfigure}
    \begin{subfigure}{0.99\textwidth}
        \centering
        \includegraphics[width=0.99\textwidth]{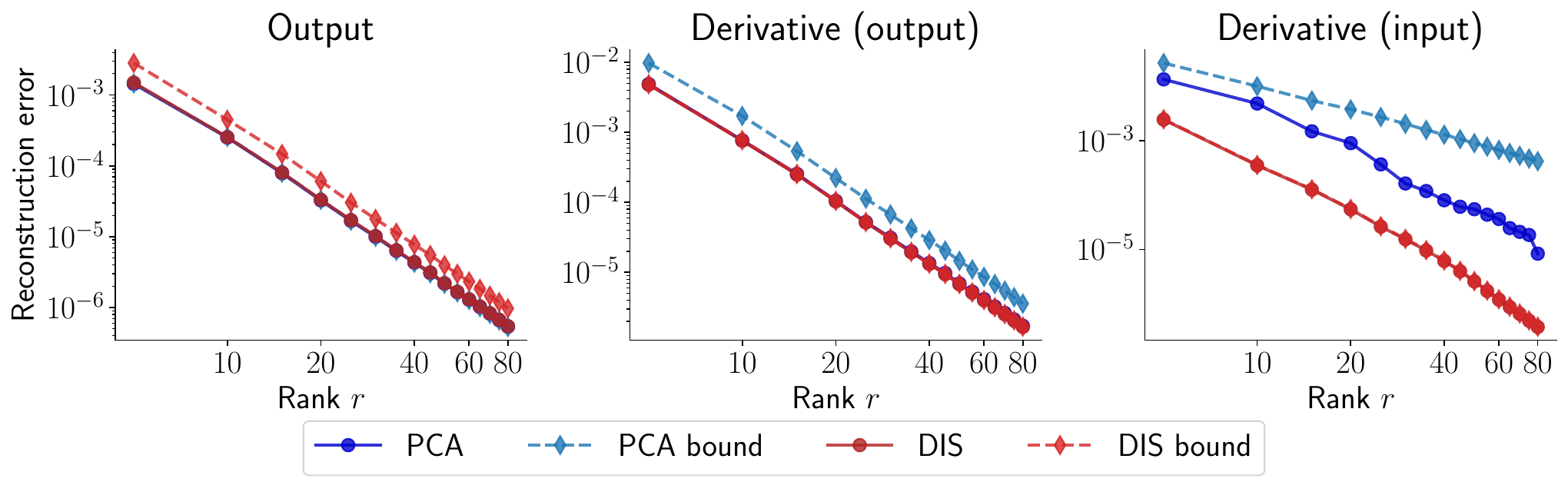}
        \caption{Linear elasticity equation}
    \end{subfigure}
    \caption{Reconstruction errors (normalized by the squared second moments of the test data) in the output and its derivatives for the three PDE problems. Left column: reconstruction error of the output by output PCA/DIS. Center column: reconstruction error of the derivative by the output PCA/DIS. Right column: reconstruction error of the derivative by the input PCA/DIS. Bounds for the reconstruction error in terms of the trailing eigenvalues are shown in the dashed lines. Here, true bounds are shown for the output PCA/DIS in output reconstruction and input/output DIS for derivative reconstruction cases.
    For the reconstruction of the derivatives by input/output PCA basis, we do not explicitly compute the constant, but instead the plot shows the line corresponding to $K_1 \sum_{i=r+1}^{\infty} \lambda_i^{\POD}$ and $K_2 \sum_{i=r+1}^{\infty} \mu_i^{\KLE}$ for some prescribed $K_1, K_2$ to illustrate the decay rate of the reconstruction error bound.
    }
    \label{fig:reconstruction_errors}
\end{figure}

In \Cref{fig:reconstruction_errors}, we present the reconstruction errors for the output (by the output PCA/DIS) and its derivative (by output PCA/DIS and input PCA/DIS) for the three PDE problems considered in this work.
That is, we plot the reconstruction errors, 
\begin{align*}
    \frac{\|(I - \Pry)(\fun - \fbar)\|_{L^2_{\gamma}}^2}{\|\fun\|_{L^2_{\gamma}}^2}, \quad
    \frac{\|(I - \Pry)\derivXC \fun\|_{L^2(\gamma, \HS(\XC,\cY))}^2}{\|\derivXC \fun\|_{L^2(\gamma, \HS(\XC,\cY))}^2}, \quad
    \text{and} \quad
    \frac{\|\derivXC \fun (I - \Qrx)\|_{L^2(\gamma, \HS(\XC,\cY))}^2}{\|\derivXC \fun\|_{L^2(\gamma, \HS(\XC,\cY))}^2}, \quad
\end{align*}
for different ranks $r$,
where each is normalized by the squared second moment of the output or the derivative.
Here, the errors are computed using a test set of $N_\text{test} = 22,\!000$ output and derivative samples.
The corresponding reconstruction error bounds of \Cref{sec:reconstruction_errors} are also plotted for reference.
Specifically, for the output reconstruction by output PCA/DIS, and the derivative reconstruction by input/output DIS, the bounds are simply the trailing eigenvalue sums and are computed and plotted exactly in \Cref{fig:reconstruction_errors}.
On the other hand, for the derivative reconstruction by input/output PCA, 
the bounds are additionally scaled by a constant ($\bE_{\gamma}[\|\deriv \fun\|_{\cL(\cX,\cY)}^2]$ and $K_D$, respectively). 
We do not compute these explicitly, and instead plot the lines $K_1 \sum_{i=r+1}^{\infty} \lambda_i^{\POD}$ and $K_2 \sum_{i=r+1}^{\infty} \mu_i^{\KLE}$ for some prescribed values of $K_1, K_2$ to illustrate the decay rate of the reconstruction error bound.

For the output reconstruction, output PCA and DIS perform comparably, with significant reductions in the reconstruction error as the rank $r$ increases.
The two only differ slightly in the steady Burgers example, where PCA errors noticeably smaller for the lower ranks.
Similarly, the two output bases are also comparable when reconstructing the derivative, 
where the reconstruction errors using the PCA and DIS bases are almost identical for the semilinear elliptic PDE and linear elasticity examples.
However, for the steady Burgers example, the DIS reconstruction errors are consistently smaller than the PCA, though both errors do decay rapidly.
On the other hand, when reconstructing the derivative using the input PCA/DIS, 
the two approaches perform quite differently, where the DIS bases produce significantly lower errors than the PCA bases.
As previously mentioned, this is to be expected, since the input PCA basis does not incorporate any information of the underlying map and hence cannot be expected to accurately capture the derivative information of the operator.

We also make a few remarks on the reconstruction error bounds. First, we note that for the cases of output reconstruction by the output PCA and derivative reconstruction by the input/output DIS, 
the bounds are, in fact, equalities, and match the observed errors.
In the case of the output reconstruction by the output DIS, 
the bounds, though slightly conservative, do provide a good approximation of the observed errors, especially for the higher ranks.
For the derivative reconstruction by the output PCA, the bound required the additional assumption of \Cref{assumption:derivative_inverse_inequality}, postulating the existence of a constant that relates the map's derivative $\derivXC \fun$ to its output $\fun$.
While this is not proven for the examples, the reconstruction errors do appear to decay at a rate consistent with a bound of the form $K_D \sum_{i=r+1}^{\infty}\lambda_i^{\POD}$,
suggesting that the assumption may be reasonable.
Finally, for the derivative reconstruction bounds of the input PCA, we observe that the $K \sum_{i=r+1}^{\infty} \mu_i^{\KLE}$ bounds are overly conservative for the 1D example problems. The observed errors instead decay at a faster rate that tracks with the DIS rate,
suggesting that the PCA bases may be serendipitously aligned with the sensitivity of the operator in these cases.
On the other hand, for the 2D linear elasticity problem, 
the observed errors follow more closely with the $K \sum_{i=r+1}^{\infty} \mu_i^{\KLE}$ bound,
which is a slower decay than the DIS.
% The error bounds in terms of the trailing eigenvalue sums also match the observed errors, where the PCA bound is exact.
% We also note that 

\subsubsection{Excess risks in reconstruction errors}
Next, we consider the excess risks associated with the sampling errors in the empirical PCA/DIS.
That is, we compute the empirical PCA and DIS using smaller sample sizes ranging from $N = 80$ to $N = 640$,
and then compare the excess in reconstruction errors when compared to the proxy of the exact PCA/DIS, which we compute with $N = 20,\!000$.
That is, for each empirical output PCA $\Uryhat^{\POD}$, we evaluate the excess risk in the output reconstruction error,
\[
    \frac{\|(I - \Pryhat^{\POD})(\fun - \fbar)\|_{L^2_{\gamma}}^2 - \|(I - \Pry^{\POD})(\fun - \fbar)\|_{L^2_{\gamma}}^2}{\|\fun\|_{L^2_{\gamma}}^2}, \quad
\]
while for the empirical output/input DIS, $\Uryhat^{\JTJ}$ and $\Vrxhat^{\JTJ}$, we evaluate the excess risk in the derivative reconstruction error,
\[
    \frac{
    \| (I - \Pryhat^{\JJT}) \derivXC \fun \|_{L^2(\gamma, \HS(\XC,\cY))}^2 - \| (I - \Pry^{\JJT}) \derivXC \fun \|_{L^2(\gamma, \HS(\XC,\cY))}^2
    }{\|\derivXC \fun\|_{L^2(\gamma, \HS(\XC,\cY))}^2}
\]
and 
\[
    \frac{\|\derivXC \fun (I - \Qrxhat^{\JTJ})\|_{L^2(\gamma, \HS(\XC,\cY))}^2 - \|\derivXC \fun (I - \Qrx^{\JTJ})\|_{L^2(\gamma, \HS(\XC,\cY))}^2}{\|\derivXC \fun\|_{L^2(\gamma, \HS(\XC,\cY))}^2},
\]
all normalized by the squared second moment of the output or the derivative, respectively.
Here, the errors are again estimated using the test set of $N_\text{test} = 22,\!000$ function and derivative samples.
This is repeated over 10 independent runs to obtain results on the average excess risk.
We present these in \Cref{fig:semilinear_adr_excess_risk}, \Cref{fig:steady_burgers_excess_risk}, and \Cref{fig:linear_elasticity_excess_risk} for the semilinear elliptic, steady Burgers, and linear elasticity problems, respectively.

The excess risk corresponding to the semilinear elliptic and linear elasticity examples both appear to follow the local rate of $\cO(N^{-1})$ for both PCA and DIS and for both ranks considered.
Moreover, the excess risks decrease as the ranks increase,
implying that the inverse dependency on the spectral gap in the bounds \Cref{prop:pod_sample_error_expectation}
and \Cref{prop:dis_sampling_error_mean} is not strongly felt in these examples.
With the rapid decay of eigenvalues, spectral gaps for the larger ranks are orders of magnitude smaller than that of the smaller ranks and should otherwise have a great impact on the excess risk bound.
This suggests that the particular form of the excess risk bound is pessimistic in these scenarios, and that more care is needed in deriving sharper estimates, such as those in \cite{BlanchardBousquetZwald06,ReissWahl20}.

The results for the steady Burgers example is less conclusive.
The $\cO(N^{-1})$ rate is observed in some cases, while the error appears to follow more closely the rate of $\cO(N^{-1/2})$ for the output DIS with $r = 10$ and the input DIS for both $r = 10$ and $r = 30$.
However, we also note that the resulting trends are likely polluted with sampling errors in computing both the PCA/DIS used as the reference, as well as in the estimation of the excess risks themselves by finite test data and finitely many runs.
In particular, the excess risks for the input DIS are on the order of $10^{-10}$ using just $N = 80$, and hence small errors in the computations can lead to significant variations in the estimated excess risks. 
For this reason, one should be careful in drawing conclusions from the steady Burgers example.

\begin{figure}[t!]
    \begin{subfigure}{0.99\textwidth}
        \centering
        \includegraphics[width=0.99\textwidth]{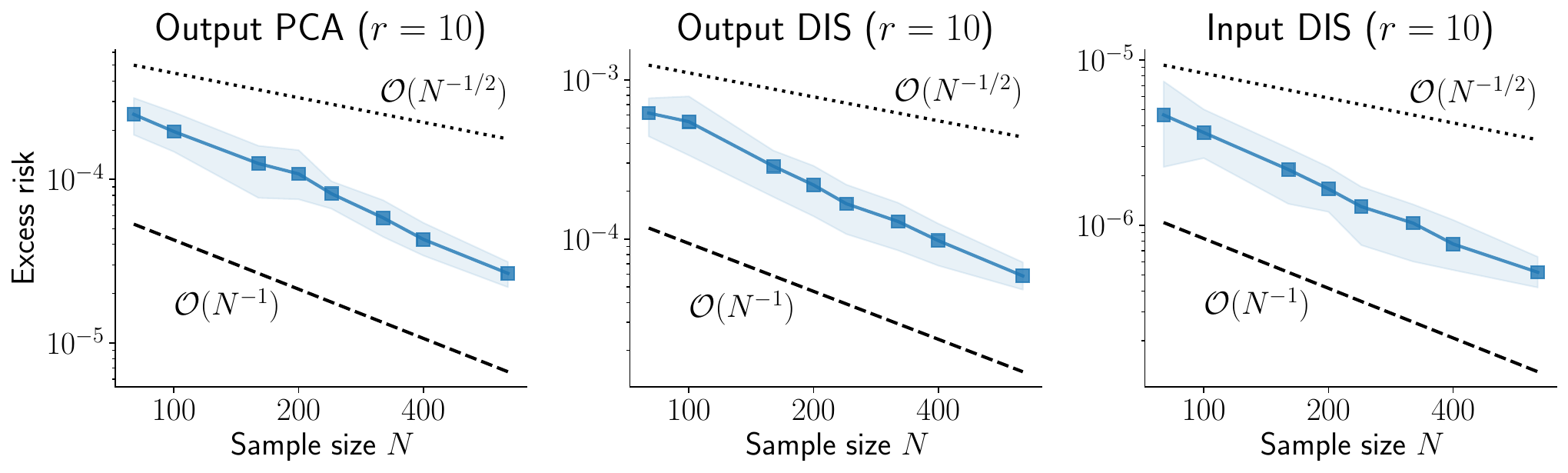}
    \end{subfigure}
    \begin{subfigure}{0.99\textwidth}
        \centering
        \includegraphics[width=0.99\textwidth]{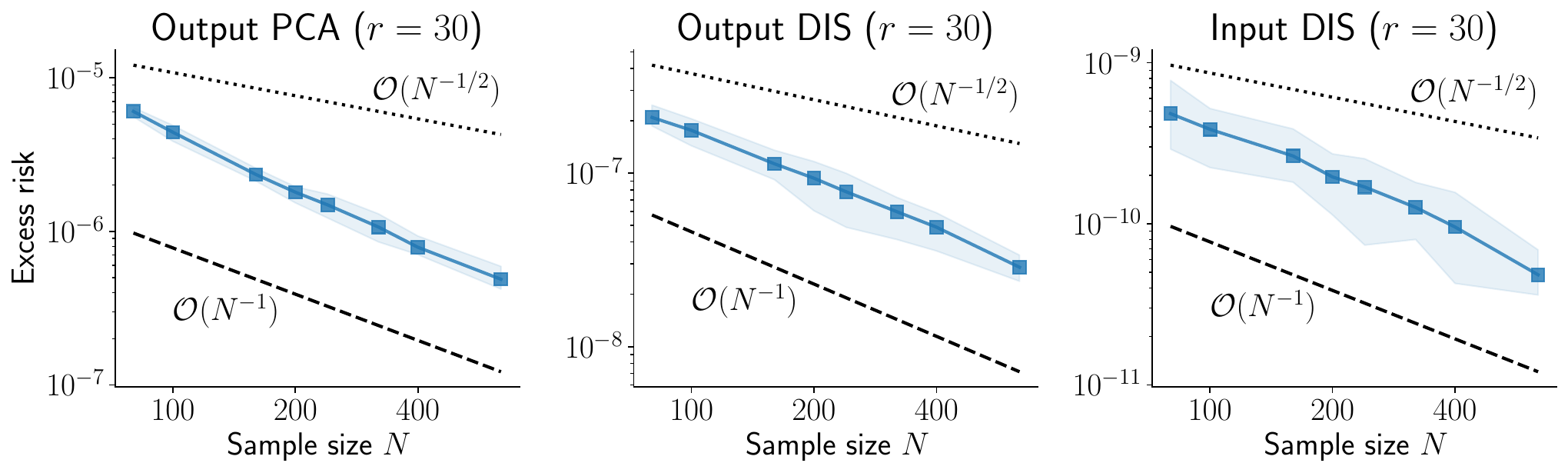}
    \end{subfigure}
    \caption{Excess risks in the output and derivative reconstruction errors for the semilinear elliptic PDE problem for ranks $r= 10$ and $r = 30$. 
    Values are normalized by the squared second moments of the test data.
    For reference, the global $N^{-1/2}$ rate is shown in the dotted line while the local $N^{-1}$ rate is shown in the dashed line. The filled regions correspond to the 10\%--90\% quantile ranges of the 10 independent runs.}
    \label{fig:semilinear_adr_excess_risk}
\end{figure}

\begin{figure}[t!]
    \begin{subfigure}{0.99\textwidth}
        \centering
        \includegraphics[width=0.99\textwidth]{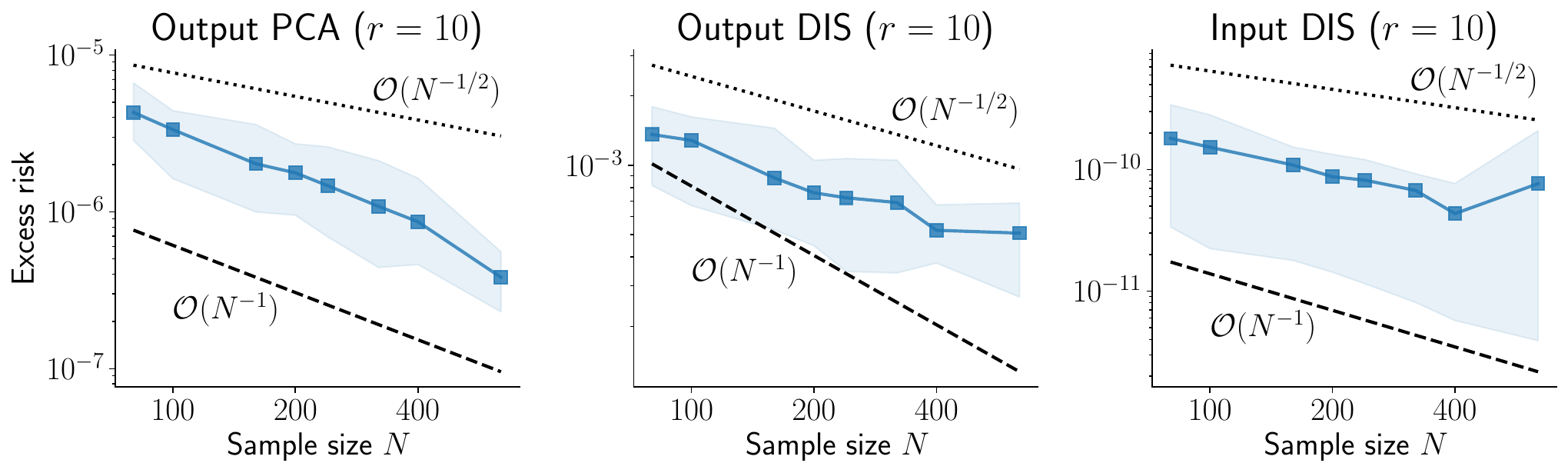}
    \end{subfigure}
    \begin{subfigure}{0.99\textwidth}
        \centering
        \includegraphics[width=0.99\textwidth]{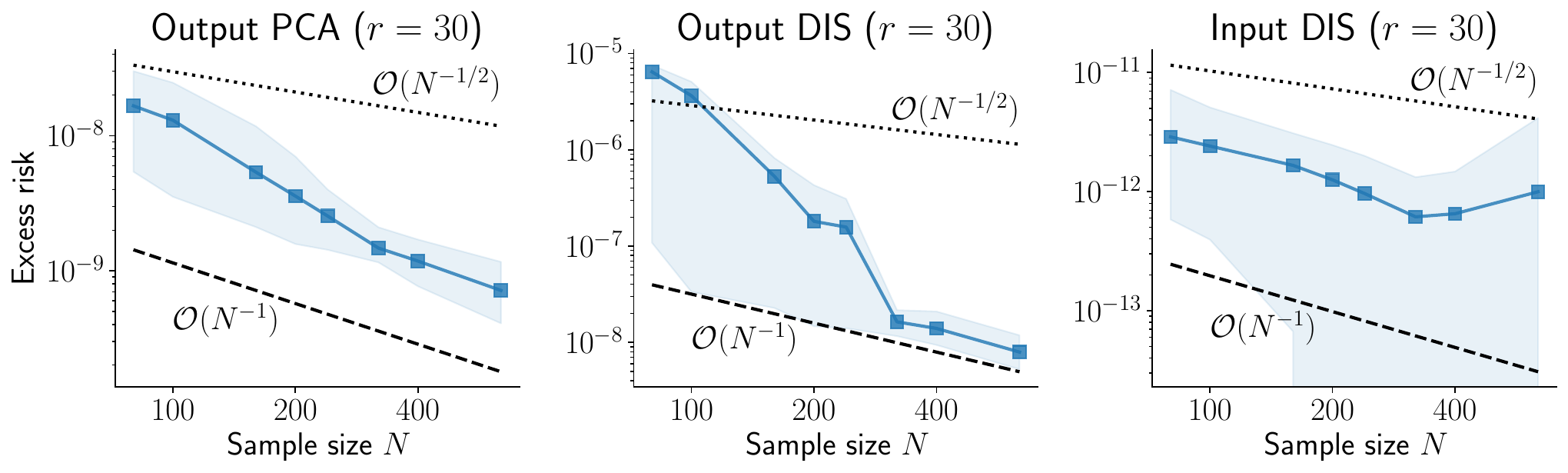}
    \end{subfigure}
    \caption{Excess risks in the output and derivative reconstruction errors for the steady Burgers problem for ranks $r= 10$ and $r = 30$. 
    Values are normalized by the squared second moments of the test data.
    For reference, the global $N^{-1/2}$ rate is shown in the dotted line while the local $N^{-1}$ rate is shown in the dashed line.
    The filled regions correspond to the 10\%--90\% quantile ranges of the 10 independent runs.
    }
    \label{fig:steady_burgers_excess_risk}
\end{figure}

\begin{figure}[t!]
    \begin{subfigure}{0.99\textwidth}
        \centering
        \includegraphics[width=0.99\textwidth]{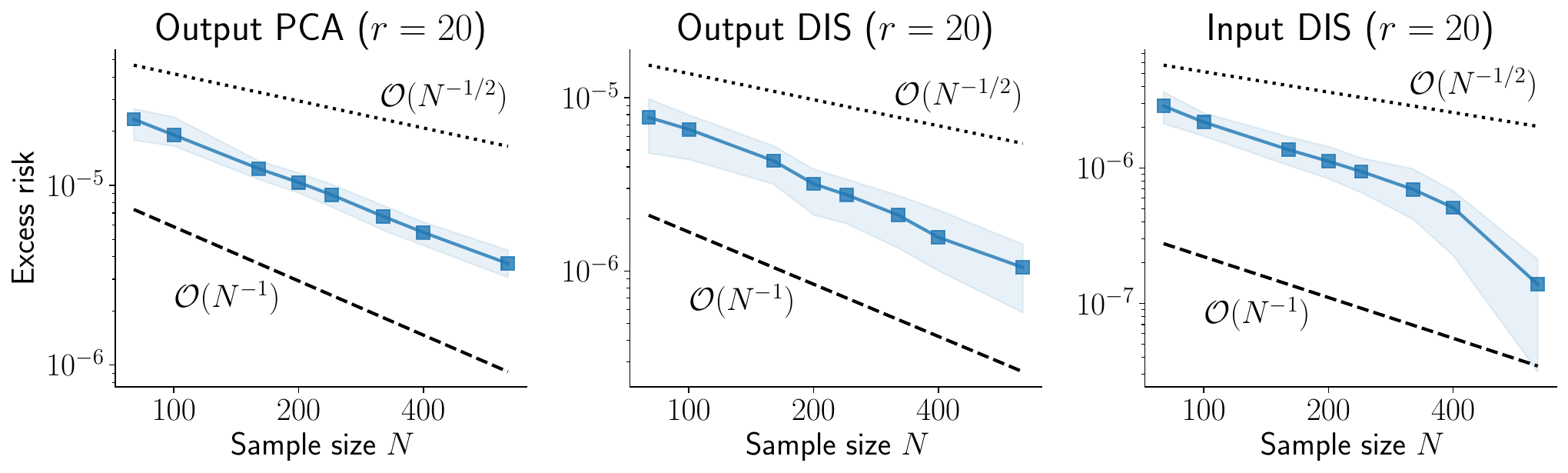}
    \end{subfigure}
    \begin{subfigure}{0.99\textwidth}
        \centering
        \includegraphics[width=0.99\textwidth]{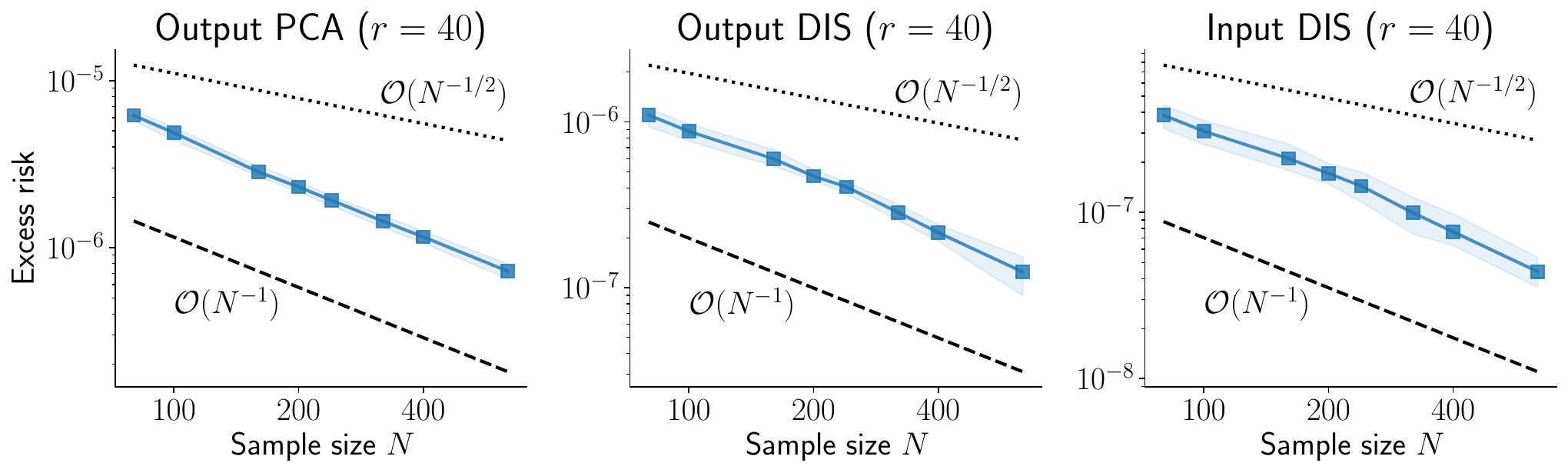}
    \end{subfigure}
    \caption{Excess risks in the output and derivative reconstruction errors for the linear elasticity problem for ranks $r= 20$ and $r = 40$. 
    Values are normalized by the squared second moments of the test data.
    For reference, the global $N^{-1/2}$ rate is shown in the dotted line while the local $N^{-1}$ rate is shown in the dashed line.
    The filled regions correspond to the 10\%--90\% quantile ranges of the 10 independent runs.
    }
    \label{fig:linear_elasticity_excess_risk}
\end{figure}

\subsubsection{Neural network generalization errors}
We now present experiments in which we attempt to learn the operators and their derivatives using the RBNO architecture.
For these experiments, we train RBNOs for the four possible pairs of input/output bases
across a range of different ranks.
For each rank, we consider training sample sizes of $N = 640$ up to $N = 16,\!000$.
We then compute the errors in predicting the output and its derivative, i.e., 
\[
    \frac{\|\fun - \ftilde_{\theta}\|_{L^2_{\gamma}}^2}{\|\fun\|_{L^2_{\gamma}}^2} \quad \text{and} \quad
    \frac{\|\derivXC \fun - \derivXC \ftilde_{\theta}\|_{L^2(\gamma, \HS(\XC,\cY))}^2}{\|\derivXC \fun\|_{L^2(\gamma, \HS(\XC,\cY))}^2}.
\]
The end-to-end neural operator generalization errors are estimated on a test dataset consisting of $N_\text{test} = 4,\!000$ function and derivative samples.
These error results are presented in two different ways.
For fixed ranks, we plot the generalization errors as a function of the number of training samples, while for fixed sample sizes, we plot the generalization errors as a function of the rank.
The results are shown in \Cref{fig:nn_errors_semilinear_adr}, \Cref{fig:nn_errors_steady_burgers}, and \Cref{fig:nn_errors_linear_elasticity} for the semilinear elliptic, steady Burgers, and linear elasticity problems, respectively.

We begin by making a few general observations. 
First, for any fixed rank $r$, both the output and derivative generalization errors 
decrease as the training sample size $N$ increases,
often continuing until a plateau is reached (i.e~a floor value in the error where further increasing the sample size does not lead to a significant decrease in the error).
This likely corresponds to the limit of either the ridge function approximation (i.e., truncated inputs/outputs) or the approximation power of the neural network itself,
as the plateau tends to occur later for larger ranks and the corresponding floor values in error are lower.
We also note that, often, output error reaches the plateau long before the derivative error, and the derivative errors continue to improve with increasing sample sizes.
This is especially noticeable in the steady Burgers example, 
where fast decrease in the derivative errors with respect to the training sample size begins \textit{after} the output errors begin to plateau.
The observed behavior may suggest that the neural networks tend to first learn the output values before they learn the derivatives.

We also recognize that, unlike the reconstruction errors, 
the generalization errors of the neural operators given a fixed training sample size
do not decrease monotonically with respect to rank, as is typical of bias-variance trade-offs in neural network training.
For smaller ranks, the generalization errors can be made small with fewer training samples, but ultimately reaching the limit of the truncated representation.
On the other hand, the ridge functions can be made more expressive with a larger rank,
but require more training samples to learn an accurate neural network representation.
Thus, for a fixed number of training samples, 
there is an optimal rank that achieves the smallest generalization error for the output and derivative predictions.
As seen in Figures \ref{fig:nn_errors_semilinear_adr} to \ref{fig:nn_errors_linear_elasticity}, the optimal ranks are larger when the training sample sizes are larger.
Moreover, the optimal ranks for approximating the output tends to be larger than the optimal ranks for approximating the derivatives, which is consistent with the observation that the derivatives are more difficult to learn. 

Finally, we discuss the effect of the choice of dimension reduction.
In particular, the choice of input dimension reduction, which determines the optimal ridge function error, has the greatest impact on the generalization errors.
In most cases, the input DIS significantly outperforms the input PCA in terms of the errors for both the output and the derivatives.
This suggests that despite the fact that the DIS basis does not explicitly incorporate the second-derivative information required to bound the derivative ridge function errors (as in \Cref{prop:ridge_derivative_bound_dis}), it remains an effective choice of dimension reduction for learning the derivatives in practice.
The differences in errors between the input DIS and PCA do, however, tend to decrease as the rank increases. For large ranks, the generalization errors are likely dominated by the neural network's ability to learn the underlying mapping rather than the dimension reduction.

On the other hand, the choice of output dimension reduction only modestly affects the generalization errors, since it is only involved in the reconstruction of the output/derivatives.
Across the examples considered, this difference is only significant for the steady Burgers example at low ranks ($r = 10,15$) and for sufficiently large training sets, 
where the training errors can be made sufficiently small such that the difference between the reconstruction errors is apparent.
In such cases, the two strategies perform as expected---the PCA leads to smaller errors in the output while the DIS leads to smaller derivative errors.

\begin{figure}
        % \centering
        \begin{subfigure}{0.95\textwidth}
        \includegraphics[width=0.99\textwidth]{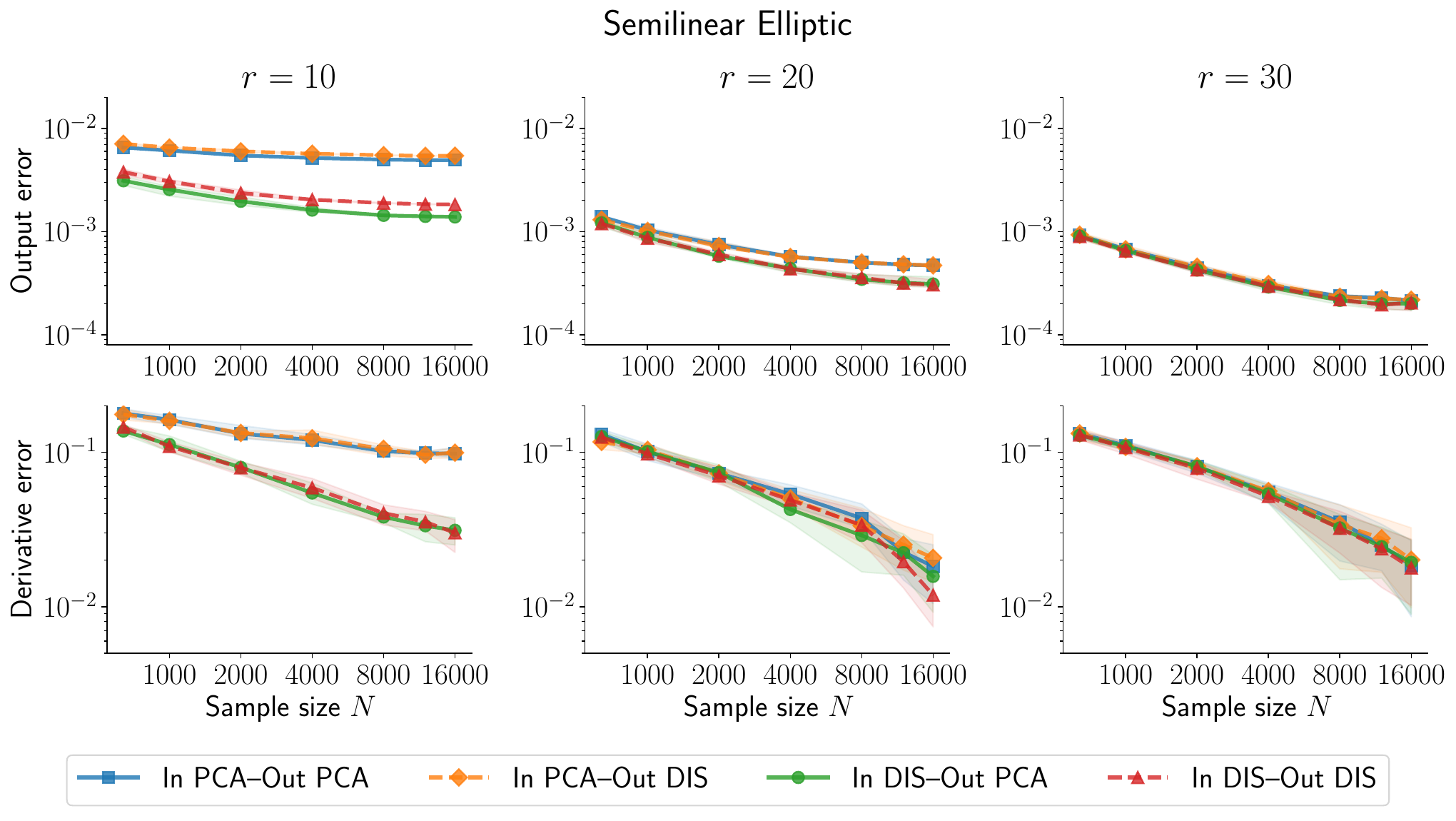}
        \caption{Generalization errors for the semilinear elliptic PDE as a function of the number of training samples for fixed ranks $r =$ $10$, $20$, and $30$.}
        \end{subfigure}
        \begin{subfigure}{0.95\textwidth}
        \includegraphics[width=0.99\textwidth]{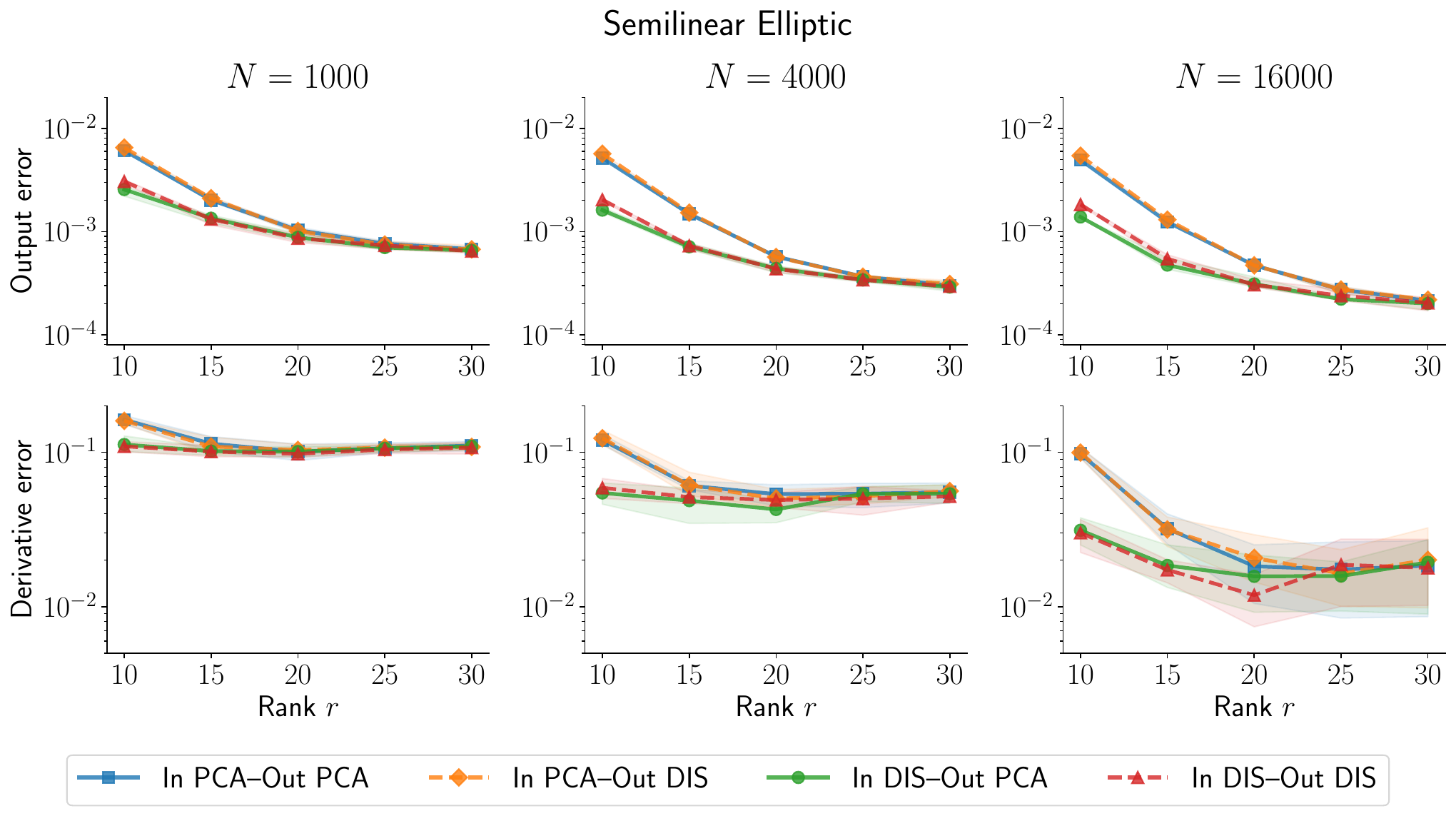}
        \caption{Generalization error for the semilinear elliptic PDE as a function of the rank for a fixed number of training samples, $N =1,\!000$, $4,\!000$, and $16,\!000$.}
        \end{subfigure}
        \caption{Generalization errors for the output ($\|\cdot\|_{L^2_{\gamma}}^2$) and derivatives ($| \cdot |_{H^1_{\gamma}}^2$) of the semilinear elliptic PDE normalized by the squared second moments of the test data. 
        Results are presented as both a function of training sample size (top) and as a function of rank (bottom). Solid lines correspond to averages over 10 independent runs, while the filled regions correspond to the 10\%--90\% quantile ranges across these runs.}
        \label{fig:nn_errors_semilinear_adr}
\end{figure}

\begin{figure}
    % \centering
    \begin{subfigure}{0.95\textwidth}
    \includegraphics[width=0.99\textwidth]{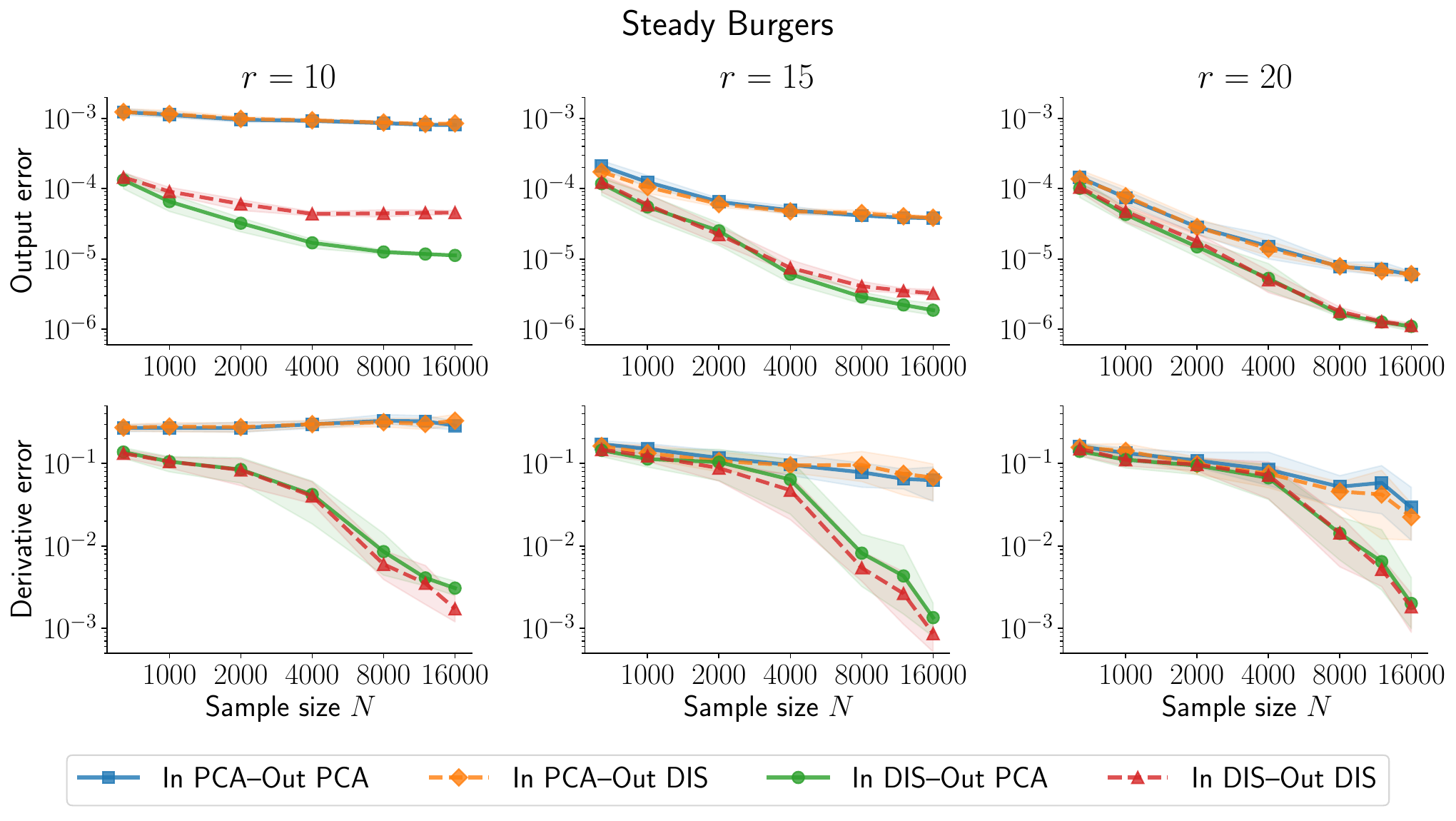}
    \caption{Generalization errors for the steady Burgers PDE as a function of the number of training samples for fixed ranks $r =$ $10$, $20$, and $30$.}
    \end{subfigure}
    \begin{subfigure}{0.95\textwidth}
    \includegraphics[width=0.99\textwidth]{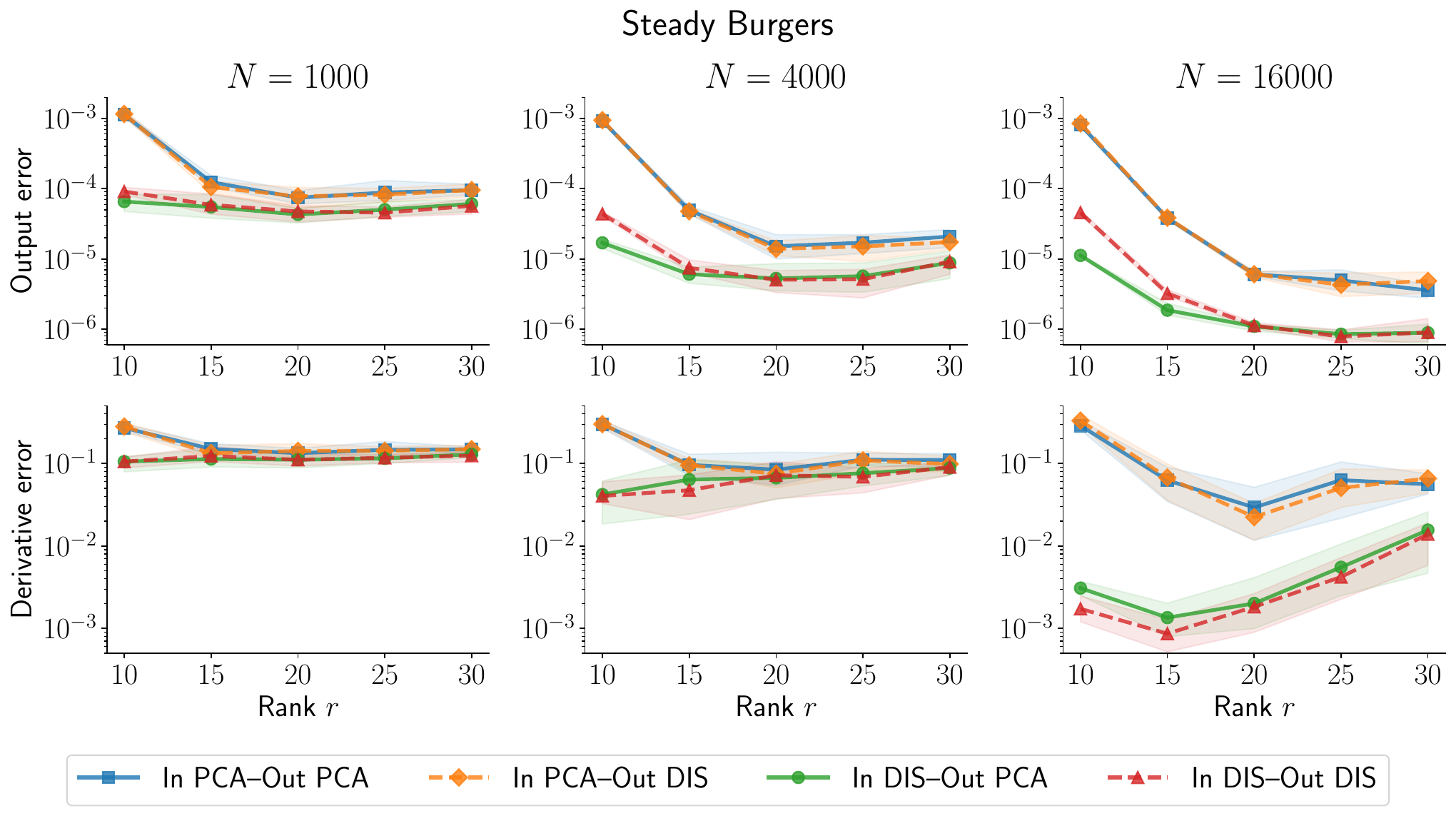}
    \caption{Generalization error for the steady Burgers PDE as a function of the rank for a fixed number of training samples, $N =$ $1,\!000$, $4,\!000$, and $16,\!000$.}
    \end{subfigure}
    \caption{Neural network generalization errors for the output and derivatives of the steady Burgers PDE, presented as both a function of training sample size (top) and as a function of rank (bottom). Solid lines correspond to averages over 10 independent runs, while the filled regions correspond to the 10\%--90\% quantile ranges across these runs.}
    \label{fig:nn_errors_steady_burgers}
\end{figure}

\begin{figure}
    % \centering
    \begin{subfigure}{0.95\textwidth}
    \includegraphics[width=0.99\textwidth]{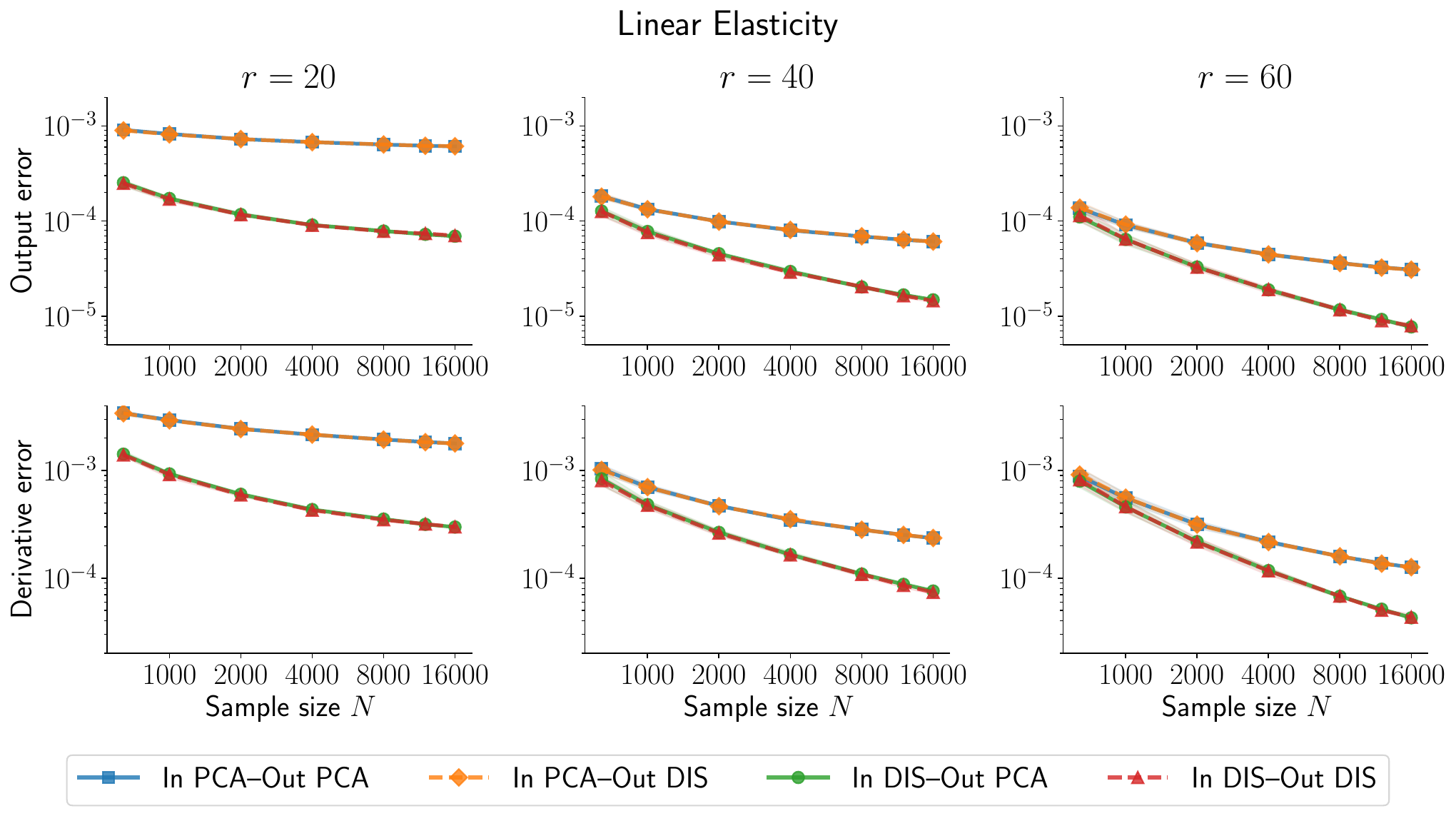}
    \caption{Generalization errors for the linear elasticity PDE as a function of the number of training samples for fixed ranks $r =$ $20$, $40$, and $60$.}
    \end{subfigure}
    \begin{subfigure}{0.95\textwidth}
    \includegraphics[width=0.99\textwidth]{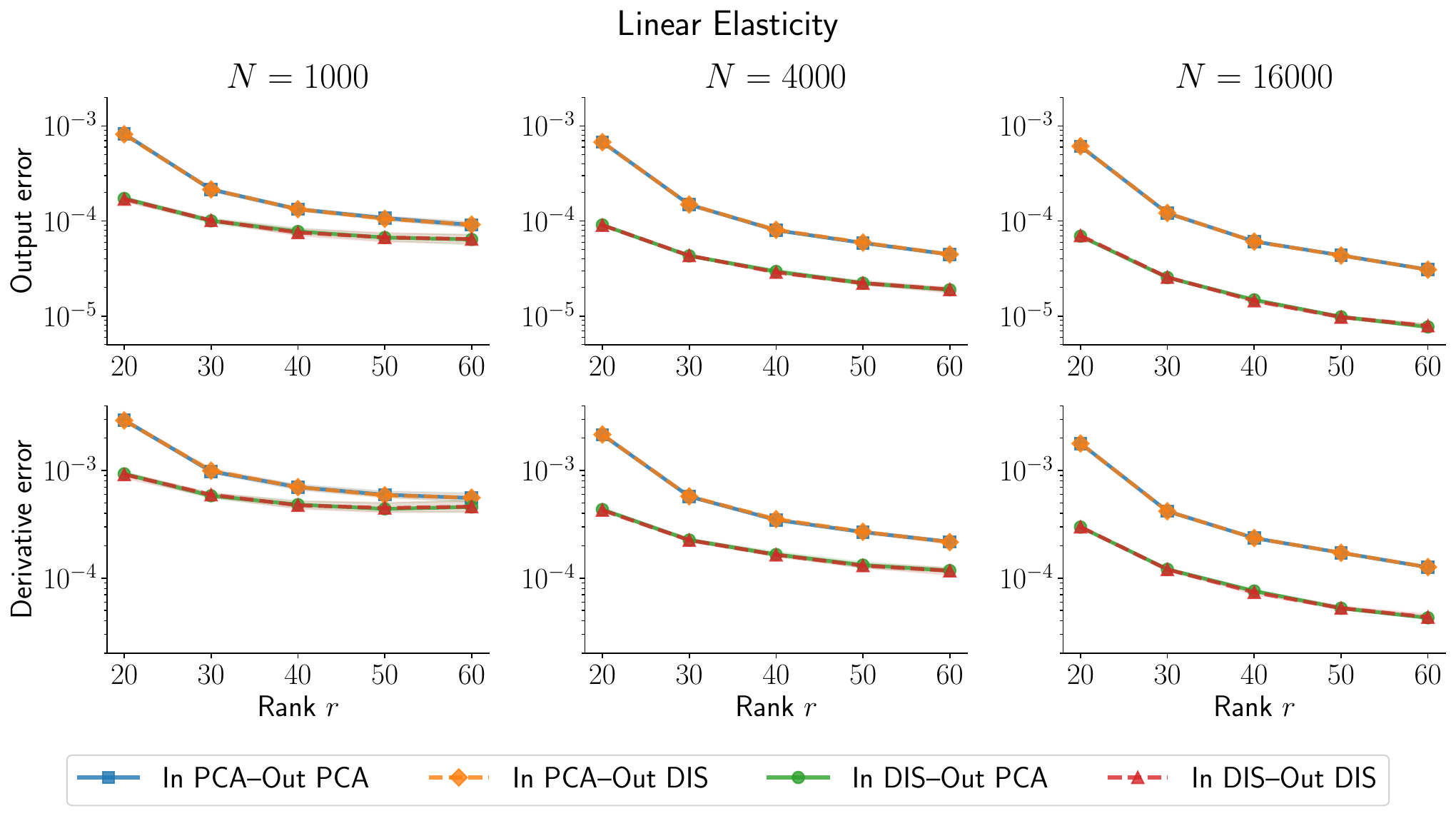}
    \caption{Generalization error for the linear elasticity PDE as a function of the rank for a fixed number of training samples, $N =$ $1,\!000$, $4,\!000$, and $16,\!000$.}
    \end{subfigure}
    \caption{Neural network generalization errors for the output and derivatives of the linear elasticity PDE, presented as both a function of training sample size (top) and as a function of rank (bottom). Solid lines correspond to averages over 10 independent runs, while the filled regions correspond to the 10\%--90\% quantile ranges across these runs.}
    \label{fig:nn_errors_linear_elasticity}
\end{figure}

% \begin{figure}
%         \centering
%         \includegraphics[width=0.99\textwidth]{sections/figures/nn/steady_burgers_nn_error_vs_samples.pdf}
% \end{figure}

% \begin{figure}
%         \centering
%         \includegraphics[width=0.99\textwidth]{sections/figures/nn/steady_burgers_nn_error_vs_ranks.pdf}
% \end{figure}

% \begin{figure}
%     \centering
%     \includegraphics[width=0.99\textwidth]{sections/figures/nn/linear_elasticity_nn_error_vs_samples.pdf}
% \end{figure}

% \begin{figure}
%     \centering
%     \includegraphics[width=0.99\textwidth]{sections/figures/nn/linear_elasticity_nn_error_vs_ranks.pdf}
% \end{figure}

\section{Conclusions}\label{sec:dibasis_conclusions}
In this work, we considered the analysis of derivative-informed operator surrogates 
that use reduced bases representations of the input and output spaces and a neural network that is constructed to approximate the finite-dimensional mapping between the truncated inputs and outputs. 
The main contributions of the present work are $H^1_{\gamma}$ approximation error bounds developed for different choices of dimension reduction---namely, PCA and DIS. 

To this end, we first presented a universal approximation theorem for generic orthonormal bases, which shows that approximation of operators in $H^m_{\gamma}$ for any $m \geq 0$ to arbitrary accuracy using the reduced basis architecture is possible.
To provide more insight on how the dimension reduction strategy affects the approximation errors, we proceeded with a basis-specific analysis, decomposing the error into dimension reduction error and neural network approximation error.
The dimension reduction errors were considered in detail for the PCA and DIS bases, including the statistical errors associated with the sample-based estimators of the PCA/DIS operators. 
The neural network approximation error was analyzed using the universal approximation results for Sobolev norms from \cite{Hornik91}, which we extended to accommodate deep architectures and non-saturating activation functions. 
The different results were combined to provide an end-to-end error bound. We note that, of the various combinations, 
the mildest assumptions were required to bound the $H^1_{\gamma}$ approximation error when using PCA for input dimension reduction and DIS for output dimension reduction.
In developing these results, we also filled gaps in existing results for the derivative informed subspaces in the $L^2_{\gamma}$ case, especially in the separable Hilbert space setting. 
% This allowed us to provide an $L^2_{\gamma}$ result when using DIS for both input and output dimension reduction (\Cref{theorem:dipnet_error}).

We also presented numerical results to validate our theoretical predictions, 
comparing the reconstruction errors, excess risks, as well as the end-to-end errors of the resulting neural operators in both $L^2_{\gamma}$ and $H^1_{\gamma}$.
For the reconstruction errors, we observed that the PCA/DIS eigenvalue decay does indeed translate to appropriate bounds in the reconstruction errors.
In the case of empirical PCA/DIS, we observed that the excess risk of the reconstructions typically decrease with sample size $N$ at a rate consistent with the local regime of $\cO(N^{-1})$ as opposed to the global regime of $\cO(N^{-1/2})$.
We did not, however, observe the effect of the inverse dependence on the spectral gap ($\lambda_{r} - \lambda_{r+1}$) in our numerical examples. 
This suggests that our approach for bounding the excess risk is pessimistic,
and that more refined approaches such as those taken in \cite{BlanchardBousquetZwald06,ReissWahl20}
may be necessary to provide sharper and more practical bounds.

When used to construct operator surrogates, we observed that the choice of dimension reduction (PCA vs DIS) is especially important for the performance of the neural operator in both output and derivative learning.
This has important practical implications---an effective choice of dimension reduction allows for smaller rank and hence smaller networks, which in turn requires fewer training samples and lower training times.
This is especially true for the derivative error, which often required more training samples to learn accurately compared to the output itself.
In this regard, the DIS significantly outperformed the PCA for input dimension reduction, since it explicitly accounts for the sensitivity of the underlying map,
while the PCA and DIS provided similar performance when used for output dimension reduction for the examples considered.

We note that several of our results for dimension reduction required additional assumptions. For example, \Cref{assumption:derivative_inverse_inequality} and \Cref{assumption:hessian_inverse_inequality} were needed in order to bound higher-order derivatives by lower-order ones.
Although the error bounds resulting from these assumptions 
are realized in the numerical examples considered,
it would be of interest to identify more fundamental properties of operators that can be used to ascertain the reconstruction/ridge function bounds for the derivatives. 
One possible example is holomorphy. 
It is known that holomorphic operators are amenable to linear dimension reduction
\cite{LanthalerMishraKarniadakis22, Lanthaler23, SchwabZech23,AdcockDexterMoraga24, HerrmannSchwabZech24,AdcockDexterMoraga25}, possibly allowing one to derive favorable approximation rates in the infinite-dimensional Sobolev norms in terms of both the latent dimension $r$ and the size of the neural network.

We also note that neither the statistical learning errors nor the training (optimization) errors of the neural network are treated theoretically in this work.
However, both are important for the understanding of derivative-informed operator learning, since many performance benefits arise due to the inclusion of derivative information in the \textit{Sobolev training} process itself. 
Thus, we believe operator learning with derivatives remains open to further theoretical and numerical explorations, with many interesting outstanding research questions.

\section{Acknowledgements}
This research is supported by the U.S. Department of Energy under the award DE-SC0023171
and by the U.S. National Science Foundation under awards DMS-2324643
and OAC-2313033.
This research made use of computing resources of the National Energy Research Scientific Computing Center (NERSC), a Department of Energy Office of Science User Facility, under award ALCC-ERCAP0030671.
The authors would also like to acknowledge helpful discussions with Lianghao Cao and Jakob Zech.
% APPENDICES ------------------------------------------------------------------
% \bookmarksetup{startatroot}

\appendix
\crefalias{section}{appendix}

\newpage
\section*{Appendices}
% \section{Appendix to \Cref{chap:dibasis}}\label{chap:dibasis_appendices}

\section{Useful supporting results}

\subsection{Cameron--Martin space}
We note some useful facts about the Cameron--Martin space. 
\begin{lemma} Suppose $v_i \in \cX$ are eigenvectors of $\Cx$ such that $\Cx v_i = \lambda_i v_i$, $i \in \bN$. 
  Then $\tilde{v}_i = \Cx^{\half} v_i$ is an orthonormal basis for the Cameron--Martin space $\XC$. 
\end{lemma}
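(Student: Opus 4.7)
The plan is to exploit the fact that $\Cx^{1/2}$ is, essentially by construction of the Cameron--Martin inner product, an isometric isomorphism from $\cX$ onto $\XC$, so it sends any orthonormal basis of $\cX$ to an orthonormal basis of $\XC$.

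First I would set up the preliminaries: since $\Cx$ is assumed non-degenerate, self-adjoint and trace-class on the separable Hilbert space $\cX$, the spectral theorem for compact self-adjoint operators guarantees that its eigenvectors $(v_i)_{i \in \bN}$, chosen orthonormal in $\cX$, form an orthonormal basis of $\cX$, with strictly positive eigenvalues $\lambda_i > 0$. This also makes $\Cx^{1/2}$ a well-defined positive self-adjoint operator with $\Cx^{1/2} v_i = \sqrt{\lambda_i}\, v_i$, and $\Cx^{-1/2}$ is well-defined and self-adjoint on $\XC = \range(\Cx^{1/2})$.

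Next I would verify orthonormality of $\tilde v_i = \Cx^{1/2} v_i$ in $\XC$ by a direct computation using the definition of the Cameron--Martin inner product:
\begin{equation*}
  \linner \tilde v_i, \tilde v_j \rinner_{\XC}
  = \linner \Cx^{-1/2} \tilde v_i, \Cx^{-1/2} \tilde v_j \rinner_{\cX}
  = \linner v_i, v_j \rinner_{\cX}
  = \delta_{ij}.
\end{equation*}
This also shows that $\Cx^{1/2}$ is an isometry from $(\cX, \linner\cdot,\cdot\rinner_{\cX})$ onto $(\XC, \linner\cdot,\cdot\rinner_{\XC})$; since $\Cx$ is non-degenerate it is injective, and its range is $\XC$ by definition, so it is a Hilbert space isomorphism.

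For the basis/spanning property, I would take an arbitrary $h \in \XC$, write $h = \Cx^{1/2} w$ for a unique $w \in \cX$, and expand $w = \sum_i \linner w, v_i \rinner_{\cX} v_i$ in $\cX$. Applying the bounded operator $\Cx^{1/2}$ term by term gives $h = \sum_i \linner w, v_i \rinner_{\cX} \tilde v_i$, with convergence in $\XC$ because of the isometry identity $\|\Cx^{1/2} w\|_{\XC} = \|w\|_{\cX}$ applied to partial sums. This identifies the Fourier coefficients $\linner h, \tilde v_i \rinner_{\XC} = \linner w, v_i\rinner_{\cX}$ and completes the proof that $(\tilde v_i)$ is an orthonormal basis of $\XC$.

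There is no real obstacle here; the one point to state carefully is that $\Cx^{1/2}$ is a unitary map from $\cX$ onto $\XC$ (so everything reduces to the fact that unitary maps preserve orthonormal bases), and that convergence of series can be transported through this map. Everything else is a direct computation.
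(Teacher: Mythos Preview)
Your proposal is correct and follows essentially the same approach as the paper: both arguments hinge on the fact that $\Cx^{1/2}:\cX\to\XC$ is an isometric isomorphism, writing $h=\Cx^{1/2}w$ and transporting the basis expansion of $w$ through this map. The paper's version is slightly more computational (it directly checks that the residual $\tilde\varepsilon_n$ satisfies $\|\tilde\varepsilon_n\|_{\XC}=\|\varepsilon_n\|_{\cX}\to 0$) while you first isolate the orthonormality check and the unitarity of $\Cx^{1/2}$, but the substance is identical.
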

\begin{proof}
    Consider any $h \in \XC$. Then $h = \Cx^{\half}w$ for some $w \in \cX$. Moreover, $(v_i)_{i=1}^{\infty}$ is an orthonormal basis for $\cX$ such that $\varepsilon_n := w - \sum_{i=1}^{n} v_i \linner v_i, w \rinner_{\cX} \rightarrow 0$. 
    Now note that 
    \begin{align*}
        \varepsilon_n &= \Cx^{-\half} h - \sum_{i=1}^{n} \Cx^{-\half} \Cx^{\half} v_i \linner \Cx^{-\half} \Cx^{\half} v_i, \Cx^{-\half} h \rinner_{\cX} \\
        &= \Cx^{-\half} \left( h - \sum_{i=1}^{n} \tilde{v}_i \linner \tilde{v}_i, h \rinner_{\XC} \right) \\
        &=: \Cx^{-\half} \tilde{\varepsilon}_n,
    \end{align*}
    where $\tilde{\varepsilon}_n = h - \sum_{i=1}^{n} \tilde{v}_i \linner \tilde{v}_i, h \rinner_{\XC}$ is complementary component of the projection onto the span of $(\tilde{v}_i)_{i=1}^{n}$ in $\XC$.
    It remains to see that $\|\tilde{\varepsilon}_n\|_{\XC}^2 = \linner \tilde{\varepsilon}_n, \tilde{\varepsilon}_n \rinner_{\XC} = \linner \varepsilon_n, \varepsilon_n \rinner_{\cX} \rightarrow 0$. 
\end{proof}

\begin{lemma}
    The inclusion operator $\iota : \cE \ni w \mapsto w \in \cX$ is Hilbert--Schmidt.
\end{lemma}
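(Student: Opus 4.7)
The plan is to compute the Hilbert--Schmidt norm of $\iota$ directly using the orthonormal basis of $\cE$ provided by the preceding lemma, and to recognize the resulting sum as the trace of $\Cx$, which is finite by assumption.

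First, I would fix the orthonormal basis $(\tilde{v}_i)_{i=1}^{\infty}$ of $\cE$ given by $\tilde{v}_i = \Cx^{1/2} v_i$, where $(v_i)_{i=1}^{\infty}$ is an orthonormal eigenbasis of $\Cx$ in $\cX$ with eigenvalues $(\lambda_i)_{i=1}^{\infty}$, $\Cx v_i = \lambda_i v_i$. Such an eigenbasis exists because $\Cx$ is assumed to be a non-degenerate, self-adjoint, trace-class operator on the separable Hilbert space $\cX$.

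Next, I would compute, term by term,
\begin{equation*}
  \|\iota \tilde{v}_i\|_{\cX}^2 = \|\tilde{v}_i\|_{\cX}^2 = \|\Cx^{1/2} v_i\|_{\cX}^2 = \linner v_i, \Cx v_i \rinner_{\cX} = \lambda_i,
\end{equation*}
so that
\begin{equation*}
  \|\iota\|_{\HS(\cE, \cX)}^2 = \sum_{i=1}^{\infty} \|\iota \tilde{v}_i\|_{\cX}^2 = \sum_{i=1}^{\infty} \lambda_i = \tr(\Cx) < \infty,
\end{equation*}
with finiteness following from the trace-class assumption on $\Cx$. This gives $\iota \in \HS(\cE, \cX)$.

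There is no real obstacle here: the only subtlety is making sure that the basis used to evaluate the Hilbert--Schmidt norm is orthonormal in the \emph{Cameron--Martin} inner product (not the $\cX$ inner product), which is precisely the content of the preceding lemma. Once that is in place, the computation reduces to the familiar identity $\sum_i \lambda_i = \tr(\Cx)$. If one prefers a basis-free argument, the same conclusion follows by writing $\iota = \Cx^{1/2} \circ (\Cx^{-1/2})$, noting that $\Cx^{-1/2}: \cE \to \cX$ is a unitary isomorphism by definition of the Cameron--Martin norm, and using that $\Cx^{1/2}: \cX \to \cX$ is Hilbert--Schmidt with $\|\Cx^{1/2}\|_{\HS(\cX,\cX)}^2 = \tr(\Cx)$.
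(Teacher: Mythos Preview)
Your proof is correct and essentially identical to the paper's: both use the Cameron--Martin orthonormal basis $\tilde{v}_i = \Cx^{1/2} v_i$ to compute $\|\iota\|_{\HS(\cE,\cX)}^2 = \sum_i \lambda_i = \tr(\Cx) < \infty$. Your write-up is slightly more detailed (spelling out $\|\Cx^{1/2} v_i\|_{\cX}^2 = \linner v_i, \Cx v_i\rinner_{\cX} = \lambda_i$) and adds a nice basis-free alternative, but the core argument is the same.
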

\begin{proof}
    We use the basis generated by the covariance $(\Cx^{\half} v_i)_{i=1}^{\infty} = (\lambda_i^{\half} v_i)_{i=1}^{\infty}$ to evaluate the Hilbert--Schmidt norm,
    \[\|\iota\|_{\HS(\XC, \cX)}^2 = \sum_{i=1}^{\infty} \linner \lambda_i^{\half} v_i, \lambda_i^{\half} v_i \rinner_{\cX} = \sum_{i=1}^{\infty} \lambda_i = \tr(\Cx) < \infty. \]
\end{proof}

\subsection{Convergence in Sobolev norms}
Here, we provide a supporting result for the convergence of functions in the Sobolev norm $H^1_{\gamma} = H^1(\gamma, \cY)$.
\begin{lemma}\label{lemma:convergence_of_sobolev}
    Let $\fun \in H^1_{\gamma}$ 
    and let $(\fun_n)_{i=1}^{\infty}$ be a sequence of 
    % $W^{2,1}(\gamma, \cX, \cY)$ 
    $H^1_{\gamma}$ functions such that 
    $\fun_n \rightarrow \fun$ in 
    % $L^2(\gamma, \cX; \cY)$ 
    $L^2(\gamma, \cY)$ 
    and $\derivXC \fun_n \rightarrow \Psi$ for some 
    % $\Psi \in L^2(\gamma, \cX; \HS(\XC, \cY))$.
    $\Psi \in L^2(\gamma, \HS(\XC, \cY))$.
    Then, $\fun_n \rightarrow \fun$ in 
    % $W^{2,1}(\gamma, \cX; \cY)$ 
    $H^1_{\gamma}$
    and $\derivXC \fun = \Psi$. 
\end{lemma}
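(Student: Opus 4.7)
The plan is to exploit the Hilbert space completeness of $H^1_{\gamma}$ together with the uniqueness of limits in $L^2(\gamma, \cY)$ and $L^2(\gamma, \HS(\XC, \cY))$. Concretely, I would first observe that the hypotheses make $(\fun_n)_{n=1}^{\infty}$ a Cauchy sequence in $H^1_{\gamma}$: since $\fun_n \to \fun$ in $L^2(\gamma, \cY)$ and $\derivXC \fun_n \to \Psi$ in $L^2(\gamma, \HS(\XC, \cY))$, both component sequences are Cauchy in their respective $L^2$ norms, and so
\[
\|\fun_n - \fun_m\|_{H^1_{\gamma}}^2 = \|\fun_n - \fun_m\|_{L^2_{\gamma}}^2 + \|\derivXC \fun_n - \derivXC \fun_m\|_{L^2(\gamma,\HS(\XC,\cY))}^2 \longrightarrow 0
\]
as $n, m \to \infty$.

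Next, I would appeal to the fact that $H^1_{\gamma} = W^{1,2}(\gamma, \cY)$, defined as the completion of $\cF \cC^{\infty}$ under $\|\cdot\|_{H^1_{\gamma}}$, is itself complete. Hence there is some $\tilde{\fun} \in H^1_{\gamma}$ with $\fun_n \to \tilde{\fun}$ in $H^1_{\gamma}$. By the definition of the $H^1_{\gamma}$ norm, this entails $\fun_n \to \tilde{\fun}$ in $L^2(\gamma, \cY)$ and $\derivXC \fun_n \to \derivXC \tilde{\fun}$ in $L^2(\gamma, \HS(\XC, \cY))$.

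Finally, I would identify the limits. Uniqueness of limits in $L^2(\gamma, \cY)$ combined with the hypothesis $\fun_n \to \fun$ yields $\tilde{\fun} = \fun$ as elements of $L^2(\gamma, \cY)$, hence as elements of $H^1_{\gamma}$ (since $\fun \in H^1_{\gamma}$ by assumption). Similarly, uniqueness of limits in $L^2(\gamma, \HS(\XC, \cY))$ gives $\derivXC \tilde{\fun} = \Psi$, which together with the previous identification yields $\derivXC \fun = \Psi$. Substituting back, $\fun_n \to \fun$ in $H^1_{\gamma}$, as required.

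I do not anticipate a serious obstacle: the argument is essentially the closedness of the Sobolev derivative viewed as a densely defined operator from $L^2(\gamma, \cY)$ to $L^2(\gamma, \HS(\XC, \cY))$, which follows from the completeness of $H^1_{\gamma}$. The only point worth being careful about is making sure we are invoking completeness of $H^1_{\gamma}$ and not merely of the subspace $\cF \cC^{\infty}$; this is exactly the payoff of defining $H^1_{\gamma}$ via the completion procedure described earlier in the paper.
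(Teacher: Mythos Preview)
Your argument is correct and is, in spirit, the same closedness-of-the-gradient argument the paper gives. The paper's version is slightly more hands-on: rather than invoking completeness of $H^1_{\gamma}$ abstractly, it extracts a subsequence $\fun_{n_k}$ with $\|\fun_{n_k}-\fun\|_{L^2_{\gamma}}\le 1/k$ and $\|\derivXC\fun_{n_k}-\Psi\|_{L^2}\le 1/k$, approximates each $\fun_{n_k}$ by a smooth cylindrical $\varphi_{n_k}\in\cF\cC^{\infty}$ to $H^1_{\gamma}$-error $1/k$, and then appeals directly to the defining property of the completion (any $\cF\cC^{\infty}$ sequence that is Cauchy in $H^1_{\gamma}$ and converges to $\fun$ in $L^2_{\gamma}$ converges to $\fun$ in $H^1_{\gamma}$, with the derivative limit identifying $\derivXC\fun$). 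Your route is cleaner; the paper's route makes explicit the one step you pass over quickly, namely that equality of $\tilde{\fun}$ and $\fun$ in $L^2_{\gamma}$ forces equality in $H^1_{\gamma}$, which is exactly the injectivity of the canonical map $H^1_{\gamma}\hookrightarrow L^2_{\gamma}$ (equivalently, closability of $\derivXC$). Both arguments ultimately rest on that fact, so there is no substantive gap.
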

\begin{proof}
    Recall that for any sequence $(\varphi_n)_{n=1}^{\infty}$ in $\cF \cC^{\infty}$ that converges to $\fun$ in $L^2_{\gamma} := L^2(\gamma, \cY)$ 
    and is Cauchy in $H^1_{\gamma}$, we have $\varphi_n \rightarrow \fun$ in $H^1_{\gamma}$ (see Section 5.2 of \cite{Bogachev98}). 
    In particular, this means $\lim_{n \rightarrow \infty} \derivXC \varphi_n = \derivXC \fun$. 
    It remains to construct the sequence $\varphi_n \in \cF \cC^{\infty}$ using $\fun_n \in H^1_{\gamma}$. 
    For $k \in \bN$, we will use a subsequence $\fun_{n_k}$ such that $\| \fun_{n_k} - \fun \|_{L^2_{\gamma}} \leq 1/k$
    and $\| \derivXC \fun_{n_k} - \Psi \|_{L^2_{\gamma}} \leq 1/k$. 
    Moreover, by the density of $\cF \cC^{\infty}$ in $H^1_{\gamma}$, we can find $\varphi_{n_k}$ such that 
    $\|\varphi_{n_k} - \fun_{n_k} \|_{H^1_{\gamma}} \leq 1/k$. 
    Thus we have 
    \[ \|\fun - \varphi_{n_k}\|_{L^2_{\gamma}} \leq \|\fun - \fun_{n_k}\|_{L^2_{\gamma}} + \|\fun_{n_k} - \varphi_{n_k}\|_{L^2_{\gamma}}
    \leq 2/k. \]
    Moreover, 
    \[ \|\derivXC \varphi_{n_k} - \Psi \|_{L^2(\gamma, \HS(\XC,\cY))} \leq \|\derivXC \varphi_{n_k} - \derivXC \fun_{n_k} \|_{L^2(\gamma, \HS(\XC,\cY))} + \| \derivXC \fun_{n_k} - \Psi \|_{L^2(\gamma, \HS(\XC,\cY))} \leq 2/k. \]
    That is, $\varphi_{n_k} \rightarrow \fun$ in $L^2_{\gamma}$ and $\derivXC \varphi_{n_k}$ converges to $\Psi$ in ${L^2(\gamma, \HS(\XC,\cY))}$. 
    By the uniqueness of the limit, this means $\varphi_{n_k} \rightarrow \fun$ in $H^1_{\gamma}$ and $\derivXC \fun = \Psi$.
\end{proof}

\subsection{Trace minimization and eigenvalue problems}
In this section, we state Fan's theorem, which is a result on the maximization of quadratic forms over orthonormal sets. 
This is related to trace minimization and is used extensively for the reconstruction error results. 
\begin{lemma}[Fan's theorem, as stated in \cite{BhattacharyaHosseiniKovachkiEtAl21}]\label{lemma:fans_theorem}
  Let $(\cH, \linner \cdot, \cdot \rinner_{\cH})$ be a separable Hilbert space and $A : \cH \rightarrow \cH$ be a non-negative, self-adjoint, compact operator.
  Denote by $\lambda_1 \geq \lambda_2 \geq \dots$ the eigenvalues of $A$, and for any $r \in \bN$ (where $\bN = \{1, 2, \dots\}$), let $S_r$ the set of collections of $r$ orthonormal elements of $\cH$. Then, 
  \begin{equation}
      \max_{(u_i)_{i=1}^{r} \in S_r} \sum_{i=1}^{r} \linner A u_i, u_i \rinner_{\cH} = \sum_{i=1}^{r} \lambda_i .
  \end{equation}
\end{lemma}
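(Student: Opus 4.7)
The plan is to use the spectral theorem for non-negative, self-adjoint, compact operators on a separable Hilbert space. Since $A$ admits an orthonormal basis of eigenvectors $(e_i)_{i=1}^{\infty}$ with eigenvalues $(\lambda_i)_{i=1}^{\infty}$ (arranged in decreasing order, with possible zero tail), the lower bound direction is immediate: choosing $u_i = e_i$ for $i = 1, \dots, r$ yields an orthonormal collection with $\sum_{i=1}^{r} \linner A u_i, u_i \rinner_{\cH} = \sum_{i=1}^{r} \lambda_i$, so the supremum is at least $\sum_{i=1}^r \lambda_i$.

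For the matching upper bound, I would take an arbitrary orthonormal collection $(u_i)_{i=1}^{r} \in S_r$ and expand each $u_i$ in the eigenbasis, writing $c_{ij} = \linner u_i, e_j \rinner_{\cH}$. By the spectral decomposition,
\begin{equation}
  \sum_{i=1}^{r} \linner A u_i, u_i \rinner_{\cH}
  = \sum_{i=1}^{r} \sum_{j=1}^{\infty} \lambda_j |c_{ij}|^2
  = \sum_{j=1}^{\infty} \lambda_j \alpha_j,
  \qquad \alpha_j := \sum_{i=1}^{r} |c_{ij}|^2,
\end{equation}
where the interchange of sums is justified by non-negativity of all summands (Tonelli). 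The coefficients $\alpha_j$ satisfy the two constraints $0 \leq \alpha_j \leq 1$, which follows from Bessel's inequality applied to the orthonormal family $(u_i)_{i=1}^{r}$ (namely $\sum_{i=1}^r |\linner u_i, e_j\rinner_{\cH}|^2 \leq \|e_j\|_{\cH}^2 = 1$), and $\sum_{j=1}^{\infty} \alpha_j = \sum_{i=1}^{r} \|u_i\|_{\cH}^2 = r$.

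The problem therefore reduces to the elementary rearrangement fact that maximizing $\sum_{j} \lambda_j \alpha_j$ over $\alpha_j \in [0,1]$ with $\sum_j \alpha_j = r$ and $(\lambda_j)$ decreasing and non-negative is achieved by $\alpha_j = \indicator_{\{j \leq r\}}$, giving the bound $\sum_{j=1}^{r} \lambda_j$. This can be verified by writing
\begin{equation}
  \sum_{j=1}^{\infty} \lambda_j \alpha_j - \sum_{j=1}^{r} \lambda_j
  = \sum_{j > r} \lambda_j \alpha_j - \sum_{j \leq r} \lambda_j (1 - \alpha_j),
\end{equation}
and using $\sum_{j > r} \alpha_j = \sum_{j \leq r}(1 - \alpha_j)$ together with $\lambda_j \leq \lambda_r$ for $j > r$ and $\lambda_j \geq \lambda_r$ for $j \leq r$ to conclude the right side is $\leq 0$.

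The main obstacle I anticipate is purely bookkeeping: handling the infinite-dimensional expansion rigorously (Bessel's inequality and justification of summation interchange) and being careful when the spectrum has a kernel or only finitely many nonzero eigenvalues, in which case the rearrangement inequality is applied in exactly the same form with the convention that trailing $\lambda_j$'s are zero. No compactness beyond the existence of the eigenbasis is actually used in the argument itself.
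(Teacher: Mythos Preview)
The paper does not actually prove this lemma; it is stated without proof as a known result, attributed to \cite{BhattacharyaHosseiniKovachkiEtAl21}. Your argument is the standard and correct one: spectral decomposition, Bessel's inequality to get $\alpha_j \in [0,1]$, Parseval to get $\sum_j \alpha_j = r$, and then the elementary rearrangement bound. The only point worth being explicit about is that to invoke Parseval for $\sum_j \alpha_j = r$ you need the $(e_j)$ to form a complete orthonormal basis of $\cH$, which for a compact self-adjoint operator means adjoining an orthonormal basis of $\ker A$ with eigenvalue zero; you do acknowledge this in your closing remark about the zero tail, so the proof is sound.
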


Fan's theorem asserts that of all orthogonal projections, 
projections onto dominant eigenvectors of a non-negative, self-adjoint, compact operator $A$ 
minimizes the trace of $(I - \Pry) A (I - \Pry)$, which shows up in the analysis of the reconstruction errors.
This is summarized in the following lemma.
\begin{lemma}[Trace minimization]\label{lemma:trace_minimization}
  Let $(\cH, \linner \cdot, \cdot \rinner_{\cH})$ be a separable Hilbert space and $A : \cH \rightarrow \cH$ be a non-negative, self-adjoint, trace-class operator with eigenpairs $(\lambda_i, u_i)_{i=1}^{\infty}$ given in order of decreasing eigenvalues.
  Let also $\cP_r(\cH)$ denote the class of rank $r$ orthogonal projections in $\cH$. 
  Then,
  \begin{equation}
    \min_{P_r \in \cP_r(\cH)} \tr( (I - P_r) A(I - P_r)) = \sum_{i=r + 1}^{\infty} \lambda_i,
  \end{equation}
  and the minimum is attained by $P_r = \sum_{i=1}^{r} u_i \otimes u_i$.
\end{lemma}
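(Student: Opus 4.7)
The plan is to reduce the minimization over projections to Fan's theorem (Lemma 5.2 in the excerpt). The key idempotency and self-adjointness of orthogonal projections will let us rewrite the trace in a form involving only the action of $A$ on the range of $P_r$, at which point the desired extremal characterization is immediate.

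First, I would show that for any $P_r \in \cP_r(\cH)$,
\begin{equation}
\tr((I - P_r) A (I - P_r)) = \tr(A) - \tr(P_r A P_r).
\end{equation}
This uses $(I - P_r)^2 = I - P_r$ (since $P_r$ is an orthogonal projection), the cyclic property of the trace for trace-class operators, and $P_r^2 = P_r$. Concretely, $\tr((I - P_r) A (I - P_r)) = \tr(A(I - P_r)^2) = \tr(A(I - P_r)) = \tr(A) - \tr(A P_r) = \tr(A) - \tr(P_r A P_r)$. Since $A$ is non-negative, self-adjoint, and trace-class, $\tr(A) = \sum_{i=1}^{\infty} \lambda_i < \infty$, so both sides are finite.

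Next, I would express $\tr(P_r A P_r)$ in terms of an orthonormal basis of the range of $P_r$. Given any $P_r \in \cP_r(\cH)$, let $(v_i)_{i=1}^{r}$ be an orthonormal basis of $\range(P_r)$ and extend it to an orthonormal basis $(v_i)_{i=1}^{\infty}$ of $\cH$. Then, using the fact that $P_r v_i = v_i$ for $i \leq r$ and $P_r v_i = 0$ otherwise,
\begin{equation}
\tr(P_r A P_r) = \sum_{i=1}^{\infty} \linner v_i, P_r A P_r v_i \rinner_{\cH} = \sum_{i=1}^{r} \linner v_i, A v_i \rinner_{\cH}.
\end{equation}
Therefore, minimizing $\tr((I-P_r) A (I - P_r))$ over $P_r \in \cP_r(\cH)$ is equivalent to maximizing $\sum_{i=1}^{r} \linner v_i, A v_i \rinner_{\cH}$ over all orthonormal $r$-tuples $(v_i)_{i=1}^{r} \in S_r$.

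Finally, I would invoke Fan's theorem (\Cref{lemma:fans_theorem}), which applies since $A$ is non-negative, self-adjoint, and compact (a trace-class operator is compact). It yields
\begin{equation}
\max_{(v_i)_{i=1}^{r} \in S_r} \sum_{i=1}^{r} \linner v_i, A v_i \rinner_{\cH} = \sum_{i=1}^{r} \lambda_i,
\end{equation}
with the maximum attained by the leading eigenvectors $v_i = u_i$. Combining with the first step,
\begin{equation}
\min_{P_r \in \cP_r(\cH)} \tr((I - P_r) A (I - P_r)) = \tr(A) - \sum_{i=1}^{r} \lambda_i = \sum_{i=r+1}^{\infty} \lambda_i,
\end{equation}
with the minimum attained by $P_r = \sum_{i=1}^{r} u_i \otimes u_i$. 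No real obstacle arises here since Fan's theorem does all the heavy lifting; the only care needed is to justify the trace manipulations in the infinite-dimensional trace-class setting, which is standard.
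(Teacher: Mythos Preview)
Your proposal is correct and follows essentially the same approach as the paper: both rewrite $\tr((I-P_r)A(I-P_r)) = \tr(A) - \tr(P_r A P_r)$ via the cyclic property and idempotency, express $\tr(P_r A P_r)$ as $\sum_{i=1}^r \linner v_i, A v_i\rinner_{\cH}$ for an orthonormal basis of the range of $P_r$, and then apply Fan's theorem to maximize this sum.
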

\begin{proof}
  Note that for trace-class operator $A$ and $I - \widetilde{P}_r \in \cL(\cH, \cH)$, their composition is also trace-class. 
  By the cyclic property of the trace, we then have 
  \[ \tr((I - \widetilde{P}_r) A (I - \widetilde{P}_r)) = \tr(A (I - \widetilde{P}_r)) 
    = \tr(A) - \tr(\widetilde{P}_r A \widetilde{P}_r). \]
  For any $\widetilde{P}_r \in \cP_r(\cH)$, the projection can be written as $\widetilde{P}_r = \sum_{i=1}^{r} \tilde{u}_i \otimes \tilde{u}_i$ for some orthonormal basis of $\cH$, $(\tilde{u}_i)_{i=1}^{\infty}$, 
  where $(\tilde{u}_i)_{i=1}^{r}$ spans the range of $\widetilde{P}_r$.
  Then, using the same basis to compute the second trace, we have 
  \[ \tr(\widetilde{P}_r A \widetilde{P}_r) = \sum_{i=1}^{r} \linner \tilde{u}_i, A \tilde{u}_i \rinner_{\cH}, \]
  which by \Cref{lemma:fans_theorem}, has a maximal value of $\sum_{i=1}^{r} \lambda_i$. 
  Moreover, this is attained by $(u_i)_{i=1}^{r}$ coming from the eigenvectors of $A$ with the largest eigenvalues.
  Thus, the minimum of $\tr((I - \widetilde{P}_r) A (I - \widetilde{P}_r))$
  is given by
  \[ \tr(A) - \sum_{i=1}^{r}(u_i, A u_i) = \sum_{i=1}^{\infty} \lambda_i - \sum_{i=1}^{r} \lambda_i = \sum_{i=r + 1}^{\infty} \lambda_i, \]
  which is attained by the projection $P_r = \sum_{i=1}^{r} u_i \otimes u_i$.
\end{proof}

When the eigenvalue problem is solved for an approximation $\widehat{A}$ (e.g. an empirical estimate of $A$), 
the corresponding projection is sub-optimal. 
The excess error can be bounded by the difference between $A$ and $\widehat{A}$ as measured in the $\HS(\cH, \cH)$ norm. 
\begin{lemma}\label{lemma:projection_errors}
  Let $(\cH, \linner \cdot, \cdot \rinner_{\cH})$ be a separable Hilbert space.
  Suppose $A \in \cL(\cH, \cH)$ and 
  $\widehat{A} \in \cL(\cH, \cH)$ 
  are non-negative, self-adjoint, and trace-class,
  with eigenpairs 
  $(\lambda_i, u_i)_{i=1}^{\infty}$ 
  and
  $(\lambdahat_i, \uhat_i)_{i=1}^{\infty}$, respectively,
  both given in order of decreasing eigenvalues.
  Then, given $r \in \bN$, the projection $\widehat{P}_r = \sum_{i=1}^{r} \uhat_i \otimes \uhat_i$
  satisfies
  \begin{equation}\label{eq:projection_error_global}
    \tr(A (I - \widehat{P}_r)) \leq \sum_{i=r+1}^{\infty} \lambda_i 
    + \sqrt{2r} \|A - \widehat{A} \|_{\HS(\cH, \cH)}
  \end{equation}
  and 
  \begin{equation}\label{eq:projection_error_local}
    \tr(A (I - \widehat{P}_r)) \leq \sum_{i=r+1}^{\infty} \lambda_i 
    + \frac{ 2 \|A - \widehat{A} \|_{\HS(\cH,\cH)}^2}{\lambda_{r} - \lambda_{r+1}} .
  \end{equation}
\end{lemma}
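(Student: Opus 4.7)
The plan is to reduce the excess-risk inequality to a Cauchy--Schwarz estimate in the Hilbert--Schmidt norm, and then bound $\|P_r - \widehat{P}_r\|_{\HS(\cH,\cH)}$ in two ways: a trivial dimension-counting bound for the global estimate, and a Davis--Kahan-style spectral-gap bound for the local estimate.

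I would begin by fixing notation. Let $P_r = \sum_{i=1}^{r} u_i \otimes u_i$ be the projection onto the dominant eigenspaces of $A$. By \Cref{lemma:trace_minimization}, $\tr(A(I - P_r)) = \sum_{i>r}\lambda_i$ and, by the same lemma applied to $\widehat{A}$, $\tr(\widehat{A}(I - P_r)) \geq \tr(\widehat{A}(I - \widehat{P}_r))$, i.e.\ $\tr(\widehat{A}(P_r - \widehat{P}_r)) \leq 0$. Rewriting the target quantity as the excess over the optimum,
\begin{equation*}
\tr(A(I - \widehat{P}_r)) - \sum_{i>r}\lambda_i = \tr(A(P_r - \widehat{P}_r)) = \tr(\widehat{A}(P_r - \widehat{P}_r)) + \tr((A - \widehat{A})(P_r - \widehat{P}_r)),
\end{equation*}
and using the sign of the first term together with the Hilbert--Schmidt Cauchy--Schwarz inequality yields the master estimate
\begin{equation*}
\tr(A(I - \widehat{P}_r)) - \sum_{i>r}\lambda_i \;\leq\; \|A - \widehat{A}\|_{\HS(\cH,\cH)}\;\|P_r - \widehat{P}_r\|_{\HS(\cH,\cH)}.
\end{equation*}

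For the global bound \eqref{eq:projection_error_global}, I would note that $P_r$ and $\widehat{P}_r$ are rank-$r$ orthogonal projections, so
\begin{equation*}
\|P_r - \widehat{P}_r\|_{\HS(\cH,\cH)}^2 = \tr(P_r) + \tr(\widehat{P}_r) - 2\tr(P_r \widehat{P}_r) = 2r - 2\tr(P_r \widehat{P}_r) \leq 2r,
\end{equation*}
since $\tr(P_r \widehat{P}_r) \geq 0$. Substituting into the master estimate gives \eqref{eq:projection_error_global}.

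For the local bound \eqref{eq:projection_error_local}, I would sharpen the bound on $\|P_r - \widehat{P}_r\|_{\HS}$ using the spectral gap $\delta := \lambda_r - \lambda_{r+1}$. Writing $c_{ij} = \linner u_i, \uhat_j \rinner_{\cH}$ and expanding in the two eigenbases, the same decomposition as above rearranges to
\begin{equation*}
\tr((A - \widehat{A})(P_r - \widehat{P}_r)) = \sum_{i \leq r,\, j > r}(\lambda_i - \lambdahat_j)\,c_{ij}^2 \;+\; \sum_{i > r,\, j \leq r}(\lambdahat_j - \lambda_i)\,c_{ij}^2,
\end{equation*}
where, crucially, \Cref{lemma:trace_minimization} applied to each of $A$ and $\widehat{A}$ makes both sums nonnegative. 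Combining Weyl's inequality $|\lambdahat_i - \lambda_i| \leq \|A - \widehat{A}\|_{\HS}$ with the gap assumption, each of the eigenvalue-difference factors is bounded below by $\delta/2$, and since $\|P_r - \widehat{P}_r\|_{\HS}^2 = 2\sum_{i \leq r,\, j>r}c_{ij}^2 = \sum_{i\leq r,\,j>r}c_{ij}^2 + \sum_{i>r,\,j\leq r}c_{ij}^2$, this gives
\begin{equation*}
\tfrac{\delta}{2}\,\|P_r - \widehat{P}_r\|_{\HS}^2 \;\leq\; \tr((A - \widehat{A})(P_r - \widehat{P}_r)) \;\leq\; \|A - \widehat{A}\|_{\HS}\,\|P_r - \widehat{P}_r\|_{\HS},
\end{equation*}
so $\|P_r - \widehat{P}_r\|_{\HS} \leq 2\|A - \widehat{A}\|_{\HS}/\delta$. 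Substituting into the master estimate delivers \eqref{eq:projection_error_local}.

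The main technical obstacle will be the local bound: specifically, handling the regime in which Weyl's inequality does not guarantee a clean separation between the perturbed and unperturbed spectra (i.e., when $\|A - \widehat{A}\|_{\HS}$ is not comfortably smaller than $\delta$). In that regime one checks that the right-hand side of \eqref{eq:projection_error_local} already exceeds the global bound, so that the $\min$ in the statements of \Cref{prop:pod_reconstruction_bound} and \Cref{prop:dis_derivative_reconstruction} is attained by the global bound and the local bound may be written without loss. Everything else reduces to careful bookkeeping on projection traces and the Cauchy--Schwarz inequality already used above.
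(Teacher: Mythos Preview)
Your master estimate and the global bound \eqref{eq:projection_error_global} are correct and coincide with the paper's argument.

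The local bound \eqref{eq:projection_error_local}, however, has a genuine gap. Your Weyl-based step requires that every factor $\lambda_i-\lambdahat_j$ (for $i\le r$, $j>r$) and $\lambdahat_j-\lambda_i$ (for $i>r$, $j\le r$) be at least $\delta/2$; this holds only when $\|A-\widehat A\|_{\HS}\le \delta/2$. Your proposed fallback---invoking the already-proved global bound when the local right-hand side dominates it---applies only when $\|A-\widehat A\|_{\HS}\ge \delta\sqrt{r/2}$. Since $\sqrt{r/2}>1/2$ for every $r\ge 1$, the interval $(\delta/2,\ \delta\sqrt{r/2})$ is nonempty and falls through both cases. (Also, \Cref{lemma:trace_minimization} does not make the two sums in your eigenbasis expansion individually nonnegative; it only gives $\tr(A(P_r-\widehat P_r))\ge 0$ and $\tr(\widehat A(P_r-\widehat P_r))\le 0$, hence nonnegativity of their \emph{sum}.)

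The paper avoids this altogether by lower-bounding the excess risk $\mathscr{E}_r=\tr(A(P_r-\widehat P_r))$ itself, rather than $\tr((A-\widehat A)(P_r-\widehat P_r))$. In your notation, $\mathscr{E}_r=\sum_{i\le r,\,j>r}\lambda_i c_{ij}^2-\sum_{i>r,\,j\le r}\lambda_i c_{ij}^2$, and using $\sum_{i\le r,\,j>r}c_{ij}^2=\sum_{i>r,\,j\le r}c_{ij}^2=\tfrac12\|P_r-\widehat P_r\|_{\HS}^2$ together with $\lambda_i\ge\lambda_r$ and $\lambda_i\le\lambda_{r+1}$ in the two sums gives $\mathscr{E}_r\ge \tfrac{\delta}{2}\|P_r-\widehat P_r\|_{\HS}^2$ unconditionally. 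Combining this with your master estimate $\mathscr{E}_r\le \|A-\widehat A\|_{\HS}\|P_r-\widehat P_r\|_{\HS}$ yields $\mathscr{E}_r\le 2\|A-\widehat A\|_{\HS}^2/\delta$ with no case distinction and no appeal to Weyl. The only change you need is to expand $\tr(A(P_r-\widehat P_r))$ instead of $\tr((A-\widehat A)(P_r-\widehat P_r))$.
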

\begin{proof}
  The proof follows that presented in \cite{ReissWahl20} in the context of covariance estimation, noting that it applies generally for self-adjoint trace-class operators.
  We reproduce the proof here for completeness. 
  We start by writing
  \[ \tr(A (I - \widehat{P}_r)) = \tr(A (I - P_r)) + \tr(A (P_r - \widehat{P}_r)) = \sum_{i=r+1}^{\infty} \lambda_i + \tr(A (P_r - \widehat{P}_r)),\]
  where $P_r = \sum_{i=1}^{r} u_i \otimes u_i$ comes from the exact optimal projection.
  We now seek to bound the excess error 
  \begin{equation}
    \mathscr{E}_r = \tr(A(P_r - \widehat{P}_r)),
  \end{equation}
  often referred to as the excess risk in covariance estimation.
  Since $\widehat{P}_r$ maximizes $\tr(\widehat{A} \widehat{P}_r)$ amongst rank $r$ orthogonal projections,
  we know $\tr(\widehat{A} \widehat{P}_r) \leq \tr(\widehat{A} P_r)$
  and so $ \tr(\widehat{A}(P_r - \widehat{P}_r)) \leq 0.$
  Subtracting this from the excess error yields
  \[ 
  \mathscr{E}_r = \tr(A (P_r - \widehat{P}_r)) \leq 
    \tr((A - \widehat{A}) (P_r - \widehat{P}_r)).\]
  Using the Cauchy--Schwarz inequality, we have 
  \begin{equation}\label{eq:excess_risk_cauchy_schwarz}
  \mathscr{E}_r \leq \tr((A - \widehat{A}) (P_r - \widehat{P}_r)) = \linner A - \widehat{A}, P_r - \widehat{P}_r \rinner_{\HS(\cH, \cH)} \leq \|A - \widehat{A}\|_{\HS(\cH, \cH)} \|P_r - \widehat{P}_r \|_{\HS(\cH, \cH)}.
  \end{equation}
  Expanding the projection error gives
  \[
    \|P_r - \widehat{P}_r \|_{\HS(\cH, \cH)}^2 = \|P_r\|_{\HS(\cH, \cH)}^2 +  \|\widehat{P}_r\|_{\HS(\cH, \cH)}^2
    - 2 \sum_{i=1}^{\infty} \linner P_r e_i, \widehat{P}_r e_i \rinner_{\cH}.
%   \|P_r - \widehat{P}_r \|_{\HS(\cH, \cH)}^2 &= \sum_{i=1}^{\infty} \linner (P_r - \widehat{P}_r)u_i, 
%   (P_r - \widehat{P}_r)u_i \rinner_{\cH}
%   &= \sum_{i=1}^{r} \|P_r
  \]
  where $\|P_r\|_{\HS(\cH, \cH)}^2 = r$ and $\|\widehat{P}_r\|_{\HS(\cH, \cH)}^2 = r$. 
  Moreover, using $e_i = u_i$, 
  \[ \linner P_r u_i, \widehat{P}_r u_i \rinner_{\cH} = \linner u_i, \widehat{P}_r u_i \rinner_{\cH} \geq 0. \]
  Thus, $\|P_r - \widehat{P}_r \|_{\HS(\cH, \cH)} \leq \sqrt{2r}$, giving \eqref{eq:projection_error_global}.

  For \eqref{eq:projection_error_local}, we first note that the bound is trivially infinite if $\lambda_{r+1} = \lambda_r$,
  so we proceed assuming $\lambda_{r} > \lambda_{r+1}$.
  We claim that the excess error upper bounds the projection error as
  \begin{equation}\label{eq:projection_error_claim}
    \|P_r - \widehat{P}_r \|_{\HS(\cH, \cH)}^2 \leq \frac{2 \mathscr{E}_r}{\lambda_{r} - \lambda_{r+1}}.
  \end{equation}
  Combining this claim with \eqref{eq:excess_risk_cauchy_schwarz}, we have 
  \[
    \mathscr{E}_{r}^2 \leq \|A - \widehat{A} \|_{\HS(\cH,\cH)}^2 \| P_r - \widehat{P}_r\|_{\HS(\cH, \cH)}^2
    \leq \frac{2 \mathscr{E}_r}{\lambda_{r} - \lambda_{r+1}} \|A - \widehat{A} \|_{\HS(\cH,\cH)}^2.
  \]
  Dividing through by $\mathscr{E}_r$ gives \eqref{eq:projection_error_local}.
  % The second result \eqref{eq:projection_error_local} follows from this claim, since 

  It remains then to prove the claim \eqref{eq:projection_error_claim}.
  To this end, we begin by introducing the notation $A_i = u_i \otimes u_i$, $P_r^{\perp} = I - P_r$, and $\widehat{P}_r^{\perp} = I - \widehat{P}_r$.
  We also have the identities 
  $P_r - \widehat{P}_r = \widehat{P}_r^{\perp} - P_r^{\perp}$ 
  and
  $
  \tr(P_r \widehat{P}_r^{\perp})
    = \tr( \widehat{P}_r  {P}_r^{\perp}).
  $
  This latter identity follows from the fact 
  \[
  \tr(P_r \widehat{P}_r^{\perp})
  = \tr( P_r (\widehat{P}_r^{\perp} - P_r^{\perp}) ) 
  = \tr( -P_r (\widehat{P}_r- P_r) ) 
  = \tr( (I-P_r) (\widehat{P}_r- P_r) ) 
  = \tr( P_r^{\perp} \widehat{P}_r) ,
  \]
  since $\tr( \widehat{P}_r- P_r ) = 0$.
  Now, expanding the norm $\| P_r - \widehat{P}_r \|_{\HS(\cH,\cH)}^2$, we have
  % \begin{align*}
  %   \|P_r - \widehat{P}_r \|_{\HS(\cH, \cH)}^2 &=  \tr((P_r - \widehat{P}_r)(P_r - \widehat{P}_r)) \\
  %   &= \tr((P_r - \widehat{P}_r)(\widehat{P}_r^{\perp} - P_r^{\perp})) \\
  %   &= \tr(P_r \widehat{P}_r^{\perp}) + \tr(P_r^{\perp}\widehat{P}_r) \\
  %   &= 2 \tr(P_r \widehat{P}_r^{\perp})  \\
  %   &= 2 \| P_r \widehat{P}_r^{\perp} \|_{\HS(\cH, \cH)}^2 \\
  %   &= 2 \sum_{i=1}^{r} \| A_i \widehat{P}_r^{\perp} \|_{\HS(\cH, \cH)}^2.
  % \end{align*}
  \begin{align*}
    \|P_r - \widehat{P}_r \|_{\HS(\cH, \cH)}^2 &=  \tr((P_r - \widehat{P}_r)(P_r - \widehat{P}_r))
    = \tr((P_r - \widehat{P}_r)(\widehat{P}_r^{\perp} - P_r^{\perp}))
    % = \tr(P_r \widehat{P}_r^{\perp}) + \tr(P_r^{\perp}\widehat{P}_r) 
    = 2 \tr(P_r \widehat{P}_r^{\perp})
    % &= 2 \tr(P_r \widehat{P}_r^{\perp})  \\
    % &= 2 \| P_r \widehat{P}_r^{\perp} \|_{\HS(\cH, \cH)}^2 \\
    % &= 2 \sum_{i=1}^{r} \| A_i \widehat{P}_r^{\perp} \|_{\HS(\cH, \cH)}^2.
  \end{align*}
  where we have used the fact that $\tr(P_r P_r^{\perp}) = \tr(\widehat{P}_r \widehat{P}_r^{\perp}) = 0$ and 
  $\tr(P_r \widehat{P}_r^{\perp})
  = \tr( \widehat{P}_r  {P}_r^{\perp}).$
  Moreover, since 
  $\tr(P_r \widehat{P}_r^{\perp}) 
  = \tr(P_r \widehat{P}_r^{\perp} \widehat{P}_r^{\perp} P_r)
  = \|P_r \widehat{P}_r^{\perp}\|_{\HS(\cH,\cH)}^2$
  , we can further expand this as
  \begin{align} \label{eq:projection_error_expansion}
    \|P_r - \widehat{P}_r \|_{\HS(\cH, \cH)}^2 
    = 2 \| P_r \widehat{P}_r^{\perp} \|_{\HS(\cH, \cH)}^2 
    = 2 \sum_{i=1}^{r} \| A_i \widehat{P}_r^{\perp} \|_{\HS(\cH, \cH)}^2.
  \end{align}

  On the other hand, the excess error is given by 
  \begin{align*}
    % \tr(A(P_r - \widehat{P}_r)) 
    \mathscr{E}_r 
    &= \sum_{i=1}^{\infty} \lambda_i \linner A_i, P_r - \widehat{P}_r \rinner_{\HS(\cH, \cH)} \\
    &= \sum_{i=1}^{r} \lambda_i \linner A_i, \widehat{P}_r^{\perp} - {P}_r^{\perp} \rinner_{\HS(\cH, \cH)}
     - \sum_{j=r+1}^{\infty} \lambda_j \linner A_j, \widehat{P}_r - P_r \rinner_{\HS(\cH, \cH)} \\
    &= \sum_{i=1}^{r} \lambda_i \tr(A_i \widehat{P}_r^{\perp})
     - \sum_{j=r+1}^{\infty} \lambda_j \tr( A_j \widehat{P}_r ).
  \end{align*}
  Moreover, we have 
  $
  \beta \tr(P_r \widehat{P}_r^{\perp})
    - \beta \tr( \widehat{P}_r  {P}_r^{\perp}) = 0
  $ 
  for any $\beta \in \bR$. Thus, subtracting this from the sum yields 
  \begin{align} \nonumber
    % \tr(A(P_r - \widehat{P}_r)) 
    \mathscr{E}_r
    &= \sum_{i=1}^{r} (\lambda_i - \beta) \tr(A_i \widehat{P}_r^{\perp})
     + \sum_{j=r+1}^{\infty} (\beta - \lambda_j) \tr( A_j \widehat{P}_r ) \\
    &= \sum_{i=1}^{r} (\lambda_i - \beta) \|A_i \widehat{P}_r^{\perp} \|_{\HS(\cH, \cH)}^2
     + \sum_{j=r+1}^{\infty} (\beta - \lambda_j) \|A_j \widehat{P}_r \|_{\HS(\cH, \cH)}^2
     \nonumber
  \end{align}
  Taking $\beta  = \lambda_{r+1}$
  and comparing the resulting expression with \eqref{eq:projection_error_expansion}, we see that 
  \begin{align*}
    \frac{2 \mathscr{E}_r}{\lambda_{r} - \lambda_{r+1}}  
    \geq 2\sum_{i=1}^{r} \frac{\lambda_i - \lambda_r}{\lambda_{r} - \lambda_{r+1}}\|A_i \widehat{P}_r^{\perp}\|_{\HS(\cH, \cH)}^2
    \geq 2 \sum_{i=1}^{r} \|A_i \widehat{P}_r^{\perp}\|_{\HS(\cH, \cH)}^2 = \|P_r - \widehat{P}_r\|_{\HS(\cH, \cH)}^2,
  \end{align*}
  since $\lambda_i - \lambda_{r+1} \geq \lambda_{r} - \lambda_{r+1} \geq 0$ for $i \leq r$ and $\lambda_{r+1} - \lambda_{j} \geq 0$ for $j \geq r+1$.
  This proves the claim.
  % we have $\lambda_i - \alpha \geq \lambda_r - \alpha \geq 0$ for $i \leq r$
  % and $\alpha - \lambda_j \geq 0$ for $j \geq r+1$.

  % $
  % P_r, \widehat{P}_r^{\perp} \rinner_{\HS(\cH, \cH)}  
  %   = \linner \widehat{P}_r,  {P}_r^{\perp} \rinner_{\HS(\cH, \cH)}.
  % $
  % This latter identity follows from the fact 
  % \[
  % \linner P_r, \widehat{P}_r^{\perp} \rinner_{\HS(\cH, \cH)}  
  % = \linner P_r, \widehat{P}_r^{\perp} - P_r^{\perp} \rinner_{\HS(\cH, \cH)}  
  % = \linner - P_r, \widehat{P}_r - P_r \rinner_{\HS(\cH, \cH)}  
  % = \linner I - P_r, \widehat{P}_r - P_r \rinner_{\HS(\cH, \cH)},
  % \]
  % where $\linner I ,  \widehat{P}_r - P_r \rinner_{\HS(\cH, \cH)} = \tr(\widehat{P}_r) - \tr(\widehat{P}_r) = 0$.

\end{proof}

\subsection{Interchanging differentiation and integration} \label{sec:interchange_differentiation_integration}
% The interchange of differentiation and integration requires additional attention. 
% The general result is difficult to find, so we prove a version that is sufficient for our purposes.
For the analysis of the $H^1_{\gamma}$ approximation properties of conditional expectations, 
we need to consider the interchange of (Sobolev) differentiation and (conditional) expectation. 
We first look at the sufficient conditions for interchanging Fr\'echet differentiation and integration.
A general result is difficult to find, so we state and prove a version that is sufficient for our purposes.

\begin{theorem}[Interchanging Fr\'echet differentiation and integration] \label{theorem:leibniz}
  Let $\cX_1$, $\cX_2$ and $\cY$ be separable Banach spaces, $\fun : \cX_1 \times \cX_2 \rightarrow \cY$ 
  be a continuously (Fr\'echet) differentiable function, and $\nu$ be a probability measure on $\cX_2$. 
  Suppose that $\fun$ additionally satisfies the following assumptions
  \begin{enumerate}
    \item The function $\fun(x_1, \cdot) : \cX_2 \rightarrow \cY$ 
      is integrable with respect to $\nu$ for every $x_1 \in \cX_1$ 
    \item The partial derivative $\partial_{x_1} \fun(x_1, \cdot) : \cX_2 \rightarrow \cL(\cX_1, \cY)$ 
      is integrable with respect to $\nu$ for every $x_1 \in \cX_1$
    \item There exists an open set $\cU \subset \cX_1$ and a function $g \in L^1(\cX_2, \nu; \bR)$
    such that 
    \begin{equation}\label{eq:interchange_integrability_condition} 
      \| \partial_{x_1} \fun(x_1, x_2) \|_{\cL(\cX_1, \cY)} \leq g(x_2)
    \end{equation} 
    for every $x_1 \in \cU$ and $x_2 \in \cX_2$.
  \end{enumerate}
  Then, for all $x_1 \in \cU$, 
  \begin{equation}
    \deriv_{x_1} \bE_{x_2 \sim \nu}[\fun(x_1, x_2)]  
    = \bE_{x_2 \sim \nu}[\partial_{x_1} \fun(x_1, x_2)].
  \end{equation}
\end{theorem}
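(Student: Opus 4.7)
The plan is to combine the fundamental theorem of calculus in Banach spaces with the dominated convergence theorem. Define $G(x_1) := \bE_{x_2 \sim \nu}[\fun(x_1, x_2)]$, which is well-defined by assumption 1, and $L := \bE_{x_2 \sim \nu}[\partial_{x_1} \fun(x_1, x_2)] \in \cL(\cX_1, \cY)$, which is well-defined by assumption 2 together with the dominating bound in assumption 3 (which ensures the Bochner integral makes sense in operator norm). The goal reduces to showing
\[
\|G(x_1 + h) - G(x_1) - L h\|_{\cY} = o(\|h\|_{\cX_1}) \quad \text{as } h \to 0,
\]
for every $x_1 \in \cU$, and then identifying $L$ as $\deriv_{x_1} G(x_1)$.

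First I would fix $x_1 \in \cU$ and restrict to $h$ small enough that the segment $\{x_1 + t h : t \in [0,1]\}$ is contained in $\cU$. Since $\fun(\cdot, x_2)$ is continuously Fr\'echet differentiable, the Banach-space fundamental theorem of calculus applied along this segment gives
\[
\fun(x_1 + h, x_2) - \fun(x_1, x_2) = \int_0^1 \partial_{x_1} \fun(x_1 + t h, x_2)\, h \, dt,
\]
where the integrand is Bochner integrable in $t$ thanks to the majorant $g(x_2)\|h\|_{\cX_1}$ from assumption 3. That same majorant produces joint integrability of $(t, x_2) \mapsto \partial_{x_1} \fun(x_1 + th, x_2) h$ on $[0,1] \times \cX_2$ against $dt \otimes \nu$, so I can apply Fubini's theorem for Bochner integrals to interchange $\int_0^1$ with $\bE_{x_2 \sim \nu}$, yielding
\[
G(x_1 + h) - G(x_1) = \int_0^1 \bE_{x_2 \sim \nu}[\partial_{x_1} \fun(x_1 + t h, x_2)] \, h \, dt.
\]
Subtracting $L h = \int_0^1 L h \, dt$ and taking the $\cY$-norm gives
\[
\|G(x_1 + h) - G(x_1) - L h\|_{\cY} \le \|h\|_{\cX_1} \int_0^1 \bE_{x_2 \sim \nu}\!\left[\|\partial_{x_1} \fun(x_1 + t h, x_2) - \partial_{x_1} \fun(x_1, x_2)\|_{\cL(\cX_1, \cY)}\right] dt.
\]

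To close the argument I would invoke dominated convergence on $[0,1] \times \cX_2$. The integrand converges pointwise to $0$ as $h \to 0$ because $\partial_{x_1} \fun$ is (jointly) continuous; assumption 3 supplies the $\nu$-integrable majorant $2 g(x_2)$ that is uniform in $t \in [0,1]$ and in all sufficiently small $h$. Hence the right-hand side is $o(\|h\|_{\cX_1})$, which simultaneously certifies Fr\'echet differentiability of $G$ at $x_1$ and identifies $\deriv_{x_1} G(x_1) = L$.

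The main obstacle I expect is not the analytic content, which is classical, but the bookkeeping needed to legitimize Fubini and dominated convergence for Banach-space-valued integrands. Specifically, I must check joint measurability of $(t, x_2) \mapsto \partial_{x_1} \fun(x_1 + t h, x_2) \in \cL(\cX_1, \cY)$, which follows from continuity in $t$ combined with the separability hypotheses on $\cX_1$ and $\cY$ (so that strong and weak measurability coincide by Pettis' theorem), and confirm the Bochner integrability of the dominating combinations. Once these technicalities are dispatched, the estimate above delivers the conclusion immediately.
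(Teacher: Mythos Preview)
Your proposal is correct and close in spirit to the paper's proof: both bound the normalized Fr\'echet residual by an integral over $x_2$ of something dominated by $2g(x_2)$ and then invoke dominated convergence. The one structural difference is that you represent the increment via the fundamental theorem of calculus and interchange $\int_0^1$ with $\bE_{x_2}$ by Fubini, thereby reducing to continuity of $\partial_{x_1}\fun$, whereas the paper works directly with the pointwise Fr\'echet residual $R_1(x_1,x_2,h)=\fun(x_1+h,x_2)-\fun(x_1,x_2)-\partial_{x_1}\fun(x_1,x_2)h$: it observes $\|R_1\|/\|h\|\to 0$ for each $x_2$ by differentiability, bounds it by $2g(x_2)$ via the mean value inequality, and applies DCT in $x_2$ alone. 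The paper's route is slightly leaner because it avoids Fubini and the attendant joint-measurability bookkeeping you flag; your route makes the role of $C^1$ regularity more explicit. Both are standard and valid.
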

\begin{proof}
  Let $\fun_1(x_1) := \bE_{x_2 \sim \nu}[\fun(x_1, x_2)]$ and $\Psi (x_1) := \bE_{x_2 \sim \nu}[\partial_{x_1} \fun(x_1, x_2)]$,
  where both are well-defined by the integrability assumptions.
  In particular, at any $x_1 \in \cX_1$, $\Psi(x_1)$ is a linear operator on $\cX_1$, and for any $h \in \cX_1$, 
  \[ \| \Psi(x_1) h \| = \| \bE_{x_2 \sim \nu}[\partial_{x_1} \fun(x_1, x_2) h]  \|
  \leq \bE_{x_2 \sim \nu}[ \|\partial_{x_1} \fun(x_1, x_2)\|_{\cL(\cX_1, \cY)} ]\|h\|. \]
  Here, we are using the shorthand $\| \cdot \|$ to denote the implied Banach space norm.
  Thus, integrability implies 
  $ \bE_{x_2 \sim \nu}[ \|\partial_{x_1} \fun(x_1, x_2)\|_{\cL(\cX_1, \cY)} ] < \infty,$ 
  such that $\Psi(x_1) \in \cL(\cX_1, \cY)$ for any $x_1 \in \cX_1$.

  Consider now the residual for $h \in \cX_1$,
  \[
    \frac{\|R(x_1, h)\|}{\|h\|} := \frac{\| \fun_1(x_1 + h) - \fun_1(x_1) - \Psi(x_1) h \|}{\|h\|},
  \]
  as $h \rightarrow 0$. 
  Combining the terms inside the expectation, we have
  \[
    \frac{\|R(x_1, h)\|}{\|h\|} =
    \frac{\| \bE_{x_2 \sim \nu}[ \fun(x_1 + h, x_2) - \fun(x_1, x_2) - \partial_{x_1} \fun(x_1, x_2) h ] \|}{\|h\|}.
  \]
  Again, by Bochner integrability, we can bring the norm inside the expectation
  \[
    \frac{\|R(x_1, h)\|}{\|h\|} 
    \leq  \bE_{x_2 \sim \nu} \left[ \frac{\| \fun(x_1 + h, x_2) - \fun(x_1, x_2) - \partial_{x_1} \fun(x_1, x_2) h \|}{\|h\|} \right].
  \]
  Since $\fun$ is differentiable, the integrand
  \[
    \frac{\|R_1(x_1, x_2, h)\|}{\|h\|} :=  \frac{\| \fun(x_1 + h, x_2) - \fun(x_1, x_2) - \partial_{x_1} \fun(x_1, x_2) h \|}{\|h\|} \rightarrow 0
  \]
  as $h \rightarrow 0$ for every $x_1 \in \cX_1$ and $x_2 \in \cX_2$. 
  It remains to bound this by an $L^1$ function to apply the dominated convergence theorem. 
  Applying the triangle inequality and the mean value theorem gives 
  \[
    \frac{\|R_1(x_1, x_2, h)\|}{\|h\|} \leq 
    \frac{\sup_{t \in [0,1]}\|\partial_{x_1}(\fun(x_1+th, x_2)) \| \|h\| +  \|\partial_{x_1} \fun(x_1, x_2)\| \|h\|}{\|h\|}. 
  \]
  For $x_1 \in \cU$ and $h$ sufficiently small, we have $x_1 + th \in \cU$ for all $t \in [0,1]$. 
  This implies
  $\sup_{t \in [0,1]}\|\partial_{x_1}(\fun(x_1+th, x_2)) \| \leq g(x_2)$. 
  Thus, 
  \[
    \frac{\|R_1(x_1, x_2, h)\|}{\|h\|} \leq 2 g(x_2).
  \]
  Applying the dominant convergence theorem thus gives $\|R(x_1,h)\|/\|h\| \rightarrow 0$, 
  and so $\deriv \fun_1(x_1) = \Psi (x) = \bE_{x_2 \sim \nu}[\deriv_{x_1} \fun(x_1, x_2)]$.
\end{proof}
Using \Cref{theorem:leibniz}, we can proceed to consider conditional expectations for functions 
in $\cF \cC^{\infty}$.

\begin{lemma}[Interchanging Fr\'echet differentiation and conditional expectation]\label{lemma:interchange_differentiation_expectation_smooth}
  Suppose $\fun \in \cF \cC^{\infty}$
  % Let $(v_i)_{i=1}^{\infty}$ be an orthonormal basis of the Cameron--Martin space $\XC$ such that  $(\Cx^{-1} v_i)_{i=1}^{\infty}$ are well defined in $\cX$.
  % For any $r \in \bN$, let $\Vrx = (v_i)_{i=1}^{\rx}$ and $\Qrx = \Vrx \pinv{\Vrx}$.
  and let $(v_i)_{i=1}^{\infty}$ be an orthonormal basis of the Cameron--Martin space $\XC$ for which $(\Cx^{-1} v_i)_{i=1}^{\infty}$ are well-defined in $\cX$. 
  Then, for the reduced basis $\Vrx = (v_i)_{i=1}^{r}$ and its corresponding projection $\Qrx = \Vrx \pinv{\Vrx}$, the conditional expectation $\bE_{\gamma}[\fun | \sigma(\pinv{\Vrx})] \in C^{\infty}_{b}(\cX, \cY)$, 
  and its derivatives are given by 
  \[ 
    \deriv^k \bE_{\gamma}[\fun | \sigma(\pinv{\Vrx})] = \bE_{\gamma}[\deriv^{k} \fun(\Qrx \cdot, \dots, \Qrx \cdot) | \sigma(\pinv{\Vrx})],
  \]
  where $\deriv^{k} \fun(x)(\Qrx \cdot, \dots, \Qrx \cdot) : (h_1, \dots, h_k) \mapsto \deriv^{k} \fun(x)(\Qrx h_1, \dots, \Qrx h_k)$. 
\end{lemma}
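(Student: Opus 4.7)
The plan is to use the explicit representation of the conditional expectation with respect to a Gaussian measure given in \eqref{eq:conditional_expectation}, namely
\[
\bE_{\gamma}[\fun | \sigma(\pinv{\Vrx})](x) = \bE_{z \sim \gamma}[\fun(\Qrx x + (I - \Qrx) z)],
\]
and then differentiate under the integral using \Cref{theorem:leibniz}. The smoothness and boundedness of all derivatives of $\fun \in \cF \cC^{\infty}$ will supply the dominating functions required to justify the interchange at every order.

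First I would observe that because $\fun \in \cF \cC^\infty$ is a finite linear combination of smooth cylindrical functions $\phi_i(\linner v_1, \cdot \rinner_{\cX}, \dots, \linner v_n, \cdot \rinner_{\cX}) u_i$ with $\phi_i \in C^\infty_b(\bR^n, \bR)$, direct computation via the chain rule shows that $\deriv^k \fun(x) \in \cL_k(\cX, \cY)$ exists for every $k \in \bN$ and that $\|\deriv^k \fun\|_{\cL_k(\cX,\cY)}$ is bounded uniformly on $\cX$ by a constant depending only on the $v_j$'s, the $u_i$'s, and the uniform bounds on $\partial^\alpha \phi_i$. Define $\fun_1 : \cX \times \cX \to \cY$ by $\fun_1(x_1, x_2) := \fun(\Qrx x_1 + (I - \Qrx) x_2)$, which is continuously Fr\'echet differentiable by the chain rule with $\partial_{x_1} \fun_1(x_1, x_2) = \deriv \fun(\Qrx x_1 + (I-\Qrx) x_2) \circ \Qrx$. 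Note that $\Qrx \in \cL(\cX, \cX)$ is well-defined thanks to the hypothesis that $\Cx^{-1} v_i \in \cX$, and $\|\Qrx\|_{\cL(\cX,\cX)}$ is finite, so the uniform bound on $\|\deriv \fun\|$ yields the integrable majorant $g(x_2) \equiv C \|\Qrx\|_{\cL(\cX,\cX)}$ required by hypothesis (3) of \Cref{theorem:leibniz}. Applying that theorem with $\nu = \gamma$ (taking $\cU = \cX$) gives
\[
\deriv \bE_{\gamma}[\fun | \sigma(\pinv{\Vrx})](x) = \bE_{z \sim \gamma}[\deriv \fun(\Qrx x + (I-\Qrx) z) \circ \Qrx] = \bE_{\gamma}[\deriv \fun(\cdot)(\Qrx \cdot) \mid \sigma(\pinv{\Vrx})](x),
\]
which is the $k = 1$ case.

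Then I would proceed by induction on $k$. Suppose the claimed identity holds at order $k$. Regarding $\deriv^k \fun(\Qrx \cdot, \dots, \Qrx \cdot)$ as a $\cL_k(\cX, \cY)$-valued function of its base point, the chain rule gives
\[
\deriv\bigl[\deriv^k \fun(\Qrx x_1 + (I-\Qrx)x_2)(\Qrx \cdot, \dots, \Qrx \cdot)\bigr] \cdot h = \deriv^{k+1} \fun(\Qrx x_1 + (I-\Qrx) x_2)(\Qrx \cdot, \dots, \Qrx \cdot, \Qrx h),
\]
which is uniformly bounded in both arguments by the cylindrical structure of $\fun$, again furnishing the dominating function. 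Another application of \Cref{theorem:leibniz} (with the target Banach space $\cL_k(\cX, \cY)$ in place of $\cY$) advances the induction and produces exactly the expression $\bE_{\gamma}[\deriv^{k+1} \fun(\Qrx \cdot, \dots, \Qrx \cdot) \mid \sigma(\pinv{\Vrx})]$. Continuity of each $\deriv^k \bE_{\gamma}[\fun | \sigma(\pinv{\Vrx})]$ in $x$ follows from dominated convergence and continuity of $\deriv^k \fun$; combined with the uniform bounds, this yields $\bE_{\gamma}[\fun | \sigma(\pinv{\Vrx})] \in C^\infty_b(\cX, \cY)$.

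The main obstacle is bookkeeping rather than analysis: verifying that the Banach-space-valued version of \Cref{theorem:leibniz} applies at each step with the target $\cL_k(\cX, \cY)$ (this is immediate since the cited theorem is stated for general separable Banach target spaces), and checking that differentiation commutes with the partial projection $\Qrx$ in a way that produces precisely the multilinear form $(h_1, \dots, h_k) \mapsto \deriv^k \fun(\cdot)(\Qrx h_1, \dots, \Qrx h_k)$. Both follow from the chain rule applied to $x \mapsto \Qrx x + (I-\Qrx)z$, whose Fr\'echet derivative is the constant operator $\Qrx$, so each new differentiation contributes one additional $\Qrx$-factor to the multilinear evaluation.
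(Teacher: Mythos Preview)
Your approach is essentially identical to the paper's: both use the explicit representation \eqref{eq:conditional_expectation}, apply \Cref{theorem:leibniz} with the uniform bounds on $\deriv^k\fun$ coming from the cylindrical structure to justify the interchange, and then induct on $k$. One small caveat: when you invoke \Cref{theorem:leibniz} with target space $\cL_k(\cX,\cY)$, note that this space is not separable in general; the paper handles this by observing that the derivatives of $\fun\in\cF\cC^\infty$ are of finite rank and hence take values in a separable subspace, which is all that is needed for Bochner measurability.
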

\begin{proof}
  We start with the first derivative. The conditional expectation is given by 
  \[ \bE_{\gamma}[\fun | \sigma(\pinv{\Vrx})](x) = \int \fun(\Qrx x + (I - \Qrx)y) \gamma(dy). \]
  First, since the derivatives of $\fun \in \cF \cC^{\infty}$ are continuous and of finite rank, the mappings $x \mapsto \deriv^k \fun(x)$ take values in a separable subspace of $\cL_k(\cX, \cY)$ and hence are Bochner measurable.
  By boundedness of $\fun$ and its derivatives, the function $(x, y) \mapsto \fun(\Qrx x + (I - \Qrx)y)$ satisfies the hypotheses of \Cref{theorem:leibniz}. 
  We can therefore interchange differentiation and integration to obtain
  \[ \deriv \bE_{\gamma}[\fun | \sigma(\pinv{\Vrx})](x) = \int \deriv \fun(\Qrx x + (I - \Qrx)y) \Qrx \gamma(dy) = \bE_{\gamma}[\deriv \fun \Qrx| \sigma(\pinv{\Vrx})]. \]
  Let $L_k$ denote the bound on $\|\deriv^k \fun (x)\|_{\cL_k(\cX, \cY)}$. To see boundedness of the derivative, we have 
  \[ 
    \|\deriv \bE_{\gamma}[\fun | \sigma(\pinv{\Vrx})](x)\|_{\cL(\cX, \cY)}
    \leq \int \|\deriv \fun(\Qrx x + (I - \Qrx)y) \Qrx \|_{\cL(\cX,\cY)} \gamma(dy) 
    \leq L_1.
  \]
  On the other hand, to see continuity, we have 
  \begin{align*}
    & \|\deriv \bE_{\gamma}[\fun | \sigma(\pinv{\Vrx})](x_2) -  \deriv \bE_{\gamma}[\fun | \sigma(\pinv{\Vrx})(x_1)]\|_{\cL(\cX,\cY)}  \\
    & \qquad \leq \int \|\deriv \fun(\Qrx x_2 + (I - \Qrx)y) \Qrx - \deriv \fun(\Qrx x_1 + (I - \Qrx)y)\Qrx \|_{\cL(\cX, \cY)} \gamma(dy) \\
    & \qquad \leq \int \|\deriv \fun(\Qrx x_2 + (I - \Qrx)y) - \deriv \fun(\Qrx x_1 + (I - \Qrx)y)\|_{\cL(\cX, \cY)} \gamma(dy) \\
    & \qquad \leq \int L_2 \|x_2 - x_1\|_{\cX} \; \gamma(dy) \\
    & \qquad = L_2 \|x_2 - x_1\|_{\cX},
  \end{align*}
  where the last inequality follows from the mean value theorem and boundedness of the second derivative. 
  Higher-order derivatives follow by induction, 
  % using the fact that $\cL_{k}(\cX, \cY)$ is isometrically isomorphic to $\cL(\cX, \cL_{k-1}(\cX, \cY))$.
  starting from the hypothesis that 
  \[D^k \bE_{\gamma}[\fun | \sigma(\pinv{\Vrx})] =
  \bE_{\gamma}[\deriv^{k} \fun(\Qrx \cdot, \dots, \Qrx \cdot) | \sigma(\pinv{\Vrx})]\]
  for some $k \geq 1$. 
  Then, we can view $x \mapsto\deriv^{k} \fun(x)(\Qrx \cdot, \dots, \Qrx \cdot)$ as another separably-valued map with bounded derivatives, 
  for which the same arguments can be used to show
  \[D \bE_{\gamma}[\deriv^{k} \fun(\Qrx \cdot, \dots, \Qrx \cdot) | \sigma(\pinv{\Vrx})] = \bE_{\gamma}[\deriv^{k+1} \fun(\Qrx \cdot, \dots, \Qrx \cdot) | \sigma(\pinv{\Vrx})].\]
  Thus, $\bE_{\gamma}[\fun | \sigma(\pinv{\Vrx})]$ has continuous and bounded $k$th order derivatives for all $k \in \bN$.

\end{proof}

Finally, the result can be extended to Sobolev differentiation of functions in $H^1_{\gamma}$ through a density argument, giving us \Cref{lemma:interchange_differentiation_conditional_expectation}.
% \begin{lemma}[Interchanging Sobolev differentiation and conditional expectation] \label{lemma:interchange_differentiation_conditional_expectation}
    % Suppose $\fun \in H^1_{\gamma}$. 
    % Let $\Vrx = (v_i)_{i=1}^{\rx}$ be an orthonormal set in the $\XC$ such that $(\Cx^{-1}v_i)_{i=1}^{\rx}$ are well defined in $\cX$ 
    % and let $\Qrx = \Vrx \pinv{\Vrx}$.
    % Let $(v_i)_{i=1}^{\infty}$ be an orthonormal basis of the Cameron--Martin space $\XC$ such that  $(\Cx^{-1} v_i)_{i=1}^{\infty}$ are well defined in $\cX$.
    % For any $r \in \bN$, let $\Vrx = (v_i)_{i=1}^{\rx}$ and $\Qrx = \Vrx \pinv{\Vrx}$.
    % Then, the Sobolev derivative of the conditional expectation is given by
    % \[\derivXC \bE_{\gamma}[\fun | \sigma(\pinv{\Vrx})] = \bE_{\gamma}[\derivXC \fun \Qrx | \sigma(\pinv{\Vrx})].\]
    % where $\sigma(\pinv{\Vrx})$ is the sigma algebra generated by the linear functionals $\pinv{\Vrx} x = (\linner \Cx^{-1} v_i, x \rinner_{\cX})_{i=1}^{r}$ 
    % is the projection onto the subspace spanned by $\Vrx = (v_i)_{i=1}^{r}$. 
% \end{lemma}
\begin{proof}(of \Cref{lemma:interchange_differentiation_conditional_expectation})
    By the density argument, we can find a sequence of $\cF \cC^{\infty}$ functions, $(\fun_n)_{i=1}^{\infty}$ convergent to $\fun$ in $H^1_{\gamma}$. 
    This also means $\bE_{\gamma}[\fun_n | \sigma(\pinv{\Vrx})]$ converges to $\bE_{\gamma}[\fun | \sigma(\pinv{\Vrx})]$ in $L^2_{\gamma}$,
    since 
    \begin{align*} 
    & \| \bE_{\gamma}[\fun_n | \sigma(\pinv{\Vrx})] - \bE_{\gamma}[\fun | \sigma(\pinv{\Vrx})] \|_{L^2_{\gamma}}^2 \\
    &\qquad =  \int \left\| \int \fun_n(\Qrx x + (I - \Qrx)y) - \fun(\Qrx x + (I - \Qrx)y ) \gamma(dy) \right\|^2 \gamma(dx) \\
    &\qquad \leq  \iint \| \fun_n(\Qrx x + (I - \Qrx)y) - \fun(\Qrx x + (I - \Qrx)y ) \|^2 \gamma(dy) \gamma(dx) \\
    &\qquad = \| \fun_n - \fun \|_{L^2_{\gamma}}^2. 
    \end{align*}

    Since $\fun_n \in \cF \cC^{\infty}$ we can interchange the integration and Fr\'echet differentiation as in \Cref{lemma:interchange_differentiation_expectation_smooth}, such that
    % \begin{align*} 
    %     \deriv \bE_{\gamma}[\fun_n | \sigma(\pinv{\Vrx})](x) & = \deriv \int \fun_n(\Qrx x + (I - \Qrx)y) \gamma(dy) \\
    %     & = \int \deriv \fun_n(\Qrx x + ( I - \Qrx)y ) \Qrx \gamma(dy) \\
    %     & = \bE_{\gamma}[\deriv \fun_n \Qrx | \sigma(\pinv{\Vrx})](x),
    % \end{align*}
    % that is, 
    $\derivXC \bE_{\gamma}[\fun_n | \sigma(\pinv{\Vrx})] = \bE_{\gamma}[\derivXC \fun_n \Qrx | \sigma(\pinv{\Vrx})]$.
    Moreover, we can verify $\bE_{\gamma}[\derivXC \fun_n \Qrx | \sigma(\pinv{\Vrx})]$ converges to $\bE_{\gamma}[\derivXC \fun \Qrx | \sigma(\pinv{\Vrx})]$, since 
    \begin{align*} 
        \| \bE_{\gamma}[& \derivXC \fun_n \Qrx | \sigma(\pinv{\Vrx})] - \bE_{\gamma}[\derivXC \fun \Qrx | \sigma(\pinv{\Vrx})] \|_{L^2_{\gamma}}^2 \\
        &=  \int \left\| \int \derivXC \fun_n(\Qrx x + (I - \Qrx)y) - \derivXC \fun(\Qrx x + (I - \Qrx)y)\Qrx  \gamma(dy) \right\|_{\HS(\XC, \cY)}^2 \gamma(dx) \\
        &\leq  \iint \| \derivXC \fun_n(\Qrx x + (I - \Qrx)y) - \derivXC \fun(\Qrx x + (I - \Qrx)y ) \|_{\HS(\XC, \cY)}^2 
            \|\Qrx \|_{\cL(\XC, \XC)}^2 \gamma(dy) \gamma(dx) \\
        &= \| \derivXC \fun_n - \derivXC \fun \|_{L^2_{\gamma}}^2. 
    \end{align*}
    Applying \Cref{lemma:convergence_of_sobolev}, we obtain $\derivXC \bE_{\gamma}[\fun | \sigma(\pinv{\Vrx})] = \bE_{\gamma}[\derivXC \fun \Qrx | \sigma(\pinv{\Vrx})]$.
\end{proof}

A corollary of this result is that the conditional expectation preserves the 
$H^m_{\gamma}$ norm.
\begin{corollary}\label{cor:conditional_expectation_preserves_norm}
  Suppose $\fun \in H^m_{\gamma}$
  and let $(v_i)_{i=1}^{\infty}$ be an orthonormal basis of the Cameron--Martin space $\XC$ for which $(\Cx^{-1} v_i)_{i=1}^{\infty}$ are well-defined in $\cX$. 
  Then, for the reduced basis $\Vrx = (v_i)_{i=1}^{r}$, we have $\| \bE_{\gamma}[ \fun | \sigma(\pinv{\Vrx})]\|_{H^m_\gamma} \leq  \| \fun \|_{H^m_\gamma}$.
\end{corollary}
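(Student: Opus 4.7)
The plan is to combine three ingredients: (i) the higher-order analogue of \Cref{lemma:interchange_differentiation_conditional_expectation} which identifies the Sobolev derivatives of the conditional expectation with conditional expectations of the derivatives composed with the projection $\Qrx$; (ii) the contractivity of conditional expectation in $L^2$ (Jensen's inequality); and (iii) the fact that inserting an orthogonal projection of $\XC$ into a $k$-linear Hilbert--Schmidt operator cannot increase its Hilbert--Schmidt norm.

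First, I would upgrade \Cref{lemma:interchange_differentiation_conditional_expectation} to all orders $k \le m$, establishing
\[
  \derivXC^{k} \bE_{\gamma}[\fun \mid \sigma(\Pinv{\Vrx})] = \bE_{\gamma}\!\left[ \derivXC^{k}\fun(\Qrx\,\cdot,\dots,\Qrx\,\cdot) \,\big|\, \sigma(\Pinv{\Vrx}) \right].
\]
For $\fun \in \cF\cC^{\infty}$ this follows directly from \Cref{lemma:interchange_differentiation_expectation_smooth}, since the derivatives $\derivXC^{k} \fun$ themselves belong to suitable cylindrical classes. The extension to $\fun \in H^m_{\gamma}$ is by density: approximate $\fun$ by $(\fun_n) \subset \cF\cC^{\infty}$ in $H^m_{\gamma}$, pass the identity through the limit exactly as in the proof of \Cref{lemma:interchange_differentiation_conditional_expectation}, using the $L^2_{\gamma}$-contractivity of conditional expectation at each order.

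Next, I would bound the $\HS_k$ norm of the integrand. Choose an orthonormal basis $(w_i)_{i=1}^{\infty}$ of $\XC$ with $w_i = v_i$ for $i \le r$, so that $\Qrx w_i = w_i$ for $i \le r$ and $\Qrx w_i = 0$ otherwise. Then for any $A \in \HS_k(\XC, \cY)$,
\[
  \|A(\Qrx\,\cdot,\dots,\Qrx\,\cdot)\|_{\HS_k(\XC,\cY)}^2
  = \sum_{i_1,\dots,i_k \le r} \|A(w_{i_1},\dots,w_{i_k})\|_{\cY}^2
  \le \|A\|_{\HS_k(\XC,\cY)}^2.
\]
Applying this pointwise with $A = \derivXC^{k}\fun(x)$ and then taking expectations yields
\[
  \bE_{\gamma}\!\left[ \|\derivXC^{k}\fun(\Qrx\,\cdot,\dots,\Qrx\,\cdot)\|_{\HS_k}^2 \right]
  \le \bE_{\gamma}\!\left[ \|\derivXC^{k}\fun\|_{\HS_k}^2 \right].
\]

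Finally, Jensen's inequality applied to the conditional expectation in the Hilbert space $\HS_k(\XC,\cY)$ gives
\[
  \bE_{\gamma}\!\left[ \| \bE_{\gamma}[ \derivXC^{k}\fun(\Qrx\,\cdot,\dots,\Qrx\,\cdot) \mid \sigma(\Pinv{\Vrx}) ] \|_{\HS_k}^2 \right]
  \le \bE_{\gamma}\!\left[ \| \derivXC^{k}\fun(\Qrx\,\cdot,\dots,\Qrx\,\cdot) \|_{\HS_k}^2 \right].
\]
Chaining the two inequalities and summing over $k = 0,\dots,m$ produces $\|\bE_{\gamma}[\fun \mid \sigma(\Pinv{\Vrx})]\|_{H^m_{\gamma}}^2 \le \|\fun\|_{H^m_{\gamma}}^2$. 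The only step requiring genuine care is the higher-order interchange result in item (i): ensuring that the density argument simultaneously controls all derivatives up to order $m$ and confirming that the composition $A \mapsto A(\Qrx\,\cdot,\dots,\Qrx\,\cdot)$ is a bounded linear map on $\HS_k(\XC,\cY)$ so that it commutes with Bochner limits; the rest is essentially bookkeeping.
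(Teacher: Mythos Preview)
Your proposal is correct and follows essentially the same approach as the paper: establish the higher-order interchange identity $\derivXC^{k}\bE_{\gamma}[\fun\mid\sigma(\pinv{\Vrx})]=\bE_{\gamma}[\derivXC^{k}\fun(\Qrx\cdot,\dots,\Qrx\cdot)\mid\sigma(\pinv{\Vrx})]$, then apply $L^2$-contractivity of conditional expectation followed by the observation that inserting $\Qrx$ into each slot cannot increase the $\HS_k$ norm. The paper states the higher-order interchange as following from ``repeatedly applying the previous lemma'' and leaves the projection bound implicit, whereas you spell out both the density argument and the basis computation, but the structure is identical.
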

\begin{proof}
  As previously shown, the conditional expectation preserves the $L^2_{\gamma}$ norm. 
  Moreover, repeatedly applying the previous lemma yields 
  \[ 
    \derivXC^k \bE_{\gamma}[\fun | \sigma(\pinv{\Vrx})] = 
    \bE_{\gamma}[\derivXC^k \fun(\Qrx(\cdot), \dots, \Qrx(\cdot)) | \sigma(\pinv{\Vrx})].
  \]
  Thus, applying the result for $L^2(\gamma, \HS_k(\XC, \cY))$ yields
  \begin{align*}
    \|\derivXC^k \bE_{\gamma}[\fun | \sigma(\pinv{\Vrx})]\|_{L^2(\gamma,\HS_k(\XC, \cY))}^2 
      & = \|\bE_{\gamma}[\derivXC^k \fun(\Qrx(\cdot), \dots, \Qrx(\cdot)) | \sigma(\pinv{\Vrx})] \|_{L^2(\gamma,\HS_k(\XC, \cY))}^2  \\
    & \leq 
      \|\derivXC^k \fun(\Qrx(\cdot), \dots, \Qrx(\cdot)) \|_{L^2(\gamma,\HS_k(\XC, \cY))}^2  \\
    & \leq 
      \|\derivXC^k \fun\|_{L^2(\gamma,,\HS_k(\XC, \cY))}^2,
  \end{align*}
  which holds for any $k \leq m$. 
\end{proof}

\subsection{Example: Polynomial forms}\label{sec:example_polynomial_forms}
In this section, we show that $n$th order polynomial forms on $\cX$ satisfy the derivative inverse inequality (\Cref{assumption:derivative_inverse_inequality}) and the Hessian inverse inequality (\Cref{assumption:hessian_inverse_inequality}) with constants $K_D = n$ and $K_H = n - 1$.
Let $H_n : \bR \rightarrow \bR$ denote the $n$th Hermite polynomial, defined for $n = 0, 1, \dots$ by
\begin{equation}
  H_n(x) = \frac{(-1)^{n}}{\sqrt{n!}} \exp \left( \frac{x^2}{2}\right) \frac{d^n}{dx^n} \exp \left( - \frac{x^2}{2} \right).
\end{equation}
With this choice of normalization, the Hermite polynomials form an orthonormal basis of $L^2(\gamma_1)$ where $\gamma_1$ is the standard Gaussian measure on $\bR$.
Moreover, one has $H_n'(x) = \sqrt{n} H_{n-1}(x)$, which implies that the derivatives of Hermite polynomials are also orthogonal under the $L^2(\gamma_1)$ inner product.

We can also define multivariate Hermite polynomials as tensor product of univariate Hermite polynomials. 
To this end, for a multi-index $\alpha = (\alpha_1, \alpha_2, \dots)$, $\alpha_i \in \bN \cup \{0 \}$, 
let us define $|\alpha| = \sum_{i=1}^{\infty} \alpha_i$ and $\supp \alpha = \{ i \in \bN: \alpha_i \neq 0 \}$.
Then, for $\alpha \in \Lambda := \{ \alpha : |\alpha| < \infty\}$, we define
\begin{equation}
  H_{\alpha}(x) = \prod_{i \in \supp \alpha} H_{\alpha_i}(x_i).
\end{equation}

In the case of separable infinite-dimensional Hilbert spaces $\cX$ and $\cY$, 
the family of Hermite polynomials $H_{\alpha}$, $\alpha \in \Lambda$, along with an orthonormal basis of the Cameron--Martin space $\XC$, can be used to construct an orthonormal basis of $L^2_{\gamma} = L^2(\gamma, \cY)$.
For example, given the PCA basis $(v_i^{\KLE})_{i=1}^{\infty}$ (normalized in $\XC$),
one defines the basis functions $H_{\gamma, \alpha} : \cX \rightarrow \bR$, 
\begin{equation}
  H_{\gamma, \alpha}(x) := \prod_{i \in \supp \alpha} H_{\alpha} \left( 
    \linner x, \Cx^{-1} v_i^{\KLE} \rinner_{\cX}\right).
    % \frac{\linner x^{(i)}, \Cx^{-1} v_i^{\KLE} \rinner_{\cX}}{{\mu_i^{\KLE}}}
    % \right).
\end{equation}
Any function $\fun \in L^2_{\gamma}$ can then be represented as
\begin{equation}
  \fun(x) = \sum_{\alpha \in \Lambda} c_{\alpha} H_{\gamma, \alpha}(x)
\end{equation}
with coefficients $c_{\alpha} \in \cY$, 
which is known as the Wiener Chaos expansion.
Moreover, 
\begin{equation}
  \| \fun \|_{L^2_{\gamma}}^2 = \sum_{\alpha \in \Lambda} \| c_{\alpha} \|_{\cY}^2.
\end{equation}
Note that the derivatives of $H_{\gamma, \alpha}$ along $v_k^{\KLE}$, which are orthonormal in $\XC$, are given by
\begin{equation}
\derivXC H_{\gamma, \alpha}(x) v_k^{\KLE} 
  = \sqrt{\alpha_k} H_{\gamma, \alpha - e_k}(x),
\end{equation}
where the multi-index $\alpha - e_k$ is defined as
\begin{equation}
  (\alpha - e_k)_j = \begin{cases}
    \max(\alpha_j - 1,0) & j = k, \\
    \alpha_j & j \neq k.
  \end{cases}
\end{equation}
This means that 
\begin{equation}\label{eq:derivative_inverse_hermite}
  \| \derivXC H_{\gamma, \alpha}(x) \|_{L^2(\gamma, \HS(\XC,\cY))}^2 
  = \sum_{k=1}^{\infty}
  \alpha_k \int H_{\gamma, \alpha - e_k}(x)^2 \gamma(dx)
  = |\alpha| \|H_{\gamma, \alpha}\|_{L^2_{\gamma}}^2
  = |\alpha|
\end{equation}
We refer to \cite[Chapter 1]{Nualart06} for a more complete exposition.
\begin{proposition}\label{prop:hermite_derivative_and_hessian}
  Let $\fun \in L^2_{\gamma}$ be represented by Hermite polynomials up to degree $n$, 
  \[
    \fun(x) = \sum_{|\alpha| \leq n} c_{\alpha} H_{\gamma, \alpha}(x).
  \]
  Then, $\fun$ satisfies the derivative inverse inequality \eqref{assumption:derivative_inverse_inequality} with constant $K_D = n$ and the Hessian inverse inequality \eqref{eq:hessian_inverse_inequality} with constant $K_H = n - 1$.
\end{proposition}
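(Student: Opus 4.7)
The strategy is to lift the classical Gaussian Poincar\'e-type spectral gap of Hermite chaoses to the Hilbert-space-valued setting and then iterate the bound once more to handle the Hessian. The key algebraic identity is the lowering relation $\derivXC H_{\gamma,\alpha}(x) v_k^{\KLE} = \sqrt{\alpha_k}\, H_{\gamma, \alpha - e_k}(x)$ together with the $L^2_\gamma$-orthogonality of the normalized Hermite family $\{H_{\gamma,\alpha}\}_{\alpha \in \Lambda}$.

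\textbf{Step 1 (Derivative inverse inequality).} First I would test \Cref{assumption:derivative_inverse_inequality} against an arbitrary $u \in \cY$ by introducing the scalar function
\[
g_u(x) := \linner u, \fun(x) \rinner_{\cY} = \sum_{|\alpha| \leq n} \linner u, c_\alpha \rinner_{\cY} H_{\gamma,\alpha}(x),
\]
which is itself a polynomial of degree at most $n$. A direct computation using \eqref{eq:derivative_inverse_hermite} and the orthogonality calculation $\int \linner \derivXC H_{\gamma,\alpha}, \derivXC H_{\gamma,\beta} \rinner_{\XC} d\gamma = |\alpha| \delta_{\alpha\beta}$ (which holds because the lowering relation forces $\alpha - e_k = \beta - e_k$ for some $k$, i.e.\ $\alpha = \beta$) yields
\[
\linner u, \Hy u \rinner_{\cY} = \bE_{\gamma}[\|\derivXC g_u\|_{\XC}^2] = \sum_{0 < |\alpha| \leq n} |\alpha|\, \linner u, c_\alpha \rinner_{\cY}^2 \leq n \sum_{0 < |\alpha| \leq n} \linner u, c_\alpha \rinner_{\cY}^2.
\]
On the other hand, since $\fbar = c_0$, the centered covariance satisfies $\linner u, \Cy u \rinner_{\cY} = \bE_\gamma[(g_u - \linner u, c_0 \rinner_{\cY})^2] = \sum_{0 < |\alpha| \leq n} \linner u, c_\alpha \rinner_{\cY}^2$. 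Combining the two gives $K_D = n$.

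\textbf{Step 2 (Hessian inverse inequality via iteration).} The main idea is that for any fixed $v \in \XC$, the map $\psi_v(x) := \derivXC \fun(x) v$ is again a polynomial (now $\cY$-valued) of degree at most $n-1$, because differentiation lowers the Hermite degree by one. Explicitly, expanding $v = \sum_k v_k v_k^{\KLE}$ gives
\[
\psi_v(x) = \sum_{|\alpha| \leq n} c_\alpha \sum_{k} \sqrt{\alpha_k}\, v_k\, H_{\gamma, \alpha - e_k}(x),
\]
so the highest Hermite degree appearing is $n-1$. Now I would apply the Step 1 bound to the $\cY$-valued polynomial $\psi_v$ (component-wise against an orthonormal basis of $\cY$, then summing using monotone convergence), which produces
\[
\bE_{\gamma}\bigl[\|\derivXC \psi_v\|_{\HS(\XC,\cY)}^2\bigr] \leq (n-1)\, \bE_{\gamma}\bigl[\|\psi_v - \bE_\gamma[\psi_v]\|_{\cY}^2\bigr] \leq (n-1)\, \bE_{\gamma}[\|\psi_v\|_{\cY}^2],
\]
where the last step uses that the mean minimizes the mean-squared deviation. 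Finally, I would identify $\derivXC \psi_v(x) = \derivXC^2 \fun(x)(\cdot, v)$ (by symmetry of second Fr\'echet derivatives and the $\cF\cC^\infty$ density argument used in \Cref{lemma:interchange_differentiation_conditional_expectation}) and $\bE_\gamma[\|\psi_v\|_\cY^2] = \linner v, \Hx v \rinner_{\XC}$, which yields the desired $K_H = n-1$.

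\textbf{Anticipated obstacle.} The computations in Step 1 are essentially a multivariate version of the classical spectral gap for the Ornstein--Uhlenbeck generator, so the routine check there is that the cross-term $\int \linner \derivXC H_{\gamma,\alpha}, \derivXC H_{\gamma,\beta}\rinner_{\XC} d\gamma$ vanishes for $\alpha \neq \beta$; the degree bound $|\alpha| \leq n$ then gives the factor $n$. The more delicate point is Step 2: one must justify that the Step 1 inequality extends to $\cY$-valued polynomial chaoses and that $\psi_v$ genuinely has chaos degree at most $n-1$ uniformly in $v \in \XC$ (in particular that the sum over $k$ defining $\psi_v$ converges in $H^1_\gamma$, which follows from $v \in \XC$ and the Hilbert--Schmidt estimates on $\derivXC \fun$). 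Once those identifications are in place, the desired bounds follow purely from the lowering relation and Hermite orthogonality.
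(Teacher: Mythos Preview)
Your proposal is correct and follows essentially the same route as the paper's proof: both test against $u\in\cY$ (respectively fix $v\in\XC$), reduce to a scalar polynomial chaos of degree at most $n$ (respectively $n-1$), and use the lowering relation plus Hermite orthogonality to extract the factor $|\alpha|$. The only difference is that the paper packages the key estimate $\bE_\gamma[\|\derivXC g\|_{\XC}^2]\le m\,\|g-\bar g\|_{L^2_\gamma}^2$ for degree-$m$ chaoses as a citation to \cite[Proposition~1.2.2]{Nualart06}, whereas you reprove it directly via the cross-term computation $\int \linner \derivXC H_{\gamma,\alpha},\derivXC H_{\gamma,\beta}\rinner_{\XC}\,d\gamma=|\alpha|\delta_{\alpha\beta}$.
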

\begin{proof}
  We first note that since the Wiener Chaos expansion for $\fun$ is of finite polynomial order, $\fun \in H^k_{\gamma}$ for any $k \geq 0$.
  We start by considering the derivative inverse inequality. 
  For $\fun = \sum_{|\alpha| \leq n} c_{\alpha} H_{\gamma, \alpha}$,
  its mean shifted form is given by
  $\fun - \fbar = \sum_{1 \leq |\alpha| \leq n} c_{\alpha} H_{\gamma, \alpha}$
  Then, given any $u \in \cY$, 
  we consider the function $u^* \circ (\fun - \fbar)$, where $u^* \in \cY'$ is defined as $u^*(y) = \linner y, u \rinner_{\cY}$.
  This has the representation 
  $u^* \circ (\fun - \fbar) = \sum_{1 \leq |\alpha| \leq n} \linner u, c_{\alpha}\rinner_{\cY} H_{\gamma, \alpha}$.
  Now,
  \[
    \linner u, H_{\cY} u \rinner_{\cH}
    = \bE_{\gamma}[\| \derivXC (u^* \circ \fun) \|_{\HS(\XC, \cY)}^2]
    = \bE_{\gamma}[\| \derivXC (u^* \circ (\fun - \fbar)) \|_{\HS(\XC, \cY)}^2].
  \]
  % Applying \Cref{lemma:hermite_derivative_bound} gives
  Analogous to \eqref{eq:derivative_inverse_hermite},
  \cite[Proposition 1.2.2]{Nualart06} gives
  \[
    \bE_{\gamma}[\| \derivXC (u^* \circ (\fun - \fbar)) \|_{\HS(\XC, \cY)}^2]
    \leq n \| u^* \circ (\fun - \fbar) \|_{L^2(\gamma)}^2 
    = n \linner u, \Cy u \rinner_{\cY},
  \]
  which shows that $K_D = n$.

  The Hessian inverse inequality is shown similarly. 
  We first note that the $\derivXC \fun$ has a Wiener chaos expansion of degree $n-1$,
  \[
    \derivXC \fun = \sum_{|\alpha| \leq n -1} A_{\alpha} H_{\gamma, \alpha},
  \]
  where $A_{\alpha} \in \HS(\XC, \cY)$.
  % The Hessian inverse inequality is equivalent to 
  % \[
  %   \bE_{\gamma}[ \|\derivXC (\derivXC \fun v)\|_{\HS(\XC, \cY)}^2]
  %   \leq K_H \bE_{\gamma}[\| \derivXC \fun v\|_{\cY}^2].
  % \]
  Moreover, let $\gfun := \derivXC \fun v = \sum_{|\alpha| \leq n - 1} A_{\alpha}v H_{\gamma, \alpha} $ 
  % has a Wiener chaos expansion of order $d - 1$, 
  for which 
  \cite[Proposition 1.2.2]{Nualart06} again gives
  \[
    \bE_{\gamma}[ \|\derivXC (\derivXC \fun v)\|_{\HS(\XC, \cY)}^2]
    = \bE_{\gamma}[ \|\derivXC \gfun\|_{\HS(\XC, \cY)}^2]
    \leq 
    (n-1) \bE_{\gamma}[ \|\gfun\|_{\cY}^2]
    = (n-1)\bE_{\gamma}[\| \derivXC \fun v\|_{\cY}^2].
  \]
  Thus, $K_H = n-1$.
\end{proof}

\section{Universal approximation theorem in Sobolev norms}\label{sec:nn_approximation}
\subsection{Original theorem of Hornik} 
% \cite{Hornik91}}
\label{sec:dibasis_hornik}
Hornik \cite{Hornik91} proved a universal approximation of functions using a single-layered, arbitrary-width neural network in any weighted Sobolev norm on finite-dimensional input and output spaces with a finite measure $\nu$. 
For $f \in C^{m}(\bR^{d}, \bR)$, this Sobolev norm is
defined in \cite{Hornik91} as
\begin{equation}
  \| f \|_{W^{m,p}(\nu, \bR)} = \left( \int_{\bR^{d}} |f (x)|^p d\nu(x) + 
  \sum_{1 \leq |\alpha|_1\leq m} \int_{\bR^{d}} |\partial^{\alpha} f(x)|^p d\nu(x) \right)^{\frac{1}{p}},
\end{equation}
where we are using the multiindex notation for derivatives, with $\alpha = (\alpha_1, \dots, \alpha_d)$ and
\[\partial^{\alpha} f(x) = \frac{\partial^{|\alpha|} f}{\partial x_1^{\alpha_1} \dots \partial x_d^{\alpha_d}}(x).\]
Note that for $p = 2$ this corresponds to the use of Frobenius norms on the derivatives $D^k f$, $k \leq m$.

The original result \cite[Theorem 4]{Hornik91}
considers activation functions that are non-constant with bounded value and derivatives up to some order $m \geq 0$,
and states that single-layered neural networks with such activation functions are 
universal approximators of the space 
\[ C^{m,p}_{\nu}(\bR^{\din}) := \{\fun \in C^{m}(\bR^{\din}) : \|g \|_{W^{m,p}(\nu, \bR)} < \infty \}.
\]
We paraphrase this result as follows.
\begin{theorem}[Universal approximation with smooth, bounded activation functions]\label{theorem:universal_approx_hornik}
  Suppose $\nu$ is a finite measure on $\bR^{\din}$ and 
  $ g \in C^{m, p}_{\nu}(\bR^{\din})$ for some $m \geq 0$, $p \geq 1$.
  Let $\psi \in C_b^m(\bR)$ be a non-constant activation function.
  Then, for any $\epsilon > 0$, 
  there exists a single-layered neural network $\varphi_{\psi, \theta}$ with activation function $\psi$ 
  and some width $d_W \in \bN$ such that 
  \begin{equation}
    \|  g - \varphi_{\psi, \theta} \|_{W^{m,p}(\nu, \bR)} \leq \epsilon,
  \end{equation}
  where
  \[\varphi_{\psi, \theta}(x) = \mathbf{W}_2 \psi(\mathbf{W}_1 + \mathbf{b}_1) + \mathbf{b}_2 \]
  with weights and biases 
  $\mathbf{W}_1 \in \bR^{\din \times d_W}$, $\mathbf{W}_2 \in \bR^{d_W \times 1}$, 
  $\mathbf{b}_1 \in \bR^{d_W}$ and $\mathbf{b}_2 \in \bR$.
\end{theorem}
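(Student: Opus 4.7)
The plan is to follow the standard route from \cite{Hornik91}: first prove density of single-layer neural networks in the local topology $C^m(K)$ on an arbitrary compact set $K \subset \bR^{\din}$, and then transfer this to the global weighted Sobolev norm $W^{m,p}(\nu, \bR)$ via the finiteness of $\nu$.

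For the compact-set step, I would fix a compact $K \subset \bR^{\din}$ and consider the linear span
\[ \cN(\psi) = \mathrm{span}\{ x \mapsto \psi(a^T x + b) : a \in \bR^{\din}, \, b \in \bR \} + \bR. \]
A single neuron has partial derivatives $\partial^{\alpha}[\psi(a^T x + b)] = a^{\alpha} \psi^{(|\alpha|)}(a^T x + b)$, so $C^m(K)$-approximation must be performed simultaneously for the function and all its derivatives up through order $m$. The key technical ingredient is the divided-difference identity
\[ \frac{1}{h^k} \sum_{j=0}^{k} (-1)^{k-j} \binom{k}{j} \psi(a^T x + b + j h) \to \psi^{(k)}(a^T x + b) \quad \text{as } h \to 0, \]
where, thanks to $\psi \in C_b^m$, the convergence holds not merely pointwise but in the $C^m(K)$ norm. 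Combined with a scaling trick (take $\|a\|$ small and adjust the coefficients so that the linear combinations approximate monomials via Taylor expansion of $\psi$), this shows that all polynomials lie in the $C^m(K)$ closure of $\cN(\psi)$; standard polynomial approximation in $C^m(K)$ (via, e.g., Bernstein-type approximants) then upgrades this to density in all of $C^m(K)$.

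For the transfer to the full space, I would use finiteness of $\nu$ and absolute continuity of the integral to pick a ball $K$ such that $\|g\|_{W^{m,p}(\nu|_{\bR^{\din} \setminus K}, \bR)} < \epsilon / 4$. On a slightly larger compact set $K' \supset K$, invoke the compact-set density to obtain a neural network $\varphi_{\psi, \theta}$ with $\|g - \varphi_{\psi, \theta}\|_{C^m(K')} < \delta$ for a tolerance $\delta$ chosen later. The resulting $W^{m,p}$ error on $K$ is controlled by $\delta \nu(K)^{1/p}$; the $g$-contribution on $\bR^{\din} \setminus K$ is at most $\epsilon/4$ by construction; the $\varphi_{\psi,\theta}$-contribution on $\bR^{\din} \setminus K$ is handled by observing that each neuron $c_i \psi(a_i^T x + b_i)$ is globally bounded by $|c_i|\|\psi\|_\infty$ with globally bounded derivatives, so $\varphi_{\psi,\theta}$ admits a (weight-dependent but finite) global $C^m(\bR^{\din})$ bound, and enlarging $K$ \emph{after} selecting $\varphi_{\psi,\theta}$ makes $\nu(\bR^{\din} \setminus K)^{1/p}$ small enough to absorb this factor.

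The main obstacle is the simultaneous control of function values and all derivatives in the compact-set step: the divided-difference and polynomial-approximation arguments must be executed so that every $\partial^\alpha$ with $|\alpha| \le m$ is controlled uniformly on $K$, rather than only the function value. This is exactly where the $C_b^m$ hypothesis enters---boundedness of $\psi^{(k)}$ through order $m$ (and through $m + k$ for the Taylor remainders in the divided-difference identity) is what promotes pointwise convergence of the divided differences to convergence in the $C^m(K)$ norm. Once this local density is secured, the extension to $W^{m,p}(\nu, \bR)$ reduces to routine measure-theoretic bookkeeping driven by the finiteness of $\nu$.
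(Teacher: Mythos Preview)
The paper does not prove this theorem. It is presented in the appendix explicitly as a restatement of \cite[Theorem 4]{Hornik91} (``We paraphrase this result as follows''), and the paper's own contributions---the extensions to $\psi \in \cA_b^{\infty}$ and to deep architectures---take this result as a black box. So there is no paper proof to compare your proposal against; your task here was only to cite the literature.

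That said, a brief comment on your sketch: the two-step strategy (local $C^m(K)$ density, then a tail-cutting argument using finiteness of $\nu$) is indeed the shape of Hornik's original proof. However, your compact-set step has a regularity mismatch. You observe, correctly, that $C^m(K)$-convergence of the order-$k$ divided difference to $\psi^{(k)}$ requires control on $\psi$ through order $m+k$; but the hypothesis is only $\psi \in C_b^m(\bR)$, so those extra derivatives are not available. Hornik's actual argument sidesteps this by working through a duality/annihilator argument (showing that any signed measure in the dual of $C^m(K)$ annihilating all single neurons must be zero), which never demands more than $m$ derivatives of $\psi$. If you wanted to salvage the divided-difference route you would need to first mollify $\psi$ to gain extra smoothness and then argue that the mollified network can itself be approximated in $C^m(K)$ by the original one---an additional layer of work that the duality argument avoids.
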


\subsection{Extension to smooth ReLU-like activation functions}
In practice, smooth, non-saturating activation functions, such as GeLU, softplus, and SiLU, are often used.
We can extend the universal approximation theorem of \cite{Hornik91} to include these activation functions by introducing a second layer into the neural network. 

We will extend the notion of $C^{m,p}_{\nu}$ to include the vector-valued case by defining 
\[
  C^{m,p}_{\nu}(\bR^{\din}, \bR^{\dout}) := \{f \in C^{m}(\bR^{\din}, \bR^{\dout}), 
  \| f \|_{W^{m,p}(\nu, \bR^{\dout})} < \infty \}, 
\]
where the Sobolev norm is analogously defined 
\begin{equation}
  \| f \|_{W^{m,p}(\nu, \bR^{\dout})} = \left( \int_{\bR^{d}} \|f(x)\|_2^p d\nu(x) + 
  \sum_{|\alpha|\leq m} \int_{\bR^{d}} \|\partial^{\alpha} f(x)\|_2^p d\nu(x) \right)^{\frac{1}{p}}.
\end{equation}

We then have the following.
\begin{theorem}[Universal approximation with smooth, ReLU like activation functions]\label{theorem:universal_approx_hornik_extended}
  % Suppose $g \in C^{m}_b(\bR^{\din}, \bR^{\dout})$ for some $m \geq 0$ and $\gamma_{\din}$ is the standard Gaussian measure on $\bR^{\din}$.
  Suppose $\nu$ is a finite measure on $\bR^{\din}$ and $g \in C^{m,p}_{\nu}(\bR^{\din}, \bR^{\dout})$ for some $m \geq 0$, $p \geq 1$.
  Let $\psi$ be an activation function in the class $\cA^\infty_b$. 
  Then, for any $\epsilon > 0$, 
  there exists a two-layered neural network $\varphi_{\psi, \theta}$ 
  with activation function $\psi$ and some width $d_W \in \bN$ such that 
  \begin{equation}
    \| g - \varphi_{\psi, \theta} \|_{W^{m,p}(\nu, \bR^{\dout})} \leq \epsilon,
  \end{equation}
  where 
  \[\varphi_{\psi, \theta}(x) = \mathbf{W}_3 \psi(\mathbf{W}_2 \psi(\mathbf{W}_1 + \mathbf{b}_1) + \mathbf{b}_2) + \mathbf{b}_3,\]
  with weights
  $\mathbf{W}_1 \in \bR^{\din\times d_W}$, 
  $\mathbf{W}_2 \in \bR^{d_W \times d_W}$, 
  $\mathbf{W}_3 \in \bR^{d_W \times \dout}$, 
  and biases
  $\mathbf{b}_1, \mathbf{b}_2 \in \bR^{d_W}$, 
  and $\mathbf{b}_{3} \in \bR^{\dout}$.
\end{theorem}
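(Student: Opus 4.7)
\emph{Strategy.} The plan is to reduce \Cref{theorem:universal_approx_hornik_extended} to the single-hidden-layer scalar version of Hornik's theorem (\Cref{theorem:universal_approx_hornik}). The obstruction is that any $\psi \in \cA_b^{\infty}$ is non-saturating on the positive axis and therefore unbounded, violating Hornik's boundedness hypothesis. I will synthesize a smooth bounded proxy activation $\widetilde{\psi}$ from $\psi$ via a linear combination of shifted copies, invoke Hornik with $\widetilde{\psi}$, and re-express the resulting approximation as a two-hidden-layer $\psi$-network of the required form.

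\emph{Construction of the auxiliary activation.} For a fixed $c > 0$, set
\[
\widetilde{\psi}(t) := \psi(t+c) - \psi(t-c).
\]
I claim $\widetilde{\psi} \in C_b^m(\bR)$ and is non-constant. Boundedness splits into three cases: for $t \leq -c$, assumption~(1) gives $|\psi(t \pm c)| \leq a_-$; for $t \geq c$, the mean value theorem together with assumption~(3) gives $|\widetilde{\psi}(t)| = 2c\,|\psi'(\xi_t)| \leq 2c\,a_1$; and for $|t| < c$, $\widetilde{\psi}$ is continuous on a compact set. Since $\widetilde{\psi}^{(k)}(t) = \psi^{(k)}(t+c) - \psi^{(k)}(t-c)$, assumption~(3) yields $|\widetilde{\psi}^{(k)}| \leq 2a_k$ uniformly. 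For non-constancy: if $\widetilde{\psi}$ were constant, then $\psi'$ would be $2c$-periodic, forcing $\psi(t+2c)-\psi(t)$ to be independent of $t$; this is incompatible with the simultaneous requirements that $\psi$ be bounded on negatives (assumption~1) and tend to $\infty$ as $t \to \infty$ (assumption~2). Hence $\widetilde{\psi}$ satisfies the hypotheses of \Cref{theorem:universal_approx_hornik}.

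\emph{Applying Hornik and lifting to two hidden layers.} Applying \Cref{theorem:universal_approx_hornik} componentwise to $g = (g_1, \ldots, g_{\dout})^{\top}$ with tolerance $\epsilon/(2\dout)$, I obtain single-hidden-layer $\widetilde{\psi}$-networks approximating each $g_i$ in $W^{m,p}(\nu, \bR)$. Expanding each term $\widetilde{\psi}(a_j^{\top} x + b_j) = \psi(a_j^{\top} x + b_j + c) - \psi(a_j^{\top} x + b_j - c)$ assembles the components into a single-hidden-layer $\psi$-network of doubled width that approximates $g$ in $W^{m,p}(\nu, \bR^{\dout})$ within $\epsilon/2$. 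To realize the two-hidden-layer form stated in the theorem, I insert an approximate-identity second layer: for $\beta$ with $\psi'(\beta) \neq 0$ (which exists by assumption~(2)) and small $\alpha > 0$, Taylor expansion gives
\[
\tfrac{1}{\alpha\,\psi'(\beta)}\bigl[\psi(\alpha z + \beta) - \psi(\beta)\bigr] = z + O(\alpha z^2),
\]
with the higher-order derivative remainders controlled via assumption~(3). Applied elementwise with the scaling $1/(\alpha\,\psi'(\beta))$ absorbed into $\mathbf{W}_3$ and the constant shift into $\mathbf{b}_3$, this produces the desired two-hidden-layer $\psi$-network; padding both layers with zero weights fixes a common constant width $d_W$. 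The main obstacle is that the first-layer outputs $\psi(\mathbf{W}_1 x + \mathbf{b}_1)$ can grow like $\|x\|$ rather than being uniformly bounded, so controlling the $O(\alpha z^2)$ Sobolev remainder requires a cutoff or localization argument exploiting the finiteness of $\nu$; once localized, $\alpha$ may be chosen small enough to drive the additional residual below $\epsilon/2$, yielding total error at most $\epsilon$.
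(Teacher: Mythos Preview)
Your proposal is correct in outline but takes a genuinely different route from the paper, and the detour creates extra work that the paper avoids.

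The paper's key device is the auxiliary activation $\tilde\psi(t) = \psi(-\psi(t)+b)$ for suitable $b>a_-$. This is non-constant and lies in $C^m_b(\bR)$ (boundedness of the value uses that $-\psi(t)+b$ eventually becomes negative for large $t$; boundedness of derivatives follows from Fa\`a di Bruno and assumption~(3)). Crucially, because $\tilde\psi$ is already a composition of two copies of $\psi$, a single-hidden-layer $\tilde\psi$-network produced by Hornik's theorem \emph{is} a two-hidden-layer $\psi$-network: one simply rewrites $\tilde\psi(\mathbf{W}_1 x+\mathbf{b}_1)=\psi(-\mathbf{I}\,\psi(\mathbf{W}_1 x+\mathbf{b}_1)+\mathbf{b})$. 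No approximate-identity layer and no localization argument are needed at all for \Cref{theorem:universal_approx_hornik_extended}.

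Your construction $\widetilde\psi(t)=\psi(t+c)-\psi(t-c)$ is a perfectly valid bounded, non-constant $C^m_b$ proxy, and Steps~1--3 are fine (your non-constancy argument via periodicity of $\psi'$ is clean). But after Step~3 you hold only a \emph{single}-hidden-layer $\psi$-network, so you are forced to manufacture a second layer via an approximate identity. That is precisely the machinery the paper introduces separately---as \Cref{lemma:identity_approximation} and the subsequent lemma---to prove the \emph{deep} extension (\Cref{theorem:universal_approx_hornik_extended_deep}); it is not used for the two-layer result. Your localization sketch is on the right track (and indeed mirrors the paper's $\{|\varphi|\le B\}$ versus $\{|\varphi|>B\}$ split in the deep case), but note that making it rigorous in $W^{m,p}$ requires controlling composed higher derivatives via Fa\`a di Bruno, which you have not spelled out.

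In short: both approaches work, but the paper's choice of $\tilde\psi=\psi\circ(-\psi+b)$ bakes the two-layer structure into the proxy activation itself, yielding a shorter and self-contained proof; your approach front-loads an easier proxy but then has to import the approximate-identity analysis from the deep case.
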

\begin{proof}
  This result provides extensions to the previous theorem by considering $\psi \in \cA^{\infty}_b$.
  The extension follows from the fact that we can construct a new activation function 
  $\tilde{\psi}(t) = \psi(-\psi(t)+b)$, using some $b \in \bR$, 
  that is non-constant and smooth with bounded derivatives up to order $k$. 

  We begin by verifying that $\tilde{\psi}$ is non-constant.
  Since $\psi \in \cA^{\infty}_b$, we know $|\psi(0)| \leq a_{-}$, and $\psi$ is increasing and non-saturating for positive inputs. Thus, 
  we choose $b > a_{-}$, such that $-\psi(0) + b > 0$ and $-\psi(t) + b < -\psi(0) + b$ for $t > 0$. 
  With this choice of $b$, 
  $-\psi(t) + b$ takes positive, changing values when $t$ is in some small interval $(0, \delta)$, 
  and so $\psi(-\psi(t) + b)$ is non-constant. 

  To see boundedness, we note that for $t > 0$, $-\psi(t) + b$ strictly decreases, such that $-\psi(t) + b \leq -\psi(0) + b$, where $-\psi(0) + b > 0$. 
  In other words, for $t > 0$, 
  $|\psi(-\psi(t) + b)| \leq \max(|\psi(-\psi(0)+b)|, a_{-})$.
  % $-a_{-} \geq \psi(-\psi(t) + b) \leq \psi(-\psi(0)+b)$. 
  For $t \leq 0$, we have $|-\psi(t) + b| \leq a_{-} + b$, such that $\psi(-\psi(t) + b)$ is again bounded.
  % $|\psi(-\psi(t))|$

  When $m \geq 1$,
  boundedness of derivatives can be verified using the Fa\`a di Bruno formula (see, for example, \cite{Roman80}), a generalization of the chain rule for arbitrary order of the derivatives. 
  Specifically, the $\ell$th derivatives are polynomial combinations of the derivatives of $\psi$, i.e., 
  products of $\psi^{(k)}(-\psi(t)+b)$ and $\psi^{(k)}(t)$ with derivative orders $k$ up to $\ell$, all of which are bounded.
  For example, the first derivative of $\tilde{\psi}$ is 
  \[ \frac{d}{dt}\tilde{\psi}(t) = -\psi'(-\psi(t) + b)\psi'(t), \]
  which is bounded since $\psi'(t)$ is bounded. 
  Cases for derivatives of order $\ell \leq m$ 
  follow by repeating this process.
  % or by inspecting the Fa\`a di Bruno formula \dc{Citation needed}.
  We then remark that products and compositions of continuous functions are continuous, from which we can conclude that $\tilde{\psi} \in C^m_b(\bR)$. 

  We can therefore apply the universal approximation result of \Cref{theorem:universal_approx_hornik} using the activation function $\tilde{\psi}$. 
  That is, given any $\epsilon > 0$, there exists a neural network,
  \[\varphi_{\tilde{\psi}, \theta}(s) = \mathbf{W}_2 \tilde{\psi}(\mathbf{W}_1 s + \mathbf{b}_1) + \mathbf{b}_2\]
  with some hidden layer size $d_1$, which approximates $g \in C^{m}_b(\bR^{\din}, \bR)$ to $\epsilon$ error. 
  Since $\tilde{\psi}(t) = \psi(-\psi(t) + b)$, a componentwise application of this gives
  \[
    \varphi_{\tilde{\psi}, \theta}(s) = \mathbf{W}_2 \psi(-\mathbf{I} \psi(\mathbf{W}_1 s + \mathbf{b}_1) + \mathbf{b}) + \mathbf{b}_2 ,
  \]
  where $\mathbf{I} \in \bR^{d_1 \times d_1}$ is the identity matrix and $\mathbf{b} = (b, \dots, b) \in \bR^{d_1}$. 
  Thus, $\varphi_{\tilde{\psi}, \theta}$ is a neural network with two hidden layers and activation function $\psi$. 
  Finally, this can be extended to the case when the output is in $\bR^{\dout}$ by stacking individual approximations for each component. 
  % $|psi(x)| 
\end{proof}

\subsection{Extension to deep neural networks} 
% and proof of \Cref{theorem:universal_approx_hornik_extended_deep}}
\label{sec:proof_nn_deep_extension}
Additionally, the result can be extended to deep neural networks by considering smooth approximations to the identity function $\id(x) = x$.
We will use a form similar to that found in \cite{KovachkiLanthalerMishra21}.
\begin{lemma}\label{lemma:identity_approximation}
  Let $m \geq 0 $, $\psi \in \cA_{b}^{\infty}$, and $t_0 \in \bR$ be such that $\psi'(t_0) \neq 0$. 
  For $h \in (0,1]$, consider the function $\id_h(t) : \bR \rightarrow \bR$, defined as
  \begin{equation}\label{eq:identity_approximation}
    \id_h(t) = \frac{\psi(t_0 + h t) - \psi(t_0 - ht)}{2 h \psi'(t_0)}.
  \end{equation}
  Then, for any $\epsilon, B > 0$, there exists $h \in (0,1]$ such that $\id_h$ approximates the identity mapping $\id(t) = t$ to arbitrary accuracy, i.e., 
  \begin{equation}
    \| \id_h - \id \|_{C^{m}([-B, B])} \leq \epsilon.
  \end{equation}
  Additionally, $\id_h$ is Lipschitz continuous with bounded derivatives up to order $m$. That is 
  \[
    | \id_h(t_1) - \id_h(t_2) | \leq L |t_1 - t_2|, \quad \forall t_1, t_2 \in \bR, 
    \qquad | \id_h^{(k)}(t) | \leq C_k, \quad \forall t \in \bR, k = 1, \dots, m,
  \]
  where the Lipschitz constant $L = L(\psi, t_0)$ and the bound $C_k = C_k(\psi, t_0)$ are independent of $h$.
\end{lemma}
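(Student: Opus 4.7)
The plan is to establish both claims by Taylor expansion of $\psi$ around $t_0$, using the fact that $\psi \in \cA_b^\infty$ guarantees bounded derivatives of every order. The key algebraic observation is that $\id_h(t)$ is a symmetrized (central difference) quotient, so all even-order terms in the Taylor expansion of the numerator cancel, leaving only odd-order corrections that are explicitly powers of $h$.

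For the approximation claim, I would begin by writing out the Taylor expansion of $\psi(t_0 \pm ht)$ around $t_0$ to order $m+1$ with integral (or Lagrange) remainder. Subtracting the two expansions, the even-order terms drop out, yielding
\begin{equation*}
    \psi(t_0 + ht) - \psi(t_0 - ht) = 2\psi'(t_0)\, ht + 2\sum_{j=1}^{\lfloor m/2 \rfloor} \frac{\psi^{(2j+1)}(t_0)}{(2j+1)!}(ht)^{2j+1} + R_m(ht),
\end{equation*}
where $R_m(ht) = O(h^{m+2})$ uniformly in $t \in [-B,B]$ since $\|\psi^{(m+2)}\|_\infty < \infty$. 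Dividing by $2h\psi'(t_0)$ gives $\id_h(t) = t + h^2\, p_h(t)$ for a smooth $p_h$ whose $C^m([-B,B])$ norm is uniformly bounded in $h \in (0,1]$. Differentiating term-by-term produces analogous bounds for $\id_h^{(k)} - \id^{(k)}$ (each derivative of the $O(h^2)$ correction remains $O(h^2)$ in $C^0([-B,B])$, since derivatives with respect to $t$ bring down powers of $h$ from the chain rule but multiply against the fixed Taylor coefficients). Choosing $h$ sufficiently small then yields $\|\id_h - \id\|_{C^m([-B,B])} \leq \epsilon$.

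For the Lipschitz/derivative bound claims, I would compute $\id_h^{(k)}$ directly via the chain rule:
\begin{equation*}
    \id_h^{(k)}(t) = \frac{h^{k-1}}{2\psi'(t_0)}\left[\psi^{(k)}(t_0 + ht) - (-1)^k \psi^{(k)}(t_0 - ht)\right].
\end{equation*}
For $k = 1$, this simplifies to $(\psi'(t_0+ht) + \psi'(t_0-ht))/(2\psi'(t_0))$, which is bounded in absolute value by $a_1/|\psi'(t_0)|$ independently of $h$ and $t$; the Lipschitz constant $L$ follows since $|\id_h'|$ is uniformly bounded. For $k \geq 2$, the prefactor $h^{k-1}$ combined with $h \leq 1$ and the uniform bound $|\psi^{(k)}| \leq a_k$ gives $|\id_h^{(k)}(t)| \leq a_k/|\psi'(t_0)|$, again independent of $h$.

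The only mild subtlety is bookkeeping the constants and ensuring that the remainder $R_m$ in the Taylor expansion is controlled uniformly on $[-B,B]$; this is immediate from $\psi \in C^\infty$ with $\|\psi^{(m+2)}\|_\infty \leq a_{m+2}$, so no real obstacle arises. The whole argument is essentially elementary calculus once the symmetry of the central difference is exploited.
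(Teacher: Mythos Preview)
Your proposal is correct and follows essentially the same approach as the paper: both compute the explicit derivative formula $\id_h^{(k)}(t) = \frac{h^{k-1}}{2\psi'(t_0)}\bigl[\psi^{(k)}(t_0+ht) + (-1)^{k-1}\psi^{(k)}(t_0-ht)\bigr]$, use Taylor expansion to control $\|\id_h - \id\|_{C^m([-B,B])}$, and read off the $h$-independent bounds $|\id_h^{(k)}| \leq a_k/|\psi'(t_0)|$ directly from this formula. The only organizational difference is that you Taylor-expand $\psi$ about $t_0$ and exploit the central-difference symmetry (yielding an $O(h^2)$ correction), whereas the paper Taylor-expands $\id_h$ itself about $t=0$ separately for each derivative order (yielding $O(h)$ bounds); your route is slightly sharper but otherwise the arguments coincide.
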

\begin{proof}
  Let $m \geq 0$, $\psi \in \cA_b^{\infty}$, $B > 0$, and $\epsilon > 0$ be given. 
  Additionally, let $a_k \geq 0$ denote the bound on the $k$th derivative of $\psi$, i.e., $|\psi^{(k)}| \leq a_k$. 
  We begin by noting for $t_0$ such that $\psi'(t_0) \neq 0$ and $h \in (0, 1]$, 
  the function $\id_h \in C^{m}(\bR)$ with derivatives
  \begin{equation}\label{eq:identity_approx_derivatives}
      \id_h^{(k)}(t)= h^{k-1} \left( \frac{\psi^{(k)}(t_0 + ht) + (-1)^{k-1} \psi^{(k)}(t_0 - ht)}{2 \psi'(t_0)} \right),
  \end{equation}
  for $1 \leq k \leq m$.

  The approximation is constructed such that 
  $\id_h(0) = 0$ and $\id_h'(0) = 1$, 
  i.e., it is a good approximation of the identity function near the origin. 
  To see this, we apply Taylor's theorem,
  \[
      \id_h(t) = \id_h(0) + \id_h'(0) t + \frac{1}{2}\id_h''(\xi) t^2
      = t +  \frac{1}{2}\id_h''(\xi) t^2
      ,
  \]
  where $\xi \in [0, t]$. Substituting in the expression for the second derivative, we arrive at the following bound for $t \in [-B,B]$,
  \[
      | \id_h(t) - t | \leq h \left| \frac{\psi''(t_0 + h \xi) - \psi''(t_0 - h \xi)}{2 \psi'(t_0)} \right| \frac{t^2}{2} \leq \frac{a_2 B^2}{|\psi'(t_0)|} h
  \] 
  which can be made arbitrarily small by choosing $h$ small.
  For the first derivative, we consider a Taylor expansion on $\id_h'$, such that 
  \[
    \id_h'(t) = \id_h'(0) + \id_h''(\xi) t,
  \]
  where $\xi \in [0, t]$ and $\id_h'(0) = 1$.
  This implies
  \[
    |\id_h'(t) - 1| \leq h \left| \frac{\psi''(t_0 + h \xi) - \psi''(t_0 - h \xi)}{2 \psi'(t_0)} \right| t \leq \frac{a_2 B}{|\psi'(t_0)|} h
  \]
  for $t \in [-B,B]$.
  For higher-order derivatives, we note that $\id^{(k)}(t) = 0$ for any $k \geq 2$.
  Moreover, inspecting \eqref{eq:identity_approx_derivatives}, we see that 
  \[
    |\id_h^{(k)}(t)| \leq h^{k-1} \left| \frac{\psi^{(k)}(t_0 + ht) + (-1)^{k-1} \psi^{(k)}(t_0 - ht)}{2 \psi'(t_0)} \right| \leq \frac{a_k}{|\psi'(t_0)|} h^{k-1}.
  \]
  Choosing $h$ sufficiently small such that all function and derivative errors are less than $\epsilon$ yields the desired error bound over the interval $[-B,B]$.

  To see Lipschitz continuity and boundedness of the derivatives, we again inspect the expression \eqref{eq:identity_approx_derivatives}.
  In particular, for $1 \leq k \leq m$, and $h \in (0,1]$, 
  \[
    |\id_h^{(k)}(t)| \leq \frac{a_k}{|\psi'(t_0)|}, 
  \]
  for all $t \in \bR$. 
  This implies the boundedness of all $k$th order derivatives for $1 \leq k \leq m$, and hence Lipschitz continuity of $\id_h$. 
  Moreover, it is clear that the bounds depend only on $\psi$ and $t_0$, and not on $h$.
\end{proof}

The identity approximation \eqref{eq:identity_approximation} can be thought of as a single neural network layer, and thus can be appended to a neural network $\varphi$ to form a deep neural network $\tilde{\varphi} = \id_h \circ \varphi$ approximating $\varphi$ itself.
\begin{lemma}
  % Let $p \in (1, \infty)$ and $m \geq 2$ and $\varphi : \bR^{\din} \rightarrow \bR$ be in $C^m \cap W^{m,p}(\gamma_{\din}, \bR)$ with bounded derivatives of order $1 \leq k \leq m$.
  Suppose $\nu$ is a finite measure on $\bR^{\din}$ and $\varphi \in C^{m,p}_{\nu}(\bR^{\din})$ for some $m \geq 1$, $p \geq 1$.
  Additionally, assume that $\varphi$ has bounded derivatives of order $1 \leq k \leq m$.
  Then, for any $\epsilon > 0$, there exists $h \in (0, 1]$ such that 
  \[
    \| \id_h \circ \varphi - \varphi \|_{W^{m,p}(\nu, \bR)} \leq \epsilon.
  \]
  Moreover, $\id_h \circ \varphi \in C^{m,p}_{\nu}(\bR^{\din})$ and also has bounded derivatives of order $1 \leq k \leq m$.
\end{lemma}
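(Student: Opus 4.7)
The plan is to compare $\id_h \circ \varphi$ with $\varphi$ in the $W^{m,p}$ norm via the higher-order chain rule (Faà di Bruno's formula), leveraging three facts from the preceding lemma: (i) uniform $C^m$ approximation $\|\id_h - \id\|_{C^m([-B,B])} \to 0$ as $h \to 0$ for any fixed $B$; (ii) Lipschitz continuity of $\id_h$ with constant $L$ independent of $h$, together with $\id_h(0)=0$; and (iii) the explicit decay $|\id_h^{(j)}(t)| \leq (a_j/|\psi'(t_0)|)\, h^{j-1}$ for $j \geq 2$, visible from the derivative formula in the proof of the preceding lemma.

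First I would handle the $k=0$ term by splitting on the value of $\varphi$:
\[
  \int |\id_h(\varphi) - \varphi|^p \, d\nu = \int_{\{|\varphi| \leq B\}} + \int_{\{|\varphi| > B\}}.
\]
The inner integral is bounded by $\|\id_h - \id\|_{C^0([-B,B])}^p \, \nu(\bR^{\din})$, which tends to zero as $h \to 0$ by (i). For the outer integral, the Lipschitz bound with $\id_h(0)=0$ gives $|\id_h(t) - t| \leq (L+1)|t|$, so the contribution is bounded by $(L+1)^p \int_{\{|\varphi| > B\}} |\varphi|^p d\nu$, which is made small by choosing $B$ large since $\varphi \in L^p(\nu)$.

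For each multi-index $\alpha$ with $1 \leq |\alpha| \leq m$, Faà di Bruno's formula gives
\[
  \partial^\alpha(\id_h \circ \varphi) \;=\; \id_h'(\varphi) \, \partial^\alpha \varphi \;+\; \sum_{j=2}^{|\alpha|} \id_h^{(j)}(\varphi) \, B_{\alpha, j}(\varphi),
\]
where $B_{\alpha,j}(\varphi)$ is a multivariate Bell polynomial in partial derivatives of $\varphi$ of order at most $|\alpha|$. Subtracting $\partial^\alpha \varphi$ isolates two error types. The piece $(\id_h'(\varphi) - 1) \partial^\alpha \varphi$ is handled exactly like the $k=0$ case: split on $\{|\varphi| \leq B\}$ using $|\id_h' - 1|$ uniformly small on the bounded range, and on the complement use the uniform bound $|\id_h' - 1| \leq L+1$ together with boundedness of $\partial^\alpha \varphi$ and $\nu(\{|\varphi| > B\}) \to 0$. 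The $j \geq 2$ terms are uniformly bounded by $(a_j/|\psi'(t_0)|)\, h^{j-1} \cdot \|B_{\alpha,j}(\varphi)\|_\infty$, where the Bell polynomials are bounded since they are products of the (assumed bounded) partial derivatives of $\varphi$; thus these terms contribute $O(h)$ in $L^p(\nu)$.

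Putting the estimates together, one first chooses $B$ large enough to make all tail contributions smaller than $\epsilon$, then $h$ small enough that the bounded-set approximation errors and the $j \geq 2$ Bell terms are also less than $\epsilon$. The claim that $\id_h \circ \varphi$ lies in $C^{m,p}_\nu$ with bounded derivatives of order $1 \leq k \leq m$ falls out of the same Faà di Bruno expansion: each summand is a product of a bounded factor $\id_h^{(j)}(\varphi)$ and a Bell polynomial in bounded derivatives of $\varphi$, hence bounded. The main obstacle is bookkeeping rather than analysis---carefully identifying the $j=1$ term as the one that cancels $\partial^\alpha\varphi$ in the limit, and confirming that the bounded-derivative hypothesis on $\varphi$ is precisely what makes $B_{\alpha,j}(\varphi)$ globally bounded, so that the tail of $\id_h - \id$ can be controlled using only the $L^p$ integrability of $\varphi$ without integrability assumptions beyond $L^p$ on its derivatives.
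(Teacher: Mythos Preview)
Your proposal is correct and follows essentially the same approach as the paper: split on $\{|\varphi|\leq B\}$ versus its complement for the $k=0$ and $j=1$ terms, apply Fa\`a di Bruno for the higher-order derivatives, then choose $B$ first and $h$ second. The only minor difference is that for the $j\geq 2$ terms you invoke the global decay $|\id_h^{(j)}|\leq (a_j/|\psi'(t_0)|)h^{j-1}$ directly (which is established in the proof of the preceding lemma), whereas the paper again splits on $\{|\varphi|\leq B\}$ and uses the $C^m([-B,B])$ bound inside and the uniform derivative bounds $L_j$ outside; your handling is slightly cleaner here but both are equivalent in spirit.
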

\begin{proof}
  Consider $\id_h$ as defined in \eqref{eq:identity_approximation}.
  Moreover, let $M_k$ denote the bound on the $k$th derivative of $\varphi$, i.e., $\|\deriv^k \varphi\|_{F_k} \leq M_k$,
  and let $L_k$ denote the bound on the $k$th derivative of $\id_h$, i.e., $|\id_h^{(k)}(t)| \leq L_k$, 
  both of which are independent of $B$ and $h$. 

  For some $B$, let $h \in (0, 1]$ be chosen such that $\| \id_h - \id \|_{C^{m}([-B, B])} \leq \delta$.
  Consider first the $k = 0$ term of the $W^{m,p}$ norm,
  \[
    \|\id_h \circ \varphi - \varphi \|_{L^p(\nu, \bR)}^p = \int |\id_h \circ \varphi - \varphi|^p d \nu
  \]
 We can decompose this as 
  \[
    \int |\id_h \circ \varphi - \varphi|^p d\nu =
    \int_{|\varphi| \leq B} |\id_h \circ \varphi - \varphi|^p d\nu + 
    \int_{|\varphi| > B} |\id_h \circ \varphi - \varphi|^p d\nu,
  \]
  which is bounded as 
  \begin{equation}\label{eq:identity_W0_bound}
    \int |\id_h \circ \varphi - \varphi|^p d\nu
    \leq \delta^p + \int_{|\varphi| > B} ((1+L_1)|\varphi |)^p d\nu,
  \end{equation}
  where $L_1$ is the bound on the first derivative of $\id_h$ and an upper bound on the Lipschitz constant.
  Note that with the help of the Lebesgue dominated convergence theorem, the second term can be made arbitrarily small by choosing $B$ large enough such that $\nu(|\varphi| > B)$ is small.

  % Thus, the second term can be made arbitrarily small as $B \rightarrow \infty$ by the Lebesgue dominated convergence theorem, 
  % and $h$ can then be chosen to make $\delta$ arbitrarily small.
  For the first derivative, we have for any $i = 1, \dots \din$,
  \begin{equation}\label{eq:identity_W1_bound}
    \int | \partial_{x_i} (\id \circ \varphi) - \partial_{x_i} \varphi |^p d \nu \leq
    \int_{| \varphi | \leq B} |\id_h' \circ \varphi - 1|^p |\partial_{x_i} \varphi |^p d\nu + 
    \int_{| \varphi | > B} |\id_h' \circ \varphi - 1|^p |\partial_{x_i} \varphi |^p d\nu.
  \end{equation}
  Then, the sum is bounded by
  \[
    \int | \partial_{x_i} (\id \circ \varphi) - \partial_{x_i} \varphi |_2^p d \nu \leq
    \delta^p + \int_{| \varphi | > B} (L_k M_k)^p d\nu.
  \]
  % Same as before, this error can be made arbitrarily small by choosing $B$ large enough such that $\nu(\| \varphi \| > B)$ is small, and then choosing $h$ small enough such that $\delta$ is small.

  For the higher-order derivatives, we recall the Fa\`a di Bruno formula for the $k$th derivative of a composition of functions \cite{Hardy06}.
  For $f : \bR^{d} \rightarrow \bR$ and $g : \bR \rightarrow \bR$, this takes the form 
  \[
    \frac{\partial^{k} (g \circ f)}{\partial x_{i_1} \dots \partial x_{i_k}} = 
    \sum_{\pi \in \Pi} g^{(|\pi|)}(f) \prod_{J \in \pi} \frac{\partial^{|J|} f}{\prod_{j \in J} \partial x_{i_j}},
  \]
  where $\Pi$ is the set of partitions of $\{1, \dots, k\}$ and $|\pi|, |J|$ refer to the number of elements in $\pi$ and $J$, respectively.
  Note that there is only one partition of size one, i.e., $\pi_k = \{ \{1, \dots, k\}\}$,
  which yields the term 
  $
    g'(f) \frac{\partial^k f}{\partial x_{i_1} \dots \partial x_{i_k}}.
  $
  We can therefore decompose the sum as 
  \[
    \frac{\partial^{k} (g \circ f)}{\partial x_{i_1} \dots \partial x_{i_k}} = 
    g'(f) \frac{\partial^k f}{\partial x_{i_1} \dots \partial x_{i_k}} +
    \sum_{\pi \in \Pi \setminus \pi_k} g^{(|\pi|)}(f) \prod_{J \in \pi} \frac{\partial^{|J|} f}{\prod_{j \in J} \partial x_{i_j}}.
  \]
  Applying this to the composition $\id_h \circ \varphi$, 
  the error in each component of the $k$th derivative is given by
  \[
    \frac{\partial^{k}}{\partial x_{i_1} \dots \partial x_{i_k}} (\id_h \circ \varphi) - 
    \frac{\partial^{k}}{\partial x_{i_1} \dots \partial x_{i_k}} (\varphi) =
    (\id_h' \circ \varphi  - 1) \frac{\partial^k \varphi}{\partial x_{i_1} \dots \partial x_{i_k}} +
    \sum_{\pi \in \Pi \setminus \pi_k} 
    \id_h^{(|\pi|)}(\varphi)
    \prod_{J \in \pi}
    \frac{\partial^{|J|} \varphi}{\prod_{j \in J} \partial x_{i_j}}.
  \]
  Consider the first term, whose $L^p$ norm can be decomposed into 
  \[
    \int_{| \varphi | \leq B} |\id_h' \circ \varphi - 1|^p 
    \left| \frac{\partial^k \varphi}{\partial x_{i_1} \dots \partial x_{i_k}} \right|^p  d\nu
    +      
    \int_{| \varphi | > B} |\id_h' \circ \varphi - 1|^p 
    \left| \frac{\partial^k \varphi}{\partial x_{i_1} \dots \partial x_{i_k}} \right|^p  d\nu.
  \]  
  Again, by the boundedness assumptions, we have 
  \[
    \int_{} |\id_h' \circ \varphi - 1|^p 
    \left| \frac{\partial^k \varphi}{\partial x_{i_1} \dots \partial x_{i_k}} \right|^p  d\nu
    \leq 
    \delta^p M_k  + \int_{\| \varphi \| > B} ((1 + L_1)M_k)^p d\nu.
  \]
  Moreover, for every entry in the summation of the second term, we have 
  \begin{align*}
    \int_{} 
    |\id_h^{(|\pi|)}(\varphi) |^p 
    \left| \prod_{J \in \pi} \frac{\partial^{|J|} \varphi}{\prod_{j \in J} \partial x_{i_j}} 
    \right|^p d\nu 
    & =
    \int_{|\varphi| \leq B} 
    |\id_h^{(|\pi|)}(\varphi) |^p 
    \left| \prod_{J \in \pi} \frac{\partial^{|J|} \varphi}{\prod_{j \in J} \partial x_{i_j}} 
    \right|^p d\nu  \\
    & +
    \int_{|\varphi| > B} 
    |\id_h^{(|\pi|)}(\varphi) |^p 
    \left| \prod_{J \in \pi} \frac{\partial^{|J|} \varphi}{\prod_{j \in B} \partial x_{i_j}} 
    \right|^p d\nu
  \end{align*}
  Note that for $\pi \in \Pi \setminus \{\pi_k \}$, $|\pi| > 1$, and hence $\id_h^{(|\pi|)} \leq \delta$ 
  inside $[-B,B]^d$ and is bounded outside. 
  Thus, we have the following bound,
  \begin{equation}\label{eq:identity_Wk_bound}
    \int_{} 
    |\id_h^{(|\pi|)}(\varphi) |^p 
    \left| \prod_{J \in \pi} \frac{\partial^{|J|} \varphi}{\prod_{j \in J} \partial x_{i_j}} 
    \right|^p d\nu 
    \leq
    \delta^p  \prod_{J \in \pi} M_{|J|}^p 
    + \int_{|\varphi| > B} \left( L_{|\pi|} \prod_{J \in \pi} M_{|J|} \right)^{p} d\nu.
  \end{equation}

  To obtain the overall error bound in the $W^{m,p}(\nu, \bR)$, we combine the bounds in \eqref{eq:identity_W0_bound}, \eqref{eq:identity_W1_bound}, and \eqref{eq:identity_Wk_bound} (summed over all partial derivatives),
  choosing first $B$ such that $\nu(\| \varphi \| > B)$ is small, and then choosing $h$ such that $\delta$ is small.

  We now check that $\id_h \circ \varphi \in C^{m,p}_{\nu}(\bR^{\din})$ and has bounded derivatives.
  In particular, since $\id_h$ is Lipschitz, the fact that $\varphi \in L^p(\nu, \bR)$ implies that $\id_h \circ \varphi \in L^p(\nu, \bR)$.
  Moreover, to see boundedness of the derivatives of $\id_h \circ \varphi$, 
  it suffices to inspect the Fa\`a di Bruno formula and note that both the derivatives of $\id_h$ and $\varphi$ up to order $m$ are bounded.
  Thus, overall, $\id_h \circ \varphi \in C^{m,p}_{\nu}(\bR^{\din})$ and has bounded derivatives of order $1 \leq k \leq m$.

\end{proof}

We can now proceed with the proof of \Cref{theorem:universal_approx_hornik_extended_deep}.
\begin{proof}(of \Cref{theorem:universal_approx_hornik_extended_deep})
  This result follows from the previous theorem by considering the identity approximation $\id_h$ as defined in \Cref{lemma:identity_approximation}.
  Beginning with the two-layer approximation result of \Cref{theorem:universal_approx_hornik_extended},
  we can construct a deep neural network by appending the identity approximation to the output of the two-layer network.
  That is, let $\varphi_1$ be the two-layer approximation to $g$ as defined in \Cref{theorem:universal_approx_hornik_extended},
  and let $\id_h$ be an approximation to the identity function as defined in \Cref{lemma:identity_approximation}.
  Moreover, due to the choice of activation function, $\varphi_1$ has bounded derivatives of order $1 \leq k \leq m$.
  Then, we define $\varphi_2 = \id_h \circ \varphi_1$ as the deep neural network with $d_L = 3$ layers,
  noting that when applied componentwise, $\id_h$ is a single layer neural network with activation function $\psi$ whose width is twice the size of the input dimension.
  The approximation error is given by
  \[
    \| g - \varphi_2 \|_{W^{m,p}(\nu, \bR^{\dout})} \leq
    \| g - \varphi_1 \|_{W^{m,p}(\nu, \bR^{\dout})} +
    \| \varphi_1 - \id_h \circ \varphi_1 \|_{W^{m,p}(\nu, \bR^{\dout})}.
  \]
  By \Cref{theorem:universal_approx_hornik_extended}, 
  we can choose $\varphi_1$ such that $\| g - \varphi_1 \|_{W^{m,p}(\nu, \bR^{\dout})} \leq \epsilon/2$,
  and by \Cref{lemma:identity_approximation}, we can choose $h_1$ such that 
  $\| \id_{h_1} \circ \varphi_1 - \varphi_1 \|_{W^{m,p}(\nu, \bR^{\dout})} \leq \epsilon/2$.
  Since the composition $\id_{h_1} \circ \varphi \in C^{m,p}_{\nu}(\bR^{\din})$ also has bounded derivatives of order $1 \leq k \leq m$, the process can be repeated to construct a deep neural network with $d_L \geq 2$ of the form
  $ \varphi_{\psi,\theta} = \id_{h_{d_L - 2}} \circ \dots \circ \id_{h_1} \circ \varphi_1$ 
  with arbitrary approximation error $\epsilon$.
  The layers of the neural network can be padded by zeros to ensure a constant width $d_W$.
\end{proof}

% ------ examples ------ % 
\subsection{Examples of activation functions in $\cA^{\infty}_b$}
Many of the common, ReLU-like activation functions are in the class $\cA^{\infty}_b$, including the softplus, GeLU, and SiLU functions.

\begin{example}[The softplus activation] 
  The function $\psi(t) = \log(1 + e^t)$ is in $\cA^{\infty}_{b}$.
\end{example}
\begin{proof}
  We first note that $\psi(t)$ is monotonically increasing and bounded below by $0$. We can verify its derivatives are all bounded. 
  Note that the first derivative of the softplus function is the (logistic) sigmoid, 
  $\sigma(t) := 1/(1+e^{-t})$. 
  That is, 
  \[ \psi'(t) = \frac{e^t}{1 + e^t} = \frac{1}{1 + e^{-t}} = \sigma(t). \]
  Evidently, this is continuous and bounded.

  Subsequent derivatives of $\psi$ are derivatives of the sigmoid, which has the well known property 
  \[ \sigma'(t) = \sigma(t)(1 - \sigma(t)). \]
  We can verify by induction that higher derivatives of $\sigma$ are all of the form $p(\sigma) \sigma (1 - \sigma)$, where $p(t)$ is a polynomial function. 
  Clearly this is true for the first derivative. Now, suppose the $k$th derivative is of the form $p_k(\sigma) \sigma (1 - \sigma)$. 
  Then, the $(k+1)$th derivative is given by
  \[ p_k'(\sigma) \sigma^2 (1-\sigma)^2
  + p_k(\sigma) \sigma (1 - \sigma)^2
  - p_k(\sigma) \sigma^2 (1 - \sigma) . \]
  Factorizing out $\sigma (1-\sigma)$ yields an expression of the form $p_{k+1}(\sigma) \sigma (1-\sigma)$, where $p_{k+1}$ is a polynomial with one higher degree.
  Since $\sigma$ is bounded, we can conclude that $\psi^{(k)}(t)$ is continuous and bounded for any $k \in \bN$.
\end{proof}
\begin{example}[The SiLU activation]
  The sigmoid linear unit (SiLU) function, $\psi(t) = t \sigma(t)$, where $\sigma(t) = 1 / (1 + e^{-t})$ is the logistic sigmoid, is in $\cA^{\infty}_b$.
\end{example}
\begin{proof}
  We can verify that $\psi(0) = 0$, and that $\psi(t)$ strictly increases for $t \geq 0$. 
  On the other hand, as $t \rightarrow -\infty$, $\psi(t) \rightarrow 0$, so it must be bounded on the interval $(-\infty, 0]$. 
  For the derivatives, we note that 
  \[ \psi'(t) = \sigma(t) + t \sigma'(t). \]
  Recall that $\sigma'(t) = \sigma(t)(1 - \sigma(t)) = (1+e^{-t})^{-1} \cdot (1+e^{t})^{-1}$, such that $t \sigma'(t)$ tends to zero as $t$ approaches $\pm \infty$. 
  Thus, $\psi'(t)$ is continuous and bounded. 
  Higher derivatives can be verified by induction. 
  The derivatives of $\sigma(t)$ are continuous and bounded, as shown in the softplus example. Furthermore, derivatives of the $t \sigma'(t)$ term can be shown to take the form $(p(\sigma) + t q(\sigma)) \sigma (1 - \sigma)$, where $p$ and $q$ are polynomials. 
  This is true for the zeroth derivative (i.e., $t \sigma'(t) = t \sigma(1-\sigma)$ is of this form). 
  Suppose the $k$th derivative is of the form 
  $(p_k(\sigma) + t q_k(\sigma)) \sigma(1 - \sigma)$. 
  Then, the $(k+1)$th derivative is given by
  \[ (p_k'(\sigma) + t q_k'(\sigma)) \sigma^2 (1-\sigma)^2 + q_k(\sigma) \sigma(1-\sigma) + (p_k(\sigma) + t q_k(\sigma))(\sigma(1-\sigma)^2 - \sigma^2(1-\sigma)).\]
  Factorizing out $\sigma(1-\sigma)$ yields an expression of the form 
  \[ (p_{k+1}(\sigma) + t q_{k+1}(\sigma) ) \sigma(1-\sigma), \]
  where $p_{k+1}$ and $q_{k+1}$ are polynomials with one higher degree.
  Recall that for any polynomial $p$, the function $p(\sigma(t))\sigma(t)(1-\sigma(t))$ is bounded. Moreover, for any polynomial $q$, the function 
  $t q(\sigma(t)) \sigma(t)(1-\sigma(t))$ approaches zero as $t \rightarrow \pm \infty$.
  Thus, higher derivatives of $\psi$ are also continuous and bounded.
\end{proof}

\begin{example}{The GeLU activation} 
  The Gaussian error linear unit (GeLU) function, $\psi(t) = t \Phi(t)$, where $\Phi(t) = \frac{1}{\sqrt{2\pi}} \int_{-\infty}^{t} e^{-x^2/2} dx$ is the cumulative distribution function of the standard Gaussian, is in $\cA^{\infty}_b$.
\end{example}
\begin{proof}
  Similar to the SiLU, we can verify that $\psi(0) = 0$, and that $\psi(t)$ strictly increases for $t \geq 0$. Moreover, as $t \rightarrow -\infty$, $\psi(t) \rightarrow 0$, so it again must be bounded on the interval $(-\infty, 0]$.
  For the first derivative, we have 
  \[ \psi'(t) = \Phi(t) + (2\pi)^{-1/2} t e^{-t^2/2}, \]
  which is bounded since $t e^{-t^2/2}$ approaches zero as $t$ approaches $\pm \infty$.
  Continuity and boundedness of higher-order derivatives can once again be verified by induction, noting that this time, they take the form $p(t) e^{-t^2/2}$, where $p(t)$ is a polynomial.

\end{proof}
% \subsection{Neural network approximation for conditional expectations}
% The universal approximation of neural networks in Sobolev norms allows us to approximate a projected 

% \section{Proofs for \Cref{sec:dibasis_approximation_error}}
\section{Proofs for the error analysis}

\subsection{Proof of the universal approximation of conditional expectations}\label{sec:proof_universal_approx_cond_exp}
% \subsection{Proof of the universal approximation of conditional expectations, \Cref{theorem:universal_approx_cond_exp}}\label{sec:proof_universal_approx_cond_exp}

\begin{proof}
  Let $\epsilon > 0$ be arbitrary, and consider first some $\eta > 0$. 
  Since $\cF\cC^{\infty}$ is dense in $H^m_{\gamma}$, we can find $\fun_{\eta} \in \cF \cC^{\infty}$ such that $\|\fun - \fun_{\eta} \|_{H^m_{\gamma}} \leq \eta$.
  This also means that $\|\Pry \bE_{\gamma}[\fun | \sigma(\pinv{\Vrx})] - \Pry \bE_{\gamma}[\fun_{\eta} | \sigma(\pinv{\Vrx})]\|_{H^m_{\gamma}} \leq \eta$.

  We then focus on approximating $\Pry \bE_{\gamma}[\fun_{\eta} | \sigma(\pinv{\Vrx})]$.
  By \Cref{lemma:interchange_differentiation_expectation_smooth}, since $\fun_{\eta} \in \cF \cC^{\infty}$, we have $\bE_{\gamma}[\fun_{\eta} | \sigma(\pinv{\Vrx})] \in C^{\infty}_{b}$.
  % since $f_{\eta}(x) = \varphi_{\eta}(l_1(x),\dots, l_n(x))$ 
  % We first note that $\Ury \pinv{\Ury} \bE_{\gamma}[\fun | \sigma(\pinv{\Vrx})](\Vrx \pinv{\Vrx}x ) = \Pry \bE_{\gamma}[\fun | \sigma(\pinv{\Vrx})](x)$. 
  We therefore look for an approximation of the function 
   $ g(s) := \pinv{\Ury} \bE_{\gamma}[\fun_{\eta} | \sigma(\pinv{\Vrx})](\Vrx s) $
  by a neural network, where $g \in C^{\infty}_{b}(\bR^{r}, \bR^{r})$.
  In particular, the universal approximation result 
  of \Cref{theorem:universal_approx_hornik_extended}
  implies that there exists a neural network $\varphi = \varphi_{\psi, \theta}$ approximating $g$ such that 
  % \[ \bE_{\gamma_r} [\|g - \varphi \|_{2}^2 + \| \deriv g - \deriv \varphi \|_{F}^2] \leq \eta^2/8, \]
  \[\|g - \varphi\|_{H^m(\gamma_r, \bR^r)} \leq \eta \]
  where $\gamma_r \sim \cN(0, I_{r})$ refers to the standard Gaussian on $\bR^{r}$. 
  % Moreover, we have
  Moreover, the $L^2_{\gamma}$ error between the projected conditional expectation and the neural operator can be expressed as
  \begin{equation*}
     \int \| \Pry \bE_{\gamma}[\fun_{\eta} | \sigma(\pinv{\Vrx})](x) - \Ury \varphi(\pinv{\Vrx} x)\|_{\cY}^2 \gamma(dx) 
     = \int \| \pinv{\Ury} \bE_{\gamma}[\fun_{\eta} | \sigma(\pinv{\Vrx})](\Vrx \pinv{\Vrx} x) - \varphi(\pinv{\Vrx} x)\|_{2}^2 \gamma(dx),
  \end{equation*}
  since $\|\Pry u\|_{\cY}^2 = \|\pinv{\Ury} u \|_{2}^2$. We can now perform a change of variable to $s = \pinv{\Vrx} x$. Since $\Vrx$ is orthonormal in $\XC$, is distributed according to $\gamma_r$, such that 
  \begin{equation*}
     \int \| \Pry \bE_{\gamma}[\fun_{\eta} | \sigma(\pinv{\Vrx})](x) - \Ury \varphi(\pinv{\Vrx} x)\|_{\cY}^2 \gamma(dx) 
     = \int \| \pinv{\Ury} \bE_{\gamma}[\fun_{\eta} | \sigma(\pinv{\Vrx})](\Vrx s) - \varphi(s)\|_{2}^2 \gamma_r (ds) 
  \end{equation*}
  Moreover, since we have defined $g = \pinv{\Ury} \bE_{\gamma}[\fun_{\eta} | \sigma(\pinv{\Vrx})](\Vrx s)$, we have
  \begin{equation*}
     \int \| \pinv{\Ury} \bE_{\gamma}[\fun_{\eta} | \sigma(\pinv{\Vrx})](\Vrx s) - \varphi(s)\|_{2}^2 \gamma_r (ds) 
    %  \int \| \Pry \bE_{\gamma}[\fun | \sigma(\pinv{\Vrx})](x) - \Ury \varphi(\pinv{\Vrx} x)\|_{\cY}^2 \gamma(dx) 
     = \int \| g(s) - \varphi(s)\|_{2}^2 \gamma_r (ds).
  \end{equation*}

  % \begin{align*}
  %    & \int \| \Pry \bE_{\gamma}[\fun | \sigma(\pinv{\Vrx})](x) - \Ury \varphi(\pinv{\Vrx} x)\|_{\cY}^2 \gamma(dx)  \\
  %    & \qquad = \int \| \pinv{\Ury} \bE_{\gamma}[\fun | \sigma(\pinv{\Vrx})](\Vrx \pinv{\Vrx} x) - \varphi(\pinv{\Vrx} x)\|_{2}^2 \gamma(dx)  \\
  %    & \qquad = \int \| \pinv{\Ury} \bE_{\gamma}[\fun | \sigma(\pinv{\Vrx})](\Vrx s) - \varphi(s)\|_{2}^2 \gamma_r (ds) \\
  %    & \qquad = \int \| g(s) - \varphi(s)\|_{2}^2 \gamma_r (ds) \\
  %    & \qquad \leq \eta^2.
  % \end{align*}

% \begin{alignat*}{2}
%     & \int \| \Pry \bE_{\gamma}[\fun | \sigma(\pinv{\Vrx})](x) - \Ury \varphi(\pinv{\Vrx} x)\|_{\cY}^2 \gamma(dx)  & \\
%     & \qquad = \int \| \pinv{\Ury} \bE_{\gamma}[\fun | \sigma(\pinv{\Vrx})](\Vrx \pinv{\Vrx} x) - \varphi(\pinv{\Vrx} x)\|_{2}^2 \gamma(dx) 
%       & \qquad (\text{since } \|\Pry u\|_{\cY}^2 = \|\pinv{\Ury} u \|_{2}^2) \\
%     & \qquad = \int \| \pinv{\Ury} \bE_{\gamma}[\fun | \sigma(\pinv{\Vrx})](\Vrx s) - \varphi(s)\|_{2}^2 \gamma_r (ds) 
%       & (\text{change of variable to } s = \Vrx y \sim \gamma_r) \\
%     & \qquad = \int \| g(s) - \varphi(s)\|_{2}^2 \gamma_r (ds) & \\
%     & \qquad \leq \eta^2.
% \end{alignat*}

Similarly, for $k$th derivative, 
\begin{align*}
  & \int \| \deriv^k \Pry (\bE_{\gamma}[\fun_{\eta} | \sigma(\pinv{\Vrx})](x)) - \deriv^k (\Ury \varphi(\pinv{\Vrx} x))\|_{\HS_k(\XC,\cY)}^2 \gamma(dx)  \\
  & \quad = \int \| \Pry \bE_{\gamma}[\deriv^k \fun_{\eta} | \sigma(\pinv{\Vrx})](\Qrx x) (\Qrx \cdot, \dots, \Qrx \cdot) 
    - \Ury \deriv^k \varphi(\pinv{\Vrx} x) (\pinv{\Vrx} \cdot, \dots, \pinv{\Vrx} \cdot) \|_{\HS_k(\XC, \cY)}^2 \gamma(dx) 
\end{align*}
where we have interchanged differentiation and conditional expectation is given by 
\Cref{lemma:interchange_differentiation_conditional_expectation} in \Cref{sec:interchange_differentiation_integration}.
We then follow the same approach, projecting the norm into the reduced subspace and then performing the change of variables $s = \pinv{\Vrx} x$ to obtain a similar bound for the derivatives,
\begin{align*}
  & \int \| \deriv^k \Pry (\bE_{\gamma}[\fun_{\eta} | \sigma(\pinv{\Vrx})](x)) - \deriv^k (\Ury \varphi(\pinv{\Vrx} x))\|_{\HS_k(\XC,\cY)}^2 \gamma(dx)  \\
  & \quad = \int \| \pinv{\Ury} \bE_{\gamma}[\deriv^k \fun_{\eta} | \sigma(\pinv{\Vrx})](\Vrx \pinv{\Vrx} x) (\Vrx \cdot, \dots, \Vrx \cdot) 
    - \deriv^k \varphi(\pinv{\Vrx} x) \|_{F_k}^2 \gamma(dx)  \\
  % & \quad = \int \| \pinv{\Ury} \bE_{\gamma}[\deriv^k \fun_{\eta} | \sigma(\pinv{\Vrx})](\Vrx s) \Vrx - \deriv \varphi(s) \|_{F}^2 \gamma_r (ds)  \\
  & \quad = \int \| \pinv{\Ury} \bE_{\gamma}[\deriv^k \fun_{\eta} | \sigma(\pinv{\Vrx})](\Vrx s) (\Vrx \cdot, \dots, \Vrx \cdot) - \deriv^k \varphi(s) \|_{F_k}^2 \gamma_r (ds)  \\
  & \quad = \int \| \deriv^k g(x) - \deriv^k \varphi(s) \|_{F_k}^2 \gamma_r (ds).
  %  & = \int \| \pinv{\Ury} \bE_{\gamma}[\fun | \sigma(\pinv{\Vrx})](\Vrx s) - \varphi(s)\|_{2}^2 \gamma_r (ds) \\
  %  & = \int \| g(s) - \varphi(s)\|_{2}^2 \gamma_r (ds) \\
  %  & \leq (\eta/4)^2.
\end{align*}
where we have used $\|\cdot\|_{F_k} = \| \cdot \|_{\HS_k(\bR^r, \bR^r)}$ to denote the tensor Frobenius norms on $\bR^r$, 
noting that $\|\cdot\|_{F_1} = \|\cdot\|_{F}$ is the standard matrix Frobenius norm.

  Finally, since the Fr\'echet derivatives coincide with the weak derivatives, we have 
  \[ \| \Pry \bE_{\gamma}[\fun | \sigma(\pinv{\Vrx})] - \Ury \circ \varphi \circ \pinv{\Vrx} \|_{H^m_{\gamma}} 
  = \|g - \varphi \|_{H^m(\gamma_r, \bR^{r})} \leq \eta.
  \]
  Combined with the fact that $\|\Pry \bE_{\gamma}[\fun | \sigma(\pinv{\Vrx})] - \Pry \bE_{\gamma}[\fun_{\eta} | \sigma(\pinv{\Vrx})]\|_{H^m_{\gamma}} \leq \eta$, it suffices to choose $\eta = \epsilon /2$.
  % \begin{align*}
    % & \| \Pry \bE_{\gamma}[\fun | \sigma(\pinv{\Vrx})] - \Ury \circ \varphi \circ \pinv{\Vrx} \|_{H^m_{\gamma}} \leq (k + 1) \epsilon \\
    % & \qquad \leq \|\Pry \bE_{\gamma}[\fun | \sigma(\pinv{\Vrx})] - \Pry \bE_{\gamma}[\fun_{\epsilon} | \sigma(\pinv{\Vrx})] \|_{H^1_{\gamma}} 
    %     + \|\Pry \bE_{\gamma}[\fun_{\epsilon} | \sigma(\pinv{\Vrx})] - \Ury \circ \varphi \circ \pinv{\Vrx}\|_{H^m_{\gamma}} \\
    % & \qquad \leq C \epsilon.
  % \end{align*}
\end{proof}

\subsection{Proof of the generic universal approximation} \label{sec:proof_naive_universap_approximation}

The universal approximation of projected conditional expectations in $H^m_{\gamma}$ can be used to obtain the generic universal approximation result of \Cref{theorem:naive_universal_approximation} as follows.
\begin{proof}
  Let $\epsilon$ be arbitrary. Given $\fun \in H^m_{\gamma}$, we have 
  \[ 
    \|\fun\|_{L^2_{\gamma}}^2 
    = \bE_{\gamma} \left[ \sum_{i=1}^{\infty} | \linner u_i, \fun \rinner_{\cY} |^2 \right]
    = \sum_{i=1}^{\infty} \bE_{\gamma} \left[ | \linner u_i, \fun \rinner_{\cY} |^2 \right]  
    < \infty
  \]
  where the interchange of sum and expectation is possible since all terms are non-negative.
  Moreover, for any $r > 0$, we have 
  \[ 
    \|\fun - \Pry \fun \|_{L^2_{\gamma}}^2 
    = \bE_{\gamma} \left[ \sum_{i=r+1}^{\infty} | \linner u_i, \fun \rinner_{\cY} |^2 \right]  
    = \sum_{i=r+1}^{\infty} \bE_{\gamma} \left[ | \linner u_i, \fun \rinner_{\cY} |^2 \right].
  \]
  Thus, $\|\fun - \Pry \fun \|_{L^2_{\gamma}}$ converges monotonically to zero as $r \rightarrow \infty$. 
  Analogously, for any $k \leq m$, 
  \begin{equation} \label{eq:output_projection_error_arb}
    \|\derivXC^k \fun\|_{L^2(\gamma, \HS_k(\XC, \cY))}^2 
    % = \bE_{\gamma}[ \sum_{i_1, \dots, i_k}^{\infty} \|\deriv^k \fun(w_{i_1}, \dots, w_{i_k})^2]  
    = \sum_{j=1}^{\infty} \sum_{i_1, \dots, i_k}^{\infty} \bE_{\gamma} \left[ \left| \linner u_j, \derivXC^k \fun(w_{i_1}, \dots, w_{i_k}) \rinner_{\cY} \right|^2 \right]
    < \infty,
  \end{equation}
  such that 
  \begin{equation} \label{eq:output_projection_error_arb_derivatives}
    \|\derivXC^k \fun - \Pry \derivXC^k \fun\|_{L^2(\gamma, \HS_k(\XC, \cY))}^2 
    = \sum_{j=r+1}^{\infty} \sum_{i_1, \dots, i_k}^{\infty} \bE_{\gamma} \left[ \left| \linner u_i, \derivXC^k \fun(w_{i_1}, \dots, w_{i_k}) \rinner_{\cY} \right|^2 \right]
    < \infty,
  \end{equation}
  also converges monotonically to zero. 
  This allows us to find some $r_{\cY}$ 
  such that $\|\fun - P_{r_{\cY}} \fun \|_{H^m_{\gamma}} \leq \epsilon/3$.
  We then consider an input rank $r_{\cX} \geq r_{\cY}$, 
  such that 
  the conditional expectation $\bE_{\gamma}[\fun | \sigma(V_{r_{\cX}})]$ also satisfies
  $\| \fun - \bE_{\gamma}[\fun | \sigma(V_{r_{\cX}})] \|_{H^m_{\gamma}} \leq \epsilon/3$. 
  This is possible since the conditional expectations converge to $\fun$ as $r \rightarrow \infty$ by Theorem 5.4.5 of \cite{Bogachev98}.
  We can now increase the output rank to $r_{\cX}$ in order to make the input and output ranks equal, i.e., $r = r_{\cX} \geq r_{\cY}$. 
  Note that 
  \[ 
  \|\fun - \Pry \fun \|_{H^m_{\gamma}} \leq  \| \fun - P_{r_{\cY}} \fun \|_{H^m_{\gamma}} \leq \epsilon/3
  \] 
  due to \eqref{eq:output_projection_error_arb} and \eqref{eq:output_projection_error_arb_derivatives} and 
  \[ 
  \| \Pry \fun - \bE_{\gamma}[\Pry \fun | \sigma(\pinv{\Vrx})] \|_{H^m_{\gamma}}
  \leq \| \fun - \bE_{\gamma}[\fun | \sigma(\pinv{\Vrx})] \|_{H^m_{\gamma}} \leq \epsilon/3,
  \] 
  since $\|\Pry\|_{\cL(\cY, \cY)} = 1$. 

  \Cref{theorem:universal_approx_cond_exp} then implies the existence of a neural network $\varphi_{\psi, \theta}$ such that the neural operator 
  $ \ftilde_{\theta} = \Ury \circ \varphi_{\psi, \theta} \circ \pinv{\Vrx}$ 
  approximates the projected conditional expectation to error 
  \[ \| \bE_{\gamma}[\Pry \fun | \sigma(\pinv{\Vrx})] - \ftilde_{\theta} \|_{H^m_{\gamma}} \leq \epsilon/3. 
  \]
  Thus, combining the error terms in a triangle inequality yields the desired error bound,
  \begin{align*}
    \| \fun - \ftilde_{\theta} \|_{H^m_{\gamma}} \leq 
    \| \fun - \Pry \fun \|_{H^m_{\gamma}} 
    + \| \Pry \fun - \bE_{\gamma}[\Pry \fun | \sigma(\pinv{\Vrx})] \|_{H^m_{\gamma}}
    + \| \bE_{\gamma}[\Pry \fun | \sigma(\pinv{\Vrx})] - \ftilde_{\theta} \|_{H^m_{\gamma}} 
    \leq \epsilon.
  \end{align*}
\end{proof}

\subsection{Proofs for the error decompositions} \label{sec:proof_overall_error}
% \subsubsection{Proof of Proposition 16}
% \subsubsection{Proof of \Cref{prop:overall_error}}\label{sec:proof_overall_error}
We begin by proving \Cref{prop:overall_error}.
\begin{proof}(of \Cref{prop:overall_error})
  The results follow from decomposing each term into its component within the projection subspace and its orthogonal complement. 
  % We first note that
  % For the term $\fun - \fbar$, we have
  We begin with the $L^2_{\gamma}$ error. For $\ftilde = \Ury \circ \varphi \circ \pinv{\Vrx} + \fhat$, we can write the misfit $\|\fun(x) - \ftilde(x)\|_{\cY}^2$ as
  % \[ \| \fun(x) - \ftilde \|_{\cY}^2 = \|\Pry(\fun(x) - \fhat) - \Ury \varphi (\pinv{\Vrx} x )\|_{\cY}^2 + \|(I - \Pry)(\fun(x) - \fhat)\|_{\cY}^2, \]
  % where the first term can be rewritten as $\|\Pry(\fun(x) - \Ury\varphi(\pinv{\Vrx}x) - \fhat)\|_{\cY}^2 = \|\Pry(\fun(x) - \ftilde(f))\|_{\cY}^2$
  \[ \| \fun(x) - \ftilde(x) \|_{\cY}^2 = \|\Pry(\fun(x) - \ftilde(x))\|_{\cY}^2 + \|(I - \Pry)(\fun(x) - \fhat)\|_{\cY}^2, \]
  since $(I - \Pry) \Ury = 0$.
  Moreover, using the decomposition
  $\fun - \fhat = \fun - \fbar + \fbar - \fhat$, we have
  \[ \|(I - \Pry)(\fun - \fhat) \|_{\cY}^2 
  = \|(I - \Pry)(\fun - \fbar) \|_{\cY}^2 
  + 2 \linner (I - \Pry)(\fun - \fbar), (\fbar - \fhat) \rinner_{\cY} + \|(I - \Pry)(\fbar - \fhat) \|_{\cY}^2.
  \]
  Taking the expectation over $\gamma$ removes the cross term, leaving the desired result.

  For the derivative error, we have $\deriv \ftilde(x) = \Ury \deriv \varphi (\pinv{\Vrx} x) \pinv{\Vrx}$. 
  Thus, using $(v_i)_{i=1}^{\infty}$ as the basis for evaluating the Hilbert--Schmidt norm, we have
  \begin{align*}
    \|\derivXC \fun(x) - \derivXC \ftilde (x)& \|_{\HS(\XC, \cY)}^2 = \sum_{i=1}^{\infty} 
      \| (\derivXC \fun(x) - \derivXC \ftilde(x)) v_i \|_{\cY}^2   \\
    & = \sum_{i=1}^{\rx} \| (\derivXC \fun(x) - \derivXC \ftilde(x)) v_i \|_{\cY}^2 
      + \sum_{i = \rx + 1}^{\infty} \| (\derivXC \fun(x) - \derivXC \ftilde(x) )v_i \|_{\cY}^2 \\
    & = \| ( \derivXC \fun(x) - \derivXC \ftilde(x) ) \Qrx \|_{\HS(\XC, \cY)}^2 + \| \derivXC \fun(x) (I - \Qrx) \|_{\HS(\XC, \cY)}^2,
  \end{align*}
  since $\derivXC \ftilde(x) (I -\Qrx) = 0$. 
  We further decompose the first term into
  \begin{align*} 
    \| ( \derivXC \fun(x) - \derivXC \ftilde(x) ) \Qrx \|_{\HS(\XC, \cY)}^2 &= 
        \| \Pry(\derivXC \fun(x) - \derivXC \ftilde(x)) \Qrx \|_{\HS(\XC, \cY)}^2  \\
        & + \| (I - \Pry) \derivXC \fun(x) \Qrx \|_{\HS(\XC, \cY)}^2 .
  \end{align*}
  Finally, we note that 
  \[ \|(I - \Pry) \derivXC \fun(x) \Qrx \|_{\HS(\XC, \cY)}^2 \leq \|(I - \Pry) \derivXC \fun(x) \|_{\HS(\XC, \cY)}^2. \]
  Combining the individual components and taking the expectation yields the overall bound.

  % We can also write the terms involving $\ftilde$ in the latent space as 

  To write the terms involving $\ftilde$ in the latent spaces, we make use of Parseval's identity with the basis $(u_i)_{i=1}^{\infty}$.
  For the function value, we have 
  \[ \|\Pry(\fun - \ftilde) \|_{\cY}^2 = \|\Ury \pinv{\Ury}(\fun - \Ury \varphi(\pinv{\Vrx} x ) - \fhat) \|_{\cY}^2  
    = \| \pinv{\Ury}(\fun - \fhat) - \varphi(\pinv{\Vrx}x) \|_{2}^2. \] 
  For the derivative, we additionally use $(v_i)_{i=1}^{\infty}$ as the basis for the $\HS(\XC, \cY)$ norm, giving
  % Furthermore, using the fact that $(v_i)_{i > \rx}^{\dimx} \in N(\Qrx)$, we have 
  \begin{align*} 
  \| \Pry (\derivXC \fun(x) - \derivXC \ftilde )\Qrx \|_{\HS(\XC, \cY)}^2 
    % &= \sum_{i=1}^{\dimx} \| \Ury (\Ury^* \deriv \fun(x) \Vrx - \deriv \varphi(\Vrx^* x)) \Vrx^* v_i \|_{\cY} \\
    % &= \sum_{i=1}^{\rx} \| (\Ury^* \deriv \fun(x) \Vrx - \deriv \varphi(\Vrx^* x)) \Vrx^* e_i \|_{2} \\
    & = \| \pinv{\Ury}\derivXC \fun(x) \Vrx - \deriv \varphi (\pinv{\Vrx} x)\|_{F}^2.
  \end{align*}
  % where $e_i = (0, \dots, 1, \dots, 0)$ denotes the $i$th standard basis vector on $\mathbb{R}^{\rx}$. 
\end{proof}

% \subsubsection{Proof of \Cref{theorem:error_decomposition_with_conditional_expectation}}
% \label{sec:proof_error_decomposition_with_conditional_expectation}

\Cref{theorem:error_decomposition_with_conditional_expectation} then follows as a result of the error decomposition in \Cref{prop:overall_error}
along with the universal approximation of conditional expectations in \Cref{theorem:universal_approx_cond_exp}.
\begin{proof}(of \Cref{theorem:error_decomposition_with_conditional_expectation})
  By \Cref{prop:overall_error}, we can decompose the approximation error into
  \[\|\fun - \ftilde_{\theta}\|_{L^2_{\gamma}}^2 \leq \|\Pry(\fun - \ftilde_{\theta})\|_{L^2_{\gamma}}^2 + \|(I - \Pry)(\fun - \fbar)\|_{L^2_{\gamma}} + \|\fbar - \fhat\|_{\cY}^2,\]
  and 
  \begin{align*}
    |\fun - \ftilde_{\theta}|_{H^1_{\gamma}}^2 \leq & \|\Pry(\derivXC \fun - \derivXC \ftilde_{\theta})\Qrx \|_{L^2(\gamma, \HS(\XC, \cY))}^2 \\
    & + \|(I - \Pry)\derivXC \fun\|_{L^2(\gamma, \HS(\XC,\cY))}^2
    + \|\derivXC \fun (I - \Qrx)\|_{L^2(\gamma, \HS(\XC,\cY))}^2.
  \end{align*}
  We first make use of the conditional expectation $\ftilde_r = \bE_{\gamma}[\fun | \sigma(\pinv{\Vrx})]$ as an intermediate to the approximation $\fun$. The triangle inequality, along with the inequality $(|a| + |b|)^2 \leq 2|a|^2 + 2|b|^2$ yields
  \begin{align*}
    \| \Pry (\fun - \ftilde_{\theta}) \|_{L^2_{\gamma}}^2
    \leq
      2 \| \Pry (\fun - \ftilde_r) \|_{L^2_{\gamma}}^2 + 
      2 \| \Pry (\ftilde_r - \ftilde_{\theta}) \|_{L^2_{\gamma}}^2
  \end{align*}
  and
  \begin{align*}
    \| \Pry (\derivXC \fun - \derivXC \ftilde_{\theta}) \Qrx \|_{L^2(\gamma, \HS(\XC, \cY))}^2
    & \leq 
    2\| \Pry (\derivXC \fun - \derivXC \ftilde_r) \Qrx \|_{L^2(\gamma, \HS(\XC, \cY))}^2 \\
    & \qquad + 
    2\| \Pry (\derivXC \ftilde_r - \derivXC \ftilde_{\theta}) \Qrx \|_{L^2_{\gamma}(\gamma, \HS(\XC, \cY))}^2.
  \end{align*}
  By \Cref{theorem:universal_approx_cond_exp}, there exists neural network $\varphi_{\psi, \theta}$ 
  such that $\Ury \circ \varphi_{\psi, \theta} \circ \pinv{\Vrx}$ approximates 
  $\Pry (\ftilde_r - \fhat) = \bE_{\gamma}[\Pry(\fun - \fhat) | \sigma(\pinv{\Vrx})]$
  in $\|\cdot\|_{H^1_{\gamma}}^2$ to error $\epsilon_{\theta}/2$.
  This means that $\ftilde_{\theta} = \Ury \circ \varphi_{\psi, \theta} \circ \pinv{\Vrx} + \fhat$ satisfies the desired error bounds.
\end{proof}

\subsection{Proof of the subspace Poincar\'e inequality} \label{sec:proof_subspace_poincare}
% \subsection{Proof of the subspace Poincar\'e inequality, \Cref{theorem:subspace_poincare}} \label{sec:proof_subspace_poincare}

We begin by stating the Poincar\'e inequality, which holds for Gaussian measures on separable Hilbert spaces \cite{Bogachev98}.
\begin{theorem}[Poincar\'e inequality for Gaussian measures]\label{theorem:poincare}
  Suppose $\fun \in H^1_{\gamma}$. Then, 
  \begin{equation}\label{eq:poincare}
    \bE_{\gamma}[\|\fun - \fbar \|_{\cY}^2] \leq 
    \bE_{\gamma}[\|\derivXC \fun\|_{\HS(\XC, \cY)}^2],
  \end{equation}
  where $\fbar = \bE_{\gamma}[\fun]$ is the mean of $\fun$.
\end{theorem}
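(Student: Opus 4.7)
The plan is to prove this classical Gaussian Poincaré inequality by first reducing to scalar-valued components, then to smooth cylindrical functions, and finally to the finite-dimensional Gaussian case where the inequality follows from the Hermite polynomial expansion already exploited in \Cref{sec:example_polynomial_forms}.

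First, I would fix an orthonormal basis $(u_j)_{j=1}^\infty$ of $\cY$ and decompose $\fun$ componentwise by setting $f_j(x) = \linner \fun(x), u_j \rinner_{\cY}$. Each $f_j$ lies in $H^1(\gamma, \bR)$ with mean $\bar{f}_j = \linner \fbar, u_j \rinner_{\cY}$ and Sobolev derivative $\derivXC f_j = \linner \derivXC \fun(\cdot), u_j \rinner_{\cY} \in L^2(\gamma, \XC^*)$. Using Parseval's identity on both sides (the $\cY$-norm on the left and the Hilbert--Schmidt norm on the right expanded over $(u_j)_{j=1}^\infty$), together with Fubini/Tonelli to interchange the sum and the expectation (all terms are nonnegative), it suffices to prove the scalar inequality $\bE_{\gamma}[|f_j - \bar{f}_j|^2] \leq \bE_{\gamma}[\|\derivXC f_j\|_{\XC^*}^2]$ for each $j$ and sum.

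Next, for a scalar $f \in H^1(\gamma, \bR)$, I would use the density of $\cF\cC^{\infty}$ in $H^1(\gamma, \bR)$. Both sides of the inequality are continuous in the $H^1(\gamma)$ norm, so it is enough to verify it for a smooth cylindrical $f(x) = \phi(\linner w_1, x \rinner_{\cX}, \dots, \linner w_n, x \rinner_{\cX})$ with $\phi \in C^\infty_b(\bR^n, \bR)$. By a Gram--Schmidt orthonormalization in $\XC$ (and using the fact that the coordinates $\xi_i = \linner \Cx^{-1} \tilde w_i, x \rinner_{\cX}$ for $\tilde w_i$ orthonormal in $\XC$ are i.i.d.\ standard Gaussians), one may assume $\xi \sim \cN(0, I_n)$ in $\bR^n$. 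Then the computation reduces exactly to the finite-dimensional Gaussian Poincaré inequality $\bE[|\phi - \bar\phi|^2] \leq \bE[\|\nabla \phi\|_2^2]$, which follows by expanding $\phi - \bar\phi = \sum_{|\alpha|\geq 1} c_\alpha H_\alpha$ in multivariate Hermite polynomials and applying \eqref{eq:derivative_inverse_hermite}: since $\|\nabla \phi\|_{L^2(\gamma_n)}^2 = \sum_{\alpha} |\alpha|\, c_\alpha^2$ and $\|\phi - \bar\phi\|_{L^2(\gamma_n)}^2 = \sum_{|\alpha|\geq 1} c_\alpha^2$, termwise one has $|\alpha|\, c_\alpha^2 \geq c_\alpha^2$ for $|\alpha|\geq 1$.

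The main obstacle is a careful bookkeeping of the approximation and interchange steps rather than any deep analytical difficulty. Specifically, one must justify (i) that the componentwise Sobolev derivatives $\linner \derivXC \fun, u_j \rinner_{\cY}$ are indeed the Sobolev derivatives of the scalar components $f_j$ (which follows from the linearity of Sobolev differentiation and continuity of the scalar projection $\linner \cdot, u_j \rinner_{\cY}$), and (ii) that the scalar Poincaré inequality passes to the $H^1(\gamma)$-limit from smooth cylindrical approximants, for which one uses that $f_n \to f$ in $L^2(\gamma)$ implies $\bar f_n \to \bar f$ and hence $\bE[|f_n - \bar f_n|^2] \to \bE[|f - \bar f|^2]$, while $\derivXC f_n \to \derivXC f$ in $L^2(\gamma, \XC^*)$ gives convergence on the right. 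The result is essentially Theorem 5.5.6 of \cite{Bogachev98} adapted to the Hilbert-valued setting; the proof sketched above makes that adaptation explicit.
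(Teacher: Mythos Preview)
The paper does not prove this theorem: it simply states the Poincar\'e inequality and cites \cite{Bogachev98} as the source (``We begin by stating the Poincar\'e inequality, which holds for Gaussian measures on separable Hilbert spaces \cite{Bogachev98}''). There is therefore no paper proof to compare your proposal against.

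Your proposal is a correct and standard proof. The reduction to scalar components via an orthonormal basis of $\cY$ and Parseval, the density of $\cF\cC^\infty$ in $H^1(\gamma,\bR)$, and the finite-dimensional Hermite computation (using precisely \eqref{eq:derivative_inverse_hermite}) are all valid. One small point to tighten: in the Gram--Schmidt step you write $\xi_i = \linner \Cx^{-1}\tilde w_i, x\rinner_{\cX}$, which presumes $\Cx^{-1}\tilde w_i \in \cX$. Since the cylindrical function is defined via $\linner v_j, x\rinner_{\cX}$ with $v_j \in \cX$, it is cleaner to observe directly that $(\linner v_1, x\rinner_{\cX}, \dots, \linner v_n, x\rinner_{\cX})$ is a centered Gaussian vector with covariance $\Sigma_{ij} = \linner v_i, \Cx v_j\rinner_{\cX}$, diagonalize $\Sigma$ to obtain i.i.d.\ standard normal coordinates, and check that under this linear change of variables the Euclidean gradient norm $\|\nabla\phi\|_2^2$ corresponds exactly to $\|\derivXC f\|_{\XC}^2$. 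This avoids any unbounded $\Cx^{-1}$ and is how the argument in Bogachev proceeds.
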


The subspace Poincar\'e inequality then follows from the Poincar\'e inequality itself.
\begin{proof}(Of \Cref{theorem:subspace_poincare})
  The procedure is almost identical to that of \cite{ZahmConstantinePrieurEtAl20},
  with justifications to ensure the operations are valid in the separable Hilbert space setting.
  We first consider the case where $\fun \in \cF \cC^{\infty}$, 
  for which we can define
  $\gfun(x,y) := \fun(\Qrx x + (I - \Qrx) y)$, such that 
  $\bE_{y \sim \gamma}[\gfun(x,y)] = \bE_{\gamma}[\fun | \sigma(\pinv{\Vrx})](x)$.
  Since $\Qrx$ and $I - \Qrx$ are continuous linear operators, we have 
  \[\partial_{y} \gfun(x,y) = \deriv \fun(\Qrx x + (I - \Qrx) y) (I - \Qrx). \]
  Moreover, since $\Qrx x + (I - \Qrx)y \sim \gamma$ whenever $(x, y) \sim \gamma \otimes \gamma$, we know
  \[ \iint \|\gfun(x,y)\|_{\cY}^2 \gamma(dy) \gamma(dx) 
  = \iint \|\fun(\Qrx x + (I - \Qrx) y)\|_{\cY}^2  \gamma(dy) \gamma(dx) 
  = \|\fun\|_{L^2_{\gamma}}^2
  < \infty. \]
  An analogous calculation can be made for $\partial_y \gfun$. 
  Therefore, we have
  $ \int |\gfun(x,y)|^2 \gamma(dy) < \infty $
  and $\int \|\partial_y\gfun(x,y) \|_{\HS(\XC, \cY)}^2 \gamma(dy) < \infty$ 
  for almost every $x$.
  This suggests that $\gfun(x, \cdot) \in H^1_{\gamma}$ for almost every $x$, 
  with a Sobolev derivative equal to $\partial_{y} \gfun(x, \cdot)(I - \Qrx)$. 
  Applying the Poincar\'e inequality on separable Hilbert spaces gives
  \[ \int \| \gfun(x,y) - \bE_{y \sim \gamma}[\gfun(x,y)] \|_{\cY}^2 \gamma(dy) 
    \leq \int \| \partial_y \gfun(x,y) \|_{\HS(\XC, \cY)}^2 \gamma(dy) \]
  for almost every $x$.
  Integrating again over $x \sim \gamma$ gives
  \begin{align*} 
    \iint \| \fun(\Qrx x + (I - \Qrx)y) &- \bE_{\gamma}[\fun | \sigma (\pinv{\Vrx})](x) \|_{\cY}^2 
      \gamma(dy) \gamma(dx) \\
    & \leq \iint \| \deriv \fun(\Qrx x + (I - \Qrx) y) (I - \Qrx) \|_{\HS(\XC, \cY)}^2 \gamma(dy) \gamma(dx).
  \end{align*}
  Since 
  $\bE_{\gamma}[\fun | \sigma(\pinv{\Vrx})](x) = \bE_{\gamma}[\fun | \sigma (\pinv{\Vrx})](\Qrx x + (I - \Qrx)y)$
  and $\Qrx x + (I - \Qrx)y \sim \gamma$, we obtain the inequality \eqref{eq:subspace_poincare} for $\fun \in \cF \cC^{\infty}$. 

  We use a density argument to extend the result to arbitrary $\fun \in H^1_{\gamma}$.
  We first note that \eqref{eq:subspace_poincare} is equivalently written as 
  \[
    \|\fun - \bE_{\gamma}[\fun | \sigma(\pinv{\Vrx})]\|_{L^2_{\gamma}}
    \leq \| \derivXC \fun (I - \Qrx)\|_{L^2(\gamma, \HS(\XC, \cY))}.
  \]
  From here, consider an approximating sequence $\fun_n \in \cF \cC^{\infty}$ converging to $\fun$ in $H^1_{\gamma}$.
  An application of the triangle inequality gives
  \[
    \|\fun - \bE_{\gamma}[\fun | \sigma(\pinv{\Vrx})]\|_{L^2_{\gamma}}
    \leq \|\fun_n - \bE_{\gamma}[\fun_n | \sigma(\pinv{\Vrx})]\|_{L^2_{\gamma}} + \|\fun - \fun_n \|_{L^2_{\gamma}} + \|\bE_{\gamma}[\fun - \fun_n | \sigma(\pinv{\Vrx})] \|_{L^2_{\gamma}}.
  \]
  Applying the subspace Poincar\'e inequality for $\fun_n$ yields 
  \[
    \|\fun - \bE_{\gamma}[\fun | \sigma(\pinv{\Vrx})]\|_{L^2_{\gamma}}
    \leq \|\derivXC \fun_n (I - \Qrx)\|_{L^2(\gamma, \HS(\XC, \cY))} +  \|\fun - \fun_n \|_{L^2_{\gamma}} + \|\bE_{\gamma}[\fun - \fun_n | \sigma(\pinv{\Vrx})] \|_{L^2_{\gamma}}.
  \]
  Another application of the triangle inequality yields 
  \begin{align*}
    \|\fun - \bE_{\gamma}[\fun | \sigma(\pinv{\Vrx})]\|_{L^2_{\gamma}}
    \leq & \|\derivXC \fun (I - \Qrx)\|_{L^2(\gamma, \HS(\XC, \cY))} +
      \|(\derivXC \fun_n - \derivXC \fun) (I - \Qrx)\|_{L^2(\gamma, \HS(\XC, \cY))} \\
    & +\|\fun - \fun_n \|_{L^2_{\gamma}} + \|\bE_{\gamma}[\fun - \fun_n | \sigma(\pinv{\Vrx})] \|_{L^2_{\gamma}}. 
  \end{align*}
  Since $\fun_n \rightarrow \fun$ in $H^1_{\gamma}$, the latter three terms in involving the difference of $\fun_n$ and $\fun$ can be made arbitrarily small (see also \Cref{cor:conditional_expectation_preserves_norm}). This leads to the subspace Poincar\'e inequality for general $\fun \in H^1_{\gamma}$.
  % \dc{We might be able to drop the $C^1$ assumption, i.e., for all $H^1_{\gamma}$, by a density argument}
\end{proof}

\subsection{Proofs for the sampling error bounds}\label{sec:proof_sampling_error_bounds}  

% \subsubsection{Proofs for the Monte Carlo errors, \Cref{lemma:monte_carlo_error}, \Cref{lemma:fourth_moment_error}}\label{sec:proof_monte_carlo_error_bounds}
\subsubsection{Proofs for the Monte Carlo errors} \label{sec:proof_monte_carlo_error_bounds}

Here, we present the proofs in the analyses of the sampling errors in \Cref{sec:sampling_error}.
We begin with the proofs for the Monte Carlo error bounds, \Cref{lemma:monte_carlo_error} and \Cref{lemma:fourth_moment_error}.
The proof of \Cref{lemma:monte_carlo_error} is reproduced below from \cite{BhattacharyaHosseiniKovachkiEtAl21}.

\begin{proof}(of \Cref{lemma:monte_carlo_error})
    Given the assumption that the second moment is bounded, we have 
    \begin{align*}
      \bE_N \left[\|\hat{z} - \bar{z}\|_{\cH}^2 \right]
        &= \bE_N \left[\linner \frac{1}{N}\sum_{k=1}^{N} z_k - \bar{z}, \frac{1}{N}\sum_{k=1}^{N} z_k - \bar{z} \rinner_{\cH} \right] \\
        &= \frac{1}{N} \bE_{z \sim \nu} [\|z\|_{\cH}^2] + \frac{N^2 - N}{N^2} \linner \bar{z}, \bar{z} \rinner_{\cH}
        - 2 \linner \bar{z}, \bar{z} \rinner_{\cH} + \linner \bar{z}, \bar{z} \rinner_{\cH} \\
        &= \frac{1}{N} \left( \bE_{z \sim \nu}\left[\|z\|_{\cH}^2 - \|\bar{z}\|_{\cH}^2 \right] \right) \\ 
        &= \frac{1}{N} \left( \bE_{z \sim \nu}\left[\|z - \bar{z}\|_{\cH}^2 \right] \right),
    \end{align*}
    where we have made use of the independence of $z_i, z_j$ for $i \neq j$. 
  \end{proof}

  We then proceed with the proof for the error in the fourth moments.
  \begin{proof}(of \Cref{lemma:fourth_moment_error})
    Assume first that $z$ is a centered random variable, i.e., $\bar{z} = 0$. 
    Then, 
    \[
      \bE_N [\|\hat{z} - \bar{z}\|_{\cH}^4] 
        = \frac{1}{N^4} \bE_{N} \left[  \linner \sum_{i=1}^{N} z_i , \sum_{j=1}^{N} z_j \rinner_{\cH} \linner \sum_{k=1}^{N} z_k , \sum_{l=1}^{N} z_l \rinner_{\cH} \right]
        = \frac{1}{N^4} \sum_{i,j,k,l=1}^{N} \bE_{N} \left[  \linner z_i , z_j \rinner_{\cH} \linner z_k , z_l \rinner_{\cH} \right].
    \]
    For $(z_i)_{i=1}^{N}$ that are i.i.d., the expectation is zero whenever a single index differs from the remaining three, e.g., $i \neq j, k, l$, since 
    \[
      \bE_{N} \left[  \linner z_i , z_j \rinner_{\cH} \linner z_k , z_l \rinner_{\cH} \right] 
      = \linner \bE_{N}[z_i], \bE_{N}[ z_j \linner z_k, z_l \rinner_{\cH}] \rinner_{\cH} 
      = \linner \bE_{z \sim \nu}[z], \bE_{N}[ z_j \linner z_k, z_l \rinner_{\cH}] \rinner_{\cH} 
      = 0.
    \]
    Thus, the only non-zero terms are those where all indices are equal, i.e., $i = j = k = l$, 
    and those where two indices are equal, e.g., $i = j \neq k = l$.
    The former contributes $N$ terms of the form  $\bE_{z \sim \nu}[\|z\|_{\cH}^4]$.
    The latter case corresponds to $\binom{N}{2} \cdot \binom{4}{2} = 3 N(N-1)$ terms, 
    where for every term,
    the Cauchy--Schwarz inequality implies that 
    \[
      \bE_{N} \left[  \linner z_i , z_j \rinner_{\cH} \linner z_k , z_l \rinner_{\cH} \right] 
      \leq \bE_{N} \left[ 
        \|z_i\|_{\cH}
        \|z_j\|_{\cH}
        \|z_k\|_{\cH}
        \|z_l\|_{\cH}
      \right] 
      =
      \bE_{N} \left[ 
        \|z_i\|_{\cH}^2
      \right]
      \bE_{N} \left[ 
        \|z_j\|_{\cH}^2
      \right]
      = 
      \bE_{z \sim \nu} \left[ 
        \|z\|_{\cH}^2
      \right]^2,
    \]
    where we have again made use of independence.
    Combining the two contributions, we arrive at
    \[
      \bE_{N}\left[\|\hat{z} - \bar{z}\|_{\cH}^4 \right]
      \leq \frac{N \bE_{z \sim \nu}[\|z\|_{\cH}^4] + 3 N (N-1) \bE_{z\sim\nu}[\|z\|_{\cH}^2]^2}{N^4}
      \leq \frac{K_z}{N^2},
    \]
    where it suffices to take  
    \[
      K_z = \bE_{z \sim \nu}[\|z\|_{\cH}^4] + 3 \bE_{z \sim \nu}[\|z\|_{\cH}^2]^2.
    \]
    When $z$ is uncentered, we note that 
    \[
      \hat{z} - \bar{z} = \frac{1}{N}\sum_{k=1}^{N} (z_k - \bar{z}) = \frac{1}{N} \sum_{i=k}^{N} \tilde{z}_k
    \]
    where $\tilde{z}_k = z_k - \bar{z}$ are centered i.i.d. random variables.
    Thus, applying the result for the centered case above to the random variable $\tilde{z}$ yields the desired result.
\end{proof}  

\subsubsection{Proofs for PCA and DIS sampling errors}
% \subsubsection{Proofs for PCA and DIS sampling errors, \Cref{prop:pod_sample_error_expectation} and \Cref{prop:dis_sampling_error_mean}}
\label{sec:proof_pod_dis_sampling_errors}

The proofs for \Cref{prop:pod_sample_error_expectation} and \Cref{prop:dis_sampling_error_mean} then follow from the previous results \Cref{lemma:monte_carlo_error} and \Cref{lemma:fourth_moment_error}.

\begin{proof}(of \Cref{prop:pod_sample_error_expectation})
    Lemma B.2 in \cite{BhattacharyaHosseiniKovachkiEtAl21} considers the uncentered case. 
    % The proof for the case where $\Cy$ is uncentered is discussed in Kovachki et al. 
    % The case for centered covariances follows from the fact that 
    % We will largely follow the approach in \cite{BhattacharyaHosseiniKovachkiEtAl21}. 
    To derive the result for the centered case, we decompose the covariance estimator $\Cyhat$ as
    \begin{align*} 
      \Cyhat 
        &= \frac{1}{N}\sum_{k=1}^{N}(\fun(x_k) - \fhat) \otimes (\fun(x_k) - \fhat) \\
        &= \frac{1}{N}\sum_{k=1}^{N}(\fun(x_k) - \fbar + \fbar - \fhat) \otimes (\fun(x_k) - \fbar + \fbar - \fhat) \\ 
        &= \frac{1}{N}\sum_{k=1}^{N}(\fun(x_k) - \fbar)  \otimes (\fun(x_k) - \fbar) + (\fbar - \fhat) \otimes (\fbar - \fhat). 
    \end{align*} 
    Let $\Cybar := \frac{1}{N}\sum_{k=1}^{N}(\fun(x_k) - \fbar)  \otimes (\fun(x_k) - \fbar)$, then 
    \begin{equation}\label{eq:split_cy_error}
    \| \Cy - \Cyhat \|_{\HS(\cY, \cY)}^2 \leq 2 \|\Cy - \Cybar \|_{\HS(\cY, \cY)}^2 + 2 \|(\fhat - \fbar) \otimes (\fhat - \fbar) \|_{\HS(\cY, \cY)}^2. 
    \end{equation}
  
    In particular, the first term reverts to the case of uncentered covariances $\|\Cy - \Cybar \|_{\HS(\cY, \cY)}$, which we present here for completeness.
    Since $\|y \otimes y\|_{\HS(\cY, \cY)} = \|y \|_{\cY}^2$ for $y \in \cY$, we have 
    \[\bE_{\gamma}[\|(\fun - \fbar) \otimes (\fun - \fbar)\|_{\HS(\cY, \cY)}^2] = \bE_{\gamma}[\|\fun - \fbar\|_{\cY}^4] < \infty. \]
    This allows us to apply \Cref{lemma:monte_carlo_error} using the random variable $z = (\fun - \fbar) \otimes (\fun - \fbar)$, 
    taking values in the Hilbert space $\HS(\cY, \cY)$, whose mean is $\Cy = \bE_{\gamma}[(\fun - \fbar) \otimes (\fun - \fbar)]$.
    This gives the bound
    \[ 
      \bE_N[\|\Cy - \Cybar\|_{\HS(\cY, \cY)}^2] \leq \frac{\bE_{\gamma}[\|(\fun - \fbar) \otimes (\fun - \fbar) - \Cy \|_{\HS(\cY, \cY)}^2]}{N}.
    \]
    % Moreover, since the $L^1$ norm is bounded by the $L^2$ norm, we have
    % \[ 
    %   \bE_N[\|\Cy - \Cybar\|_{\HS(\cY, \cY)}] \leq  \sqrt{\frac{\bE_{\gamma}[\|(\fun - \fbar) \otimes (\fun - \fbar) - \Cy \|_{\HS(\cY, \cY)}^2]}{N}}.
    % \]
    For the second term in \eqref{eq:split_cy_error}, we have
    \[
      \|(\fhat - \fbar) \otimes (\fhat - \fbar) \|_{\HS(\cY, \cY)} = \|\fhat - \fbar\|_{\cY}^2
    \]
    such that 
    \[
      \bE_N[\|(\fhat - \fbar) \otimes (\fhat - \fbar) \|_{\HS(\cY, \cY)}^2] 
      = \bE_{N}[\|\fhat - \fbar\|_{\cY}^4]  
      \leq \frac{\|\fun - \fbar\|_{L^4_{\gamma}}^4 + 3\|\fun - \fbar\|_{L^2_{\gamma}}^4}{N^2}.
    \]
    Combining the bounds for the two terms from \eqref{eq:split_cy_error} yields 
    \[ \bE_{N}[ \|\Cy - \Cyhat\|_{\HS(\cY, \cY)}^2] \leq
    \frac{2 \bE_{\gamma}[\|(\fun - \fbar) \otimes (\fun - \fbar) - \Cy \|_{\HS(\cY, \cY)}^2]}{N}  + \frac{2 \|\fun - \fbar\|_{L^4_{\gamma}}^4 + 6\|\fun - \fbar\|_{L^2_{\gamma}}^4}{N^2}.
    \]
    Since $N^{-2}$ decays faster than $N^{-1}$, we can simply take the constant as
    \[ 
    M_{\Cy} =  2 \bE_{\gamma}[\|(\fun - \fbar) \otimes (\fun - \fbar) - \Cy \|_{\HS(\cY, \cY)}^2] + 2 \|\fun - \fbar\|_{L^4_{\gamma}}^4 + 6\|\fun - \fbar\|_{L^2_{\gamma}}^4 . \]

\end{proof}

The strategy for the sampling error estimates in the DIS case is similar (and in fact, simpler, since we do not need to account for the centering by the empirical mean).

\begin{proof}(of \Cref{prop:dis_sampling_error_mean})
    Recall that for $\Hx = \bE_{\gamma}[\derivXC \fun^* \derivXC \fun]$ and $\Hy = \bE_{\gamma}[\derivXC \fun \derivXC \fun^*]$, 
    their empirical estimators
    \[ \Hxhat = \frac{1}{N} \sum_{k=1}^{N} \derivXC \fun(x_k)^* \derivXC \fun(x_k), \qquad \Hyhat = \frac{1}{N} \sum_{k=1}^{N} \derivXC \fun(x_k) \derivXC \fun(x_k)^*, \]
    are sample averages of the random variables $\derivXC \fun^* \derivXC \fun$ and $\derivXC \fun \derivXC \fun^*$ that take values in the Hilbert spaces $\HS(\XC, \XC)$ and $\HS(\cY, \cY)$, respectively.
    Note that for Hilbert--Schmidt operators $A \in \HS(\cH_1, \cH_2)$ and $B \in \HS(\cH_2, \cH_3)$, which map between separable Hilbert spaces $\cH_1, \cH_2, \cH_3$, the norm of their composition satisfies $\|AB\|_{\HS(\cH_1, \cH_3)} \leq \|A\|_{\HS(\cH_1, \cH_2)} \|B\|_{\HS(\cH_2, \cH_3)}$. Thus, we have
    \[ 
      \|\derivXC \fun^* \derivXC \fun\|_{\HS(\XC, \XC)} \leq \|\derivXC \fun\|_{\HS(\XC, \cY)}^2, \qquad
      \|\derivXC \fun \derivXC \fun^*\|_{\HS(\cY, \cY)} \leq \|\derivXC \fun\|_{\HS(\XC, \cY)}^2.
    \]
    Squaring and taking expectations gives us
    \[ \bE_{\gamma}[\|\derivXC \fun^* \derivXC \fun\|_{\HS(\XC, \XC)}^2] \leq \bE[ \|\derivXC \fun\|_{\HS(\XC, \cY)}^4] < \infty \] 
    and 
    \[ \bE_{\gamma}[\|\derivXC \fun \derivXC \fun^*\|_{\HS(\cY, \cY)}^2] \leq \bE[ \|\derivXC \fun\|_{\HS(\XC, \cY)}^4] < \infty \]
    by our moment assumption. This allows us to apply \Cref{lemma:monte_carlo_error}, which yields the desired result with constants
    \[ 
      M_{\Hx} = \bE_{\gamma}[\|\derivXC \fun^* \derivXC \fun - \Hx\|_{\HS(\XC, \XC)}^2], \qquad 
      M_{\Hy} = \bE_{\gamma}[\|\derivXC \fun \derivXC \fun^* - \Hy\|_{\HS(\cY, \cY)}^2].
    \]
\end{proof}

\subsection{Proof of the main result}
\label{sec:proof_main_theorem}
We now conclude with the proof of the main theorem, \Cref{theorem:main_theorem}, which combines all the results from the previous sections.

\begin{proof}
  First note that, 
  under the assumption that $\fun \in H^{2}_{\gamma} \cap W^{1,4}_{\gamma}$,
  for almost every realization of the samples 
  $(x_k)_{k=1}^{N}, x_k \sim \gamma$ i.i.d., (i.e., almost surely with respect to $\gamma^{N}$),
  the empirical estimator $\fhat \in \cY$,
  the empirical covariance $\Cyhat$ and sensitivity operator $\Hyhat$ lead to orthonormal bases $(\uhat_i)_{i=1}^{\infty}$ in $\cY$,
  and the empirical sensitivity operator $\Hxhat$ leads to an orthonormal basis $(\vhat_i^{\JTJ})_{i=1}^{\infty}$ in $\XC$.
  Additionally, 
  \Cref{assumption:derivative_second_moments} ensures 
  that $\vhat_i^{\JTJ}$ are elements of $\cX$.
  We can therefore apply \Cref{theorem:error_decomposition_with_conditional_expectation} to obtain the existence of a neural network $\varphi_{\psi, \theta}$ such
  \eqref{eq:error_decomposition_with_conditional_expectation_l2} and \eqref{eq:error_decomposition_with_conditional_expectation_h1} 
  hold for $\Uryhat = \Uryhat^{\POD}, \Uryhat^{\JTJ}$ and $\Vrxhat = \Vrx^{\KLE}, \Vrxhat^{\JTJ}$.

  Taking expectations $\bE_{N}$ for the terms in \eqref{eq:error_decomposition_with_conditional_expectation_l2} and \eqref{eq:error_decomposition_with_conditional_expectation_h1}, 
  we have 
  \[\bE_{N}[\|\fbar - \fhat\|_{\cY}^2] \leq \frac{\|\fun - \fbar\|_{L^2_{\gamma}}^2}{N}
  \]
  due to \Cref{corollary:mean_estimator_error},
  while the terms 
  \[
    2 \bE_{N} [ \|\fun - \fhat_r \|_{L^2_{\gamma}}^2], \quad 
    2 \bE_{N} [ \|\Pryhat (\derivXC \fun - \derivXC \fhat_r)\Qrxhat \|_{\HS(\XC,\cY)}^2], 
    \quad 
    \bE_{N} [\|\derivXC \fun (I - \Qrxhat)\|_{\HS(\XC,\cY)}^2]
  \]
  depend on the input dimension reduction $\Vrxhat$ and
  \[
    \bE_{N}[\|(I - \Pryhat)(\fun - \fbar)\|_{L^2_{\gamma}}^2], \quad
    \bE_{N}[\|(I - \Pryhat)(\derivXC \fun - \derivXC \fbar)\|_{L^2_{\gamma}}^2], 
  \]
  depend on the output dimension reduction $\Uryhat$.
  We can then consider each dimension reduction strategy separately, substituting in the results from the reconstruction, ridge function, and sampling errors developed in \Cref{sec:dibasis_approximation_error} to obtain the final approximation errors.

  \paragraph{Empirical output PCA} $\Uryhat = \Uryhat^{\POD}$:
  For the output reconstruction error, \Cref{prop:pod_reconstruction_bound} gives 
  \[
    \|(I - \Pryhat^{\POD}) (\fun - \fbar) \|_{L^2_{\gamma}}^2 
    \leq \sum_{i = \ry + 1}^{\infty} \lambda_i^{\POD}  + \min \left\{ \sqrt{2 r} \|\Cy - \Cyhat \|_{\HS(\cY, \cY)}, \frac{2 \| \Cy - \Cyhat\|_{\HS(\cY,\cY)}^2}{\lambda_{r}^{\POD} - \lambda_{r+1}^{\POD}} \right\}.
  \]
  To bound the derivative reconstruction error, we additionally require \Cref{assumption:derivative_inverse_inequality}, which combined with \Cref{prop:pod_derivative_reconstruction_multiplicative} gives
  \begin{align*}
    & \|(I - \Pryhat^{\POD}) \derivXC \fun \|_{L^2(\gamma, \HS(\XC,\cY))}^2 \\
    & \qquad \leq K_{D} \left(
    \sum_{i = \ry + 1}^{\infty} \lambda_i^{\POD} + \min \left\{ \sqrt{2 r} \|\Cy - \Cyhat \|_{\HS(\cY, \cY)}, \frac{2 \| \Cy - \Cyhat\|_{\HS(\cY,\cY)}^2}{\lambda_{r}^{\POD} - \lambda_{r+1}^{\POD}} \right\} \right).
  \end{align*}
  Taking expectations with respect to the sampling error with the help of \Cref{prop:pod_sample_error_expectation} gives 
  \[
    \bE_{N}\left( 
    \min \left\{ \sqrt{2 r} \|\Cy - \Cyhat \|_{\HS(\cY, \cY)}, \frac{2 \| \Cy - \Cyhat\|_{\HS(\cY,\cY)}^2}{\lambda_{r}^{\POD} - \lambda_{r+1}^{\POD}} \right\} 
    \right)
    \leq 
    \min \left\{
      \sqrt{\frac{2r M_{\Cy}}{N}}, \frac{2 M_{\Cy}}{(\lambda_{r}^{\POD} - \lambda_{r+1}^{\POD}) N}
    \right\} 
  \]
  with $M_{\Cy} = 2 \bE_{\gamma}[\|(\fun - \fbar) \otimes (\fun - \fbar) - \Cy \|_{\HS(\cY, \cY)}^2] + 2 \|\fun - \fbar\|_{L^4_{\gamma}}^4 + 6\|\fun - \fbar\|_{L^2_{\gamma}}^4$. 
  Thus, the overall constants are 
  $K_{\cY}^{(1)} = 1$, $K_{\cY}^{(2)} = K_D$, and 

  \paragraph{Empirical output DIS} $\Uryhat = \Uryhat^{\JJT}$: 
  From \Cref{prop:dis_derivative_reconstruction} and \Cref{prop:output_dis_function_error}, we have 
  \begin{align*}
    \|(I - \Pryhat^{\JJT})(\fun - \fbar)\|_{L^2_{\gamma}}^2 
    &\leq \sum_{i = \ry + 1}^{\infty} \lambda_i^{\JJT} 
    + \min \left\{ \sqrt{2 r} \|\Hy - \Hyhat \|_{\HS(\cY, \cY)}, \frac{2 \| \Hy - \Hyhat\|_{\HS(\cY,\cY)}^2}{\lambda_{r}^{\JJT} - \lambda_{r+1}^{\JJT}} \right\},
    \\
    \|(I - \Pryhat^{\JJT})\derivXC \fun\|_{L^2(\gamma, \HS(\XC,\cY))}^2
    &\leq \sum_{i = \ry + 1}^{\infty} \lambda_i^{\JJT} 
    + \min \left\{ \sqrt{2 r} \|\Hy - \Hyhat \|_{\HS(\cY, \cY)}, \frac{2 \| \Hy - \Hyhat\|_{\HS(\cY,\cY)}^2}{\lambda_{r}^{\JJT} - \lambda_{r+1}^{\JJT}} \right\}.
  \end{align*}
  Taking expectations with respect to the sampling error using \Cref{prop:dis_sampling_error_mean} gives
  \[
    \bE_{N} \left[ \min \left\{ \sqrt{2 r} \|\Hy - \Hyhat \|_{\HS(\cY, \cY)}, \frac{2 \| \Hy - \Hyhat\|_{\HS(\cY,\cY)}^2}{\lambda_{r+1}^{\JJT} - \lambda_{r}^{\JJT}} \right\} \right] \leq \min \left\{ 
      \sqrt{\frac{2r M_{\Hy}}{N}}, \frac{2 M_{\Hy}}{(\lambda_{r}^{\JJT} - \lambda_{r+1}^{\JJT})N} \right\},
  \]
  where $M_{\Hy} = \bE_{\gamma}[\|\derivXC \fun \derivXC \fun^* - \Hy \|_{\HS(\cY,\cY)}^2]$. Thus, the overall constants are 
  $K_{\cY}^{(1)} = K_{\cY}^{(2)} = 1$ and
  $ M_{\cY} = \bE_{\gamma}[\|\derivXC \fun \derivXC \fun^* - \Hy \|_{\HS(\cY,\cY)}^2].$

  \paragraph{Exact input PCA} $\Vrxhat = \Vrx^{\KLE}$: 
    Under \Cref{assumption:derivative_second_moments}, 
    the results in
    \Cref{prop:kle_derivative_reconstruction},
    \Cref{cor:ridge_function_error},
    and \Cref{prop:ridge_derivative_bound_kle} 
    give
    \begin{align*}
        \|\derivXC \fun(I - \Qrx^{\KLE}) 
        \|_{L^2(\gamma, \HS(\XC,\cY))}^2 
        & \leq 
        \bE_{\gamma}\left[ \|\deriv \fun \|_{\cL(\cX, \cY)}^2 \right]\sum_{i = \rx + 1}^{\infty} \mu_i^{\KLE},
        \\
        \| \fun - \fhat_r \|_{L^2_{\gamma}}^2
        & \leq 
        \bE_{\gamma}\left[ \|\deriv \fun \|_{\cL(\cX, \cY)}^2 \right]\sum_{i = \rx + 1}^{\infty} \mu_i^{\KLE}, 
        \\
        \|\Pryhat (\derivXC \fun - \derivXC \fhat_r)\Qrx^{\KLE} \|_{L^2(\gamma, \HS(\XC,\cY))}^2
        & \leq  
        \bE_{\gamma} \left[\|\deriv^{2}\fun\|_{\cL_2(\cX, \cY)}^2 \right] 
        \bE_{\gamma}[\|x\|_{\cX}^2] 
        \sum_{i = \rx + 1}^{\infty} \mu_i^{\KLE}.
    \end{align*}
    Moreover, since there is no sampling error in the exact PCA, these bounds are deterministic, such that $M_{\cX} = 0$ and 
    \[
      K_{\cX}^{(1)}  = 2 \bE_{\gamma}[\| \deriv \fun \|_{\cL(\cX,\cY)}^2],
      \quad
      K_{\cX}^{(2)}  = 2 \bE_{\gamma}[\|\deriv^2 \fun\|_{\cL_2(\cX,\cY)}^2]
      \bE_{\gamma}[\|x\|_{\cX}^2] + 
      \bE_{\gamma}[\| \deriv \fun \|_{\cL(\cX,\cY)}^2].
    \]

    \paragraph{Empirical input DIS} $\Vrxhat = \Vrxhat^{\JTJ}$: 
    \Cref{prop:dis_derivative_reconstruction} and \Cref{cor:ridge_function_error}
    give 
    \begin{align*}
      \| \fun - \fhat_r \|_{L^2_{\gamma}}^2 
      &\leq 
      % \bE_{\gamma}[ \| \Pry(\fun - \ftilde) \|_{\cY}^2] 
      \sum_{i = \rx + 1}^{\infty} \mu_i^{\JTJ} 
      + \min \left\{ \sqrt{2 r} \|\Hx - \Hxhat \|_{\HS(\XC, \XC)}, \frac{2 \| \Hx - \Hxhat\|_{\HS(\XC,\XC)}^2}{\mu_{r}^{\JTJ} - \mu_{r+1}^{\JTJ}} \right\},  \\
      \| \derivXC \fun(I - \Qrxhat^{\JTJ})\|_{L^2(\gamma, \HS(\XC,\cY))}^2 
      & \leq
      \sum_{i = \rx + 1}^{\infty} \mu_i^{\JTJ} 
      + \min \left\{ \sqrt{2 r} \|\Hx - \Hxhat \|_{\HS(\XC, \XC)}, \frac{2 \| \Hx - \Hxhat\|_{\HS(\XC,\XC)}^2}{\mu_{r}^{\JTJ} - \mu_{r+1}^{\JTJ}} \right\}, 
    \end{align*}
    To bound the derivative ridge function error, we additionally need \Cref{assumption:hessian_inverse_inequality}, which combined with \Cref{prop:ridge_derivative_bound_dis} gives
    \begin{align*}
      & \|\Pryhat(\derivXC \fun - \derivXC \fhat_r) \Qrxhat^{\JTJ}\|_{L^2_{\gamma}(\HS(\XC, \cY))}^2 \\
      & \qquad \leq 
      K_H \left( \sum_{i = \rx + 1}^{\infty} \mu_i^{\JTJ} +\min \left\{ \sqrt{2 r} \|\Hx - \Hxhat \|_{\HS(\XC, \XC)}, \frac{2 \| \Hx - \Hxhat\|_{\HS(\XC,\XC)}^2}{\mu_{r}^{\JTJ} - \mu_{r+1}^{\JTJ}} \right\}\right).
    \end{align*}
    Taking expectations with respect to the sampling distribution
    and applying \Cref{prop:dis_sampling_error_mean} then yields
    \[
    \bE_{N} \left[ \min \left\{ \sqrt{2 r} \|\Hx - \Hxhat \|_{\HS(\XC, \XC)}, \frac{2 \| \Hx - \Hxhat\|_{\HS(\XC,\XC)}^2}{\mu_{r}^{\JTJ} - \mu_{r+1}^{\JTJ}} \right\} \right] \leq \min \left\{ 
      \sqrt{\frac{2r M_{\Hx}}{N}}, \frac{2 M_{\Hx}}{(\mu_{r}^{\JTJ} - \mu_{r+1}^{\JTJ})N} \right\},
    \]
    where $M_{\Hx} = \bE_{\gamma}[\|\derivXC \fun^* \derivXC \fun - \Hx\|_{\HS(\XC,\XC)}^2]$.
    Thus, the overall constants are $K_{\cX}^{(1)} = 2$, $K_{\cX}^{(2)} = 2 K_H + 1$, and $M_{\cX} = \bE_{\gamma}[\|\derivXC \fun^* \derivXC \fun - \Hx\|_{\HS(\XC,\XC)}^2]$.

    As suggested in \Cref{remark:on_the_assumptions}, we have used \Cref{assumption:derivative_second_moments} to ensure the DIS yields $C^{-1}_{\cX} \vhat_i^{\JTJ}$ that are elements of $\cX$. 
    We do not need the full \Cref{assumption:derivative_second_moments} in the input DIS result. Instead, we can relax the assumption by directly assuming that $\fun$ is such that the eigenvalue problem involving $\Hxhat$ yields $C^{-1}_{\cX} \vhat_i^{\JTJ}$ that are indeed elements of $\cX$.
\end{proof}

% END MATTER ------------------------------------------------------------------
\bookmarksetup{startatroot}

\bibliography{ref}

\begin{thebibliography}{10}

\bibitem{AdcockDexterMoraga24}
{\sc B.~Adcock, N.~Dexter, and S.~Moraga}, {\em Optimal deep learning of
  holomorphic operators between {B}anach spaces}, in Proceedings of the 38th
  Conference on Neural Information Processing Systems, 2024,
  \url{https://doi.org/10.48550/arXiv.2406.13928}.

\bibitem{AdcockDexterMoraga25}
{\sc B.~Adcock, N.~Dexter, and S.~Moraga}, {\em Optimal approximation of
  infinite-dimensional holomorphic functions {II}: {R}ecovery from i.i.d.
  pointwise samples}, Journal of Complexity, 89 (2025), p.~101933,
  \url{https://doi.org/10.1016/j.jco.2025.101933}.

\bibitem{AdcockGriebelMaier24}
{\sc B.~Adcock, M.~Griebel, and G.~Maier}, {\em Learning {L}ipschitz operators
  with respect to {G}aussian measures with near-optimal sample complexity},
  (2024), \url{https://doi.org/10.48550/arXiv.2410.23440}.

\bibitem{BaptistaMarzoukZahm22}
{\sc R.~Baptista, Y.~Marzouk, and O.~Zahm}, {\em Gradient-based data and
  parameter dimension reduction for {B}ayesian models: an information theoretic
  perspective}, 2022, \url{https://doi.org/10.48550/arXiv.2207.08670}.

\bibitem{BhattacharyaHosseiniKovachkiEtAl21}
{\sc K.~Bhattacharya, B.~Hosseini, N.~B. Kovachki, and A.~M. Stuart}, {\em
  Model reduction and neural networks for parametric {PDEs}}, The SMAI journal
  of computational mathematics, 7 (2021), pp.~121--157,
  \url{https://doi.org/10.5802/smai-jcm.74}.

\bibitem{BlanchardBousquetZwald06}
{\sc G.~Blanchard, O.~Bousquet, and L.~Zwald}, {\em Statistical properties of
  kernel principal component analysis}, Machine Learning, 66 (2006),
  pp.~259--294, \url{https://doi.org/10.1007/s10994-006-6895-9}.

\bibitem{Bogachev10}
{\sc V.~Bogachev}, {\em Differentiable Measures and the Malliavin Calculus},
  American Mathematical Society, July 2010,
  \url{https://doi.org/10.1090/surv/164}.

\bibitem{Bogachev98}
{\sc V.~I. Bogachev}, {\em Gaussian Measures}, Mathematical Surveys and
  Monographs, American Mathematical Society, 1998.

\bibitem{BonevKurthHundtEtAl23}
{\sc B.~Bonev, T.~Kurth, C.~Hundt, J.~Pathak, M.~Baust, K.~Kashinath, and
  A.~Anandkumar}, {\em Spherical {F}ourier neural operators: Learning stable
  dynamics on the sphere}, in Proceedings of the 40th International Conference
  on Machine Learning, A.~Krause, E.~Brunskill, K.~Cho, B.~Engelhardt,
  S.~Sabato, and J.~Scarlett, eds., vol.~202 of Proceedings of Machine Learning
  Research, PMLR, 23--29 Jul 2023, pp.~2806--2823,
  \url{https://proceedings.mlr.press/v202/bonev23a.html}.

\bibitem{BouhlelHeMartins20}
{\sc M.~A. Bouhlel, S.~He, and J.~R. R.~A. Martins}, {\em Scalable
  gradient{\textendash}enhanced artificial neural networks for airfoil shape
  design in the subsonic and transonic regimes}, Structural and
  Multidisciplinary Optimization, 61 (2020), pp.~1363--1376,
  \url{https://doi.org/10.1007/s00158-020-02488-5}.

\bibitem{CaoChenBrennanEtAl24}
{\sc L.~Cao, J.~Chen, M.~Brennan, T.~O'Leary-Roseberry, Y.~Marzouk, and
  O.~Ghattas}, {\em {LazyDINO}: {F}ast, scalable, and efficiently amortized
  {B}ayesian inversion via structure-exploiting and surrogate-driven measure
  transport},  (2024), \url{https://doi.org/10.48550/arXiv.2411.12726}.

\bibitem{CaoOLearyRoseberryGhattas24}
{\sc L.~Cao, T.~O'Leary-Roseberry, and O.~Ghattas}, {\em Derivative-informed
  neural operator acceleration of geometric {MCMC} for infinite-dimensional
  {B}ayesian inverse problems}, 2024,
  \url{https://doi.org/10.48550/arXiv.2403.08220}.

\bibitem{ChenArnaudBaptistaEtAl24}
{\sc Q.~Chen, E.~Arnaud, R.~Baptista, and O.~Zahm}, {\em Coupled input-output
  dimension reduction: Application to goal-oriented {B}ayesian experimental
  design and global sensitivity analysis}, 2024,
  \url{https://doi.org/10.48550/arXiv.2406.13425}.

\bibitem{ConstantineDowWang14}
{\sc P.~G. Constantine, E.~Dow, and Q.~Wang}, {\em Active subspace methods in
  theory and practice: Applications to kriging surfaces}, SIAM Journal on
  Scientific Computing, 36 (2014), pp.~A1500--A1524,
  \url{https://doi.org/10.1137/130916138}.

\bibitem{CuiMartinMarzoukEtAl14}
{\sc T.~Cui, J.~Martin, Y.~M. Marzouk, A.~Solonen, and A.~Spantini}, {\em
  Likelihood-informed dimension reduction for nonlinear inverse problems},
  Inverse Problems, 30 (2014), p.~114015,
  \url{https://doi.org/10.1088/0266-5611/30/11/114015}.

\bibitem{CuiTong22}
{\sc T.~Cui and X.~T. Tong}, {\em A unified performance analysis of
  likelihood-informed subspace methods}, Bernoulli, 28 (2022),
  \url{https://doi.org/10.3150/21-BEJ1437}.

\bibitem{CzarneckiOsinderoJaderbergEtAl17}
{\sc W.~M. Czarnecki, S.~Osindero, M.~Jaderberg, G.~Swirszcz, and R.~Pascanu},
  {\em Sobolev training for neural networks}, in Neural Information Processing
  Systems, 2017.

\bibitem{FengZeng22}
{\sc X.~Feng and L.~Zeng}, {\em Gradient-enhanced deep neural network
  approximations}, Journal of Machine Learning for Modeling and Computing, 3
  (2022), pp.~73--91,
  \url{https://doi.org/10.1615/JMachLearnModelComput.2022046782}.

\bibitem{FrescaManzoni22}
{\sc S.~Fresca and A.~Manzoni}, {\em {POD-DL-ROM}: {E}nhancing deep
  learning-based reduced order models for nonlinear parametrized {PDE}s by
  proper orthogonal decomposition}, Computer Methods in Applied Mechanics and
  Engineering, 388 (2022), p.~114181,
  \url{https://doi.org/10.1016/j.cma.2021.114181}.

\bibitem{GoChen23}
{\sc J.~Go and P.~Chen}, {\em Accurate, scalable, and efficient {B}ayesian
  optimal experimental design with derivative-informed neural operators}, 2023,
  \url{https://doi.org/10.48550/arXiv.2312.14810}.

\bibitem{GoChen24}
{\sc J.~Go and P.~Chen}, {\em Sequential infinite-dimensional bayesian optimal
  experimental design with derivative-informed latent attention neural
  operator}, 2024, \url{https://doi.org/10.48550/arXiv.2409.09141}.

\bibitem{GuehringKutyniokPetersen19}
{\sc I.~Gühring, G.~Kutyniok, and P.~Petersen}, {\em Error bounds for
  approximations with deep {ReLU} neural networks in ${W}^{s,p}$ norms},
  Analysis and Applications, 18 (2019), pp.~803--859,
  \url{https://doi.org/10.1142/S0219530519410021}.

\bibitem{Hardy06}
{\sc M.~Hardy}, {\em Combinatorics of partial derivatives}, The Electronic
  Journal of Combinatorics, 13 (2006), \url{https://doi.org/10.37236/1027}.

\bibitem{HerrmannSchwabZech24}
{\sc L.~Herrmann, C.~Schwab, and J.~Zech}, {\em Neural and spectral operator
  surrogates: unified construction and expression rate bounds}, Advances in
  Computational Mathematics, 50 (2024),
  \url{https://doi.org/10.1007/s10444-024-10171-2}.

\bibitem{HesthavenUbbiali18}
{\sc J.~S. Hesthaven and S.~Ubbiali}, {\em Non-intrusive reduced order modeling
  of nonlinear problems using neural networks}, Journal of Computational
  Physics, 363 (2018), pp.~55--78,
  \url{https://doi.org/10.1016/j.jcp.2018.02.037}.

\bibitem{Hornik91}
{\sc K.~Hornik}, {\em Approximation capabilities of multilayer feedforward
  networks}, Neural Networks, 4 (1991), pp.~251--257,
  \url{https://doi.org/10.1016/0893-6080(91)90009-T}.

\bibitem{JinMengLu22}
{\sc P.~Jin, S.~Meng, and L.~Lu}, {\em {MIO}net: Learning multiple-input
  operators via tensor product}, SIAM Journal on Scientific Computing, 44
  (2022), pp.~A3490--A3514, \url{https://doi.org/10.1137/22M1477751}.

\bibitem{KovachkiLanthalerMishra21}
{\sc N.~Kovachki, S.~Lanthaler, and S.~Mishra}, {\em On universal approximation
  and error bounds for {F}ourier neural operators}, Journal of Machine Learning
  Research, 22 (2021), pp.~1--76,
  \url{http://jmlr.org/papers/v22/21-0806.html}.

\bibitem{KovachkiLiLiuEtAl23}
{\sc N.~Kovachki, Z.~Li, B.~Liu, K.~Azizzadenesheli, K.~Bhattacharya,
  A.~Stuart, and A.~Anandkumar}, {\em Neural operator: Learning maps between
  function spaces with applications to {PDE}s}, Journal of Machine Learning
  Research, 24 (2023), pp.~1--97,
  \url{http://jmlr.org/papers/v24/21-1524.html}.

\bibitem{KovachkiLanthalerStuart24}
{\sc N.~B. Kovachki, S.~Lanthaler, and A.~M. Stuart}, {\em Operator learning:
  Algorithms and analysis}, 2024,
  \url{https://doi.org/10.48550/arXiv.2402.15715}.

\bibitem{LamZahmMarzoukEtAl20}
{\sc R.~R. Lam, O.~Zahm, Y.~M. Marzouk, and K.~E. Willcox}, {\em Multifidelity
  dimension reduction via active subspaces}, SIAM Journal on Scientific
  Computing, 42 (2020), pp.~A929--A956,
  \url{https://doi.org/10.1137/18M1214123}.

\bibitem{Lanthaler23}
{\sc S.~Lanthaler}, {\em Operator learning with {PCA}-net: upper and lower
  complexity bounds}, Journal of Machine Learning Research, 24 (2023),
  pp.~1--67, \url{http://jmlr.org/papers/v24/23-0478.html}.

\bibitem{LanthalerMishraKarniadakis22}
{\sc S.~Lanthaler, S.~Mishra, and G.~E. Karniadakis}, {\em Error estimates for
  {DeepONets}: a deep learning framework in infinite dimensions}, Transactions
  of Mathematics and Its Applications, 6 (2022),
  \url{https://doi.org/10.1093/imatrm/tnac001}.

\bibitem{LiKovachkiAzizzadenesheliEtAl20}
{\sc Z.~Li, N.~Kovachki, K.~Azizzadenesheli, B.~Liu, K.~Bhattacharya,
  A.~Stuart, and A.~Anandkumar}, {\em Multipole graph neural operator for
  parametric partial differential equations}, in Proceedings of the 34th
  International Conference on Neural Information Processing Systems, NIPS '20,
  Red Hook, NY, USA, 2020, Curran Associates Inc.

\bibitem{LiKovachkiAzizzadenesheliEtAl21}
{\sc Z.~Li, N.~Kovachki, K.~Azizzadenesheli, B.~Liu, K.~Bhattacharya,
  A.~Stuart, and A.~Anandkumar}, {\em Fourier neural operator for parametric
  partial differential equations}, in International Conference on Learning
  Representations, 2021, \url{https://doi.org/10.48550/arXiv.2010.08895}.

\bibitem{LiuBatill00}
{\sc W.~Liu and S.~Batill}, {\em Gradient-enhanced neural network response
  surface approximations}, in 8th Symposium on Multidisciplinary Analysis and
  Optimization, American Institute of Aeronautics and Astronautics, Sept. 2000,
  \url{https://doi.org/10.2514/6.2000-4923}.

\bibitem{LoggMardalWellsEtAl12}
{\sc A.~Logg, K.-A. Mardal, G.~N. Wells, et~al.}, {\em Automated Solution of
  Differential Equations by the Finite Element Method}, Springer, 2012,
  \url{https://doi.org/10.1007/978-3-642-23099-8}.

\bibitem{LuJinPangEtAl21}
{\sc L.~Lu, P.~Jin, G.~Pang, Z.~Zhang, and G.~E. Karniadakis}, {\em Learning
  nonlinear operators via {DeepONet} based on the universal approximation
  theorem of operators}, Nature Machine Intelligence, 3 (2021), pp.~218--229,
  \url{https://doi.org/10.1038/s42256-021-00302-5}.

\bibitem{LuMengCaiEtAl22}
{\sc L.~Lu, X.~Meng, S.~Cai, Z.~Mao, S.~Goswami, Z.~Zhang, and G.~E.
  Karniadakis}, {\em A comprehensive and fair comparison of two neural
  operators (with practical extensions) based on fair data}, Computer Methods
  in Applied Mechanics and Engineering, 393 (2022), p.~114778,
  \url{https://doi.org/10.1016/j.cma.2022.114778}.

\bibitem{LuoOLearyRoseberryChenEtAl23}
{\sc D.~Luo, T.~O'Leary-Roseberry, P.~Chen, and O.~Ghattas}, {\em Efficient
  {PDE}-constrained optimization under high-dimensional uncertainty using
  derivative-informed neural operators}, 2023,
  \url{https://doi.org/10.48550/arXiv.2305.20053}.

\bibitem{MasRuymgaart14}
{\sc A.~Mas and F.~Ruymgaart}, {\em High-dimensional principal projections},
  Complex Analysis and Operator Theory, 9 (2014), pp.~35--63,
  \url{https://doi.org/10.1007/s11785-014-0371-5}.

\bibitem{Nualart06}
{\sc D.~Nualart}, {\em The Malliavin Calculus and Related Topics},
  Springer-Verlag, 2006, \url{https://doi.org/10.1007/3-540-28329-3}.

\bibitem{hippyflow}
{\sc T.~O'Leary-Roseberry and U.~Villa}, {\em {hIPPYflow}: {D}imension reduced
  surrogate construction for parametric {PDE} maps in {P}ython}, 2021,
  \url{https://doi.org/10.5281/zenodo.4608729},
  \url{https://github.com/hippylib/hippyflow}.

\bibitem{OLearyRoseberryVillaChenEtAl22}
{\sc T.~O'Leary-Roseberry, U.~Villa, P.~Chen, and O.~Ghattas}, {\em
  Derivative-informed projected neural networks for high-dimensional parametric
  maps governed by {PDEs}}, Computer Methods in Applied Mechanics and
  Engineering, 388 (2022), p.~114199,
  \url{https://doi.org/10.1016/j.cma.2021.114199}.

\bibitem{OpschoorSchwabZech21}
{\sc J.~A.~A. Opschoor, C.~Schwab, and J.~Zech}, {\em Exponential {ReLU} {DNN}
  expression of holomorphic maps in high dimension}, Constructive
  Approximation, 55 (2021), pp.~537--582,
  \url{https://doi.org/10.1007/s00365-021-09542-5}.

\bibitem{OLearyRoseberryChenVillaEtAl24}
{\sc T.~O’Leary-Roseberry, P.~Chen, U.~Villa, and O.~Ghattas}, {\em
  Derivative-informed neural operator: An efficient framework for
  high-dimensional parametric derivative learning}, Journal of Computational
  Physics, 496 (2024), p.~112555,
  \url{https://doi.org/10.1016/j.jcp.2023.112555}.

\bibitem{OLearyRoseberryDuChaudhuriEtAl22}
{\sc T.~O’Leary-Roseberry, X.~Du, A.~Chaudhuri, J.~R. Martins, K.~Willcox,
  and O.~Ghattas}, {\em Learning high-dimensional parametric maps via reduced
  basis adaptive residual networks}, Computer Methods in Applied Mechanics and
  Engineering, 402 (2022), p.~115730,
  \url{https://doi.org/10.1016/j.cma.2022.115730}.

\bibitem{QiuBridgesChen24}
{\sc Y.~Qiu, N.~Bridges, and P.~Chen}, {\em Derivative-enhanced deep operator
  network}, 2024, \url{https://doi.org/10.48550/arxiv.2402.19242}.

\bibitem{ReinhardtWangZech24}
{\sc N.~Reinhardt, S.~Wang, and J.~Zech}, {\em Statistical learning theory for
  neural operators},  (2024), \url{https://doi.org/10.48550/arXiv.2412.17582}.

\bibitem{ReissWahl20}
{\sc M.~Reiß and M.~Wahl}, {\em Nonasymptotic upper bounds for the
  reconstruction error of {PCA}}, The Annals of Statistics, 48 (2020),
  \url{https://doi.org/10.1214/19-AOS1839}.

\bibitem{Roman80}
{\sc S.~Roman}, {\em The formula of {F}aa {D}i {B}runo}, The American
  Mathematical Monthly, 87 (1980), pp.~805--809,
  \url{https://doi.org/10.1080/00029890.1980.11995156}.

\bibitem{SaibabaLeeKitanidis15}
{\sc A.~K. Saibaba, J.~Lee, and P.~K. Kitanidis}, {\em Randomized algorithms
  for generalized {H}ermitian eigenvalue problems with application to computing
  {K}arhunen{\textendash}{L}o{\`{e}}ve expansion}, Numerical Linear Algebra
  with Applications, 23 (2015), pp.~314--339,
  \url{https://doi.org/10.1002/nla.2026}.

\bibitem{SchwabZech23}
{\sc C.~Schwab and J.~Zech}, {\em Deep learning in high dimension: Neural
  network expression rates for analytic functions in
  \(\pmb{L^2(\mathbb{R}^d,\gamma_d)}\)}, SIAM/ASA Journal on Uncertainty
  Quantification, 11 (2023), pp.~199--234,
  \url{https://doi.org/10.1137/21m1462738}.

\bibitem{Tsay21}
{\sc C.~Tsay}, {\em Sobolev trained neural network surrogate models for
  optimization}, Computers and Chemical Engineering, 153 (2021), p.~107419,
  \url{https://doi.org/10.1016/j.compchemeng.2021.107419}.

\bibitem{VillaPetraGhattas21}
{\sc U.~Villa, N.~Petra, and O.~Ghattas}, {\em {hIPPYlib}}, {ACM} Transactions
  on Mathematical Software, 47 (2021), pp.~1--34,
  \url{https://doi.org/10.1145/3428447}.

\bibitem{YuLuMengEtAl22}
{\sc J.~Yu, L.~Lu, X.~Meng, and G.~E. Karniadakis}, {\em Gradient-enhanced
  physics-informed neural networks for forward and inverse {PDE} problems},
  Computer Methods in Applied Mechanics and Engineering, 393 (2022), p.~114823,
  \url{https://doi.org/10.1016/j.cma.2022.114823}.

\bibitem{ZahmConstantinePrieurEtAl20}
{\sc O.~Zahm, P.~G. Constantine, C.~Prieur, and Y.~M. Marzouk}, {\em
  Gradient-based dimension reduction of multivariate vector-valued functions},
  {SIAM} Journal on Scientific Computing, 42 (2020), pp.~A534--A558,
  \url{https://doi.org/10.1137/18M1221837}.

\bibitem{ZahmCuiLawEtAl22}
{\sc O.~Zahm, T.~Cui, K.~Law, A.~Spantini, and Y.~Marzouk}, {\em Certified
  dimension reduction in nonlinear {B}ayesian inverse problems}, Mathematics of
  Computation, 91 (2022), pp.~1789--1835,
  \url{https://doi.org/10.1090/mcom/3737}.

\end{thebibliography}

% \printindex

% \begin{vita} % ----------------------------------------------------------------
% \noindent
% \input{vita}
% \end{vita}

\end{document}